\newcommand{\globalcolor}[1]{%
  \color{#1}\global\let\default@color\current@color
}
\newif\ifdark
\definecolor{darkred}{rgb}{0.9,0.2,0.2}
\definecolor{darkblue}{rgb}{0.7,0.3,1}
\definecolor{darkgreen}{rgb}{0.1,0.9,0.1}
\definecolor{pagebackground}{rgb}{.15,.21,.18}
\definecolor{pageforeground}{rgb}{.84,.84,.85}
\colorlet{symbols}{black!50}
\definecolor{darkred}{rgb}{0.7,0.1,0.1}
\definecolor{darkblue}{rgb}{0.4,0.1,0.8}
\definecolor{darkgreen}{rgb}{0.1,0.7,0.1}
\definecolor{pagebackground}{rgb}{1,1,1}
\definecolor{pageforeground}{rgb}{0,0,0}
\colorlet{symbols}{black!50}
\DeclareMathAlphabet{\mathbbm}{U}{bbm}{m}{n}
\DeclareFontFamily{U}{BOONDOX-calo}{\skewchar\font=45 }
\DeclareFontShape{U}{BOONDOX-calo}{m}{n}{
  <-> s*[1.05] BOONDOX-r-calo}{}
\DeclareFontShape{U}{BOONDOX-calo}{b}{n}{
  <-> s*[1.05] BOONDOX-b-calo}{}
\DeclareMathAlphabet{\mcb}{U}{BOONDOX-calo}{m}{n}
\SetMathAlphabet{\mcb}{bold}{U}{BOONDOX-calo}{b}{n}
\setlist{noitemsep,topsep=4pt}
\def\DeclareSymbol#1#2#3{%
	\expandafter\gdef\csname MH@symb@#1\endcsname{%
	\tikz[baseline=#2,scale=0.15,draw=symbols,line join=round]{#3}}%
	\expandafter\gdef\csname MH@symb@#1s\endcsname{\scalebox{0.75}{%
	\tikz[baseline=#2,scale=0.15,draw=symbols,line join=round]{#3}}}%
	\expandafter\gdef\csname MH@symb@#1ss\endcsname{\scalebox{0.65}{%
	\tikz[baseline=#2,scale=0.15,draw=symbols,line join=round]{#3}}}%
	}
\def\<#1>{\ifthenelse{\boolean{mmode}}{\mathchoice{\csname MH@symb@#1\endcsname}{\csname MH@symb@#1\endcsname}{\csname MH@symb@#1s\endcsname}{\csname MH@symb@#1ss\endcsname}}{\csname MH@symb@#1\endcsname}}
\newcommand{\cT}{\mcb{T}}
\newcommand{\cut}{\mathfrak{C}}
\def\Poly{\mathscr{P}}
\newcommand{\mcA}{\mathcal{A}}
\newcommand{\mcR}{\mathcal{R}}
\newcommand{\mcC}{\mathcal{C}}
\newcommand{\mcS}{\mathcal{S}}
\newcommand{\mcT}{\mathcal{T}}
\newcommand{\mcI}{\mathcal{I}}
\newcommand{\mcN}{\mathcal{N}}
\newcommand{\mcK}{\mathcal{K}}
\newcommand{\mcW}{\mathcal{W}}
\newcommand{\mcP}{\mathcal{P}}
\newcommand{\mcQ}{\mathcal{Q}}
\newcommand{\jets}{\mathscr{U}}
\newcommand{\G}{\mcb{Q}}
\newcommand{\bu}{\mathbf{u}}
\newcommand{\T}{\mathbf{T}}
\def\Labe{\mathfrak{e}}
\def\Labn{\mathfrak{n}}
\def\Labhom{\mathfrak{t}}
\def\Deltam{\Delta^{\!-}}
\def\Deltap{\Delta^{\!+}}
\def\Span#1{\mathop{\mathrm{Span}}{#1}}
\def\${|\!|\!|}
\def\DD{\mathscr{D}}
\def\id{\mathrm{id}}
\def\graft#1{\curvearrowright_{#1}}
\def\hgraft#1{\hat\curvearrowright_{#1}}
\def\bgraft#1{\bar\curvearrowright_{#1}}
\def\Labo{\mathfrak{o}}
\def\root{\mathrm{root}}
\def\poly{\mathrm{poly}}
\def\nonroot{\mathrm{non-root}}
\def\Gauss{\mathrm{Gauss}}
\def\scal#1{{\langle#1\rangle}}
\def\CS{\mathcal{S}}
\def\CR{\mathcal{R}}
\def\CF{\mathcal{F}}
\def\CT{\mcb{T}}
\def\bone{\mathbf{1}}
\def\noise{\mathop{\mathrm{noise}}}
\def\homplus#1{|#1|_{+}}
\newcommand{\mfT}{\mathfrak{T}}
\newcommand{\mfo}{\mathfrak{o}}
\newcommand{\mfe}{\mathfrak{e}}
\newcommand{\mfL}{\mathfrak{L}}
\newcommand{\mfc}{\mathfrak{c}}
\newcommand{\mfR}{\mathfrak{R}}
\newcommand{\mft}{\mathfrak{t}}
\newcommand{\mfi}{\mathfrak{i}}
\newcommand{\mfj}{\mathfrak{j}}
\newcommand{\mfm}{\mathfrak{m}}
\newcommand{\mfp}{\mathfrak{p}}
\newcommand{\mfl}{\mathfrak{l}}
\newcommand{\mfq}{\mathfrak{q}}
\newcommand{\mfb}{\mathfrak{b}}
\newcommand{\mff}{\mathfrak{f}}
\newcommand{\mfD}{\mathfrak{D}}
\newcommand{\Y}{\mcb{X}}
\def\wwnorm#1{|\!|\!| #1 |\!|\!|}
\def\cC{\mathscr{C}}
\def\can{\mbox{\tiny{can}}}
\def\BPHZ{\mbox{\tiny \textsc{bphz}}}
\def\mnoise{\mfm^{\scriptscriptstyle \Xi}}
\def\mpoly{\mfm^{\scriptscriptstyle X}}
\def\barmnoise{\bar{\mfm}^{\scriptscriptstyle \Xi}}
\def\barmpoly{\bar{\mfm}^{\scriptscriptstyle X}}
\newcommand{\ex}{\mathrm{ex}}
\def\combplus[#1,#2,#3,#4]{\binom{#1\ {\scriptstyle #4} }{#2\ #3}}
\def\singlescalegenvert[#1,#2]{\hat{H}^{#2}_{#1}}
\def\multiscalegenvert[#1,#2]{H^{#2}_{#1}}
\def\nr[#1]{\tilde{N}[#1]} 
\def\inn[#1]{\mathring{N}[#1]}
\def\nrinn[#1]{\hat{N}_{#1}} 
\def\nrmod[#1,#2]{\tilde{N}_{#1}(#2)}
\def\nrinnmod[#1,#2]{\hat{N}_{#1}(#2)}
\def\ident[#1]{\underline{#1}}
\def\mylink#1#2{\mathrel{\vbox{\offinterlineskip\ialign{%
    \hfil##\hfil\cr
    $\scriptscriptstyle#1$\cr
    \noalign{\kern0.1ex}
    $#2$\cr
}}}}
\def\mysublink[#1]#2#3{\mathrel{\vbox{\offinterlineskip\ialign{%
    \hfil##\hfil\cr
    $\scriptscriptstyle#2$\cr
    \noalign{\kern0.1ex}
    $#3$\cr
    \noalign{\kern-0.2ex}
    \smash{\raisebox{-\height}{\hbox{$\scriptscriptstyle #1$}}}\cr
    \noalign{\kern0.2ex}
}}}}
\def\fon[#1]{\cC_{#1}}
\def\mincompproj[#1]{\mfp_{#1}}
\def\Proj_#1{\mathop{\mathrm{Proj}_{#1}}}
\def\negrenorm[#1]{\mfR_{#1}}
\def\topnegrenorm[#1]{\overline{\mfR}_{#1}}
\def\quotedge[#1]{E^{q}_{#1}}
\def\posrenorm[#1]{\mcC_{#1}}
\def\topposrenorm[#1]{\overline{\mcC_{#1}}}
\def\cutsmod[#1]{\mathbb{C}_{+,#1}}
\def\fullcutsmod[#1]{\cut_{#1}}
\def\rem{\mathop{\mathrm{rem}}}
\def\clas{\mathop{\mathrm{clas}}}
\def\emptyset{{\centernot\ocircle}}
\colorlet{testcolor}{green!60!black}
\colorlet{redkernel}{red!80}
\def\symbol#1{\textcolor{symbols}{#1}}
\def\1{\mathbf{\symbol{1}}}
\newcommand*{\mathcolor}{}
\def\mathcolor#1#{\mathcoloraux{#1}}
\newcommand*{\mathcoloraux}[3]{%
  \protect\leavevmode
  \begingroup
    \color#1{#2}#3%
  \endgroup
}
    \pgfmathsetlength{\pgf@xb}{\pgfkeysvalueof{/pgf/outer xsep}}%
    \pgfmathsetlength{\pgf@yb}{\pgfkeysvalueof{/pgf/outer ysep}}%
\definecolor{connection}{rgb}{0.7,0.1,0.1}
\tikzset{
root/.style={circle,fill=black!50,inner sep=0pt, minimum size=3mm},
        dot/.style={circle,fill=black,inner sep=0pt, minimum size=1.2mm},
        sdot/.style={circle,fill=black,inner sep=0pt,minimum size=.5mm},
        dotred/.style={circle,fill=black!50,inner sep=0pt, minimum size=2mm},
        var/.style={circle,fill=black!10,draw=black,inner sep=0pt, minimum size=3mm},
        kernel/.style={semithick,shorten >=2pt,shorten <=2pt},
        kernel1/.style={draw=black,thick},
        kernels/.style={snake=zigzag,shorten >=2pt,shorten <=2pt,segment amplitude=1pt,segment length=4pt,line before snake=2pt,line after snake=5pt,},
        rho/.style={densely dashed,semithick,shorten >=2pt,shorten <=2pt},
           testfcn/.style={dotted,semithick,shorten >=2pt,shorten <=2pt},
           tau/.style={circle,inner sep=1pt,draw=black,fill=white,text=black,thin},
        renorm/.style={shape=circle,fill=white,inner sep=1pt},
        labl/.style={shape=rectangle,fill=white,inner sep=1pt},
        xic/.style={very thin,circle,fill=symbols,draw=black,inner sep=0pt,minimum size=1.2mm},
        xi/.style={very thin,circle,fill=blue!10,draw=black,inner sep=0pt,minimum size=1.2mm},
        xix/.style={crosscircle,fill=blue!10,draw=black,inner sep=0pt,minimum size=1.2mm},
	xib/.style={very thin,circle,fill=blue!10,draw=black,inner sep=0pt,minimum size=1.6mm},
	xie/.style={very thin,circle,fill=green!50!black,draw=black,inner sep=0pt,minimum size=1.6mm},
	xid/.style={very thin,circle,fill=symbols,draw=black,inner sep=0pt,minimum size=1.6mm},
	xibx/.style={crosscircle,fill=blue!10,draw=black,inner sep=0pt,minimum size=1.6mm},
	kernels2/.style={very thick,draw=connection,segment length=12pt},
	not/.style={thin,circle,fill=symbols,draw=connection,fill=connection,inner sep=0pt,minimum size=0.5mm},
	>=stealth,
  }
\newtheorem{assumption}[lemma]{Assumption}
\let\D\CD
\def\s{\mathfrak{s}}
\def\K{\mathfrak{K}}
\def\RR{\mathfrak{R}}
\def\${|\!|\!|}
\def\?{{\color{red}?}}
\def\id{\mathrm{id}}
\def\restr{\mathord{\upharpoonright}}
\def\proj{\mathbf{p}}
\def\reg{\mathop{\mathrm{reg}}}
\def\ireg{\mathop{\mathrm{ireg}}}
\def\PPi{\boldsymbol{\Pi}}
\def\id{\mathrm{id}}
\def\Labhom{\mathfrak{t}}
\def\Labe{\mathfrak{e}}
\def\Labn{\mathfrak{n}}
\def\BB{\mathcal{B}}
\def\BBspan{\mcb{B}}
\def\BBbig{\check{\BB}}
\def\VV{\mathcal{V}}
\def\VVspan{\mcb{V}}
\def\nodes{\mathsf{N}}
\def\edges{\mathsf{E}}
\def\decor{\mathsf{D}}
\def\trunc{L}
\def\branch{P}
\def\trunk{R}
\def\slash{\leavevmode\unskip\kern0.18em/\penalty\exhyphenpenalty\kern0.18em}
\def\dash{\leavevmode\unskip\kern0.18em--\penalty\exhyphenpenalty\kern0.18em}
\def\one{\mathbbm{1}}
\newtheorem{example}[lemma]{Example}}
\let\basepoint\logof
\def\logof{\mathord{{\basepoint}}} 
\title{Renormalising SPDEs in regularity structures}
\author{Y.~Bruned$^1$, A.~Chandra$^2$, I.~Chevyrev$^3$, and M.~Hairer$^2$}
\institute{University of Edinburgh, UK \and Imperial College London, UK \and University of Oxford, UK}
\date{\today}
\begin{document}
\maketitle
\begin{abstract}
The formalism recently introduced in \cite{BHZalg} allows one to assign a regularity structure,
as well as a corresponding ``renormalisation group'', to any subcritical system of semilinear
stochastic PDEs. Under very mild additional assumptions, it was then shown in \cite{CH} that
large classes of driving noises exhibiting the relevant small-scale behaviour can be lifted 
to such a regularity structure in a robust way, following a renormalisation procedure reminiscent
of the BPHZ procedure arising in perturbative QFT. 

The present work completes this programme by constructing an action of the renormalisation group
onto a suitable class of stochastic PDEs which is intertwined with its action on the corresponding
space of models. This shows in particular that solutions constructed from the BPHZ lift of a smooth
driving noise coincide with the classical solutions of a modified PDE. 
This yields a very general black box type local existence and stability theorem for a wide class 
of singular nonlinear SPDEs. 
\end{abstract}
\setcounter{tocdepth}{2}

\tableofcontents

\section{Introduction}\label{sec: intro}

This article is part of the ongoing programme initiated in \cite{Regularity} aiming to develop a 
robust existence and approximation theory for a wide class of semilinear parabolic stochastic 
partial differential equations (SPDEs). The problem we tackle here is that of showing that 
when such equations are ``renormalised'' using the procedure given in \cite{BHZalg,CH}, the resulting
process is again the solution to a modified equation containing counterterms that only depend
in a local way on the solution itself.
%

A similar situation to the one dealt with here already arises in the 
classical theory of stochastic integration. There, one is faced with the problem of defining integrals with
respect to Brownian motion which, on a pathwise level, has insufficient regularity for the classical Riemann-Stieltjes integral to be well-defined.  
When establishing the convergence of discrete approximations one must take advantage of probabilistic cancellations in order to overcome this pathwise irregularity. 
Moreover, one sees that different classes of approximations that would have had the same limit for the case of regular drivers actually lead to different limiting integrals with different properties when working with irregular stochastic drivers\dash in the limit, one can obtain either the It{\^o} or Stratonovich integral, or any of a one-parameter
family of theories of stochastic integration which contains these as special cases \cite{GregAndrew}. 
In practice this choice is informed either by phenomenological considerations or by a desire 
for the integral to satisfy a given mathematical property. 

In the theory of parabolic, locally subcritical SPDEs, both the design of approximations and the framework for showing convergence of these approximations become more involved.
A rigorous solution\slash integration theory in this case was fairly intractable until just a few years ago\dash now there are 
several frameworks available that provide rigorous descriptions of what it means to be a (local) solution to these 
SPDEs: the theory of regularity structures \cite{Regularity}, 
the theory of paracontrolled distributions \cite{Paracontrol}, a Wilsonian renormalisation group approach \cite{Kupiainen2016}, 
and most recently the approach of \cite{OW}. Although these approaches differ in 
their technical details and their scope of application, the
solutions constructed with all of them do coincide for those examples in which more than one approach
applies.

As an example, suppose that one wants to develop a notion of solution for the Cauchy problem
associated to the system of SPDEs on $\R_+ \times \T^d$
\begin{equ}\label{eq: formal spde}
(\partial_{t} - \Delta)\phi_{j} = F_{j}(\phi,\nabla \phi) + \xi_j\;,
\end{equ}
where $(F_j)_{j=1}^m$ is a collection of local nonlinearities given by smooth functions. 
One can take the vector of ``drivers'' $\xi = (\xi_{j})_{j=1}^{m}$ to be a family of generalised random fields which are stationary, jointly Gaussian, and have covariances $\E[\xi_{j}(z)\xi_{k}(\bar{z})]$ which are smooth 
as long as $\bar{z} \not = z$ but behave like a homogeneous distribution of some negative degree near 
the diagonal  $z = \bar{z}$.
A sufficient condition for~\eqref{eq: formal spde} to be \emph{locally subcritical} is that, via power-counting considerations, the nonlinear term $F_{j}(\phi, \nabla \phi)$ is expected to be of better regularity than the driving noise $\xi_{j}$.\footnote{See Definition~\ref{def: obey for blackbox} and the remark following it for a formal definition.}

In many cases of interest one cannot solve \eqref{eq: formal spde} using classical deterministic methods since the lack of regularity 
of $\xi_j$ may force some $\phi_{j}$ to live in a space of functions\slash distributions on which $F_{j}(\phi,\nabla \phi)$ has no canonical meaning. 
A naive way to obtain a well-defined approximation to~\eqref{eq: formal spde} is to replace $\xi_j$ with $\xi_j^{(\eps)} = \xi_j \ast \rho_{\eps}$ where $\eps > 0$ and $\rho_{\eps}$ is a smooth approximation of the identity with $\lim_{\eps \downarrow 0} \rho_{\eps} = \delta$.
Then one has classical solutions $\phi_{\eps} = (\phi_{j,\eps})_{j=1}^{m}$ for the system of equations 
\begin{equ}\label{eq: mollified spde}
(\partial_{t} - \Delta)\phi_{j,\eps} = F_{j}(\phi_{\eps},\nabla \phi_{\eps}) + \xi_j^{(\eps)}\;,
\end{equ}
Unfortunately, in the generic situation, $\phi_{j,\eps}$ will either fail to converge as $\eps \downarrow 0$, or
 converge to a trivial limit as in \cite{Ryser}, so that simply replacing $\xi_j$ with $\xi_j^{(\eps)}$ 
does not allow one to 
define solutions to \eqref{eq: formal spde} via a limiting procedure. 

Upon studying the $\eps \downarrow 0$ behaviour of formal perturbative expansions for $\phi_{\eps}$,
one is naturally led to a more sophisticated approximation procedure. 
In general, one expects to find $n_{1}, \dots, n_{m} \in \N$ and, for each $j=1,\ldots, m$, a family of constants $\{c_{(j,i)}[\rho_\eps]\}_{i=1}^{n_{j}}$,
typically divergent as $\rho_{\eps} \rightarrow \delta$, as well as a family of functions 
$\{P_{(j,i)}(\cdot,\cdot)\}_{i=1}^{n_{j}}$, such that the classical solutions $\hat{\phi}_{\eps} = (\hat{\phi}_{j,\eps})_{j=1}^{m}$ to the system of equations
\begin{equ}\label{eq: renormalised spde}
(\partial_{t} - \Delta)\hat{\phi}_{j,\eps} = F_{j}(\hat{\phi}_{\eps},\nabla \hat{\phi}_{\eps})
-
\sum_{i=1}^{n_{j}}c_{(j,i)}[\rho_\eps]P_{(j,i)}(\hat{\phi}_{\eps},\nabla \hat{\phi}_{\eps}) + \xi_j^{(\eps)}\;,
\end{equ}
converge in probability as $\eps \downarrow 0$ to a tuple $\phi$ of limiting random distributions, 
which can be viewed as ``a solution''  to the system of equations~\eqref{eq: formal spde} and which
\textit{does not depend on the specific choice of approximation} $\xi_j^{(\eps)}$.
This process of using approximations where one regularises at a certain scale and then modifies the nonlinearity in a way that depends on this regularisation scale is called \emph{renormalisation}.
\begin{remark}\label{remark on renormalisation}
One may worry about the fact that~\eqref{eq: renormalised spde} no longer seems to relate to 
the ``real'' equation~\eqref{eq: formal spde} due to the presence of the additional counterterms $P_{(j,i)}$.
From a physical perspective however, this is not as unnatural as it may seem. Indeed, 
what one can typically ``guess'' from
physical arguments is not the specific system of equations \eqref{eq: formal spde}, but rather the generic form
of such a system, with the nonlinearities $F_j$ involving a priori unknown parameters (`coupling constants')
that then need to be determined a posteriori by matching predictions with experiments. From this
perspective, \eqref{eq: renormalised spde} is actually also of the form \eqref{eq: formal spde} and
simply corresponds to an $\eps$-dependent reparametrisation of the family of equations under
consideration. One way of interpreting this is that the whole \textit{family} of solutions
given by \eqref{eq: formal spde} and indexed by a suitable finite-dimensional collection of possible
nonlinearities $F$ converges to a limiting family of solutions as $\eps \to 0$, but the collection of
nonlinearities has to be suitably reparametrised in the process. A trivial but analogous situation is
the following. For any fixed $\eps$, consider the subset $A_\eps \subset \R^2$ parametrised by
$\R$ and given by $A_\eps = \{(x_\eps(t), y_\eps(t))\,:\, t\in \R\}$, where
\begin{equ}
x_\eps(t) = \eps t + \frac{2}{\eps} \;,\qquad y_\eps(t) = \eps \cos(t)\;.
\end{equ}
While it is clear that $A_\eps \to A_0$ with $A_0 = \R \times \{0\}$, $x_\eps$ and $y_\eps$
do not converge to a parametrisation of $A_0$, although they do if we perform the $\eps$-dependent
reparametrisation $t \mapsto t/\eps - 2/\eps^2$ and write instead
$A_\eps = \{(\hat x_\eps(t), \hat y_\eps(t))\,:\, t\in \R\}$ with
\begin{equ}
\hat x_\eps(t) = t \;,\qquad \hat y_\eps(t) = \eps \cos \Bigl(\frac{t}{\eps} - \frac{2}{\eps^2}\Bigr)\;.
\end{equ}
In this analogy, $t$ plays the role of $F$, $(x_\eps,y_\eps)$ plays the role of the solution map 
$\phi_\eps$, while $(\hat x_\eps,\hat y_\eps)$ plays the role of the ``renormalised''
solution map $\hat \phi_\eps$. 
\end{remark}

While perturbative methods can shed light on the mechanics of renormalisation, they are limited 
to proving statements about the term by term behaviour of formal expansions for $\hat{\phi}_{\eps}$ 
which one does not expect to be summable. 
The jump from knowing how to set up approximations such as \eqref{eq: renormalised spde} to showing that the solutions $\hat{\phi}_{\eps}$ 
do actually converge as $\eps \downarrow 0$ requires fundamentally new ideas and is the main achievement of the methods 
developed in \cite{Regularity,Paracontrol,Kupiainen2016,OW}.
\subsection{A review of the theory of regularity structures}
The setting of the current work is the theory of regularity structures \cite{Regularity}, 
so we quickly present the theory's central ideas. 
Those seeking more pedagogical expositions are encouraged to look at \cite{FrizHairer,CW17,CDM}. 
The approach of the theory can be illustrated by the diagram shown in Figure~\ref{fig:diagram}.
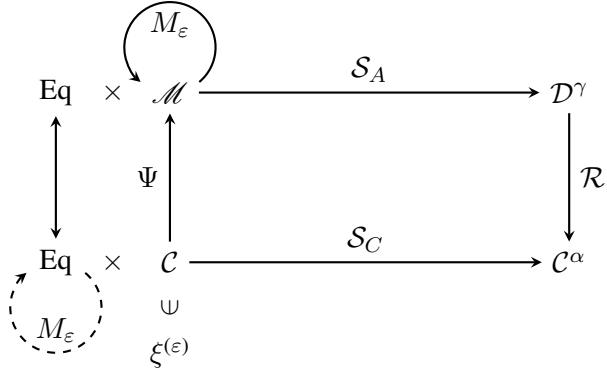
\begin{figure}
\begin{center}
\begin{tikzpicture}[thick,scale=1.5]
  \node (Fr) at ( -1,1.5) [] {$\mathrm{Eq}$};
  \node (X) at ( 0,1.5) [] {$\mathscr{M}$};
  \node at ( 0,2.1) [] {$M_{\eps}$};
  \draw[->] (-50:0.4) ++(0,1.9) arc (-50:230:0.4);
  \node at ( -0.5,1.5) [] {$\times$};
  \node (end) at ( 3.5,1.5) [] {$\CD^{\gamma}$};

  \node (F) at ( -1,0) [] {$\mathrm{Eq}$};
  \draw[dashed,->] (50:0.4) ++(-1,-0.4) arc (50:-230:0.4);
  \node at ( -1,-0.6) [] {$M_{\eps}$};
  \node at ( -0.5,0) [] {$\times$};
  \node (O) at ( 0,0) [] {$\CC$};
  \node at ( 0,-0.8) [] {$\xi^{(\eps)}$};
  \node at ( 0,-0.4) [rotate=90] {$\in$};

  \node (end1) at ( 3.5,0) [] {$\CC^{\alpha}$};
  \draw [->] (end) --node [right] {$\CR$} (end1);
  \draw [->] (O) -- node [left] {$\Psi$} (X);
  \draw [<->] (F) -- (Fr);
  \draw [->] (X) --node [above] {$\CS_{A}$} (end);
  \draw [->] (O) --node [above] {$\CS_{C}$}  (end1);
\end{tikzpicture}
\caption{Mechanism of renormalisation}\label{fig:diagram}
\end{center}
\end{figure}

In this figure, $\mathrm{Eq}$ denotes a space of possible equations. While the instances of $\mathrm{Eq}$ on the top and bottom lines can 
be thought of as the same, we will see them as playing different roles. The choice of an element in $\mathrm{Eq}$ on the bottom line 
will be called a \emph{concrete} equation and the choice of an element in $\mathrm{Eq}$ on the top line will be called an \emph{abstract} equation. 

Continuing on the bottom line, we denote by $\CC$ a space of continuous (or sufficiently smooth) 
functions defined on the underlying space-time\dash this is where regularised realisations of our driving noise live, but this space
is typically much too small to contain instances of the limiting noise $\xi$.
The space $\CC^{\alpha}$ is a H\"older-type space of space-time functions\slash distributions where the solution to the equation at hand will live. 
Given a concrete equation and a regularised driving noise $\xi^{(\eps)}$ the classical solution map $\CS_{C}$ returns the solution to the specified concrete equation starting from $0$ (or some other specified initial condition) and driven by $\xi^{(\eps)}$.

While the map $\CS_{C}$ is well-defined when the driving noise is drawn from $\CC$, it lacks sufficient continuity in this argument to be 
well-defined on any of the distributional spaces in which the convergence $\xi = \lim_{\eps \downarrow 0} \xi^{(\eps)}$ takes place. 
This is actually already the case for stochastic ordinary differential equations, see \cite{TerryPath}.
The theory of regularity structures follows the philosophy of (controlled) rough paths 
\cite{Lyons,MaxControl,MR2314753,MR2667703,FrizHairer} and builds a continuous solution map $\CS_{A}$ at the price of defining it on a richer space\dash one must feed into the map $\CS_{A}$ not just a realisation of the driving noise but also a suitable 
``enhancement'', which encodes various multilinear functionals of the driving noise that are a priori ill-defined. 

Such a collection of data is referred to as a \emph{model} in the terminology of regularity structures, with
the space of models $\mathscr{M}$ being a fairly complicated nonlinear metric space.
The multilinear functionals one must define in order to specify an element of $\mathscr{M}$ are such that they can be defined canonically when evaluated on regularised instances of the noise but have no such interpretation when evaluated on an un-regularised realisation\dash this is because one encounters ill-defined pointwise products of rough functions and distributions.  
Consequently, on the space of regularised realisations of the noise $\CC $, one has a canonical lift $\Psi : \CC \rightarrow \mathscr{M}$ but this lift does not extend continuously 
to typical realisations of $\xi$. 

One can also define a bundle\footnote{Strictly speaking, the space $\CD^{\gamma}$ doesn't satisfy the axioms of a 
vector bundle because fibres corresponding to different models are not isomorphic in general. At an 
algebraic level, the object in each fibre is always a ``jet''-valued function, but the analytic requirements
we impose on this do depend on the underlying model and can be very different, even for nearby fibres.} $\CD^{\gamma}$ of H{\"o}lder-type spaces of abstract jets over $\mathscr{M}$ 
where the abstract equation can be formulated as a well-posed fixed point problem.
The fixed point yields a solution map $\CS_A$ that is a continuous section of the bundle $\CD^{\gamma}$:
given $Z \in \mathscr{M}$,  $\CS_{A}[Z]$ belongs to the fibre over the model $Z$. 

The map $\CR$ appearing on the very right is the \emph{reconstruction operator} which is a continuous map from the bundle $\CD^{\gamma}$ to some H{\"o}lder space $\CC^{\alpha}$ of space-time functions\slash distributions. 
The key point of the diagram above is that the square commutes, namely $\CR \circ \CS_{A} \circ \Psi= \CS_{C}$. This 
factorisation of $\CS_{C}$ separates difficulties: the map $\Psi$ is discontinuous, but has the advantage of being given 
explicitly, while the map $\CS_A$ is given as the solution to a fixed point problem but has the advantage of being continuous.

The incorporation of renormalisation in the abstract setting is done by 
replacing the canonical lift $\Psi: \CC \rightarrow \mathscr{M}$ by a different lift which is allowed to break the
usual definition of a product. The space of those deformations of the product that are
allowed and that preserve stationarity is itself rather small.
In particular, one can exhibit a finite-dimensional Lie group $\mfR$ acting on $\mathscr{M}$
(in this case by a right action or equivalently by a left action of the adjoint) which 
parametrises all ``natural'' lifts of the noise.
The art of renormalisation then involves remembering that $\xi^{(\eps)}$ is random, and choosing, for each $\eps > 0$, a 
\textit{deterministic} element $M_{\eps} \in \mfR$, determined by the law of $\xi^{(\eps)}$, such that the random models $M_{\eps}^* \circ \Psi [\xi^{(\eps)}]$ converge in probability as $\eps \to 0$. 
If this can be done, then thanks to the pathwise continuity of $\CR$ and $\CS_{A}$, one concludes 
that 
\begin{equ}[e:renormSol]
\hat \CS_C^{(\eps)}[\xi^{(\eps)}] \eqdef \CR \circ \CS_{A} \circ M_{\eps}^* \circ \Psi[ \xi_{\eps}]
\end{equ}
also converges in probability as $\eps \to 0$ to some limiting ``renormalised solution map'' 
$\hat \CS_C$, which is only defined almost surely with respect to the law of $\xi$.

The overall framework of the theory of regularity structures was set forth in \cite{Regularity}. 
The theory was designed to be robust and fairly automated in that it does not need to be modified on an equation by equation basis but three of the above steps were left to the person applying the theory in general: 
\begin{enumerate}
\item[(i)] the construction of a Lie group $\mfR$ rich enough to contain $\{M_{\eps}\}_{\eps > 0}$, 
\item[(ii)] proving the convergence of the renormalised 
models $M_{\eps}^* \circ \Psi [\xi^{(\eps)}]$, 
\item[(iii)] showing that $\hat \CS_C^{(\eps)}$
actually coincides with the classical (not renormalised) solution map, but for a \textit{modified} equation.
\end{enumerate}
Robust theorems which automate the first two of these steps were recently obtained 
in \cite{BHZalg} and \cite{CH}, respectively. 
The aim of the current article is to give a general proof of the last step.

Note that the action of $M_{\eps}$ on the top line of Figure~\ref{fig:diagram} doesn't change the 
fixed point problem
used to build the map $\CS_A$, it only changes the model which is used as an input to this map. 
This deformation of the canonical lift generates a discrepancy between how we interpret products 
on the top and bottom lines of our diagram and as a result 
$\hat \CS_C^{(\eps)} \not =  \CS_{C}$. 
The purpose of the present article is to describe a 
corresponding action of $\mfR$ on a suitable space of equations $\mathrm{Eq}$ so that 
the identity
\begin{equ}
(\CR \circ \CS_{A})(F, M^* \Psi (\xi)) = \CS_C( M F, \xi)\;,
\end{equ}
holds for every smooth noise $\xi$, every right hand side $F \in \mathrm{Eq}$, and 
every $M \in \mfR$.

In~\cite{BCFP} the authors identified such an action in the simpler setting of regularity structures arising from branched rough paths, which gave rise to a natural morphism of \emph{pre-Lie algebras}. 
The approach in~\cite{BCFP} inspired that of the present work, however the setting here is quite a bit more complex. 

The problem of identifying the action of the renormalisation group on the equation is also found in perturbative quantum field theory (QFT) where one checks that the counterterms one would like to insert in order to make individual Feynman diagrams finite can be generated order by order by changing the coupling constants in the Lagrangian that was used to generate these terms in the first place. 
The fact that the Lagrangian can be modified in this way can usually be checked quite easily on a case by case basis\dash examples can be found in any textbook on QFT. 
However, we have not been able to find work analogous to the present work in the perturbative QFT literature\dash this would be a theorem which gives an explicit and model-independent formalism for deriving renormalised Lagrangians. 
\begin{remark}\label{second remark on renormalisation}
Continuing the thread of Remark~\ref{remark on renormalisation}, we can view our full space of equations as being parameterised by a family of coupling constants $\vec{c} = (c_{(j,i)}) \in \R^{K}$, where we range across $
1 \le j \le m$, 
$1 \le i \le n_{j}$,
and $K = \sum_{j=1}^{m} n_{j}$.  
The correspondence between the coupling constants $\vec{c}$ and the equation is given by~\eqref{eq: renormalised spde}.
The dual action of the renormalisation group on the equation is then just a representation of $\mfR$ on $\R^{K}$.

As shown in \cite{CH}, we can choose the sequence $M_{\eps}$ to depend on our choice of sequence $\rho_{\eps}$ of approximate identities in such a way 
that the limit $M_{\eps}[\rho_{\eps}]^* \circ \Psi[\xi \ast \rho_{\eps}]$ is independent of the choice
of $\rho_\eps$. 
Once one has obtained one limiting model $\lim_{\eps \downarrow 0} M_{\eps}^* \Psi[\xi^{(\eps)}]$, then an 
entire family of models is obtained via
\begin{equ}\label{family of limiting models}
\Big\{ \lim_{\eps \downarrow 0} M^* M_{\eps}^* \Psi[\xi^{(\eps)}]: M \in \mfR \Big\}\;.
\end{equ}
Once one fixes an initial choice $\vec{c}$ of coupling constants, every model in \eqref{family of limiting models} gives rise to a notion of solution which can be obtained as the $\eps \downarrow 0$ limit of the classical solution to \eqref{eq: renormalised spde} driven by $\xi^{(\eps)}$ and with coupling constants given by $M M_{\eps}\vec{c}$.

We stress that there is in general not a \emph{canonical} model or solution theory that can be pointed out in the 
family~\eqref{family of limiting models}. 
This is because, even though the BPHZ lift constructed in \cite{BHZalg,CH} seems canonical to a certain extent, 
it depends in general on
an arbitrary choice of (scale $1$) cutoff in the Green's function for the linear system.
Different choices of cutoff yield solutions
that differ by the action of an element of $\RR$, but no single choice of cutoff is more canonical than the 
others in general.
We reiterate however that 
\begin{claim}
\item[(i)] If a specific solution is required for modelling purposes, then its parameters do have to be 
determined by comparisons with data\slash experiments anyway. This will then determine a unique element of 
the family of solutions, which is independent of the parametrisation of the family that is being used. 
\item[(ii)] The exact same sensitivity\slash indeterminacy in the notion of solution is present already in the case of 
the theory of integration against Brownian motion.\footnote{This is not just an analogy as integration against 
Brownian motion falls within the framework of regularity structures and the choice between It\^o and 
Stratonovich integrals, or some interpolation of the two, is parameterised by the corresponding 
renormalisation group. The difference is that, in the case of SDEs, no renormalisation 
is in principle required and every smooth regularisation of Brownian motion yields the Stratonovich solution in the
limit. On the other hand, the BPHZ renormalisation procedure, which is the natural way of centring random models
used in the present article, always yields the It\^o solution in the limit. (In the special case of SDEs, the effect of 
the cutoff of the Green's function happens to vanish in the limit.)
}
\end{claim}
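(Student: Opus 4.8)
These are structural statements about the family of limiting objects in~\eqref{family of limiting models} rather than about any single solution, so the plan is to pin down that structure first and then read~(i) and~(ii) off it.

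\emph{Step 1: the family carries a transitive $\mfR$-action.} Using the intertwining identity $(\CR\circ\CS_A)(F,M^*\Psi(\xi))=\CS_C(MF,\xi)$ proved above, together with the convergence of the BPHZ models from~\cite{CH}, one obtains, for almost every realisation of $\xi$, a well-defined map $\Phi\colon\mfR\to\CC^\alpha$, $M\mapsto\lim_{\eps\downarrow0}\CS_C\bigl(MM_\eps F_{\vec{c}},\xi^{(\eps)}\bigr)$, where $F_{\vec{c}}$ is the right-hand side of~\eqref{eq: renormalised spde} for the initial coupling constants $\vec{c}$; the image of $\Phi$ is exactly~\eqref{family of limiting models}. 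The point to verify is that changing the scale-$1$ cutoff in the Green's function, or adopting any other admissible convention, implements itself as precomposition of $\Phi$ with left translation by a fixed $g\in\mfR$ and as postcomposition of the $\vec{c}$-labelling with the representation of $g$ on $\R^K$; this is a direct consequence of applying the intertwining identity to the two lifts. Hence the image is convention-independent and $\mfR$ acts on it transitively on the right.

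\emph{Step 2: assertion~(i).} A physical observable is by definition a measurable functional of the solution process $\phi$, hence a function on the family~\eqref{family of limiting models}, defined independently of any parametrisation. Matching a finite list of such observables against measured values therefore cuts out a subset of the family whose description does not see the cutoff or the labelling, and in the generic well-posed case (enough independent observables, nondegenerate Jacobian) that subset is a single point, the ``physical solution''. Assertion~(i) is then simply the remark that the selection takes place downstream of the $\eps\to0$ limit, on the family itself, where the arbitrary choices have already been quotiented away. Note that the claim asks only for uniqueness of this \emph{element}, not of the parameter $\vec{c}$, which is the correct level of generality: a counterterm $P_{(j,i)}$ can be ``invisible'' (proportional to a monomial already present in $F_j$, or producing only a solution-independent shift), so the representation of $\mfR$ on $\R^K$ need not act freely and distinct $\vec{c}$ may label the same solution.

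\emph{Step 3: assertion~(ii).} Here the plan is to instantiate the whole picture on the regularity structure generated by an SDE $\dot y=f(y)\,\dot B$, viewed as a degenerate case of~\eqref{eq: formal spde} (zero spatial dimension, $\xi=\dot B$). Then $\mfR$ is one-dimensional, its action on the equation adds a drift proportional to $\lambda\,(ff')(y)$ with $\lambda\in\R$, and~\eqref{family of limiting models} is precisely the one-parameter family interpolating It{\^o} and Stratonovich integration of~\cite{GregAndrew}. One then records the two disanalogies flagged in the footnote: every smooth regularisation of $B$ already lifts to a convergent (Stratonovich) model, so one may take $M_\eps=\id$; and the BPHZ centring used throughout this article singles out the It{\^o} solution, the effect of the Green's-function cutoff being asymptotically negligible in this case. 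With this dictionary, (ii) is the observation that the ``sensitivity'' arising for singular SPDEs is the very same object\dash the action of the renormalisation group of the ambient regularity structure on the equation\dash as the classical It{\^o}\slash Stratonovich ambiguity.

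\emph{Main obstacle.} The conceptual content of (i)--(ii) is light, so the work sits in the two supporting facts. In Step~1 one must check carefully that a change of cutoff (or convention) is implemented by a genuine \emph{group} element of $\mfR$ rather than by something more general, which is where the structure theory of~\cite{BHZalg} enters. In Step~3 one needs the two SDE statements above\dash the first classical, the second a short explicit computation\dash and, most delicately, a careful matching of normalisations in the identification of the one-dimensional $\mfR$ with the It{\^o}\slash Stratonovich parameter. Everything else is unwinding definitions.
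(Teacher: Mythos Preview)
The items (i) and (ii) you are attempting to prove are not mathematical statements admitting or requiring proof in the paper. They appear inside Remark~2.5 (continuing the thread of Remark~1.1) as \emph{interpretive observations} meant to reassure the reader that the non-canonicity of solutions is not a defect of the theory. The paper offers no proof of them; it simply asserts them, with the footnote on (ii) giving informal context about how SDEs fit into the framework.

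Your proposal therefore rests on a category error: you have read two discursive remarks as if they were lemmas, and then sketched a programme (transitive $\mfR$-action, observables as functionals on the family, instantiation on the SDE regularity structure) to justify them rigorously. None of that programme appears in the paper, and none of it is needed for anything downstream. In particular, your ``Step~1'' claim that a change of cutoff is implemented by a fixed group element of $\mfR$ is a nontrivial assertion that the paper neither states nor proves at this point; your ``Step~2'' invokes a notion of ``physical observable'' and a genericity/Jacobian argument that is entirely foreign to the text; and your ``Step~3'' outlines verifications (one-dimensionality of $\mfR$ for SDEs, BPHZ yielding It\^o) that the footnote merely asserts without argument.

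If the task is to compare with the paper's own proof: there is nothing to compare against. The correct response here is that the paper treats (i) and (ii) as unproved heuristic remarks, and you should do likewise rather than manufacture a formal argument.
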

\end{remark}
\subsection{Outline of the paper}
Section~\ref{sec: black box} introduces the bare minimum in order to state an existence result, namely Theorem~\ref{thm: THE black box}, which is applicable to a wide class of semilinear SPDEs. 
This section can be read without any prior knowledge of the theory of regularity structures and, with the exception of Section~\ref{subsec: preliminary notation}, it can be skipped by those who are 
more interested in learning the method of proof for the main results of this paper.
In Section~\ref{sec: examples} we illustrate two applications of Theorem~\ref{thm: THE black box} to the generalised KPZ equation and the dynamical $\Phi^4_{4-\delta}$ model for any $\delta > 0$.

In the early parts of Section~\ref{sec: alg and main theorem} we recall some of the basic algebraic definitions from the theory of regularity structures and describe how we specialize them for our purposes. 
In Section~\ref{sec: gen construction of inner products} we introduce a formalism that allows us to efficiently deal with some of the combinatorial symmetry factors that appear when we work with spaces of combinatorial decorated trees.

After this preliminary work, we introduce the notion of \emph{coherence} in Section~\ref{subsec:coherence}, which plays a central role in the paper. 
Given a PDE determined by some right hand side $F$, we first define a function 
$\Upsilon^{F}[\cdot]$ on the trees of the corresponding regularity structure.  
In the case of a single scalar equation, we then say that a linear combination of trees $U$ is coherent if the coefficient of every tree of the form $\mcb{I}[\tau]$ in the expansion of $U$ is given by $\Upsilon^{F}[\tau]$ evaluated on the coefficients of the polynomial part of $U$. 
After introducing this concept, we present the first of two key lemmas, Lemma~\ref{lemma: coherence identity}, which states that coherence of $U$ with $F$ is equivalent to $U$ satisfying a fixed point problem determined by $F$.  The notion of coherence is close in the spirit to the B-series in numerical analysis. 
Indeed, B-series are numerical stepping methods for ODEs represented by a tree expansion whose coefficients are given by an analogue of the map $  \Upsilon^{F}[\cdot]  $. They were originally introduced to describe Runge-Kutta methods and have proven to be a 
powerful tool for classifying various numerical methods see  \cite{MR0305608,hairer74,Murua2006,MR2657947,MR2803804}.

In Section~\ref{subsec: main thm}, we describe how $\Upsilon^{\bullet}$ allows us to define an action of the renormalization group $\mathfrak{R}$ on the space of $F$'s which we write $F \mapsto MF$ for $M \in \mathfrak{R}$.
We can then present our second key lemma, Lemma~\ref{lem: renormalisation of upsilon}, which states that any renormalisation operator $M \in \mathfrak{R}$ takes expansions coherent with respect to $F$ to expansions coherent with respect to a new nonlinearity $MF$.
We conclude Section~\ref{sec: alg and main theorem} by presenting the main theorem of this paper, 
Theorem~\ref{thm: algebraic main theorem}, which gives the general form of the action of
the renormalisation group onto a suitable space of nonlinearities, 
and show how this theorem follows from 
Lemmas~\ref{lemma: coherence identity} and~\ref{lem: renormalisation of upsilon}. 

Sections~\ref{sec:SpacesOfTrees} and \ref{sec: grafting and renormalisation} are devoted to developing an 
algebraic\slash combinatorial framework in which we can prove Lemmas~\ref{lemma: coherence identity} and \ref{lem: renormalisation of upsilon}. 
In Section~\ref{sec:SpacesOfTrees} we introduce a new collection of trees which carry more data through additional decorations which greatly facilitates the proof of Lemma~\ref{lemma: coherence identity}. 
In Section~\ref{sec: grafting and renormalisation} we define various ``grafting'' operations on trees. 
A key result here is Proposition~\ref{prop:generate}, which states that a certain space of trees is the ``universal free object'' corresponding to our grafting operators. 
We then also state lemmas showing that the maps $\Upsilon^{\bullet}$ and the renormalisation operators $M \in \mfR$ all have ``morphism'' properties with respect to these grafting operators.
This, when combined with Proposition~\ref{prop:generate}, allows us to prove Lemma~\ref{lem: renormalisation of upsilon}. 

Section~\ref{sec: analytic aspects and DPD} is the analytic part of our paper which is needed to prove Theorem~\ref{thm: THE black box}. 
Sections~\ref{subsec: admissible models},\ref{subsec:space of jets}, and \ref{subsec: modelled distributions} recall many analytic objects in the theory of regularity structures and describe how we will specialize them for our purposes.
In Section~\ref{subsec: coherence of fp} we state and prove Theorem~\ref{thm:renormalised_equation}, which 
is obtained by combining Theorem~\ref{thm: algebraic main theorem} with the analytic theory given in the earlier parts of Section~\ref{sec: analytic aspects and DPD}.
One novel aspect of Theorem~\ref{thm: THE black box} is that it states, with full generality, to what degree one can expect to ``restart'' solutions to the class of SPDE under consideration and consequently what a natural notion of ``maximal solution'' should be. 
To facilitate this, we develop a new argument which could be loosely described as an analogue of the Da Prato--Debussche trick \cite{DPD2} in the space of modelled distributions\dash this is the content of Section~\ref{subsec: gen DPD}. 

In Appendix~\ref{subsec:GPreserveProof} we state a technical result describing how the renormalisation of nonlinearities influences the trees they generate via Duhamel expansion.  
In Appendix~\ref{proof lemma Upsilon sum identity} we give a multivariate Faa Di Bruno formula which allows us to show that work performed on the ``richer'' space of trees introduced in Section~\ref{sec:SpacesOfTrees} collapses appropriately to the smaller trees which populate the regularity structure. 
In Appendix~\ref{subsec:coInteractProofs} we give the details of the proofs describing how renormalisation interacts 
with the grafting operations, which leverages the co-interaction property of~\cite{BHZalg}. 
In Appendix~\ref{subsec:freelyGenProof} we describe how the techniques of \cite{Chapoton01} can be used to prove Proposition~\ref{prop:generate}.
In Appendix~\ref{sec: proof of blackbox} we describe how our abstract result can be combined with the framework of regularity structures to prove Theorem~\ref{thm: THE black box}.

The reader need not have any familiarity with \cite{CH}, but some familiarity with 
the frameworks of \cite{Regularity} and \cite{BHZalg} is assumed throughout the paper. 
\subsection*{Acknowledgements}
{\small
We thank Hao Shen for pointing out an error in the published version of this paper, see Remark~\ref{rem:motivating_newTheta}. 

MH gratefully acknowledges financial support from the
 Leverhulme Trust through a leadership award
 and from the European Research Council through a consolidator grant, project 615897.
IC is funded by a Junior Research Fellowship of St John's College, Oxford.
AC gratefully acknowledges financial support from the Leverhulme Trust via an Early Career Fellowship, ECF-2017-226.
}
\section{A black box theorem for local well-posedness of SPDEs}\label{sec: black box}

\subsection{Preliminary notation}\label{subsec: preliminary notation}
Throughout this article, we adopt the standard conventions $\sup \emptyset \eqdef -\infty$ and $\inf \emptyset \eqdef +\infty$.
We freely use multi-index notation. 
For any set $A$, $a \in A$, and $\theta \in \N^{A}$ we usually write $\theta[a]$ for the $a$-component of $\theta$,  $|\theta| \eqdef \sum_{a \in A} \theta[a]$, and $\theta! \eqdef \prod_{a \in A} \theta[a]!$. 
For a vector of commuting indeterminates (or real numbers) $x = (x_{a})_{a \in A}$, 
we similarly write $x^{\theta} = \prod_{a \in A} (x_{a})^{\theta[a]}$. 
For any $a \in A$ we define $e_{a} \in \N^{A}$ by setting $e_{a}[b] \eqdef \mathbbm{1}\{a = b\}$\label{ei page ref} for $b \in A$.

In \cite{BHZalg} and in this paper one often uses the notion of \emph{multisubsets} of some fixed set $A$. 
By saying $B$ is a multisubset of $A$ we are indicating that $B$ can contain certain elements of $A$ with multiplicity. 
In \cite{BHZalg} the collection of all multisubsets of $A$, denoted by $\hat{\mcb{P}}(A)$,\label{multiset pageref} is given by $\bigsqcup_{n \ge 0}[A]^n$ where $[A]^{n}$ is $A^{n}$ quotiented by permutation of entries.
We will implicitly identify $\hat{\mcb{P}}(A)$ with $\N^{A}$.
In particular, for $b \in \N^{A}$ we adopt the notational convention that
\[
\sum_{a \in b}
x_{a}
\eqdef
\sum_{a \in A}
b[a]x_{a}\;.
\]
Similarly, for $b_{1}, b_{2} \in \N^{A}$, writing the interpretation as ``multi-sets'' like in \cite{BHZalg} on the left and multi-indices on the right, one has
\begin{equs}[2]
b_{1} \cap b_{2} &= \min\{b_{1},b_{2}\}\;,\quad&\quad
b_{1} \cup b_2 &= \max\{b_1,b_2\}\;,\\ 
b_1 \sqcup b_2 &= b_1 + b_2\;,\quad&\quad
b_1 \subset b_2 &\Leftrightarrow b_1 \le b_2\;.
\end{equs}
We fix for the rest of the paper a dimension of space\footnote{The case $d=0$ corresponds to working with stochastic differential equations.} $d \ge 0$. 
We define our space-time to be $\Lambda \eqdef \R \times \T^{d}$ with the first component being 
referred to as ``time''.\label{introduction of Lambda} 
We write $\{ \d_i \}_{i=0}^{d}$ for the corresponding partial derivatives with respect to space-time. 
We will also sometimes identify functions on $\Lambda$ with functions on $\R^{d+1}$ by periodic continuation.
A space-time scaling\label{space-time scaling introduced} $\s$ is a tuple $\s = (\s_{i})_{i=0}^{d} \in [1,\infty)^{d+1}$ with non-vanishing components.
Given a space-time scaling $\s$ we set $|\s| \eqdef \sum_{i=0}^{d} \s_{i}$ and, for any $\eps > 0$, we define a scale 
transformation $S^{\eps}_{\s}$\label{scale transform page ref} on functions $\rho: \R^{d+1} \rightarrow \R$ by setting $(S^{\eps}_{\s}\rho)(z_{0},\dots,z_{d}) \eqdef \eps^{-|\s|}\rho(\eps^{-\s_{0}}z_{0},\dots,\eps^{-\s_{d}}z_{d})$.

We also introduce a notion of $\s$-degree $|k|_{\s}$ of a multi-index $k \in \N^{d+1}$ by setting $|k|_{\s} = \sum_{i=0}^{d} k[i] \s_{i}$. 
This gives a corresponding notion of $\s$-degree for polynomials and $\s$-degree of partial derivatives. 
We define  a scaled distance $|\cdot|_{\s}$ on $\Lambda$ as usual by setting 
$
|z|_{\s} 
\eqdef
\sum_{i=0}^{d}
|z_{i}|^{1/\s_{i}}
$.
We often write $\bar{\s} = (\s_{i})_{i=1}^{d}$ for the associated space-scaling on $\R^{d}$.\label{space scaling introduced} 
Clearly, one has natural analogues of all the above notation when working with distances and degrees of 
polynomials\slash derivatives on $\R^{d}$ with respect to the scaling $\bar \s$ and we use these in what follows.
\subsection{H{\"o}lder-Besov Spaces}\label{sec: holder-besov}
While the bulk of this paper is algebraic\slash combinatorial, our statement and proof of the main theorem of Section~\ref{black box section} requires us to reference scaled H{\"o}lder-Besov spaces.
This subsection can be skipped by those readers who are more interested in our main result Theorem~\ref{thm:renormalised_equation} as opposed to Theorem~\ref{thm: THE black box}. 

We first specialize to the case of $\alpha \in (0,\infty) \setminus \N$. 
We define, for every compact set $\mathfrak{K} \subset \Lambda$ and any function $f:\mathfrak{K} \rightarrow \R$, 
\[
\|f\|_{\alpha,\mathfrak{K}}
\eqdef
\sup_{
\substack{
z \in \mathfrak{K}\\
|k|_{\s} \le \lfloor \alpha \rfloor }
}
\inf
\Big\{
\sup_{
\substack{
\bar{z} \in \mathfrak{K}\\
|\bar{z} - z|_{\s} \le 1}
}
\frac{
\big|\big(\d^{k}f\big)(\bar{z}) - \big(\d^{k} P\bigr)(\bar{z} - z)\big|}
{|z-\bar{z}|_{\s}^{\alpha - |k|_{\s}}}:
\deg_\s P \le  \lfloor \alpha \rfloor
\Big\}\;,
\]
where $\deg_\s P$ denotes the $\s$-degree of the polynomial $P$.
We then define $\CC_{\s}^{\alpha}(\Lambda)$ to be the collection of $f: \Lambda \rightarrow \R$ with $\|f\|_{\alpha,\mathfrak{K}} < \infty$ for every compact $\mathfrak{K} \subset \Lambda$, and we equip it with a metric induced by these seminorms. 
If $d = 1$ and $\s = 1$ then $\CC_{\s}^{\alpha}(\Lambda)$ just corresponds to functions that admit $\lfloor \alpha \rfloor$ continuous derivatives and whose $\lfloor \alpha \rfloor$-th derivative is H{\"o}lder continuous of 
index $\alpha - \lfloor \alpha \rfloor$.

We now define $\CC_{\s}^{\alpha}(\Lambda)$ for $\alpha \in (-\infty,0)$. 
First, for every $r \in \N$ and $z \in \Lambda$ we define $\mathcal{B}_{z,r}$ to be the collection of all smooth functions $\omega: \Lambda \rightarrow \R$ which are supported on the ball $\{\bar{z} \in \Lambda:\ |\bar{z} - z|_{\s} \le 1\}$ and satisfy $\sup_{\bar{z} \in \Lambda} |\partial^{k}\omega(\bar{z})| \le 1$ for every $k \in \N^{d}$ with $|k|_{\s} \le r$. 
For any compact $\mathfrak{K} \subset \Lambda$ and distribution $f \in \mcb{S}'(\Lambda)$, we then set
\begin{equ}\label{Holder Besov seminorm}
\|f\|_{\alpha,\mathfrak{K}}
\eqdef
\sup
\Bigl\{
\lambda^{-\alpha} |( f, S^{\lambda}_{\s}\omega )| \,:\,
z \in \mathfrak{K},
\omega \in B_{z, \lceil - \alpha \rceil}, \lambda \in (0,1]
\Bigr\}\;.
\end{equ}
and we define $\CC_{\s}^{\alpha}(\Lambda)$ to be the collection of distributions $f$ with $\|f\|_{\alpha,\mathfrak{K}} < \infty$ for every compact $\mathfrak{K} \subset \Lambda$. As before, we equip $\CC_{\s}^{\alpha}(\Lambda)$ with a metric induced by these seminorms. 
The spaces $\CC^{\alpha}_{\bar{\s}}(\T^{d})$, $\alpha \in \R$, are defined in the analogous way.
\subsection{Types, nonlinearities, and functional derivatives}
We fix a finite set $\mfL_{-}$\label{set of negative types} which will index the set of rough driving noises that appear in our system of SPDEs and a finite set $\mfL_{+}$\label{set of positive types} which will index the set of components of our system of SPDEs. 
We also fix a \emph{degree assignment $|\cdot|_{\s}$} on $\mfL \eqdef \mfL_{-} \sqcup \mfL_{+}$ which takes strictly negative values on $\mfL_{-}$ and strictly positive ones on $\mfL_{+}$.
For $\mfl \in \mfL_{-}$ the value of $|\mfl|_{\s}$ represents an assumption on the regularity of the corresponding driving noise and for $\mft \in \mfL_{+}$ the value $|\mft|_{\s}$ represents an assumption on the regularizing properties of the inverse of the linear differential operator appearing in the $\mft$ equation.  
For any multi-set $A$ of elements of $\mfL$ we define $|A|_{\s} \eqdef \sum_{\mft \in A} |\mft|_{\s}$.

We define an indexing set $\mcb{O} \eqdef \mfL_{+} \times \N^{d+1}$\label{mcbO page ref} and write elements of this set as $(\mfb,q) \in \mcb{O}$ with $\mfb \in \mfL_{+}$ and $q \in \N^{d+1}$. For $(\mfb,q) \in \mcb{O}$ we write $|(\mfb,q)|_{\s} \eqdef |\mfb|_{\s} - |q|_{\s}$, where $|q|_\s = \sum_{i=0}^d q_i \s_i$. 
One should think of $\mcb{O}$ as indexing all the solutions and derivatives of solutions of our system of SPDEs.
We also assume that we have a partition $\mcb{O} \eqdef \mcb{O}_{+} \sqcup \mcb{O}_{-}$. 
This partition corresponds to an a priori assumption that the elements of $\mcb{O}_{-}$ will index space-time distributions of negative regularity, while $\mcb{O}_{+}$ will index functions of positive regularity,\footnote{Note that $|(\mfb,q)|_{\s}$ is \emph{not} an estimate on the regularity of the $q$-th derivative of the solution to the $\mfb$ equation and thus says nothing about whether $(\mfb,q) \in \mcb{O}_{+}$ or $\mcb{O}_{-}$.} see Section~\ref{subsec: subcrit}.
We introduce a family of commuting indeterminates $\Y \eqdef (\Y_{o})_{o \in \mcb{O}}$\label{Y page ref}. 
The indeterminate $\Y_{o}$ will, depending on context, serve as a placeholder for a portion of the abstract expansion corresponding to the (derivative of the) component of the solution indexed by $o$ or for 
a reconstruction of that expansion.

We write $\mcb{C}_{\mcb{O}}$ for the real algebra of smooth functions on $\R^{\mcb{O}}$ which depend on only 
finitely many components, which we henceforth identify with functions of $\Y$. 
Given $F \in \mcb{C}_{\mcb{O}}$ we write $\mcb{O}(F)$ for the minimal subset of $\mcb{O}$ such that $F$ does not depend on any components of $\mcb{O}$ outside of $\mcb{O}(F)$.

We introduce two types of differential operators on $\mcb{C}_{\mcb{O}}$. 
For $o \in \mcb{O}$ we write $D_{o}:\mcb{C}_{\mcb{O}} \rightarrow \mcb{C}_{\mcb{O}}$ for the operation of differentiation with respect to $\Y_{o}$. 
We also define, for every $0 \le i \le d$,
and every $(\mft,p) \in \mcb{O}$, $\d_i \Y_{(\mft,p)} = \Y_{(\mft,p + e_{i})}$ and we impose the chain rule
\begin{equ}\label{chain rule}
\partial_{i}F
\eqdef
\sum_{o \in \mcb{O}}
\d_i \Y_{o}\,
D_{o}F\;.
\end{equ}
\begin{remark}
Let us pause for a moment to help the reader follow our use of derivative and multi-index notation throughout the paper.
We will make use of the convention of Section~\ref{subsec: preliminary notation} for multi-indices, viewing each of the families $\{D_{o}\}_{o \in \mcb{O}}$ and $\{\partial_{i}\}_{i=0}^{d}$ as indeterminates.

We also point out that we are overloading the notation $\partial^{k}$ here since it clashes its 
the more standard use in Section~\ref{sec: holder-besov}. 
However, the meaning of $\partial^{k}$ should be clear from context, as when it is acting on a function of the indeterminates $\{ \Y_{\mcb{O}}: o \in \mcb{O}\}$ one should use the definition~\eqref{chain rule}.

For a more detailed motivation for~\eqref{chain rule}, we refer the reader to~\eqref{meaning of derivatives}.
In a nutshell, one should think of the variable $\Y_{(\mft,p)}$ as representing the $p$th derivative
(in space-time) of the component $u_\mft$ of the solution. In this sense, the notation \eqref{chain rule} is then consistent
with the traditional usage of the symbol.
\end{remark} 
We define $\Poly$\label{poly page ref} to be the sub-algebra of $\mcb{C}_{\mcb{O}}$ consisting of all elements $F \in \mcb{C}_{\mcb{O}}$ for which one can write
\begin{equ}\label{expansion of nonlinearity}
F(\Y)
=
\sum_{j=1}^{m}
F_{j}(\Y) \Y^{\alpha_{j}}\;,
\end{equ}
where $\alpha_{1},\dots,\alpha_{m} \in \N^{\mcb{O}}$ are distinct, supported on $\mcb{O}_{-}$, and have only finitely many non-zero components and, for all $1 \le j \le m$, the element $F_{j}(\Y)$ is not identically 
$0$ and $\mcb{O}(F_{j}) \subset \mcb{O}_{+}$.
Note that in the representation \eqref{expansion of nonlinearity} we put, in each term, all of the dependence of the positive degree parts in the factor $F_{j}(\Y)$ (even if this dependence is itself polynomial).

Once one has a representation of the form \eqref{expansion of nonlinearity} it is unique (modulo permutations of the index $j$).
An equivalent definition of $\Poly$ is the collection of all $F \in \mcb{C}_{\mcb{O}}$ such that there exists $\alpha \in \N^{\mcb{O}}$ supported on $\mcb{O}_{-}$ with $D^{\alpha}F = 0$.
 
The nonlinear terms appearing in our systems of equations will be viewed as elements of $\Poly$, namely we are restricting ourselves to the case where any dependence on the rough distributions appearing in the equation is polynomial.   

\begin{lemma}\label{lem:partialK F vanishes}
Let $F \in \mcb{C}_{\mcb{O}}$ and $k \in \N^{d+1}\setminus\{0\}$. Then $\partial^k F = 0$ if and only if $F$ is constant.
\end{lemma}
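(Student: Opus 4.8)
The plan is to exploit the very specific structure of the chain rule \eqref{chain rule}: applying $\partial_i$ replaces each variable $\Y_{(\mft,p)}$ by $\Y_{(\mft,p+e_i)}$ while differentiating, so $\partial_i$ acts like a first-order differential operator with a shift. The key observation is that iterating $\partial^k$ can only move probe indices ``upward'' in the $\N^{d+1}$-grading and never brings them back; so a nontrivial dependence on $\Y$ cannot be annihilated.

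First I would reduce to the case $|k|=1$, i.e.\ $k=e_i$ for some $0\le i\le d$: if $\partial^k F=0$ with $|k|\ge 2$, write $k=e_i+k'$ and note that $\partial^{k'}F$ is again an element of $\mcb{C}_{\mcb{O}}$ killed by $\partial_i$, so if the $|k|=1$ case is known we get $\partial^{k'}F$ constant, hence $\partial_j(\partial^{k'}F)=0$ for any $j$ with $k'[j]>0$ (if $k'\ne 0$), and we iterate downward until we conclude $F$ is constant. (The direction ``$F$ constant $\Rightarrow \partial^k F=0$'' is immediate from \eqref{chain rule}.)

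For the core case $\partial_i F=0$: suppose $F$ is not constant, so $D_o F\not\equiv 0$ for some $o=(\mft,p)\in\mcb{O}$. Among all $o\in\mcb{O}(F)$ with $D_oF\not\equiv0$ and fixed first component $\mft$, choose one, say $o_\star=(\mft,p_\star)$, whose $i$-th coordinate $p_\star[i]$ is \emph{maximal}; such a choice is possible because $\mcb{O}(F)$ is finite. Now expand
\begin{equ}
0=\partial_i F=\sum_{o\in\mcb{O}}\Y_{(\mft_o,p_o+e_i)}\,D_{o}F\;,
\end{equ}
where $o=(\mft_o,p_o)$. I claim the term indexed by $o_\star$ cannot be cancelled by any other term. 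Indeed, the monomial structure is: each summand is $\Y_{(\mft_o,p_o+e_i)}$ times a function $D_oF$ that does not involve $\Y_{(\mft_o,p_o+e_i)}$ whenever $p_o[i]$ was maximal — more precisely, $D_{o_\star}F$ does not depend on $\Y_{o_\star}$ at all, and by maximality of $p_\star[i]$ no other $o'=(\mft,p')\in\mcb{O}(F)$ has $p'+e_i=p_\star+e_i$ with $p'\ne p_\star$; and for $o$ with different first component the shifted index $(\mft_o,p_o+e_i)$ differs from $(\mft,p_\star+e_i)$. So, grouping the identity $\partial_iF=0$ by the monomial degree in the single variable $\Y_{(\mft,p_\star+e_i)}$ and reading off the coefficient of $\Y_{(\mft,p_\star+e_i)}^1$, we get $D_{o_\star}F + (\text{terms divisible by }\Y_{(\mft,p_\star+e_i)})\cdot(\cdots) = 0$, and setting $\Y_{(\mft,p_\star+e_i)}=0$ forces $D_{o_\star}F\equiv 0$, a contradiction.

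The main obstacle, and the part requiring care, is the bookkeeping in the previous paragraph: making precise the claim that no collision occurs among the shifted indices, i.e.\ that the map $(\mft_o,p_o)\mapsto(\mft_o,p_o+e_i)$ is injective on $\mcb{O}(F)$ (which is trivially true since adding $e_i$ is injective on $\N^{d+1}$) and, more subtly, that the new variable $\Y_{(\mft,p_\star+e_i)}$ produced by the maximal term does not already appear inside any $D_oF$ — this is exactly where maximality of $p_\star[i]$ enters, since $D_oF$ only involves variables $\Y_{o'}$ with $o'\in\mcb{O}(F)$, and $p_\star+e_i\notin\{p : (\mft,p)\in\mcb{O}(F)\}$ again by maximality. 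Once this is set up cleanly, extracting the coefficient of the top power of that variable and evaluating gives the contradiction. Alternatively, and perhaps more slickly, I would phrase the whole argument via the substitution $\Y\mapsto t\Y$ (scaling all variables, or just those with first component $\mft$) and track the highest power of $t$, or introduce the total grading $\deg(F)=\max\{p[i] : (\mft,p)\in\mcb{O}(F)\}$ and observe $\partial_i$ strictly increases it on non-constant $F$ that depend on some $\Y_{(\mft,\cdot)}$ — but the explicit coefficient-extraction above is the most transparent route.
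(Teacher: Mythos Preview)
Your core computation is correct and is essentially the paper's argument: given $F$ not constant, pick $o_\star=(\mft,p_\star)$ with $D_{o_\star}F\neq 0$ and $p_\star$ suitably maximal, and observe that $D_{(\mft,p_\star+e_i)}\,\partial_i F = D_{o_\star}F \neq 0$ because $(\mft,p_\star+e_i)\notin\mcb{O}(F)$. The paper takes $p_\star$ maximal in the componentwise partial order on $\N^{d+1}$ rather than just in the $i$-th coordinate, but either choice works. Note that this computation in fact proves the stronger statement ``$F$ not constant $\Rightarrow \partial_i F$ not constant'', not merely $\partial_i F\neq 0$.

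Your reduction from general $k$ to $|k|=1$, however, does not work as written. From $\partial^k F=0$ with $k=e_i+k'$ you correctly deduce that $\partial^{k'}F$ is constant, but the next step ``hence $\partial_j(\partial^{k'}F)=0$ for any $j$ with $k'[j]>0$, and we iterate downward'' only yields $\partial^{k'+e_j}F=0$, which has $|k'+e_j|=|k|$: nothing has decreased and you are going in circles. What the downward iteration actually needs is the implication ``$\partial_j G$ constant $\Rightarrow G$ constant'' applied to $G=\partial^{k'-e_j}F$, and the $|k|=1$ case as you stated it (``$\partial_j G=0\Rightarrow G$ constant'') is too weak for this. The fix is immediate once you notice that your own coefficient extraction already delivers the stronger implication; alternatively, and this is what the paper does, run the induction forward: $F$ not constant $\Rightarrow \partial_iF$ not constant $\Rightarrow \cdots \Rightarrow \partial^kF$ not constant, hence $\partial^k F\neq 0$.
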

\begin{proof}
If $F$ is constant, then evidently $\partial^k F = 0$.
Conversely, suppose $F$ is not constant.
Then there exists a (not necessarily unique) element $(\mft,p) \in \mcb{O}$ 
for which $D_{(\mft,p)} F \neq 0$ but $D_{(\mft,\bar p)} F = 0$ for all $\bar p > p$.
Using Definition~\ref{chain rule}, the fact that $D_{o}$ and $D_{\bar o}$ commute, and that $D_o\Y_{\bar o} = \delta_{o,\bar o}$, we see that $D_{(\mft,p+e_i)}\partial_i F = D_{(\mft,p)}F$ for every $i \in \{0,\ldots, d\}$, whence we 
conclude that $\partial_i F$ is not constant either. The conclusion then readily follows by induction.
\end{proof}
\subsection{Example: the generalised KPZ equation}\label{sec: example of genKPZ part 0}
We make a small aside to clarify the abstract notation we have introduced above by looking at how a concrete example can be recast in this setting. Some of the particular choices we make here are motivated in Section~\ref{sec: example of genKPZ}. 

We consider the generalised KPZ equation (as described in~\cite{KPZg}), a natural stochastic evolution on loop space.
We set $d=1$, $\s = (2,1)$, and fix $n > m \ge 1$. 
We are studying the evolution of a loop on an $m$-dimensional manifold in local coordinates, it follows that we will have a system of $m$ scalar equations and so we fix some set $\mfL_{+}$ with $|\mfL_{+}| = m$.  
Our dynamics will be driven by $n$ noises so we fix a set $\mfL_{-}$ with $|\mfL_{-}| = n$. 
Recall that both $\mfL_{+}$ and $\mfL_{-}$ are just abstract sets we use for indexing.
Our system of equations is then given by 
\begin{equ}\label{eq: gen KPZ}
\d_{t}  
u_{\mft}
=
(\d_x^2 -1)
u_{\mft}
+
u_{\mft}
+
\sum_{\mfj,\mfq \in \mfL_{+}} \Gamma^{\mft}_{\mfj,\mfq}(u)\,(\d_x u_{\mfj} )
(\d_x u_{\mfq})
+ 
\sum_{\mfl \in \mfL_{-}}
\sigma^{\mfl}_{\mft}(u)\xi_{\mfl}\;, \quad \mft \in \mfL_{+}\;,
\end{equ} 
where we write $(t,x)$ rather than $(z_0,z_1)$ for the space-time coordinates.
Here the $(\xi_{\mfl})_{\mfl \in \mfL_{-}}$ are independent space-time white noises and 
the $ (\Gamma^{\mft}_{\mfj,\mfq} )$ are the Christoffel symbols of the underlying manifold. 
For each $\mfl$ one should think of $\sigma^{\mfl}$ as a smooth vector field on $\R^{\mfL_{+}} = \R^{m}$. 
One chooses the collection of smooth vector fields $(\sigma^{\mfl})_{\mfl \in \mfL_{-}}$ so that they generate the metric, that is
$\sum_{\mfl \in \mfL_{-}} (L_{\sigma^\mfl})^2 = \Delta $ where $\Delta$ is the Laplace-Beltrami operator on our manifold and $ L_{\sigma^{\mfl}} $ is the Lie derivative in the direction of $ \sigma^{\mfl}$.
 Note that these vector fields and Christoffel symbols only depend on $(u_{\mft})_{\mft \in \mfL_{+}}$ itself, not on any space-time derivatives of the $u_{\mft}$.

We set $|\mft|_{\s} \eqdef 2$ for every $\mft \in \mfL_+$, which encodes 
the fact that the Green's function of $(\partial_{x}^{2} - 1)$ increases the regularity of the solution by
two degrees of differentiability in the parabolic scaling. 
We also fix some $\kappa \in (0,\frac{1}{6})$ and, for every $\mfl \in \mfL_{-}$, we set $|\mfl|_{\s} = -\frac{3}{2} - \kappa$. This encodes the path-wise parabolic regularity estimate on the driving space-time noises $(\xi_{\mfl})_{\mfl \in \mfL_{-}}$ which guarantees that each $\xi_{\mfl}$ belongs almost surely to $\CC_\s^{-\frac{3}{2}-\kappa}$.

We now turn to defining the partition $\mcb{O} = \mcb{O}_{+} \sqcup \mcb{O}_{-}$. 
The choice of this split is determined by an assumption\footnote{This assumption is encoded in the map $\reg$ described in Section~\ref{subsec: subcrit}} on the regularity of the $(u_{\mft})_{\mft \in \mfL_{+}}$. 
We will later\footnote{See the continuation of this example in Section~\ref{sec: example of genKPZ}} see that the 
 assumption that $ u_{\mft} \in \CC_\s^{\frac{1}{2}-3\kappa}$ for every $\mft \in \mfL_{+}$ is a self-consistent one.
 It is also the case that space-time derivation with multi-index $k \in \N^{d+1}$ maps $\CC^{\alpha}_{\s}$ into $\CC_\s^{\alpha - |k|_{\s}}$. 
Thus the $(u_{\mft})$ are of positive regularity but any space-time derivative of them is of negative regularity. 
Accordingly, we set $\mcb{O}_{+} \eqdef \{ (\mft,0)\}_{\mft\in\mfL_+}$ and  $ \mcb{O}_{-} \eqdef \{ (\mft,p), \, p \neq 0  \}_{\mft \in \mfL_+}$. 

The nonlinearity $F = (F^{\mfl}_{\mft}: \mft \in \mfL_{+}, \mfl \in \mfL_{-} \sqcup \{\mathbf{0}\})$ is given by setting, for $\mft \in \mfL_{+}$, 
\begin{equ}
F^{\mfl}_{\mft}
\eqdef
\begin{cases}
\Y_{(\mft,0)} +
\sum_{\mfj,\mfq \in \mfL_{+}}
\Gamma^{\mft}_{\mfj,\mfq}(\Y)
\Y_{(\mfj,(0,1))}\Y_{(\mfq,(0,1))}& 
\textnormal{if $\mfl = \mathbf{0}$,}\\
\sigma^{\mfl}_{\mft}(\Y) & \textnormal{otherwise.}
\end{cases}
\end{equ}
We note that $\Gamma^{\mft}_{\mfj,\mfq}(\Y)$ and $\sigma^{\mfl}_{\mft}(\Y)$ are only functions of $(\Y_{(\mft,0)}: \mft \in \mfL_{+})$\dash there is no dependence on derivatives as mentioned earlier. 
\subsection{Regularity pairs and subcriticality}\label{subsec: subcrit}
Given any function $f$ from $\mfL$ (resp. $\mfL_+$) to $\R$, we extend $f$ canonically
to $\mfL \times \N^{d+1}$ (resp. $\mfL_+ \times \N^{d+1}$) by setting $f(\mft,p) \eqdef f(\mft) - |p|_{\s}$. 
Fix then a map $\reg: \mfL \sqcup \{\mathbf{0}\} \rightarrow \R$\label{reg page ref} for which the following hold.
\begin{enumerate}
\item $\reg(\mathbf{0}) = 0$.
\item One has $o \in \mcb{O}_{\pm}$ if and only if $(\pm) \reg(o) > 0$.
\item For every $\mfl \in \mfL_{-}$ one has $\reg(\mfl) < |\mfl|_{\s}$. (Recall that $|\mfl|_{\s}<0$ in this case.)
\end{enumerate}
For $\mft \in \mfL_{+}$ one should think of $\reg(\mft)$ as an estimate of the space-time 
regularity of the distribution\slash function associated to $\mft$.
For $\mfl \in \mfL_{-}$ the quantity $\reg(\mfl)$ can be taken arbitrarily close to but strictly smaller than $|\mfl|_{\s}$\dash this doesn't really encode any new information, but such a convention will be convenient later on to gain a little
bit of ``wriggle room''.
\begin{definition}\label{def: obey for blackbox} Suppose we are given a tuple $F = (F^{\mfl}_{\mft})_{\mft,\mfl}$, where $\mft$ ranges over $\mfL_{+}$, $\mfl$ ranges over $\mfL_{-} \sqcup \{\mathbf{0}\}$, and for each $\mft,\mfl$ one has $F^{\mfl}_{\mft} \in \Poly$. We say $F$ obeys $\reg$ if the following condition holds. 

For every $\mft \in \mfL_{+}$ and $\mfl \in \mfL_{-}\sqcup\{\mathbf{0}\}$, if one expands $F^{\mfl}_{\mft}$ as in \eqref{expansion of nonlinearity} then for every exponent $\alpha \in \N^{\mcb{O}}$ appearing in the expansion of $F^{\mfl}_{\mft}$  one has
\begin{equ}\label{eq: subcriticality}
\reg(\mft)
< 
|\mft|_{\s}
+
\reg(\mfl)
+
\sum_{o \in \alpha}
\reg(o)\;.
\end{equ}
We define $\widetilde{\G}$\label{tilde G page ref} to be the set of all tuples $F$ which obey $\reg$.
\end{definition}
Condition~\eqref{eq: subcriticality} enforces that the assumptions on regularity encoded by $\reg$ are self-consistent when checked on an equation with right hand side determined by $F$, namely the system of SPDEs formally given by
\begin{equ}[e:SPDE]
\d_t \phi_\mft = \mathscr{L}_\mft \phi_\mft + F^{\mathbf{0}}_{\mft}(\boldsymbol{\phi}) + \sum_{\mfl \in \mfL_-} F^{\mfl}_{\mft}(\boldsymbol{\phi})\,\xi_\mfl\;.
\end{equ}

\begin{remark}\label{rem:kappa}
As we did in Section~\ref{sec: example of genKPZ part 0}, we will use the parameter $\kappa$ to denote the aforementioned ``wriggle room'' in our power counting and regularity estimates.
It will, for instance, account for the difference between the left and right hand sides of~\eqref{eq: subcriticality}, as well as the difference between $\reg(\mfl)$ and $|\mfl|_\s$.
\end{remark}

Here and for the remainder of this subsection, we use the convention that for 
any collection $\phi = (\phi_{\mft})_{\mft \in \mfL_{+}}$ of smooth functions $\phi_{\mft}\colon \R_+ \times \T^{d} \rightarrow \R$ and $z \in \Lambda$ we write 
\begin{equ}
\boldsymbol{\phi}(z) \eqdef 
\big(\d^p\phi_{\mft}(z):\ (\mft,p) \in \mcb{O} \big)
 \in \R^{\mcb{O}}\;.
\label{eq:bold phi def}
\end{equ}
Condition~\eqref{eq: subcriticality} also guarantees that the SPDE associated to $F$ can be algebraically formulated using a regularity structure built in~\cite{BHZalg}.
If there exists a function $\reg$ such that $F$ obeys $\reg$, then $F$ is said to be \emph{locally subcritical}.

To guarantee the existence of local solutions however, extra assumptions are needed.
These additional assumptions will be formulated in terms of a function $\ireg \colon \mfL_+ \to \R$,\label{ireg page ref}
which one should think of as the regularity of the initial condition for the ``remainder'' 
part of $\phi_\mft$ for our SPDE~\eqref{e:SPDE} in a decomposition reminiscent of the Da Prato--Debussche 
trick (see Section~\ref{subsubsec:loc exist thm}).

For $\mft \in \mfL_+$, $\mfl \in \mfL_{-}\sqcup\{\mathbf{0}\}$, and $F^\mfl_\mft$ as in~\eqref{expansion of nonlinearity} with multisets $\alpha_j^\mfl$
for $j \le m_\mfl$ where $m_\mfl$ is the corresponding value of $m$ in~\eqref{expansion of nonlinearity}, define
\begin{equ}\label{eq:nlt def}
n^\mfl_\mft \eqdef |\mfl|_\s + \min_{1 \leq j \leq m_\mfl} \min_f \sum_{o \in \alpha_j^\mfl} f_o(o)\;,
\end{equ}
where $|0|_\s \eqdef 0$
and where $\min_f$ is taken over all assignments $f \colon o \mapsto f_o \in \{\reg,\ireg\}$ which,
for those $j$ such that $(F^\mfl_\mft)_j$ is identically constant, satisfy $f^{-1}(\ireg) \cap \alpha_j^\mfl 
\neq \emptyset$.
The quantity $n^\mfl_\mft$ should be thought of as an estimate on the blow-up rate of the term $F^{\mfl}_{\mft}(\boldsymbol{\phi})\,\xi_\mfl$ (with $\xi_{\mathbf{0}} \equiv 1$) at the hyperplane $t=0$ (cf. Lemma~\ref{lem:DgammaMult}).
We remark that this rate is determined by the regularity of the initial condition for the ``remainder'' part together with the space-time regularity of the ``stationary'' part (see once more Section~\ref{subsubsec:loc exist thm}); this is the reason for the somewhat complicated definition of $n^\mfl_\mft$.
\begin{remark}\label{remark:F const}
If $F^\mfl_\mft$ itself is identically constant, then $m_\mfl=1$ 
and $\alpha_1^\mfl = 0$,
so that $\min_f$ in~\eqref{eq:nlt def} is taken over the empty set.
Due to our convention $\inf\emptyset \eqdef +\infty$, it follows that $n^\mfl_\mft = +\infty$ in this case.
\end{remark}
Define further
\begin{equ}
n_\mft \eqdef \min_{\mfl \in \mfL_-\sqcup\{\mathbf{0}\}} n^\mfl_\mft\;.
\end{equ}
%
\begin{assumption}\label{assump:Schauder1}
For every $o \in \mcb{O}_{+}$, it holds that $0 \leq \ireg(o) \leq \reg(o)$. Moreover, for every $\mft \in \mfL_+$, it holds that
$n_\mft > -\s_0$ and $n_\mft + |\mft|_\s > \ireg(\mft)$.
\end{assumption}
The first condition of Assumption~\ref{assump:Schauder1} is required to deal with the composition of solutions with smooth functions, while the second condition is required to reconstruct products of singular modelled distributions which appear in the abstract fixed point map associated to our equation.
\begin{remark}\label{remark: warning}
In practice one starts with a specific system of equations, then fixes a scaling $\s$, computes the 
regularisation of the kernels $\{|\mft|_{\s}\}_{\mft \in \mfL_{+}}$ and regularity of the noises 
$\{|\mfl|_{\s}\}_{\mfl \in \mfL_{-}}$, encodes the nonlinearities that appear in terms of a rule, 
and then tries to determine the functions $\reg$ and $\ireg$.
We introduce notions in a different order because, from the outset, we always want to
consider a whole family of equations on which the renormalisation group will then be able
to act.
\end{remark} 
\subsection{Kernels on the torus}\label{sec: kernels on torus}

We make the following standing assumption regarding the linear part of our equation.

\begin{assumption}\label{ass:kernels}
For each $\mft \in \mfL_{+}$, we are given a differential operator $\mathscr{L}_{\mft}$\label{Lmft page ref} involving 
only the spatial derivatives $\{ \d_i \}_{i=1}^{d}$ which satisfies the following properties.
\begin{itemize}
\item $\d_{0} - \mathscr{L}_{\mft}$ admits a Green's function $G_{\mft}: \Lambda \setminus \{0\} \rightarrow \R$\label{Gmft page ref} which is a kernel of order $|\mft|_{\s}$ in the sense of \cite[Ass.~5.1]{Regularity} with respect to $\s$. 

\item For any $\eta \in (-\infty,0) \setminus \N$, $u \in \mcC^{\eta}_{\bar{\s}}(\T^{d})$, $k \in \N^{d+1}$, 
and for some $\chi > 0$, one has the bounds
\begin{equs}\label{eq:initialBound}
\sup_{t \in (0,1]}
t^{-(\eta - |k|_{\s})/\s_{0}}
\sup_{x \in \T^{d}}
\Big|
\int_{\T^{d}} dy\ D^{k}G_{\mft}(t,x-y) u(y)
\Big|
&< 
\infty\;,\quad\\
\label{eq:exponentialBound}
\sup_{|t| > 1}
\sup_{x \in \T^{d}}
e^{\chi|t|}
\cdot
\big|
(D^{k}G_{\mft})(t,x)
\big|
&< 
\infty\;.
\end{equs}
\end{itemize}
\end{assumption}
\begin{example}
If $\mathscr{L}_{\mft} = Q(\nabla_x)-1$ for a homogeneous polynomial $Q$ of even degree $2q$ with respect to a scaling $\bar\s$, then~\cite[Lem.~7.4]{Regularity} implies that we can take $|\mft|_\s=2q$ for the scaling $\s = (2q,\bar\s_1,\ldots,\bar\s_d)$.
In particular, the heat operator with unit mass falls into this framework: here $d \geq 1$, $\s = (2,1,\dots,1)$,  
$\mathscr{L}_{\mft} = \Delta - 1$, where $\Delta \eqdef \sum_{j=1}^{d} \d_j^{2}$, and $|\mft|_{\s} = 2$.
However, while this is the most common example, other non-trivial choices are possible: if one sets $d = 2$ and $\s = (4,2,1)$ then one can take $\mathscr{L}_{\mft} \eqdef \d_1^{2} - \d_2^4 - 1$, and $|\mft|_{\s} = 4$.
\end{example}
\begin{remark}
One can sharpen the second condition by assuming that for each $\mft \in \mfL_+$, there exists $\kappa_\mft > 0$ such that~\eqref{eq:initialBound} holds with $t^{-(\eta-|k|_\s)\kappa_\mft}$ in place of $t^{-(\eta-|k|_\s)/\s_{0}}$.
This could allow, in certain cases, for a lower regularity of initial data and\slash or driving terms.\footnote{In particular, one could sharpen Lemma~\ref{lem:initialCond} below.}
However, since $\kappa_\mft = 1/\s_{0}$ is optimal in most cases of interest, and since the current assumptions are already quite involved, we refrain from making this generalisation.
\end{remark}
\subsection{The local well-posedness theorem}\label{black box section}
We fix some quantities and objects just for the remainder of this subsection so we can state the aforementioned result. 

We introduce a family of rooted decorated combinatorial trees $\mathring{\mathscr{T}}$.\label{basic trees page ref} 
An element $\tau \in \mathring{\mathscr{T}}$ consists of an underlying combinatorial rooted tree $T$ with node set $N_{T}$, edge set $E_{T}$, an edge decoration $\mff: E_{T} \rightarrow \mcb{O}$, and a node decoration $\mfm = (\mnoise, \mpoly): N_{T} \rightarrow (\mfL_{-} \sqcup \{\mathbf{0}\}) \times \N^{d+1}$.
We also write $\rho_{T} \in N_{T}$ for the root node
and we sometimes write $\tau = T^{\mfm}_{\mff}$.
Observe that for every $\tau \in \mathring{\mathscr{T}}$ there exist unique $n \geq 0$, $o_1, \ldots, o_n \in \mcb{O}$, and $\tau_1,\dots,\tau_n \in \mathring{\mathscr{T}}$ such that
$\tau$ is obtained by attaching each 
$\tau_j$ to the root of $\tau$ (which has some decoration $(\mfl,k)$) using edges with decoration $o_j$.
In this case we adopt the symbolic notation
\begin{equ}\label{eq:generic T}
\tau = \mathbf{X}^{k} \Xi_{\mfl}\Big( \prod_{j=1}^n \mcb{I}_{o_j}[\tau_j]\Big)\;,\quad
k \eqdef \mpoly(\rho_T)\;,\quad \mfl \eqdef \mnoise(\rho_T)\;.
\end{equ}
In particular, when $n=0$, so that $\tau$ consists of only the root node, 
we write $\tau = \mathbf{X}^{k} \Xi_{\mfl}$.
Also, if $\mnoise(\rho_T)=0$ or $\mpoly(\rho_T) = 0$, then we omit the corresponding symbol $\Xi_{\mathbf{0}}$ or $\mathbf{X}^0$ respectively. 
Every tree of the form $\tau = \mcb{I}_o[\bar \tau]$ for some $o$ and some $\bar \tau$, we call \emph{planted}.

For every $T_{\mff}^{\mfm} \in \mathring{\mathscr{T}}$, we set 
\[
|T_{\mff}^{\mfm}|_{\s}
\eqdef 
\sum_{e \in E_{T}}
|\mff(e)|_{\s}
+
\sum_{u \in N_{T}}
\big(
|\mnoise(u)|_{\s} 
+
|\mpoly(u)|_{\s}
\big)\;,
\] 
where $|\mathbf{0}|_{\s} \eqdef 0$.

For any $\tau$ and $F \in \widetilde{\G}$ we define 
$\Upsilon^F[\tau] \eqdef  (\Upsilon^F_{\mft}[\tau])_{\mft \in \mfL_{+}}\in \Poly^{\mfL_{+}}$ inductively as follows. 
For $\tau$ given by~\eqref{eq:generic T} for some $n \geq 0$, we define for every $\mft \in \mfL_{+}$\label{upsilon page ref}
\begin{equ}\label{eq: first Upsilon}
\Upsilon^{F}_{\mft}[\tau]
\eqdef
\Big(
\prod_{j=1}^{n}
\Upsilon^{F}_{\mft_{j}}\big[
\tau_j
\big]
\Big)\cdot
\Big(
\partial^{k}
\prod_{j=1}^{n}D_{o_j}
\Big)
F_{\mft}^{\mfl}\;,
\end{equ}
where $o_{j} = (\mft_{j},p_{j})$.

\begin{remark}\label{rem:B-series}
Our definition of $ \Upsilon^{F}[\cdot] $ as described in \eqref{eq: first Upsilon} also appears, in a simpler form, 
in the theory of B-series for ODEs \cite{MR0305608,hairer74}. 
Given an ODE
\begin{equs}\label{eq: ode}
\partial_t y = F(y), \quad y(0) = y_0 \in \R^{d}\;,
\end{equs}
a B-series associated to it is a discrete time-stepping method $ y_k \mapsto y_{k+1}  $ given by an expansion over rooted trees:
\begin{equs}\label{eq: B-series formula}
y_{k+1} - y_k = \sum_{\tau \in \mathbf{T}}  h^{|\tau|} \alpha(\tau) \frac{\Upsilon^{F}[\tau](y_k)}{S(\tau)},
\end{equs}
where $\mathbf{T}$ is the set of rooted combinatorial trees. 
For an ODE like~\eqref{eq: ode} these trees don't need any decorations:
there is only one type of `noise', which represents the constant $1$ (which we called $\Xi_{\mathbf{0}}$ earlier) and there are no abstract 
polynomials, so no node decorations are required. 
Furthermore, even though $y$ may have several components, there is only one type of edge (and thus no need for edge decorations) because each component of $y$ plays the same role
(as there is only one integration operator) and we don't see the appearance of spatial derivatives in such an ODE.

In the right hand side of~\eqref{eq: B-series formula}, $h > 0$ is the step size, $ |\tau| $ denotes the number of nodes of $\tau$, $ S(\tau) $ denotes its symmetry factor,\footnote{See~\eqref{def: symmetry factor in section 2} and Remark~\ref{rem:B-series 2}} and $ \alpha : \mathbf{T} \rightarrow \mathbf{R}$ is a function which determines\footnote{
One choice of $\alpha$ is setting $\alpha(\tau) = 1/\gamma(\tau)$ where $\gamma(\tau)$ is the ``density'' given by $\gamma(\bullet) = 1$ and  $\gamma(\tau) = |\tau| \prod_{j=1}^{n}\gamma(\tau_{j})$. Here we are writing $\tau$ recursively as in \eqref{eq:generic T}. 
With this choice, the formula given in~\eqref{eq: B-series formula} coincides with the Taylor expansion  of the exact solution to the initial value problem.} the time stepping method.
Similarly to above, $ \Upsilon^{F}[\bullet] = F$ and, for $ \tau =  \prod_{j=1}^n \mcb{I}[\tau_j] $ with $\mcb{I}$ the operation of grafting the tree onto a new root, it is defined  by
\begin{equs} 
\Upsilon^{F}[\tau](y)
\eqdef
F^{(n)}(y)
\prod_{j=1}^{n} \Upsilon^{F}\big[
\tau_j
\big](y)\;.
\end{equs}
For example, we have
\begin{equs}
\Upsilon^{F}[\bullet](y) & = F(y), \quad \Upsilon^{F}[\<IXi3>](y) = F^{(3)}(y)(F(y))^{3}, \\ 
\Upsilon^{F} [\<IXi4> ](y) & =  F^{(2)}(y)F(y)F^{(1)}(y)F(y). 
\end{equs}
In~\cite{Gubinelli2010693}, similar tree expansions have also been used for solving rough differential equations of the form
\begin{equation}
\label{eq:CODE}
	dY_t = \sum_{\mfl=0}^m F_{\mfl}(Y_t)dX^{\mfl}_t,
\end{equation}
where the $F_\mft$ are vector fields on $\R^{d}$ and $X:[0,T] \rightarrow \R^{m+1}$ is a driving signal 
(we can incorporate a non-noisy term into this framework by using the convention that $X_{0}(t) = t$). 
The solution to~\eqref{eq:CODE} is again given by a tree expansion
\begin{equation}
\label{eq:CODEsolution}
	Y_t =  Y_s + \sum_{\tau \in \mathbf{T}} \frac{\Upsilon^{F}[\tau](Y_s)}{S(\tau)} X^{\tau}_{st}\;,
\end{equation}
at least in the sense of asymptotic series.
Here $\mathbf{T}$ is the collection of rooted trees with node decorations in $\{0,\ldots,m\}$ 
(corresponding to the different components of $X$) and, as before, only one type of edge.  
The $X^{\tau}_{st}$ are multiple integrals of the driving signal. 
In this context one inductively sets, for a tree $ \tau = \Xi_{\mfl} \prod_{j=1}^n \mcb{I}[\tau_j] $, 
\begin{equs}
\Upsilon^{F}[\tau](y)
\eqdef
F^{(n)}_{\mfl}(y)
\prod_{j=1}^{n}
\Upsilon^{F}\big[
\tau_j
\big](y)\;.
\end{equs}
As an example we compute:
\begin{equs}
\Upsilon^{F}[\bullet_{\mfl}](y) & = F_{\mfl}(y), 
\quad 
\Upsilon^{F}\big[\<IXi3b>\big](y) = F_{\mfl}^{(3)}(y)F_{\mfi}(y)F_{\mfj}(y)F_{\mfq}(y), \\ 
\Upsilon^{F} \big[\<IXi4b> \big](y) & =  F^{(2)}_{\mfl}(y)F_{\mfq}(y)F^{(1)}_{\mfj}(y)F_{\mfi}(y).
\end{equs}
The only difference between~\eqref{eq: ode} and~\eqref{eq:CODE} is that the latter had multiple drivers so we decorated the vertices with the labels of these drivers.  

In the setting of this article, we allow for several additional decorations on our trees: (i) we have edge decorations which keep track of the 
different components of our systems of equations (different components can have different integration operators associated to them), as well
as their derivatives, (ii) in addition to storing data about drivers in our node decorations as in~\eqref{eq:CODEsolution}, we also 
store abstract classical monomials $\mathbf{X}^{k}$ which is required because our Green's functions have non-vanishing derivatives,
unlike the Heaviside function, which is the Green's function of the one-dimensional derivative. 
Moreover, our evaluation operator $\Upsilon^{F}_{\mft}$ is not in general 
just a function of the solution's value at a point but can also depend on its derivatives.
\end{remark}

We next introduce a notion which greatly restricts the types of trees we need to consider.

\begin{definition}\label{def: mft non-vanishing}
Given $\mft \in \mfL_{+}$, $F \in \widetilde{\G}$, and $\tau \in \mathring{\mathscr{T}}$ of the form~\eqref{eq:generic T}, we say that $\tau$ is $\mft$-non-vanishing for $F$ if
$\big(
\partial^{k}
\prod_{j=1}^{n}D_{o_j}
\big)
F_{\mft}^{\mfl}
\not = 0
$, and $\tau_j$ is $\mft_j$-non-vanishing for $F$ for all $j \in [n]$.
Let $\mathring{\mathscr{T}}_{\mft}[F]$\label{basic trees non-vanish page ref} denote the set of all $\tau \in \mathring{\mathscr{T}}$ 
that are $\mft$-non-vanishing for $F$. 
We also write $\mathring{\mathscr{T}}_{\mft,-}[F] \subset \mathring{\mathscr{T}}_{\mft}[F]$ \label{basic neg trees page ref}
for those elements $\tau$ for which $|\tau|_\s < 0$.
\end{definition}
We note that, for every $\gamma \in \R$, and $\mft \in \mfL_+$, the set $\{\tau \in \mathring{\mathscr{T}}_{\mft}[F] : |\tau|_\s < \gamma\}$ is finite due to the local subcriticality of $F$ (see the proof of Theorem~\ref{thm: THE black box}). 
In particular, $\mathring{\mathscr{T}}_{\mft,-}[F]$ is a finite set.
Note also that if $\tau$ is not $\mft$-non-vanishing, then $\Upsilon^F_\mft[\tau] =0$ but the converse implication is not true in general.\footnote{This is because one can find $f_{1}, f_{2} \in \mcb{C}_{\mcb{O}} \setminus \{0\}$ with $f_{1}f_{2} =0$ because their supports are disjoint. However, the converse does hold for polynomial nonlinearities.}

With these notations, we introduce several structural assumptions on a nonlinearity $F \in \widetilde{\G}$ and the sets $\mathring{\mathscr{T}}_{\mft}[F]$.

\begin{assumption}\label{assump:planted trees}
For every $\mft \in \mfL_+$ and $ T^{\mfm}_{\mff} \in \mathring{\mathscr{T}}_{\mft}[F]$ and any strict subtree\footnote{\label{foot:subtree}By a subtree, we mean that $\bar T^{\bar\mfm}_{\bar\mff}$ is a tree whose node and edge sets are subsets of those of $T^{\mfm}_{\mff}$ and whose decorations satisfy $\bar\mff(e) = \mff(e)$ for all $e \in E_{\bar T}$, and $\barmnoise(x) = \mnoise(x)$ and $\barmpoly(x) \leq \mpoly(x)$ for all $x \in N_{\bar T}$.
By a \emph{strict} subtree, we mean $\bar T^{\bar\mfm}_{\bar\mff} \neq T^{\mfm}_{\mff}$.
Note that if $T^{\mfm}_{\mff}$ is $\mft$-non-vanishing, then every subtree $ \bar T^{\bar\mfm}_{\bar\mff}$ with $ \rho_T = \rho_{\bar T}$ is also $\mft$-non-vanishing.
}
$ \bar T^{\bar\mfm}_{\bar\mff}$ of $ T $ with $ \rho_T = \rho_{\bar T}$, one has $|\bar T^{\bar\mfm}_{\bar\mff}|_\s > -
(|\mft|_\s \wedge \s_0)$.
\end{assumption}
\begin{remark}
Assumption~\ref{assump:planted trees} is used in Section~\ref{subsec: gen DPD} to ensure that certain subspaces of a regularity structure form sectors.
For readers familiar with the notion of rules in~\cite{BHZalg}, it may appear surprising that we do not simply restrict to trees which conform to a given rule.
While it holds that if $\tau$ is $\mft$-non-vanishing, then $\tau$ must conform to a rule naturally associated with $F$ (Proposition~\ref{prop:obeyEquivalence}), the converse does not hold in general.
For example, using notation from Section~\ref{subsec:Phi44-delta} for the $\Phi^4_{4-\delta}$ model, taking $i\in\{1,\ldots, 4\}$ and $\tau \eqdef \mathbf{X}^{e_i} \mcb{I}_{(\mft,0)}[\Xi_\mfl]^3$, it holds that $\tau$ obeys the rule naturally associated with the equation but is not $\mft$-non-vanishing.
In fact, removing $\mathbf{X}^{e_i}$ from the root of $\tau$ gives a subtree $\bar\tau$ for which $|\bar\tau|_\s < -(|\mft|\wedge\s_0)=-2$ whenever $\delta \leq \frac23$, which would violate Assumption~\ref{assump:planted trees} if $\tau$ were included in $\mathring{\mathscr{T}}_{\mft}[F]$.

On the other hand, it may appear simpler to keep only those $\tau$ for which $\Upsilon^F_\mft[\tau] \neq 0$.
This definition, however, turns out to not be stable under the structure group, and does not yield sectors of the regularity structures in which we can solve for fixed points.
\end{remark}
For the explanation behind the following two assumptions, see Remark~\ref{remark: conditions on trees for blackbox}.
\begin{assumption}\label{assump:remove_noise_positive}
For every $\mft \in \mfL_+$ and $T^{\mfm}_{\mff} \in \mathring{\mathscr{T}}_\mft[F]$ with $E_{T} \not = \emptyset$, one has
\begin{equ}
|T^{\mfm}_{\mff}|_{\s}
-
\max_{
\substack{
x \in N_{T}\\
\mnoise(x) \not = 0}
}
|\mnoise(x)|_{\s}
>
0\;.
\end{equ}
\end{assumption}

\begin{assumption}\label{assump:no_diverging_variances}
For every $\mft \in \mfL_+$ and $T^{\mfm}_{\mff} \in \mathring{\mathscr{T}}_\mft[F]$ with $E_{T} \not = \emptyset$, one has
\begin{equ}
|T^{\mfm}_{\mff}|_{\s} > -\frac{|\s|}{2} \quad \text{ and } \quad
|T^{\mfm}_{\mff}|_{\s} + |\s| + \min_{\mfl \in \mfL_{-}} |\mfl|_{\s} > 0\;.
\end{equ}
\end{assumption}
\subsubsection{The random driving terms}
Given $\eps > 0$, a smooth function $\rho: \Lambda \rightarrow \R$, and $\psi \in \mcb{S}'(\Lambda)$, we write
\begin{equ}\label{mollification}
\psi^{(\rho,\eps)} \eqdef \psi \ast (S^{\eps}_{\s}\rho)\;.
\end{equ}
Next we describe the class of driving noises included in our main theorem.
\begin{definition} \label{definition noises}
We define $\mathrm{Gauss}$ to be the collection of all tuples $\xi = (\xi_{\mfl})_{\mfl \in \mfL_{-}}$ of jointly Gaussian,
stationary, centred, random elements of $\mcb{S}'(\Lambda)$ which satisfy the following regularity properties for every $\mfl, \mfl' \in \mfL_{-}$ .
\begin{enumerate}
\item \label{point:noise1} There exist distributions $C_{\mfl,\mfl'} \in \mcb{S}'(\Lambda)$ whose singular support is contained in $\{0\}$ and with the property that for every $f,g \in \mcb{S}(\Lambda)$, 
\[
\E[ \xi_{\mfl}(f) \xi_{\mfl'}(g)]
=
C_{\mfl,\mfl'} \Big( \int_{\R \times \T^{d}} dz f(z - \cdot)g(z) \Big)\;.
\]
\item \label{point:noise2} Writing $z \mapsto C_{\mfl,\mfl'}(z)$ for the smooth function which determines $C_{\mfl,\mfl'}$ away from $0$, one has, for any $g \in \mcb{S}(\Lambda)$ satisfying $D^{k}g(0) = 0$ for all $k \in \N^{d+1}$ with $|k|_{\s} < -|\s| - |\mfl|_{\s} - |\mfl'|_{\s}$, 
\[
C_{\mfl,\mfl'}[g]
=
\int dz\, C_{\mfl,\mfl'}(z) g(z)\;.
\]
\item \label{point:noise3} There exists $\kappa > 0$ such that for any $k \in \N^{d+1}$
\[
\sup_{
0 < |z|_{\s} \le 1
}
|D^{k} C_{\mfl,\mfl'}(z)| \cdot |z|_{\s}^{- |\mfl|_{\s} - |\mfl'|_{\s} + |k|_{\s} - \kappa} < \infty\;. 
\]
\end{enumerate}
\end{definition}
It follows from Kolmogorov's continuity theorem combined with items~\ref{point:noise2} and~\ref{point:noise3} above
that every $\xi \in \Gauss$ admits a version which is a random element of 
$\CC^{\noise} \eqdef \bigoplus_{\mfl \in \mfL_{-}} \CC_{\s}^{|\mfl|_{\s}}(\Lambda)$.
\label{Cnoise page ref}
\subsubsection{The local existence theorem}
\label{subsubsec:loc exist thm}
We fix $F \in \widetilde{\G}$ for the remainder of this subsection.
For every $\xi \in \Gauss$, our main result yields a (local in time) solution theory for the initial value problem
\begin{equ}\label{ivp}
\forall \mft \in \mfL_{+}, \; \;
\d_{t} \phi_{\mft} 
=
\mathscr{L}_{\mft}\phi_{\mft}
+
F_{\mft}^{\mathbf{0}}(\boldsymbol{\phi})
+
\sum_{\mfl \in \mfL_{-}}
F_{\mft}^{\mfl}(\boldsymbol{\phi})\xi_{\mfl}
\;,
\end{equ}
with a suitably chosen initial condition $\phi_{\mft}(0,\cdot)$.

One annoying technical problem is that, in general, our solutions may not accommodate evaluation at fixed times.
(This is actually already the case for the $\Phi^4_3$ model and was taken care of in an ad hoc manner in
\cite[Sec.~9]{Regularity}.) 
To circumvent this, we introduce a decomposition of our solution into a sum of explicit space-time distributions coming 
from perturbation theory, along with a remainder which is actually a function.
We first give an intuitive but slightly imprecise statement of the result before introducing the necessary spaces 
required for its precise formulation in Theorem~\ref{thm: THE black box}.

\begin{theorem}[Metatheorem]
Suppose all the assumptions of this section are satisfied.
Fix a collection of Gaussian fields $\xi \in \Gauss$, a mollifier $\rho \in \mcC^\infty(\R^{d+1})$, and a collection of sufficiently nice ``initial conditions'' $(\psi_\mft)_{\mft \in \mfL_+}$.
Then, for every $\eps > 0$, there exist two families of smooth functions $(\CS^-_{\rho,\eps,\mft})_{\mft \in \mfL_+}$ and $(\CS^+_{\rho,\eps,\mft})_{\mft \in \mfL_+}$ such that
\begin{itemize}
\item for every $\mft\in\mfL_+$, $\CS^-_{\rho,\eps,\mft}$ depends on $\xi,\rho$, and $\eps$, and converges to a space-time distribution on $\Lambda$ as $\eps\downarrow0$;
\item for every $\mft\in\mfL_+$, $\CS^+_{\rho,\eps,\mft}$ depends on $\xi,\rho,\eps,$ and $\psi$, and converges in a space of functions up to a (random) blow-up time as $\eps\downarrow0$;
\item $(\phi_{\mft,\eps})_{\mft\in\mfL_+} \eqdef (\CS^-_{\rho,\eps,\mft} + \CS^+_{\rho,\eps,\mft})_{\mft \in \mfL_+}$ solves a renormalised PDE given by~\eqref{eq: renormalised equation} with initial conditions $(\psi_\mft + \CS^-_{\rho,\eps,\mft}(0,\cdot))_{\mft \in \mfL_+}$.
\end{itemize}
\end{theorem}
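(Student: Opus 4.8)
The plan is to assemble the statement from three already-available inputs: the algebraic main theorem (Theorem~\ref{thm: algebraic main theorem}), together with the two key lemmas on coherence (Lemma~\ref{lemma: coherence identity}) and on the renormalisation of $\Upsilon^F$ (Lemma~\ref{lem: renormalisation of upsilon}); the analytic machinery of regularity structures from \cite{Regularity}; and the convergence of BPHZ-renormalised models from \cite{CH}. First I would build the regularity structure and structure group associated to $F$ via \cite{BHZalg}. The hypothesis $F \in \widetilde{\G}$ (local subcriticality) guarantees that $\{\tau \in \mathring{\mathscr{T}}_{\mft}[F] : |\tau|_\s < \gamma\}$ is finite for every $\gamma$, so the structure is of finite type below any degree; Assumptions~\ref{assump:planted trees}, \ref{assump:remove_noise_positive} and \ref{assump:no_diverging_variances} then ensure that the subspaces spanned by the $\mft$-non-vanishing trees form sectors stable under the structure group and that no logarithmic or diverging-variance pathologies obstruct the BPHZ construction. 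For each $\eps > 0$ I would form the admissible canonical model $\Psi[\xi^{(\rho,\eps)}]$ lifting the mollified noise, then the renormalised model $\hat Z_{\rho,\eps} \eqdef M_\eps^* \Psi[\xi^{(\rho,\eps)}]$, where $M_\eps \in \mfR$ is the BPHZ character attached to the law of $\xi^{(\rho,\eps)}$; by \cite{CH} and Definition~\ref{definition noises}, $\hat Z_{\rho,\eps}$ converges in probability in $\mathscr{M}$ as $\eps \downarrow 0$ to a limiting model $\hat Z$ which is independent of $\rho$.

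Second, I would solve the abstract fixed point problem, with the decisive analytic step being a Da Prato--Debussche-type decomposition in the space of modelled distributions (Section~\ref{subsec: gen DPD}): seek $U_\mft = U_\mft^- + U_\mft^+$, where $U^-$ is a fixed finite combination of lifted-noise (planted/negative-degree) trees whose reconstruction is a genuine space-time distribution that does not feel the initial condition, while $U^+$ solves a fixed point problem in a sector of positive regularity — hence is $\CC^{\noise}$-style but function-valued — and carries the initial datum $\psi_\mft + \CS^-_{\rho,\eps,\mft}(0,\cdot)$. Assumption~\ref{assump:Schauder1}, through the blow-up exponents $n_\mft$, is precisely what makes the Schauder estimates and the reconstruction of the singular products in the fixed point for $U^+$ go through, up to a strictly positive random existence time $T_\star$ (possibly finite, since the fixed point is only \emph{local}). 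By Lemma~\ref{lemma: coherence identity}, the resulting $U$ is coherent with $F$, so under the (smooth) canonical model its reconstruction is the classical solution of the equation with nonlinearity $F$, and under $\hat Z_{\rho,\eps}$ it is, by Theorem~\ref{thm: algebraic main theorem} (equivalently Lemma~\ref{lem: renormalisation of upsilon} applied to $M_\eps$), the classical solution of the modified PDE~\eqref{eq: renormalised equation} with nonlinearity $M_\eps F$, whose extra terms are genuine local functions of $\boldsymbol{\phi}$ as required.

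Third, I would read off the three bullet points. Setting $\CS^-_{\rho,\eps,\mft} \eqdef \CR U_\mft^-$ and $\CS^+_{\rho,\eps,\mft} \eqdef \CR U_\mft^+$, both reconstructions taken with respect to $\hat Z_{\rho,\eps}$, the identity $\CR(U^- + U^+) = \CR U^- + \CR U^+$ and the commuting-square property $\CR \circ \CS_{A} \circ M_\eps^* \Psi = \CS_C(M_\eps F, \cdot)$ give that $\phi_{\mft,\eps} = \CS^-_{\rho,\eps,\mft} + \CS^+_{\rho,\eps,\mft}$ solves the renormalised PDE with the stated initial conditions. Since $U^-$ is a fixed continuous function of finitely many model components, $\CS^-_{\rho,\eps,\mft}$ converges to a space-time distribution as $\eps \downarrow 0$; and continuity of $\CR$ and of the local solution map $\CS_{A}$ in the model, combined with $\hat Z_{\rho,\eps} \to \hat Z$, yields convergence of $\CS^+_{\rho,\eps,\mft}$ in a space of functions, but only up to the blow-up time, because the existence time $T_\star(\hat Z_{\rho,\eps})$ depends continuously, though not uniformly, on the model. (The precise version of all of this is Theorem~\ref{thm: THE black box}, obtained by combining Theorem~\ref{thm:renormalised_equation} with the analytic theory of Section~\ref{sec: analytic aspects and DPD}.)

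The main obstacle I expect is the analytic Da Prato--Debussche argument of Section~\ref{subsec: gen DPD}: one has to verify that the candidate ``remainder'' subspace is genuinely a sector of positive regularity stable under the structure group (this is exactly where Assumption~\ref{assump:planted trees} enters), that the products in the fixed point for $U^+$ are reconstructible with the blow-up rates dictated by the $n_\mft$ and Assumption~\ref{assump:Schauder1}, and that the decomposition is compatible with restarting the solution at a later time — which is what upgrades a purely local statement to the ``maximal solution'' picture alluded to in the introduction. By contrast, once Theorem~\ref{thm: algebraic main theorem} and the convergence result of \cite{CH} are granted, identifying the limiting equation and the locality of its counterterms is essentially bookkeeping built on the coherence identity.
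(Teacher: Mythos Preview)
Your proposal is correct and follows essentially the same route as the paper: the metatheorem is not given a separate proof but is the informal precursor to Theorem~\ref{thm: THE black box}, whose proof in Appendix~\ref{sec: proof of blackbox} proceeds exactly as you outline --- construct the rule and regularity structure from $F$ via \cite{BHZalg}, invoke \cite{CH} for convergence of the BPHZ models, define $\CS^\pm_{\rho,\eps}$ as reconstructions of the stationary and remainder parts of the generalised Da Prato--Debussche decomposition of Section~\ref{subsec: gen DPD}, and identify the renormalised equation via Proposition~\ref{prop: underlined U} and Theorem~\ref{thm:renormalised_equation}. Your parenthetical at the end already pinpoints this, and your assessment of where the real analytic work lies (the sector and Schauder verifications in Section~\ref{subsec: gen DPD}) is accurate.
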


We now introduce formally where our solutions will live.
	The explicit stationary part will live in the space 
\[
\CC^{\reg,-} \eqdef \bigoplus_{\mft \in \mfL_{+}} 
\begin{cases}
\CC^{\reg(\mft)}_{\s}(\Lambda)& 
\textnormal{ if } \reg(\mft) < 0\;,\\
\{0\} & \textnormal{ otherwise.}
\end{cases}
\]\label{Creg- page ref}
In order to describe the remainder, we first set, for any $\mft \in \mfL_{+}$,
\[
\widetilde{\reg}(\mft)
\eqdef
|\mft|_{\s}
+
\inf
\big\{ 
|\tau|_{\s}:\ 
\tau \textnormal{ is $\mft$-non-vanishing and }
|\tau|_{\s} > -(|\mft|_{\s}\wedge \s_0)
\big\}\;.
\]
We then define, for any $T \in (0,\infty]$, $\CC^{\reg,+}_{T} \eqdef \bigoplus_{\mft \in \mfL_{+}} \CC^{\widetilde{\reg}(\mft)}_{\s}((0,T) \times \T^{d})$. 

We also define the spaces\label{Cireg page ref}
\begin{equ}
\CC^{\ireg} \eqdef \bigoplus_{\mft \in \mfL_{+}} \CC_{\bar{\s}}^{\ireg(\mft)\wedge\widetilde{\reg}(\mft)}(\T^d)
\end{equ}
and $\widehat{\CC^{\ireg}}
\eqdef
\CC^{\ireg} \sqcup \{\infty\}$. 
More precisely, we view $\CC^{\ireg}$ as a Banach space with norm $\|\cdot\|_{\CC^{\ireg}}$ and define the topological space $\widehat{\CC^{\ireg}}$ by including a point at infinity $\infty$ and determining the topology by starting with the basis of open balls in $\CC^{\ireg}$ and adding sets of the form $\{ g \in \CC^{\ireg}: \|g\|_{\CC^{\ireg}} \ge N \} \sqcup \{\infty\}$ for any $N > 0$. 
We adopt the notational convention that $\|\infty\|_{\CC^{\ireg}} = +\infty$. 

Given $f \in \CC(\R_{+},\widehat{\CC^{\ireg}})$, we define $T[f] = \inf\{ t \ge 0: f(t) = \infty\}$. 
The space where our ``maximal remainders'' will live is then given by 
\[
\CC^{\rem}
\eqdef
\Big\{
f \in \CC(\R_{+},\widehat{\CC^{\ireg}}):\ 
\begin{array}{c}
\forall t > T[f],\ f(t) = \infty\\
f\restr_{(0,T[f])} \in \mcC^{\reg,+}_{T[f]}
\end{array}
\Big\}\;.\label{Creg page ref}
\]
To state our convergence results we now want to put a topology on $\CC^{\rem}$ which requires some care (see Remark~\ref{rem:motivating_newTheta} below). 

For any $f \in \CC^{\rem}$, we define its running supremum by 
\[
S_f(t) \eqdef \sup_{s \le t}  \|f(s)\|_{\CC^{\ireg}} \in [0,\infty] \;.
\]
Given a parameter $L>0$ and a fixed smooth mollifier 
$\phi\colon \R \to [0,\infty)$ integrating to $1$ and with closed support equal to $[0,1]$, we define a smoothened version of $S_f$ by setting
\begin{equ}
S_f^L(t) = \tan \int_0^1 \tan^{-1}(S_f(t+s/L)) \phi(s)\,ds\;. 
\end{equ}
Note that $S_f^L$ is increasing, $S_f^L(t) \ge S_f(t)$, and that
$\inf\{t\geq 0\,:\, S_f^L(t) = \infty\} = \inf\{t\geq 0\,:\, S_f(t) = \infty\}$.
We then further fix a smooth decreasing function $\psi \colon \R \to [0,1]$ with
derivative supported in $[1,2]$ and $\psi(1) = 1$ and $\psi(2) = 0$. 

Then, given any $f \in \CC^{\rem}$ and $L \in \N$ we define $\Theta_L(f) \in \mcC^{\reg,+}_{\infty}$ by setting 
\begin{equ}
\Theta_L(f)(t) = \psi(S_f^L(t)/L) f(t)\;.
\end{equ}
Note that $\psi(S_f^L(t)/L)$ is smooth in $t$ since  $S_{f}^{L}(\cdot)$ is smooth on $\{t \ge 0: S_{f}^{L}(t) < 3L\}$ and so we indeed have $\Theta_L(f) \in \mcC^{\reg,+}_{\infty}$.
Furthermore, $\sup_{t\geq 0}\|\Theta_L(f)(t)\|_{\CC^{\ireg}} \leq 2L$.


We equip $\CC^{\rem}$ with a metric $d(\cdot,\cdot) \eqdef \sum_{L=1}^{\infty} 2^{-L} d_{L}(\cdot,\cdot)$, where for $f,g \in \mcC^{\rem}$
\[
d_L(f,g) \eqdef 1\wedge \Big[\sup_{t \in [0,L]} \|\Theta_L(f)(t) - \Theta_L(g)(t)\|_{\mcC^{\ireg}} + \|\Theta_L(f) - \Theta_L(g)\|_{\mcC^{\reg,+}_{L}}\Big]\;.
\]
The following lemma tells us that the metric $d(\cdot,\cdot)$ has continuity properties that are compatible with the stability properties of maximal solutions to semi-linear PDE.  
We define, for any $L \in \N$ and $f \in \CC^{\rem}$, $T^L(f) = \inf\{t \geq 0 \,:\, \|f(t)\|_{\CC^{\ireg}} \ge L\}$. 
\begin{lemma}\label{lemma:metric_stability}
Let $f,f_{1},f_{2},\dots \in \CC^{\rem}$ satisfy the following condition: for every $L \in \N$ one has  
\begin{equ}\label{eq:convergence_standard}
\lim_{n\rightarrow \infty} 
\sup_{ t \in [0,T(L,n)]}
\| f_{n}(t) - f(t) \|_{\mcC^{\ireg}}
+
\|f_{n} - f\|_{\mcC^{\reg,+}_{T(L,n)}}
= 
0\;,
\end{equ}
where $T(L,n) = L \wedge T^{L}(f) \wedge T^{L}(f_{n})$. 
Then it follows that $d(f_{n},f) \rightarrow 0$ as $n \rightarrow \infty$. 
\end{lemma}
\begin{proof}
The key point is that, for any fixed $L \in \N$, one has the uniform convergence of the functions $ \psi(S^{L}_{f_n}(\cdot)/L)$ to $ \psi(S^{L}_{f}(\cdot)/L)$, along with their derivatives, on the interval $[0,L]$ as $n \rightarrow \infty$.

Here one uses the observation that $\tan^{-1}(S_{f_n}(\cdot))$ converges point-wise to $\tan^{-1}(S_{f}(\cdot))$ as $n \rightarrow \infty$ \dash note that it is good enough that $\tan$ is continuous on $[0,\pi/2)$ since, for any $g$, $\psi(S^{L}_{g}(s)/L)$ vanishes for any $s \ge \inf \{ t \ge 0 : S^{L}_{g}(t) \ge 2L\}$ which means we never directly encounter a blow-up, even if $T[g] < L$.  
\end{proof}

\begin{remark}\label{rem:motivating_newTheta}
Our particular method of ``stopping $f$  at threshold $L$'' to define $\Theta_{L}(f)$ was designed to guarantee (i) that  $\Theta_{L}(f)$ has time regularity at least as good as that of $f$, and  (ii) $d(\cdot,\cdot)$ should have good continuity properties as described in Lemma~\ref{lemma:metric_stability}. 

Constraint (i) prevents us from using a sharp time cut-off while constraint (ii) prevents us from using $T^{L}(f)$ from determining how to localize $f$ in time since $T^{L}(\cdot)$ does not have good enough continuity properties. 
Constraint (ii) is crucial for us to have continuity in the initial condition and the driving model, compare Lemma~\ref{lemma:metric_stability} with \cite[Cor.~7.12]{Regularity}. 

Note that the published version of this paper contained an error since there we performed a smooth time localisation using $T^{L}(\cdot)$ which means that constraint (i) was satisfied but we violated constraint (ii). 
\end{remark}
We now set 
\[
\CC^{\clas}
\eqdef
\Big\{
f \in \CC(\R_{+},\widehat{\CC^{\ireg}}):\ 
\begin{array}{c}
\forall t > T[f],\ f(t) = \infty\\
f\restr_{(0,T[f])} \in \mcC^{\infty}((0,T[f]) \times \T^{d})
\end{array}
\Big\}\;.
\]
Finally, consider a smooth function $\rho:\R^{d+1} \rightarrow \R$ supported on the ball $|z|_{\s} \le 1$ with $\int \rho = 1$, as well as a family of constants
\begin{equs}\label{eq:familyOfConst}
\{c_{\rho,\eps}^\tau \in \R:\  \tau \in \mathring{\mathscr{T}}_{\mft,-}[F]\ \textnormal{ for some } \mft \in \mfL_+,\ \eps > 0 \}\;.
\end{equs}
We then denote by 
\begin{equ}
\CS_{\rho,\eps}\colon\CC^{\noise} \times \CC^{\ireg} \rightarrow \CC^{\clas}\;,\qquad
\CS_{\rho,\eps}\colon (\xi,\psi) \mapsto \phi_{\eps} = (\phi_{\mft,\eps})_{\mft \in \mfL_{+}}
\end{equ}
the classical solution map of the following system of initial value problems for $\phi_{\eps} = (\phi_{\mft,\eps}: \mft \in \mfL_{+})$:
\begin{equ}\label{eq: renormalised equation}
\d_{t} \phi_{\mft,\eps} 
= 
\mathscr{L}_{\mft}
\phi_{\mft,\eps}
+
F_{\mft}^{\mathbf{0}}(\boldsymbol{\phi_{\eps}})
+
\sum_{\mfl \in \mfL_{-}}
F_{\mft}^{\mfl}(\boldsymbol{\phi_{\eps}})\xi^{(\rho,\eps)}_{\mfl}
+\sum_{\tau \in \mathring{\mathscr{T}}_{\mft,-}[F]}
c_{\rho,\eps}^\tau
\frac{\Upsilon_{\mft}^{F}[\tau](\boldsymbol{\phi_{\eps}})}{S(\tau)}
\;,
\end{equ}
with initial data $\phi_{\mft,\eps}(0,\cdot)
\eqdef \psi_{\mft}(\cdot)$, where the mollified noises $\xi^{(\rho,\eps)}_{\mfl}$ are defined as in \eqref{mollification}.
The combinatorial symmetry factor $S(\tau)$ appearing in this identity is defined as follows.
For any tree $\tau$ written as
\begin{equ}\label{eq: display of tree}
\tau = \mathbf{X}^{k} \Xi_{\mfl}\Big( \prod_{j=1}^m \mcb{I}_{o_j}[\tau_j]^{\beta_j}\Big)\;,
\end{equ}
where we group terms (uniquely)
in such a way that $(o_i,\tau_i) \neq (o_j,\tau_j)$ for $i \neq j$, we inductively set
\begin{equ}\label{def: symmetry factor in section 2}
S(\tau)
\eqdef
k!
\Big(
\prod_{j=1}^{m}
S(\tau_{j})^{\beta_{j}}
\beta_{j}!
\Big)\;.
\end{equ}

\begin{remark}\label{rem:B-series 2} 
Continuing the thread of Remark~\ref{rem:B-series}, the formulas~\eqref{eq: display of tree} and~\eqref{def: symmetry factor in section 2} can be adapted to the cases of~\eqref{eq: B-series formula} and~\eqref{eq:CODEsolution} as follows. 
For~\eqref{eq: display of tree}, in both scenarios one only has a single type of edge so the $o_{j}$ are always all the same. 
Furthermore, in the case of~\eqref{eq: ode} one forces $k=0$ and $\mfl = \mathbf{0}$, while for~\eqref{eq:CODE} one always sets $k=0$ and forces $\mfl \in \{\mathbf{0}\} \cup \{1,\dots,m\}$.
In this way,  both~\eqref{eq: B-series formula} and~\eqref{eq:CODEsolution} are special cases 
of~\eqref{def: symmetry factor in section 2}. 
\end{remark}

As discussed earlier, to formulate the main result of this section, we decompose the map $\CS_{\rho,\eps}$ into a ``stationary'' part
\begin{equ}
\CS_{\rho,\eps}^-\colon\CC^{\noise}\rightarrow \CC^{\reg,-} \cap\CC^\infty\;,
\end{equ}
independent of the initial condition, as well as a ``remainder'' part
\begin{equ}
\mathcal{S}_{\rho,\eps}^{+}:\CC^{\noise} \times \CC^{\ireg} \rightarrow \CC^{\clas}\;,
\end{equ}
which does depend on the initial condition.
These two maps will be chosen in such a way that one has the identity
\begin{equ}[e:idenSol]
\CS_{\rho,\eps}\Big(\xi,\psi+\mathcal{S}_{\rho,\eps}^{-}(\xi)(0,\cdot)\Big) 
= 
\mathcal{S}_{\rho,\eps}^{+}(\xi,\psi)
+ \mathcal{S}_{\rho,\eps}^{-}(\xi)\;.
\end{equ}
Here $\mathcal{S}_{\rho,\eps}^{-}(\xi)(0,\cdot)$ is a function of space obtained by restricting $\mathcal{S}_{\rho,\eps}^{-}(\xi)$ to the time $0$ hyperplane.
We also remark that addition between an element of $\CC^{\clas}$ and an element of $\CC^\infty$ naturally yields 
again an element of $\CC^{\clas}$. 

The precise definitions of $\mathcal{S}_{\rho,\eps}^{\pm}$ will be given in \eqref{e:defSpm} 
below, based on the construction of Section~\ref{subsec: gen DPD} below,
but do not matter much at this stage. Suffices to say that this decomposition should be thought of as 
a higher order version of the classical Da Prato--Debussche trick \cite{MR1941997,DPD2}. 
With all of these preliminaries in place, our general convergence result 
can be formulated as follows.

\begin{theorem}\label{thm: THE black box}
Suppose that Assumptions~\ref{assump:Schauder1},~\ref{ass:kernels},~\ref{assump:planted trees},~\ref{assump:remove_noise_positive}, and~\ref{assump:no_diverging_variances} hold.
Let $\xi \in \Gauss$, viewed as a $\CC^{\noise}$-valued random variable.
Then the system~\eqref{ivp} admits
maximal solutions in the following sense.
There exist maps $\mathcal{S}^{-}:\CC^{\noise} \rightarrow \CC^{\reg,-}$ and 
$\mathcal{S}^{+}:\CC^{\noise} \times \CC^{\ireg} \rightarrow \CC^{\rem}$ with the following properties.
\begin{itemize}
\item The maps $\mathcal{S}^{\pm}$ are measurable.
\item Almost surely, the map $\psi \mapsto T[\mathcal{S}^{+}(\xi,\psi)]$ is a strictly positive lower semicontinuous function
and $\psi \mapsto \mathcal{S}^{+}(\xi,\psi)$ is continuous from $\CC^{\ireg}$ into $\mcC^{\rem}$.
\item For any smooth function $\rho:\R^{d+1} \rightarrow \R$ supported on the unit ball with $\int \rho = 1$, there exists a choice of constants~\eqref{eq:familyOfConst} such that, as $\eps \downarrow 0$, $\mathcal{S}_{\rho,\eps}^{-}$ converges to $\mathcal{S}^{-}$ in probability as random elements of $\CC^{\reg,-}$, and, for fixed $\psi \in \CC^{\ireg}$, $\CS^{+}_{\rho,\eps}(\xi,\psi)$ converges in probability to $\CS^{+}(\xi,\psi)$ as random elements of $\CC^{\rem}$.
\end{itemize}
\end{theorem}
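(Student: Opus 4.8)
The plan is to deduce Theorem~\ref{thm: THE black box} by feeding the convergence theorem of \cite{CH} into the abstract machinery of Sections~\ref{sec: alg and main theorem} and~\ref{sec: analytic aspects and DPD}. First I would associate to $F \in \widetilde{\G}$ the rule it naturally generates; by Proposition~\ref{prop:obeyEquivalence} every $\mft$-non-vanishing tree conforms to this rule, and local subcriticality of $F$ guarantees that the rule is subcritical and complete in the sense of \cite{BHZalg}, so that we obtain a regularity structure $\mathscr{T}$ together with its renormalisation group $\mfR$. Assumption~\ref{assump:no_diverging_variances}, together with the covariance bounds built into Definition~\ref{definition noises}, is exactly what is needed to invoke \cite{CH}: the BPHZ-renormalised models $\hat Z_{\rho,\eps} \eqdef M_{\rho,\eps}^*\Psi[\xi^{(\rho,\eps)}]$ converge in probability, as $\eps\downarrow 0$, to a limiting admissible model $\hat Z$ on $\mathscr{T}$, with a limit that does not depend on $\rho$. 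The element $M_{\rho,\eps}\in\mfR$ is deterministic and its coordinates on the negative-homogeneity trees are precisely the constants $c^{\tau}_{\rho,\eps}$ appearing in~\eqref{eq: renormalised equation}.

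Next I would set up and solve the abstract fixed point problem. The technical nuisance that solutions need not admit restrictions to fixed-time hyperplanes is handled via the generalised Da Prato--Debussche decomposition of Section~\ref{subsec: gen DPD}: one writes the abstract solution as $U_\mft = U^-_\mft + U^+_\mft$, where $U^-_\mft$ is built only from the noise, is model-dependent but independent of the initial condition, and reconstructs to an element of $\CC^{\reg,-}$, while $U^+_\mft$ is the \emph{remainder}, which one shows takes values in a sector of positive regularity $\widetilde{\reg}(\mft)$ and hence reconstructs to an honest continuous function. Here Assumption~\ref{assump:planted trees} is used to verify that the relevant subspaces of $\mathscr{T}$ are genuine sectors (stable under the structure group), Assumption~\ref{assump:Schauder1} ensures that the nonlinear operations involved (composition with the smooth maps $F^{\mfl}_\mft$, multiplication by the lifted noise, and the Schauder map built from $G_\mft$ via Assumption~\ref{ass:kernels}) are well-defined and contractive on small time intervals, and Assumption~\ref{assump:remove_noise_positive} guarantees that the products $F^{\mfl}_\mft(\mathbf{U})\Xi_\mfl$ can be reconstructed. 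Solving for $U^+$ given the model and an initial condition $\psi\in\CC^{\ireg}$ produces, for each $L$, a solution up to the stopping time at which the $\CC^{\ireg}$-norm reaches $L$; gluing these via the soft cutoff $\Theta_L$ yields $\mathcal{S}^{+}_{\rho,\eps}(\xi,\psi)\in\CC^{\rem}$, while $\mathcal{S}^{-}_{\rho,\eps}(\xi)\eqdef\CR U^-$.

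Then I would invoke Theorem~\ref{thm:renormalised_equation}, the analytic incarnation of the algebraic main theorem (Theorem~\ref{thm: algebraic main theorem}, itself obtained from Lemmas~\ref{lemma: coherence identity} and~\ref{lem: renormalisation of upsilon}): it identifies $\CR\circ\CS_A$ applied to the renormalised model $M_{\rho,\eps}^*\Psi[\xi^{(\rho,\eps)}]$ with the classical solution of the PDE whose nonlinearity has been modified by the counterterms $\sum_{\tau}c^{\tau}_{\rho,\eps}\,\Upsilon^{F}_\mft[\tau]/S(\tau)$, that is with $\CS_{\rho,\eps}$ of~\eqref{eq: renormalised equation}; the identity~\eqref{e:idenSol} relating $\CS_{\rho,\eps}$ to $\mathcal{S}^\pm_{\rho,\eps}$ follows from the decomposition. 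Finally, passing to the limit: since $\hat Z_{\rho,\eps}\to\hat Z$ in the space of models in probability, and the maps $\psi\mapsto\CS_A[\cdot](\psi)$ and $\CR$ are jointly continuous in the model and in $\psi$, with continuity that is uniform on sets where the remainder stays bounded, the maps $\mathcal{S}^{-}_{\rho,\eps}$ and, for fixed $\psi$, $\mathcal{S}^{+}_{\rho,\eps}(\xi,\psi)$ converge in probability to limits $\mathcal{S}^{-}(\xi)\in\CC^{\reg,-}$ and $\mathcal{S}^{+}(\xi,\psi)\in\CC^{\rem}$, which one declares to be the maximal solution; measurability of $\mathcal{S}^\pm$ and lower semicontinuity of $\psi\mapsto T[\mathcal{S}^{+}(\xi,\psi)]$ then follow from the continuity of the abstract solution map and the construction of $\CC^{\rem}$ and its metric.

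The main obstacle is twofold, and both parts lie in Section~\ref{sec: analytic aspects and DPD}: first, carrying out the generalised Da Prato--Debussche trick at the level of modelled distributions, namely carving out the \emph{stationary} and \emph{remainder} sectors so that the fixed point closes, is stable under restarting, and interacts correctly with the $t=0$ restriction and the space $\widehat{\CC^{\ireg}}$; and second, propagating the purely algebraic renormalisation identity through reconstruction to obtain Theorem~\ref{thm:renormalised_equation} with \emph{exactly} the counterterms $\Upsilon^{F}_\mft[\tau]$. The convergence of the models is a black-box input from \cite{CH}, and once the above is in place the $\eps\to0$ passage is soft.
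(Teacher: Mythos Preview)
Your proposal is correct in its overall architecture and matches the paper's proof closely: construct a rule from $F$, invoke \cite{CH} for BPHZ model convergence, solve the remainder fixed point via the generalised Da Prato--Debussche trick of Section~\ref{subsec: gen DPD}, apply Theorem~\ref{thm:renormalised_equation} to identify the renormalised equation, and pass to the limit by continuity.

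Two points deserve correction. First, the rule naturally generated by $F$ need \emph{not} be complete (see Remark~\ref{rem:not_complete} for $\Phi^4_{4-\delta}$); the paper explicitly invokes \cite[Prop.~5.20]{BHZalg} to pass to its completion while preserving subcriticality, and one must then check that Assumption~\ref{assump:RregComplete} holds and that $F$ obeys the completed rule. Second, you misplace the role of Assumption~\ref{assump:remove_noise_positive}: it is not used for reconstruction of $F^{\mfl}_\mft(\mathbf{U})\Xi_\mfl$, but rather at the very end of the argument to verify that $M_{\rho,\eps}^{\BPHZ}F$ has \emph{exactly} the form~\eqref{eq: renormalised equation}. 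The assumption forces $M^*\Xi_{(\hat\mfl,\mfo)}=\Xi_{(\hat\mfl,\mfo)}$ whenever $\hat\mfl\neq\emptyset$, so $(MF)^{(\hat\mfl,\mfo)}_\mft = F^{(\hat\mfl,\mfo)}_\mft$ for genuine noise drivers, and the only new contributions come from the ``purely extended'' drivers $(\mathbf{0},\mfo)$ with $\mfo\neq 0$, which produce the counterterms $\ell^{(\rho,\eps)}_{\BPHZ}(\tau)\,\Upsilon^F_\mft[\tau]/S(\tau)$. Without this step, Theorem~\ref{thm:renormalised_equation} would in general yield an equation involving products and derivatives of noises (cf.\ the example in Section~\ref{subsec: drivers}), not the clean form stated in the theorem.
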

\begin{remark}
One does not, in general, have unique (or even canonical) choices for the maps $\mcS^+$ and $\mcS^-$; this is already evident on the level of SDEs where the choice of It{\^o} or Stratonovich integration (or any interpolation thereof) 
leads to different solutions.
However, in the language of regularity structures, non-uniqueness is captured entirely by the choice of model above $\xi$.
In particular, as the formulation of the theorem indicates, the model can be chosen independently of the mollifier $\rho$, and the maps $\mcS^+$ and $\mcS^-$ become canonical once this choice is made.
\end{remark}
\begin{remark}\label{rem:choice_for_constants}
A possible choice for the constants \eqref{eq:familyOfConst} is given by
\begin{equs} \label{formula_constant}
 c_{\rho,\eps}^\tau
  = \E\big[\PPi^{\varrho,\varepsilon} \tilde{\CA}^{\ex}_{-} \tau(0)\big]\;,
 \end{equs}
where $ \tilde{\CA}^{\ex}_{-} $ is the twisted antipode defined in \cite[Prop.~6.17]{BHZalg} and 
$ \PPi^{\varrho,\varepsilon} $ is the canonical lift of $(\xi_\mfl^{(\varepsilon)})_{\mfl\in\mfL_-}$ defined in~\cite[Sec.~6.2]{BHZalg}.
In particular, $c_{\varepsilon,\rho}^\tau$ can be taken as zero whenever $\tau$ is planted or 
 of the form \eqref{eq:generic T} with $k\neq 0$.
\end{remark}

\begin{remark}\label{remark: conditions on trees for blackbox}
Assumption~\ref{assump:remove_noise_positive} is in fact automatic for locally subcritical systems of SPDEs where all driving noises have the same regularity.
In the general case, this condition is more for convenience than a fundamental necessity. 
This assumption was also made in~\cite{CH} to ease the presentation of the proof.
If this condition fails, one can always rewrite the system under consideration in such a way
that it is satisfied for the rewritten (equivalent) system. 
We also mention that our formulation of the renormalised equation in~\eqref{eq: renormalised equation} is based on this assumption. 
Our main result, Theorem~\ref{thm: algebraic main theorem}, which is a combinatorial\slash algebraic result, does not require this condition and in the more general case one may see new terms involving components of the noises $\xi$ in the renormalised equation\dash see Section~\ref{thm: algebraic main theorem}.

In Assumption~\ref{assump:no_diverging_variances}, the first condition  guarantees that none of the stochastic objects we need to control have diverging variances. 
Diverging variances cannot be cancelled by the subtraction of renormalisation constants and thus fall outside of our framework. 
The difficulty of dealing with this scenario was already observed in \cite{MR1883719,MR2667703,Hoshino2016,HuNualart,CH}
and one cannot expect the conclusions of Theorem~\ref{thm: THE black box} to hold in this case.
The second condition likewise prevents the occurrence of divergences we cannot renormalise.
\end{remark}
\begin{remark}\label{rem:stationarity}
The statement of Theorem~\ref{thm: THE black box} is more convoluted than a classical maximal existence theorem
due to our splitting of the solution map into maps $\CS^-$ and $\CS^+$. 
This is because our method of proof is to solve an equation for the remainder term of a truncated perturbative expansion at stationarity.
This truncated expansion is given by $\mathcal{S}^{-}$, which can be written explicitly\footnote{See the proof of Theorem~\ref{thm: THE black box} in Section~\ref{sec: proof of blackbox}.} as a finite sum of renormalised multilinear functionals of $\xi$.
Each such functional makes sense as a global in time object and there is one such functional for every $\mft$-non-vanishing tree $T^{\mfm}_{\mff}$ with $|T^{\mfm}_{\mff}|_{\s} \leq -(|\mft|_\s\wedge \s_0)$.
The remainder, which may blow up in finite time, is then given by $\mathcal{S}^{+}(\psi)$.

The generality allowed by the assumptions of Theorem~\ref{thm: THE black box} means that this notion of maximal solution is the best one can hope for. 
This is also needed for treating equations with scaling behaviour like the dynamical 
$\Phi^4_d$ problem with $d \ge 3$. 
Indeed, our result then applies as stated for arbitrary (non-integer) $d < 4$ for which it is not possible
to find a function space $\CB$ containing typical realisations of the solutions and such that even
the \textit{deterministic} Allen-Cahn equation is well-posed for arbitrary initial data in $\CB$.
\end{remark}
\subsection{Applications}\label{sec: examples}
\subsubsection{The generalised KPZ equation}\label{sec: example of genKPZ}
We apply Theorem~\ref{thm: THE black box} to the generalised KPZ equation as described in Section~\ref{sec: example of genKPZ part 0}.
We shall see below that the convergence statement of Theorem~\ref{thm: THE black box} simplifies greatly in this example since one can readily check that there are no $\mft$-non-vanishing trees with $|T^\mfm_\mff|_\s \leq -(|\mft|_\s\wedge \s_0)$, so by Remark~\ref{rem:stationarity}, one can take $\mathcal{S}_{\varrho,\varepsilon}^{-}(\xi) = \mathcal{S}^{-}(\xi) = 0$.

For every $\mft \in \mfL_+$, we set $\mathscr{L}_{\mft} \eqdef \partial_{1}^{2}-1$. 
Note that we chose $\mathscr{L}_{\mft}$ to satisfy~\eqref{eq:exponentialBound}, and added the term $\Y_{(\mft,0)}$ to $F_{\mft}^{\mathbf{0}}$ accordingly.

We define $\reg : \mfL \to \R$ and $\ireg : \mfL_+ \to \R$ by $\reg(\mft) = \ireg(\mft) = \frac{1}{2} - 3 \kappa$ for $\mft \in \mfL_{+}$, and $\reg(\mfl) = -3/2-2\kappa$ for $\mfl \in \mfL_{-}$.
Then it is straightforward to check that $F$ obeys $\reg$ in the sense of Definition~\ref{def: obey for blackbox}.

We turn to checking the assumptions of Theorem~\ref{thm: THE black box}.
\begin{itemize}
\item Assumption~\ref{assump:Schauder1} is readily verified upon noting that
\begin{equ}
n_\mft = |\mfl|_\s \wedge (2\ireg(\mft)-2) = \Big(-\frac{3}{2}-\kappa\Big)\wedge (-1-6\kappa)\;.
\end{equ}
\item One can readily check that the $\mft$-non-vanishing trees with lowest degree are of the form $\Xi_\mfl$, for which $|\Xi_\mfl|_\s = -\frac32 - \kappa$.
Hence, as we already pointed out, there are no $\mft$-non-vanishing trees with $|T^\mfm_\mff|_\s \leq -(|\mft|_\s\wedge \s_0)=-2$, from which Assumption~\ref{assump:planted trees} follows (see footnote~\ref{foot:subtree}).
\item As mentioned in Remark~\ref{remark: conditions on trees for blackbox}, Assumption~\ref{assump:remove_noise_positive} follows from the 
fact that $F$ obeys $\reg$ and $|\cdot|_{\s}$ is constant on $\mfL_{-}$.
\item Assumption~\ref{assump:no_diverging_variances} is an immediate consequence of the easily proven fact that for any $T^{\mfm}_{\mff} \in \mathring{\mathscr{T}}_{\mft_i,-}[F]$ with $|E_{T}| > 0$ one has $|T^{\mfm}_{\mff}|_{\s} \ge -1 - 2 \kappa > -\frac32 + \kappa$.
\end{itemize}

It follows that one can apply Theorem~\ref{thm: THE black box} to the generalised KPZ equation with the further simplification that $\mathcal{S}_{\varrho,\varepsilon}^{-}(\xi) = \mathcal{S}^{-}(\xi) = 0$ as claimed.

We finish this subsection by performing explicit computations of some of the terms 
$\Upsilon^{F}_{\mft}[T^{\mfm}_{\mff}](\mathbf{u_{\eps}})$ and constants $c_{\rho,\eps}[T^{\mfm}_{\mff}]$ appearing on the RHS of the renormalised equation \eqref{eq: renormalised equation} for $u_{\eps} = (u_{\mft,\eps})_{\mft \in \mfL_{+}}$\dash here $u_{\eps}$ is playing the role of $\phi_{\eps}$.
As in \eqref{eq:bold phi def}, we set $ \mathbf{u_{\eps}} $ for $ \big(\d^pu_{\mft,\eps}(z):\ (\mft,p) \in \mcb{O} \big)
 \in \R^{\mcb{O}}\;. $
There is a degree of freedom in choosing $c_{\rho,\eps}[\cdot]$ as given in \eqref{formula_constant} in that in order to specify $\PPi^{\varrho,\eps}$ one must fix a choice of truncation of $(\partial_{t} - \mathscr{L}_{\mft})^{-1}$ for each $\mft \in \mfL_{+}$.
For convenience, since all these kernels coincide in our case, just fix a single kernel 
$K(z)$ which is a smooth function on $\Lambda \setminus \{0\}$ that is of compact support, agrees with $(\partial_{t} - \mathscr{L}_{\mft})^{-1}$ for $|z|_{\s} \le 1$, and integrates to $0$. 

Before presenting it we describe some notational conventions we use in the computation.
In order to lighten notation we will drop the $\eps$ from notation, writing $u = (u_{\mft}: \mft \in \mfL_{+}) $ and $\mathbf{u}= \big(\d^pu_{\mft}(z):\ (\mft,p) \in \mcb{O} \big)
 \in \R^{\mcb{O}}\;. $
 The functions  $\sigma_{\mathfrak{a}}^{\mathfrak{d}}(u)$ and $\Gamma^{\mathfrak{a}}_{\mathfrak{b},\mathfrak{c}}(u)$, $\mathfrak{a},\mfb,\mfc \in \mfL_{+}$, $\mathfrak{d} \in \mfL_{-}$, are always written as functions of $u$, not $\bu$, to make it clear that they depend on $(u_{\mft'}: \mft' \in \mfL_{+})$, but not on any derivatives of these functions. Moreover, when we write an expression like $D_{\mft'}\sigma_{\mft}^{\mfl}(u)$ we are taking a derivative in the $u_{\mft'}$ argument. 
We also recall that in this example we use $z = (t,x)$ notation for points in space-time and so we use the shorthand $\partial_{x} \eqdef \partial^{(0,1)}$.

Let us consider the trees given by $\mcb{I}_{(\mfj,0)}(\mathbf{X}^{(0,1)} \Xi_{\mfl}) \Xi_{\mfl} $ and $ \mcb{I}_{(\mfj,0)} (\Xi_{\mfl}) \mcb{I}_{(\mfq,(0,1))}(\Xi_{{\mfl}}) $, where $\mfj, \mfq \in \mfL_{+}$ and $\mfl \in \mfL_{-}$ as in the notation of~\eqref{eq:generic T}. 
Both trees are of degree $-2\kappa$
and we can depict them graphically as in \cite{FrizHairer,WongZakai} by
\begin{equs}
\mcb{I}_{(\mfj,0)}(\mathbf{X}^{(0,1)} \Xi_{\mfl}) \Xi_{\mfl} = \<Xi2X>, \enskip
\mcb{I}_{(\mfj,0)} (\Xi_{\mfl}) \mcb{I}_{(\mfq,(0,1))}(\Xi_{\mfl})  
= \<IXi^2>\;.
\end{equs}
We suppress the indices in the graphical notation for the sake of conciseness but for what follows $\mfj$, $\mfq$, and $\mfl$ have been fixed. 
Circles
represent instances of $\Xi_{\mfl}$, the cross represents the factor $\mathbf{X}^{(0,1)}$.
We first walk through the computation of $\Upsilon^{F}_{\mft}[\<Xi2X>](\bu)$ for some arbitrary $\mft \in \mfL_{+}$.

\begin{equs}
\Upsilon_{\mft}^{F}[\<Xi>](\bu) & = F_{\mft}^{\mfl}(\bu) = \sigma^{\mfl}_{\mft}(u)
\\
\Upsilon_{\mft}^{F}[\<XiX>] (\bu) & = 
\big( \partial^{(0,1)} \Upsilon_{\mft}^{F}[\<Xi>] \big)(\bu) 
=
\big( \partial^{(0,1)} F_{\mft}^{\mfl} \big)(\bu) 
= 
\sum_{\mft' \in \mfL_{+}} (\partial_x u_{\mft'}) (D_{\mft'} \sigma_{\mft}^{\mfl})(u)
\\ 
\Upsilon^{F}_{\mft}[\<Xi2X>](\bu) 
& = \left( \Upsilon_{\mfj}^{F}[\<XiX>]
  D_{(\mfj,0)}  F_{\mft}^{\mfl}  \right)(\bu)
\\ & = 
(D_{j} \sigma_{\mft}^\mfl)(u)
\sum_{\mft' \in \mfL_{+}} (\partial_x u_{\mft'}) (D_{\mft'} \sigma^{\mfj}_{\mfl})(u)\;. 
\end{equs}
Using formula~\eqref{formula_constant} we get:
\begin{equs}
c_{\rho,\eps}[ \<Xi2X> ]
& =  \E \big[ \big(\PPi^{\varrho,\varepsilon} \tilde{\CA}^{\ex}_{-} \<Xi2X>\big)(0)\big] = - \E \big[\big( \PPi^{\varrho,\varepsilon}\<Xi2X>\big)(0)\big] \label{eq:first const}
\\ & = 
\int_{\Lambda^{2}}
\rho_{\eps}(z-z') \rho_{\eps}(-z')z_{1}K(-z)\,dz\,dz' 
\;.
\end{equs}
For the tree $\<IXi^2>$ we have
\begin{equs}
\Upsilon^{F}_{\mft}[\<IXi^2>](\bu)
&=
\big[
\Upsilon^{F}_{\mfj}[\<Xi>]\ 
\Upsilon^{F}_{\mfq}[\<Xi>]\ 
D_{(\mfj,0)} D_{(\mfq,(0,1))} F^{\mathbf{0}}_{\mft}
\big]
(\bu)   
\\
{}&=  \sigma^{\mfl}_{\mfj}(u)\sigma^{\mfl}_{\mfq}(u) 
\sum_{\mft' \in \mfL_{+}} (\partial_x u_{\mft'}) \big[ D_{\mfj} (\Gamma^{\mft}_{\mft',\mfq} +   \Gamma^{\mft}_{\mfq,\mft'}) \big](u)
\end{equs}
and, writing $ K^{(\varrho,\varepsilon)}  \eqdef K \ast \varrho_{\varepsilon} $,
\begin{equs}
c_{\rho,\eps}[\<IXi^2>]
&=  
\E 
\big[
\big(\PPi^{\varrho,\varepsilon}
\tilde{\CA}^{\ex}_{-}        
\<IXi^2>\big) (0) \big]
 =  - \E
\big[ 
\big(\PPi^{\varrho,\varepsilon}  
\<IXi^2>\big)(0)
\big] \\ 
{}&=
\int_{\Lambda}
K^{(\varrho,\varepsilon)}(z) (\partial_{x}K^{(\varrho,\varepsilon)})(z)\,dz\;. \label{eq:second const}
\end{equs}
Note that~\eqref{eq:second const} vanishes, and, for spatially symmetric $\rho$ and $K$, so does~\eqref{eq:first const};
in fact, this remains true for any choice of noise $\xi \in \Gauss$.
\subsubsection{The dynamical $\Phi^4_{4-\delta}$ model}
\label{subsec:Phi44-delta}
We consider in this subsection the equation
\begin{equs}
\partial_t \phi = (\Delta-1)\phi - \phi^3 + \xi\;.\label{eq:Phi44-delta}
\end{equs}
We work here in $1+4$ dimensions $\Lambda \eqdef \R \times \T^4$ and use the parabolic scaling $\s = (2,1,1,1,1)$.
We fix some $\delta > 0$ and consider $\xi$ as a Gaussian noise which satisfies the conditions of Definition~\ref{definition noises} for every $|\mfl|_\s < -3+\frac{\delta}{2}$ (constructed, for example, by the convolution of white noise on $\Lambda$ with a slightly regularising kernel).
Note that, in terms of scaling properties, the cases $\delta=2$ and $\delta=1$ behave  like the usual $\Phi^4_2$~\cite{DPD2} and $\Phi^4_3$~\cite{CC13,Regularity,HX16,MourratWeber16} equations respectively, while the case $\delta = 0$ corresponds to the critical regime.

In this example, we demonstrate a situation where one is unable to start the equation from initial data of the ``natural regularity'', i.e., of the same regularity as the solution, and thus requires the full power of Assumptions~\ref{assump:Schauder1} and~\ref{assump:planted trees} and the decomposition of $\mcS$ into $\mcS^{\pm}$ in Theorem~\ref{thm: THE black box}.
As we shall see below, for every $\delta > 0$, one must take $\varphi_{\varepsilon}(0,\cdot) = \psi + \mcS^-_{\varrho,\varepsilon}(0,\cdot)$ where $\psi \in \mcC^\eta_{\bar{\s}}(\T^d)$ with $\eta > (-\frac{2}{3}) \vee (-\delta)$, and $\mcS^-_{\varrho,\varepsilon}$ is the explicit stationary part which converges in $\mcC^{-1+\delta/2-\kappa}_\s(\Lambda)$ in probability as $\varepsilon \to 0$.
In particular, rougher noise forces us to start the equation from a smoother initial condition for the remainder (which can be interpreted as starting the equation closer to equilibrium).
\begin{remark}
One can see directly the necessity of the lower bound $\eta > -\frac{2}{3}$ by recalling that the deterministic map which sends $\phi(0,\cdot)$ to the solution~\eqref{eq:Phi44-delta} with zero noise $\xi = 0$ is continuous only for $\varphi(0,\cdot) \in \mcC^\eta_{\bar{\s}}(\T^d)$ with $\eta > -\frac{2}{3}$, cf.~\cite[Rem.~9.9]{Regularity}.
\end{remark}
We fix some $\delta > 0$ and $\kappa \in (0,\frac{\delta}{6})$ for the rest of the example. 
First, a computation shows that the renormalised equation takes the form
\begin{equ}
\partial_t \phi_\varepsilon = (\Delta-1) \phi_\varepsilon - \phi^3_\varepsilon + C_{\varepsilon,2} \phi^2_\varepsilon + C_{\varepsilon,1} \phi_\varepsilon + C_{\varepsilon,0} + \sum_{i=1}^4 C_\varepsilon^{(i)}\partial_i \phi_\varepsilon + \xi^{(\varepsilon)}\;.\label{eq:renormalised Phi eq}
\end{equ}
Indeed, here $\mfL_+ = \{\mft\}$ and $\mfL_- = \{\mfl\}$ are singletons, $|\mft|_\s = 2$, and $|\mfl|_\s \eqdef -3+\frac{\delta}{2} - \kappa$.
The corresponding nonlinearity is the cubic function $F(\Y) = \Y_{(\mft,0)}^3$.
Suppose that $\tau = T^\mfm_\mff \in \mathring{\mathscr{T}}_{\mft,-}[F]$ has at least one edge, i.e.\ it is not of the form 
$\Xi_\mfl$ (note that, for $k \neq 0$, $\mathbf{X}^k\Xi_\mfl$ is not $\mft$-non-vanishing and thus does not belong to $\mathring{\mathscr{T}}_{\mft,-}[F]$).
The following can readily be deduced by an inductive argument:
if $x \in N_T$ is a leaf, then $\mnoise(x) = \mfl$; if $x \in N_T$ is not a leaf, then $\mnoise(x) = 0$; if $e \in E_T$, then $\mff(e) = (\mft,0)$.
Moreover, every node $x \in N_T$ must have $\mpoly(x) = 0$ and, if $x$ is not a leaf, must have three outgoing edges with the following possible exceptions:
\begin{enumerate}
\item \label{pt:phi2I} there is exactly one node with one outgoing edge; in this case $|\tau|_\s = -1+\frac{\delta}{2}-\kappa$ if $\tau$ is the planted tree $\mcb{I}_{(\mft,0)}[\Xi_\mfl]$, and $|\tau|_\s \geq -1+\frac{3\delta}{2}-3\kappa$ otherwise,
\item \label{pt:partial_phi} there is exactly one node $x$ with two outgoing edges and $\mpoly(x) = e_i$ for some $i=1,\ldots, 4$; in this case $|\tau|_\s = -1+\delta-2\kappa$ if $\tau=\mathbf{X}^{e_i}\mcb{I}_{(\mft,0)}[\Xi_\mfl]^2$, and $|\tau|_\s \geq -1+2\delta-4\kappa$ otherwise,
\item there are exactly two nodes with two outgoing edges each; in this case $|\tau|_\s \geq -1+\frac{3\delta}{2}-3\kappa$,
\item there is exactly one node with two outgoing edges; in this case $|\tau|_\s \geq -2+\delta-2\kappa$,
\item no exceptions; in this case $|\tau|_\s \geq -3+\frac{3\delta}{2}-3\kappa$. 
\end{enumerate}
An inductive argument tells us that the counterterms associated to the above possibilities are given respectively by (up to combinatorial factors):
\[
\textnormal{1. }
\phi^2_\varepsilon,
\enskip 
\textnormal{2. }
\partial_i \phi_\varepsilon,
\enskip 
\textnormal{3. }
\phi^2_\varepsilon,
\enskip
\textnormal{4. }
\phi_\varepsilon,
\enskip
\textnormal{and}
\enskip
\textnormal{5. a numeric constant.}
\]
Furthermore, recall from Remark~\ref{rem:choice_for_constants} that planted trees and trees with non-zero polynomial decorations at the root do not contribute to the counterterms, and thus the first sub-cases in cases~\ref{pt:phi2I} and~\ref{pt:partial_phi} can be ignored when determining the renormalised equation.
It follows that $C_{\varepsilon,2}\phi_\varepsilon^2$ and $C_\varepsilon^{(i)}\partial_i \phi_\varepsilon$ do not appear in~\eqref{eq:renormalised Phi eq} whenever $\delta > \frac{2}{3}$ and $\delta > \frac{1}{2}$ respectively, which explains their absence in the usual $\Phi^4_3$ equation.
Similarly, $C_{\varepsilon,1}\phi_\varepsilon$ and $C_{\varepsilon,0}$ do not appear whenever $\delta > 2$, precisely the values for which \eqref{eq:Phi44-delta} is classically well-posed.
Note further that, due to the symmetry $\phi \mapsto -\phi$, one can in fact take $C_{\varepsilon,2}=C_{\varepsilon,0}=0$ since our noise has vanishing odd moments.
\begin{remark}\label{rem:not_complete}
Equation~\eqref{eq:Phi44-delta} also demonstrates an example where the naive rule constructed from the corresponding nonlinearity is not complete in the sense of~\cite{BHZalg}.
Indeed, the rule $R(\mfl) = \{()\}, R(\mft)=\{(\Xi_\mfl),([\mcb{I}_{(\mft,0)}]_\ell)\}_{\ell =0,\ldots, 3}$ ceases to be complete for $\delta \leq \frac{1}{2}$,
and its completion~\cite[Def.~5.21]{BHZalg} is given by adding $\{(\mcb{I}_{(\mft,e_i)})\}_{i=1,\ldots 4}$ to $R(\mft)$.
While the consideration of rules is not necessary to compute the renormalised equation, we note that $R$ fails to be complete for the same reason as the counterterm $\sum_{i=1}^4 C_\varepsilon^{(i)}\partial_i \phi_\varepsilon$ appears in~\eqref{eq:renormalised Phi eq}, which are consequences of the (negative) renormalisation procedure.
\end{remark}
Continuing on, we define $\reg : \mfL \to \R$ by $\reg(\mfl) \eqdef -3+\frac{\delta}{2}-2\kappa$ and $\reg(\mft) \eqdef -1 + \frac{\delta}{2} - 3\kappa$.
Since $\reg(\mft) > -1$, we see that $F$ obeys $\reg$, i.e., the equation~\eqref{eq:Phi44-delta} is subcritical.
Furthermore, choosing any $\ireg:\mfL_+ \to \R$ such that $\ireg(\mft) > (-\frac{2}{3})\vee(-\delta+6\kappa)$, we see that Assumption~\ref{assump:Schauder1} is satisfied.
Indeed, the first condition in Assumption~\ref{assump:Schauder1} is trivial since $\mcb{O}_+ = \emptyset$.
Furthermore, using Remark~\ref{remark:F const}, we have that $n_\mft = |0|_\s + (3\ireg(\mft)) \wedge (\ireg(\mft) + 2\reg(\mft))$.
Hence the bounds in the second condition of Assumption~\ref{assump:Schauder1} are respectively equivalent to
\begin{equs}
(3\ireg(\mft)) \wedge (\ireg(\mft) + 2\reg(\mft)) > -2
\; &\Leftrightarrow \;
\ireg(\mft) > (-2/3)\vee (-\delta+6\kappa)\;,
\\
(3\ireg(\mft)) \wedge (\ireg(\mft) + 2\reg(\mft)) + 2 > \ireg(\mft)
\; &\Leftrightarrow \;
\ireg(\mft) \wedge \reg(\mft) > -1\;,
\end{equs}
both of which are satisfied with the above choices.
Note that Assumption~\ref{assump:remove_noise_positive} is again automatic by Remark~\ref{remark: conditions on trees for blackbox}, while Assumptions~\ref{assump:no_diverging_variances} and~\ref{assump:planted trees}, are readily checked using the above classification of $\mathring{\mathscr{T}}_{\mft,-}[F]$.
We thus meet all the criteria to apply Theorem~\ref{thm: THE black box}.

To summarise, it follows from Theorem~\ref{thm: THE black box} that for any fixed $\delta > 0$ there exists
\begin{itemize}
\item a choice of constants $C_{\varepsilon,2},C_{\varepsilon,1},C_{\varepsilon,0}, C_{\varepsilon}^{(i)}$, $i=1,\dots, 4$, and
\item a function of the noise $\mcS^-_{\varrho,\varepsilon}(\xi)$ which is smooth for every $\varepsilon > 0$,
\end{itemize}
such that, as $\varepsilon \downarrow 0$,
\begin{itemize}
\item $\mcS^-_{\varrho,\varepsilon}(\xi) \to \mcS^-(\xi)$ in $\mcC^{-1+\delta/2-\kappa}_\s(\Lambda)$ in probability, and
\item for any $\eta > (-\frac{2}{3})\vee (-\delta)$, the solution to the renormalised equation~\eqref{eq:renormalised Phi eq} with initial condition $\phi_\varepsilon(0,\cdot) = \psi + (\mcS^-_{\varrho,\varepsilon}(\xi))(0,\cdot)$, for fixed $\psi \in \mcC^\eta_{\bar\s}(\T^d)$,
converges in probability to local solutions in the sense dictated by the theorem.
\end{itemize}
%
\begin{remark}
In the case $\delta \in (\frac{2}{3},2]$, the stationary part $\CS^-$ takes on the simple form $\CS^-_{\varrho,\eps}(\xi) \eqdef G*\xi^{(\varrho,\eps)} \to \CS^-(\xi) \eqdef G*\xi$, where the convergence moreover happens in the space $\CC([0,T], \CC^{-1+\delta/2-}_{\bar\s}(\T^4))$ (cf.~\cite[Sec.~9.4]{Regularity}).
One can leverage this fact to build a more explicit (though equivalent) solution theory for the $\Phi^4_3$ equation than that given by Theorem~\ref{thm: THE black box}.
\end{remark}
\subsubsection{Renormalisation of non-linearities}
  We close this section with a computation that shows how, with a pre-processing trick, the algebraic framework described in the next two sections also allows us to identify an action of the renormalisation group on reasonable local non-linearities even when the non-linearity in question doesn't appear on the right hand side of~\eqref{ivp}. 
This example is different in flavour from Sections~\ref{sec: example of genKPZ}
 and~\ref{subsec:Phi44-delta} since it is more of an advertisement of the generality of the upcoming Theorem~\ref{thm: algebraic main theorem} rather than the applicability of the self-contained Theorem~\ref{thm: THE black box}. 

To illustrate this trick we work with the equation
\begin{equs}\label{eq:gpam}
\partial_t  u_{\mfb} = (\Delta - 1)u_{\mfb} + u_{\mfb} + g(u_{\mfb}) \xi_{\mfl}\;,
\end{equs}
in $1+2$ dimensions, on $\Lambda \eqdef \R \times \T^2$, where $g:\R \rightarrow \R$ is a smooth function and $\xi_{\mfl}$ is spatial white noise, that is $\xi_{\mfl}$ is Gaussian with covariance structure $\E[\xi_{\mfl}(t,x) \xi_{\mfl}(s,y)] = \delta(x-y)$. 
Equation~\eqref{eq:gpam} is sometimes called the generalised parabolic Anderson model (gPAM). 
We use the parabolic scaling $\s = (2,1,1)$ and, since we have a scalar equation with a single driving noise,
both $\mfL_{+} = \{\mfb\}$ and $\mfL_{-} = \{\mfl\}$ are singletons.
We fix $\mathscr{L}_{\mfb} = \Delta - 1$ so we have $F^{\mfl}_{\mfb}(\mathbf{u}) = g(u_{\mfb})$ and $F^{\mathbf{0}}_{\mfb}(\mathbf{u}) = u_{\mfb}$.  
We can set $|\mft|_{\s} = 2$ and $|\mfl|_{\s} = -1 - \kappa$ for any $\kappa > 0$ and to keep the number of symbols of negative degree to a minimum we assume that $\kappa \in (0,\frac{1}{3})$. 
We then have $\mathring{\mathscr{T}}_{\mfb,-}[F] = \{ \<Xi2> \}$ where $\<Xi2> = \Xi_{\mfl}\mcb{I}_{(\mfb,0)}[\Xi_{\mfl}]$. 
One can then check that $\Upsilon_{\mfb}^{F}[\<Xi2>](\mathbf{u}) = g'(u_{\mfb})g(u_{\mfb})$. 
We leave the specification of $\reg$, $\ireg$, and the verification of the assumptions of Theorem~\ref{thm: THE black box} to the reader\dash the conclusion is that for any appropriate mollifier $\rho$ there exist constants $c^{\<Xi2>}_{\rho,\eps}$, for $\eps \in (0,1]$, such that, started from appropriate initial data, the local solutions of 
\begin{equs}\label{eq:renorm gpam}
\partial_t  u_{\mfb,\eps} = (\Delta - 1)u_{\mfb,\eps} + u_{\mfb,\eps} + g(u_{\mfb,\eps}) \xi_{\mfl,\eps} +  g'(u_{\mfb,\eps})g(u_{\mfb,\eps}) c^{\<Xi2>}_{\rho,\eps},\;
\end{equs}
where $\xi_{\mfl,\eps} \eqdef \xi_{\mfl} \ast \rho_{\eps}$ as before, converge to a limit $u$ as $\eps \downarrow 0$.

Readers familiar with the theory of regularity structures will know that the limit $u_{\mfb}$ can be obtained directly as a solution \eqref{eq:gpam} when one interprets the equation as ``being driven'' by a particular rough model $\hat{Z}$.  
Then the convergence of the $u_{\mfb,\eps}$ to $u_{\mfb}$ is a consequence of the fact that suitably renormalised smooth models $\hat{Z}_{\eps}$ converge to $\hat{Z}$ as $\eps \downarrow 0$, where the $u_{\mfb,\eps}$ arise as the solutions to~\eqref{eq:gpam} driven by $\hat{Z}_{\eps}$. 
As mentioned before, the contribution of this article is to show that the $u_{\mfb,\eps}$ are themselves solutions to classical PDEs driven by $\xi_{\mfl,\eps}$ (for the case of gPAM, this can be verified by hand as in~\cite{Regularity}).

The theory of regularity structures also allows one to make sense of classically ill-defined non-linearities of $u_{\mfb}$, such as $f(u_{\mfb})\xi_{\mfl}$ for some smooth function $f$, in a ``renormalised sense''. One can use the modelled distribution expansion of $u_{\mfb}$ to obtain a modelled distribution for $f(u_{\mfb}) \xi_{\mfl}$ and then applying to this the reconstruction operator associated to $\hat{Z}$.

It is natural to ask if $f(u_{\mfb})\xi_{\mfl}$ constructed in the above manner can be obtained as a limit of $h_{\eps}(u_{\mfb,\eps},\xi_{\mfl,\eps})$ for appropriate $h_{\eps}(\cdot,\cdot)$ as $\eps \downarrow 0$. 
For the case of gPAM, this can be done by hand fairly easily, but the method we give below is more robust and much more tractable for analogous computations for more complicated equations like~\eqref{eq: gen KPZ} \dash see \cite{BGHZ} 
for a situation where one needs such a result. 
  
The idea is simple and involves working with a larger system of equations. 
We set $\mfL_{+} \eqdef \{ \mfb, \mfq \}$ and look at 
\begin{equs}
\ u_{\mfb} =& P_{\mfb} 
\left[ 
u_{\mfb} + g(u_{\mfb}) \xi_{\mfl}
\right] \qquad\textnormal{and}\qquad
u_{\mfq} 
=& 
P_{\mfq}
\left[
f(u_{\mfb})\xi_{\mfl} 
\right]\;.
\end{equs}
Here we have written the system as a fixed point problem, where $P_{\mfb} \eqdef (\partial_{1} - \mathscr{L}_{\mfb})^{-1}$. The new operator $P_{\mfq}$ just corresponds to the identity or some approximate identity and has been inserted so that we can encode the problem of obtaining $f(u)\xi$ as solving a system of equations. 
We set $|\mfq|_{\s} = 0$ \dash 
we are slightly out of the scope of the assumptions of Theorem~\ref{thm: THE black box} but the proof still carries through.\footnote{Adopting the notation of later sections, a simple way to make this rigorous is to set $U_\mfq \eqdef F^\mfl_\mfq(U_\mfb)\Xi_\mfl$ (or more precisely $U_\mfq \eqdef \mcI_{\mfq}[F^\mfl_\mfq(U_\mfb)\Xi_\mfl]$ so that the trees in $U_\mfq$'s expansion are planted and thus fit into the framework of Section~\ref{sec: alg and main theorem} \dash  here $\mcI_{\mfq}$ acts ``trivially'' for our models) and observe that the proof of Theorem~\ref{thm:renormalised_equation} implies $u_\mfq(x) \eqdef \hat\mcR U_\mfq (x) = \sum_{\hat\mfl}(MF)^{\hat\mfl}_\mfq(u_\mfb(x))\xi_{\hat\mfl}(x)$, which is precisely what we want.}

In this new system we have $F_{\mfb}^{\mathbf{0}}(\mathbf{u}) = u_{\mfb}$, $F_{\mfb}^{\mfl}(\mathbf{u}) = g(u_{\mfb})$,
$F_{\mfq}^{\mathbf{0}} = 0$, and $F_{\mfq}^{\mfl}(\mathbf{u}) = f(u_{\mfb})$. 
For our larger system we now have new symbols which have an edge of type $\mfq$, but all of these new symbols vanish under $\Upsilon^{F}_{\mft}[\cdot]$, for $\mft \in \{\mfb,\mfq\}$, since $u_{\mfq}$ doesn't appear in any component of $F$. 
Again, the sole symbol contributing counterterms is $\<Xi2>$ and we observe that the system gets renormalised to
\begin{equs}
\ u_{\mfb,\eps} =& P_{\mfb} 
\left[ 
u_{\mfb,\eps} + g(u_{\mfb,\eps}) \xi_{\mfl,\eps}
+
c^{\<Xi2>}_{\rho,\eps}
\Upsilon^{F}_{\mfb}[\<Xi2>](\mathbf{u}_{\eps})
\right]\\
u_{\mfq,\eps} 
=& 
P_{\mfq}
\left[
f(u_{\mfb,\eps})\xi_{\mfl,\eps}
+
c^{\<Xi2>}_{\rho,\eps}
\Upsilon^{F}_{\mfq}[\<Xi2>](\mathbf{u}_{\eps})
\right]\;
\end{equs}
The $\mfb$ component works out the same as before, that is $\Upsilon^{F}_{\mfb}[\mathbf{u}] = g'(u_{\mfb})g(u_{\mfb})$.
For the $\mfq$ component we have 
$\Upsilon^{F}_{\mfq}[\mathbf{u}] = f'(u_{\mfb})g(u_{\mfb})$ which indicates that the function $h_{\eps}$ is given by 
\[
h_{\eps}(u_{\mfb,\eps},\xi_{\mfl,\eps})
\eqdef
f(u_{\mfb,\eps})\xi_{\mfl,\eps}
+
c^{\<Xi2>}_{\rho,\eps}
f'(u_{\mfb,\eps})g(u_{\mfb,\eps})\;.
\]
\section{Algebraic theory and main theorem}\label{sec: alg and main theorem}
In this section we will state the main theorem of this paper,  Theorem~\ref{thm: algebraic main theorem}. 
 In order to precisely state and prepare to prove this theorem, we will spend much of this section recalling and specializing many definitions from the theory of regularity structures.  
Before presenting what might be an intimidating level of formalism, we take a moment to describe the content of this theorem at a conceptual level. 

In the theory of regularity structures, fixed point problems are first posed and solved in a space of modelled distributions.
Recall that these modelled distributions are jets that should be viewed as generalised Taylor expansions\dash they are functions from space-time into a linear span of abstract monomials.
These abstract monomials can be split into two categories: (i) the ``classical'' monomials $\mathbf{X}^{k}$, $k \in \N^{d}$, which are  placeholders for polynomials and (ii) ``planted trees'' $\mcb{I}_{(\mft,0)}[\tau]$, which are placeholders for multi-linear functionals of the driving noise.

Since this section is more algebraic than analytic, we will not invoke the actual definition of modelled distributions (which also enforce some space-time regularity on these jets) but just investigate what it means for such a jet to solve the fixed point problem at an algebraic level, namely that the planted tree part of the jet is recovered when the full jet is inserted into the non-linearity.
We identify a necessary and sufficient condition for this to hold which we call ``coherence with the non-linearity''\dash see Definition~\ref{def: coherent} and Lemma~\ref{lemma: coherence identity}.
In words, coherence with the non-linearity forces the coefficients of planted trees in a jet to be given by particular explicit functions of the coefficients of the classical monomials.

Next we investigate how renormalisation and coherence interact. 
Recall that the renormalisation group of regularity structures can be interpreted as a group of linear operators acting on the span of abstract monomials. 
For every element of the renormalisation group, we define in~\eqref{def:MP} a map on the space of non-linearities.
In Lemma~\ref{lem: renormalisation of upsilon}, we show that this procedure induces a {\it bona fide} group action of the renormalisation group, and describe this action through the adjoint of the renormalisation maps acting on monomials. 
This then allows us to state Proposition~\ref{main prop}, which says that the combined action of the renormalisation operators on non-linearities and jets preserves coherence.
With this in hand we can then apply Lemma~\ref{lemma: coherence identity} to obtain Theorem~\ref{thm: algebraic main theorem}, which states that if a jet satisfies an algebraic fixed point problem, then the renormalised jet satisfies a renormalised algebraic fixed point problem.

We now return to formulating the setting for the precise statement and proof of Theorem~\ref{thm: algebraic main theorem}.
\subsection{Set-up of the regularity structure and renormalisation group}\label{subsec: set-up}
We freely use the notion of rules and the notations from \cite[Sec.~5.2]{BHZalg}.
We start by fixing a normal complete rule $R$\label{rule page ref} which is subcritical with respect to $\reg:\mfL \rightarrow \R$; namely
\begin{equ}
\reg(\mft)
< 
|\mft|_{\s}
+
\inf_{\mcN \in R(\mft)}
\sum_{(\mfb,p) \in \mcN}
\reg(\mfb,p)\;, \quad \forall \mft \in \mfL\;.
\end{equ}
We denote by $\mathscr{T} \eqdef (\mcb{T}^{\ex},G)$\label{reg st page ref} the (untruncated) extended regularity structure corresponding to $R$ as defined in \cite{BHZalg}.
We write $\mcT^{\ex}$\label{mcT page ref}
for the collection of decorated trees which span $\mcb{T}^{\ex}$\label{cT page ref}.
These decorated trees are of the form $ T^{\Labn,\Labo}_{\Labe} $ where $ T $ is a rooted tree endowed with a type map $ \Labhom : E_T \rightarrow \mfL  $, an edge decoration $ \Labe : E_T \rightarrow \N^{d+1} $ and two node decorations $ \Labn : 
N_T \rightarrow \N^{d+1} $, $ \Labo : N_T \rightarrow \Z^{d+1} \oplus \Z(\mfL) $. We assign to any such tree two degrees $|\cdot|_-$ and $|\cdot|_+$ by
\begin{equs}[e:degrees]
| T^{\Labn,\Labo}_{\Labe}|_{-} 
& \eqdef 
\sum_{e \in E_{T}}
(|\mft(e)|_\s - |\Labe(e)|_{\s})
+
\sum_{x \in N_{T}}
|\Labn(x)|_{\s}\;,  \\
| T^{\Labn,\Labo}_{\Labe}|_{+} 
& \eqdef 
\sum_{e \in E_{T}}
(|\mft(e)|_\s - |\Labe(e)|_{\s})
+
\sum_{x \in N_{T}}
(|\Labn(x)|_{\s} + |\mfo(x)|_\s)
\;.
\end{equs}
Given a rooted tree $ T $, we endow $ N_T $ with the partial order $ \leq $ where $ x \leq y $ if and only if $ x $ is on the unique path connecting $ y $ to the root.

We write $ \mfR $\label{RG page ref} for the corresponding renormalisation group,\footnote{At this stage we really mean the full renormalisation group $\mfR$ which gives complete freedom on how to treat extended labels, we are not restricting to the subgroups mentioned in \cite[Rem.~6.25]{BHZalg}.}
i.e., the set of linear maps $ M : \cT^\ex \to \cT^\ex$ of the form $ M_g $ for some $ g \in \mcb{G}^\ex_-$, see \cite[Sec.~6.3]{BHZalg}.

We now make a deviation from the notation of~\cite{BHZalg}.
In the sequel, we restrict ourselves to a subspace of $ \mcb{T}^{\ex} $, which, by an abuse of notation, we denote with the same symbol.
This subspace is spanned by all trees $T^{\Labn,\Labo}_{\Labe}$ for which $\Labn(x) = 0$ for all leaves $x\in N_T$ which are connected to the tree with an edge of type $ \mfl \in  \mfL_{-} $.
This means that $T^{\Labn,\Labo}_{\Labe}$ has no edges of the form $\Xi^\mfl_{k,\ell} \eqdef \mcb{I}^\mfl_k[X^\ell]$ with $\mfl\in \mfL_-$ and $\ell\neq 0$, but $X^\ell \Xi^\mfl_k \eqdef X^\ell\mcb{I}^\mfl_k[X^0]$ for example is allowed.
While both symbols appear in the regularity structure defined in~\cite{BHZalg}, it is the latter which we will identify with $\mathbf{X}^\ell \Xi_{(\mfl,k)}$ in Section~\ref{Notations for trees} (and the former will not have a meaning here).
In a similar way, we henceforth let $\mcT^\ex$ denote the set of all such trees.
Observe that $\mcb{T}^\ex$ forms a sector of our regularity structure which is closed under the action of $\mfR$.

\begin{remark}\label{rem:difference_of_notation}
The trees in $\mcT^\ex$ are of a different form to those in Section~\ref{black box section}, namely noises are treated as edges in $\mcT^\ex$ but as node decorations in $\mathring{\mathscr{T}}$.
We explain in Section~\ref{Notations for trees} how to reconcile these two formalisms, and why we choose the latter in this article (see Remark~\ref{rem:noise_node_vs_edge}).
\end{remark}
\subsection{Drivers}\label{subsec: drivers}
Our equations will be written in a mild formulation, where we ask for the components of the solution to be equal to an integral kernel operator acting on a linear combination of elements of $\Poly$ multiplied by ``driving terms''. 
The family of possible driving terms includes the noises and their derivatives $\{D^{\mfe}\xi_{\mft}: \mft \in \mfL_{-}, \mfe \in \N^{d+1} \}$, products of such terms, as well as the constant function $1$.  
In order to incorporate information on how an equation has been renormalised, it is natural to allow for some degeneracy in the set of drivers in our abstract formulation, so 
we introduce some notation for this. 
First, we define $\widetilde{\mfD}
\eqdef
\{ 
(\mfl,\mfe): \mfl \in \mfL_{-},\ 
\mfe \in \N^{d+1}
\}$.
Then we define 
\[
\hat{\mfD}
\eqdef
\Big\{
\hat{\mfl} 
\in \N^{\widetilde{\mfD}}:\ 
\exists \mft \in \mfL_{+} 
\textnormal{ with } \Xi_{\hat\mfl} \in R(\mft)
\Big\}\;,\label{drivers hat page ref}
\] 
where, for $\hat{\mfl} = \big\{ (\mfl_{1}, \mfe_{1}),\dots, (\mfl_{k},\mfe_{k}) \big\} \in \hat{\mfD}$, we set
\[
\Xi_{\hat{\mfl}} 
\eqdef
D^{\mfe_{1}}\Xi_{\mfl_{1}}
\cdots
D^{\mfe_{k}}\Xi_{\mfl_{k}}
\in \mcT^{\ex}\;,
\]
with the product being the tree product in $\mcT^{\ex}$
(in particular, $\Xi_{\mathbf{0}} = \bone$).
Note that subcriticality of $R$ implies that $\hat{\mfD}$ is finite and that, by completeness, one has 
$\mathbf{0} \in \hat{\mfD}$.

We also define, as in \cite[Def.~5.23]{BHZalg}, a set $D(\mft,N) \subset \Z^{d +1} \oplus \Z(\mfL)$ of extended decorations for every $\mft \in \mfL_{+}$ and $N \in R(\mft)$. We extend this definition by setting $D(\mft,N) = \emptyset$ for $\mft \in \mfL_{+}$ and $N \in \mcb{N} \setminus R(\mft)$ where $\mcb{N}$ is the set of 
all possible node types as in \cite[Sec.~5.2]{BHZalg}.

For each $\mft \in \mfL_{+}$ we define a corresponding set of drivers $\mfD_{\mft}$ \label{drivers for t} via
\begin{equ}[e:defDt']
\mfD_\mft 
\eqdef \big\{ 
(\hat{\mfl}, \mfo) \in \hat{\mfD} \times (\Z^{d+1} \oplus \Z(\mfL)):\ 
\mfo \in D(\mft,\hat{\mfl})
\big\}\;.
\end{equ}
We also write $\mfD \eqdef \bigcup_{\mft \in \mfL_{+}} \mfD_{\mft}$\label{drivers page ref}.
For $\mfl = (\hat{\mfl},\mfo) \in \mfD$ we use as above the shorthand
\begin{equ}[e:drivers]
\Xi_{\mfl}
\eqdef
\Xi_{\hat{\mfl}} \cdot \bullet^{0,\mfo}
\in \mcT^{\ex}, \; \; \homplus{\mfl} \eqdef \homplus{\Xi_{\mfl}} \;.
\end{equ}
The set $ \mfD $ contains all the drivers needed in order to formulate one of the main results of this paper, Theorem~\ref{thm: algebraic main theorem}.
We add all possible extended decorations $ \mfo $ such that these drivers are stable under the action of the renormalisation group.

\begin{remark}
As we will see in Examples~\ref{ex:B-series} and~\ref{ex:gKPZ}, the set $\hat\mfD$ can often be identified simply with $\mfL_-\sqcup \{\mathbf{0}\}$ (see also Section~\ref{Notations for trees}).
This happens whenever the SPDE, before and after renormalisation, contains no terms involving products or derivatives of noises;
this is the case, in particular, in the setting of the Section~\ref{black box section} due to Assumption~\ref{assump:remove_noise_positive}.
It might therefore be surprising that we choose to accommodate renormalisation procedures which
\begin{itemize}
\item have a dependence on the extended decoration $\mfo$,
\item produce a counterterm which involves derivatives or products of noises.
\end{itemize}
For the first item, such renormalisation procedures are included in~\cite{BHZalg} and the statement of Theorem~\ref{thm: algebraic main theorem} below becomes more natural if these are included here.

We give an example of a subcritical SPDE where the second item occurs.
 Set $\mfL_{+} \eqdef \{\mft\}$ and $\mfL_{-} 
\eqdef \{\mfl_{1},\mfl_{2}\}$ with $|\mft|_{\s} = 10$, $|\mfl_{1}|_{\s} = -1$, and $|\mfl_{2}|_{\s} = -16$. 
Our equation is
\[
\partial_{t} u_{\mft} 
= 
\mathscr{L}_{\mft}u_{\mft}
+ u_{\mft}^{2} + u_{\mft}^{2} \zeta_{\mfl_{1}} + \zeta_{\mfl_{2}}\;.
\]
Then the renormalisation counterterm corresponding to 
\[
\tau 
\eqdef 
\mcb{I}_{(\mft,0)}
\Big[
\mcb{I}_{(\mft,0)}[\Xi_{\mfl_2}]
\Big]
\mcb{I}_{(\mft,0)}[\Xi_{\mfl_2}]
\]
includes terms involving derivatives of $\zeta_{\mfl_{1}}$ and products of $\zeta_{\mfl_{1}}$. 
One can calculate the corresponding $\Upsilon^{F}_{\mft}[\tau]$ for our choice of $F$ as described in Section~\ref{subsec:coherence}. Alternatively, one can perform the renormalisation contraction of $\tau$ inside of the tree 
\[
\mcb{I}_{(\mft,0)}\Big[\Xi_{\mfl_{1}}\mcb{I}_{(\mft,0)}[\Xi_{\mfl_2}]
\Big]
\mcb{I}_{(\mft,0)}[\Xi_{\mfl_2}]\Xi_{\mfl_{1}}\;.
\]
\end{remark}

We give two examples which illustrate the sets $\hat\mfD$ and $\mfD_\mft$, as well as all possible values of $|\mfo|_\s$ for $(\hat{\mfl},\mfo) \in \mfD$.

\begin{example}\label{ex:B-series}
In the case of B-series (see Remark~\ref{rem:B-series}), one has a single (constant) driver $\Xi_\mathbf{0}$ and no products or derivatives of noises, nor extended decorations.
In this setting, we simply have $\mfD = \{(\mathbf{0}, 0)\}$.
In the setting of rough differential equations (see \cite[Sec.~6]{BCFP}), one again has no derivatives or products of noises, but now $\mfL_- = \{\mfl_1,\ldots,\mfl_m\}$ and $\hat\mfD = \mfL_- \sqcup \{\mathbf{0}\}$, where we have used an abuse of notation to identify $\mfl \in \mfL_-$ with an element of $\N^{\widetilde\mfD}$.
We also have
\begin{equs}
\mfD_{\mft} = \big\{ (\mfl,0) :\ \mfl \in\mfL_-  \big\} \sqcup  \big\{ 
(\mathbf{0}, \mfo) :\ 
\mfo \in D(\mft,\mathbf{0})
\big\}\;.
\end{equs}
Setting the degree of the drivers to $|\mfl|_\s \eqdef \alpha \in (-1,0)$, and recalling that edges increase degree by one, we have
\begin{equs}
  \{ | \mfo  |_{\s} :\
   \mfo \in  D(\mft,\mathbf{0}) \} = \{ k\alpha+(k-1) \in (-1,0) : \ k \in \N \}	 \;.
  \end{equs}
  (Note, however, that in this special setting, extended decorations can be ignored since they do not affect the renormalisation procedure, see~\cite[Rem.~45]{BCFP}.)
\end{example}

\begin{example}\label{ex:gKPZ}
In the case of the generalised KPZ equation introduced in Section~\ref{sec: example of genKPZ part 0}, one has $ \hat \mfD = \mfL_{-}\sqcup \{\mathbf{0}\}$
because no derivatives and products of noises appear on the right hand of the equation, nor do they appear after renormalisation.
Then $ \mfD_{\mft} $ is again given by
\begin{equs}
\mfD_{\mft} = \big\{ (\mfl,0) :\ \mfl \in \mfL_-  \big\} \sqcup  \big\{ 
(\mathbf{0}, \mfo) :\ 
\mfo \in D(\mft,\mathbf{0})
\big\}\;.
\end{equs}
Moreover, we can give an explicit expression of the elements $ | \mfo  |_{\s}, \mfo \in  D(\mft,\mathbf{0})$ which are exactly the negative degrees which appear among the trees generated by the rules. One has 
\begin{equs}
  \big\{ | \mfo  |_{\s} :\
   \mfo \in  D(\mft,\mathbf{0}) \big\} = \left\lbrace  -3/2- \kappa, \, -1 - 2 \kappa, \, -1/2 -\kappa, \, -4 \kappa, -2 \kappa, 0  \right\rbrace\;. 
  \end{equs}
\end{example}
\subsection{Inner product spaces of trees} 
\label{sec: gen construction of inner products}
We introduce a very general prescription for building inner products of rooted decorated trees which is designed to have the advantage of automatically encoding symmetry factors of trees.\footnote{The situation focused on in this paper is the ``fully commutative'', in particular we see the degree $1$ polynomials $(\mathbf{X}^{e_{i}})_{i=0}^{d}$ as commuting and also do not distinguish different planar embeddings of trees (i.e., we assume $\mcb{I}_{o}(\tau)$ and $\mcb{I}_{o'}(\tau')$ also commute).
One key advantage of the formalism we adopt here is that it allows our work to be more easily translated to situations where some of this commutativity is lost, for instance~\cite{Mate2}, by a simple tweak of the construction given here.}

Saying that a set of rooted trees is \emph{decorated} means that there are different species of nodes and edges appearing in our trees. We thus assume that we are given a set $\nodes$ of possible node species and another set $\edges$ of possible edge species. We write $\decor = (\nodes,\edges)$ for the tuple of decorations.
We also assume we have been given an inner product $\langle \cdot, \cdot \rangle$ on the free vector space generated by $\nodes$.

We will generate a corresponding set of rooted decorated trees $\mcT(\decor)$ and an inner product space built from the free vector space generated by $\mcT(\decor)$ and extending $\langle \bullet \rangle$ which we will call $\mathfrak{T}(\decor)$.
We write $Y$ for an element of $\nodes$.  
We write $I$ for an element of $\edges$, and each such element will be thought of as operator on $\mcT(\decor)$.
In particular we view the full set $\mcT(\decor)$ as being generated from the set of nodes by taking products and applying the edge operators. 
We recall how our symbolic definition of a rooted decorated tree corresponds to the naive one. 
Given $n \ge 0$, $I_{1},\dots,I_{n} \in \edges$, a collection of previously defined rooted decorated trees $\tau_{1},\dots,\tau_{n}$, and $Y \in \nodes$, the rooted decorated tree
\begin{equ}\label{general tree formula}
\tau = Y \prod_{i=1}^{n} I_{i}(\tau_{i})
\end{equ}
is obtained as follows: 
\begin{itemize}
\item Start with the trees $\tau_{1},\dots,\tau_{n}$ and add a new node of type $Y$.
\item  For each $1 \le i \le n$ connect the new node to the root of $\tau_{i}$ with an $I_{i}$ edge.
\item Make the new node the root.
\end{itemize} 
We treat the product over $[n]$ appearing in \eqref{general tree formula} as commutative.

We will define $\mcT(\decor) \eqdef \bigsqcup_{k=0} \mcT_{k}(\decor)$ and now define the sets on the RHS. 
For $k = 0 $ we set $\mcT_{0}(\decor) \eqdef \nodes$.
Then for $k \ge 1$ and $l \ge 0$ we define $\mcT_{k}^{(l)}(\decor)$ inductively by setting $\mcT_{k}^{(0)}(\decor) = \emptyset$ and then setting, for $l \ge 1$,\ $\mcT_{k}^{(l)}(\decor)$ to be given by all elements $\tau$ of the form \eqref{general tree formula} where one takes $n=k$ and requires $\tau_{1},\dots,\tau_{k} \in \mcT^{(l-1)}(\decor)$ where we set 
\[
\mcT^{(l-1)}(\decor) \eqdef \mcT_{0}(\decor) \sqcup 
\Big( 
\bigsqcup_{k \ge 1} \mcT^{(l-1)}_{k}(\decor)
\Big)\;.
\]
Finally, we set $\mcT_{k}(\decor) \eqdef \bigcup_{l \ge 0} \mcT^{(l)}_{k}(\decor)$. 
Similarly, we define 
\[
\mathfrak{T}(\decor) \eqdef 
\bigoplus_{k \ge 0} 
\mathfrak{T}_{k}(\decor)
\]
where, for each the $k \ge 0$, $\mathfrak{T}_{k}(\decor)$ is an inner product space with its underlying vector space being the free vector space generated by $\mathcal{T}_{k}(\decor)$.

\begin{remark}\label{rem:trees}
The space $\mathring{\mathscr{T}}$ is of the form $\mathfrak{T}(\decor)$ with 
$\nodes = (\mfL_{-}\sqcup \{\mathbf{0}\}) \times \N^{d+1}$ and $\edges = \mcb{O}$. 
(In our notations, $\nodes$ is identified with $\{\mathbf{X}^{k} \Xi_{\mfl}\,:\, k \in \N^{d+1},\, \mfl \in \mfL_{-}\sqcup \{\mathbf{0}\}\}$.)
\end{remark}
 
For $k=0$ the inner product for $\mathfrak{T}_{0}(\decor)$ is given by the one given as input for our construction. 
For $k \ge 1$ we inductively set, for any $\tau, \bar{\tau} \in \mathcal{T}_{k}(\decor)$,
\[
\langle \tau, \bar{\tau} \rangle 
\eqdef
\langle Y, \bar{Y} \rangle
\sum_{s \in S_{k}}
\prod_{j=1}^{k}
\delta_{I_{j}, \bar{I}_{s(j)}}
\langle \tau_{j}, \bar{\tau}_{s(j)} \rangle
\]
where $S_{k}$ is the set of permutations on $[k]$ and we are using $Y$ (resp. $\bar Y$), $I_{j}$ (resp. $\bar{I}_{j}$), and $\tau_{j}$ (resp. $\bar{\tau}_{j}$) as those appearing in \eqref{general tree formula} for the expansion of $\tau$ (resp. $\bar{\tau}$).

One should remember that $\mcT(\decor)$ is an orthogonal but \emph{not orthonormal} basis for $\mfT(\decor)$.
We often write expansions of $\sigma \in \mfT(\decor)$ in the dual basis $( \langle \tau, \tau \rangle^{-1}\tau: \tau \in \mcT(\decor))$ as $\sigma = \sum_{\tau \in \mcT(\decor)} \tau \langle\sigma,\tau\rangle\langle\tau,\tau\rangle^{-1}$.

The construction above also enjoys some natural functorial properties. 

\begin{lemma}\label{lem: adjoint identity}
Suppose we are given two sets of decorations $\mathsf{D} = (\mathsf{N},\mathsf{E})$ and $\mathsf{D}' = (\mathsf{N}',\mathsf{E})$ along with inner products on $\scal{\mathsf{N}}$ and $\scal{\mathsf{N}}$.
Then for any linear operator $A: \langle \mathsf{N} \rangle \rightarrow \langle \mathsf{N}' \rangle$ we define a linear operator $\mathfrak{T}_{\mathsf{D},\mathsf{D}'}(A): \mathfrak{T}(\decor) \to \mathfrak{T}(\decor')$ as follows. 
For any $\tau \in \mcT(\mathsf{D})$, we inductively set
\[
\mathfrak{T}_{\mathsf{D},\mathsf{D}'}(A)\tau
=
(AY) \prod_{i=1}^{n} I_{i}( \mathfrak{T}_{\mathsf{D},\mathsf{D}'}(A)  \tau_{i})\;,
\]
where on the RHS we have used the expansion \eqref{general tree formula}.

Then if we denote by $A^{\ast}: \langle \mathsf{N}' \rangle \rightarrow \langle \mathsf{N} \rangle$ the adjoint of $A$ (defined with respect to the given inner products on $ \langle \mathsf{N} \rangle $ and $\langle \mathsf{N}' \rangle$) then $\mathfrak{T}_{\mathsf{D},\mathsf{D}'}(A)^{\ast}$, the adjoint of $\mathfrak{T}_{\mathsf{D},\mathsf{D}'}(A)$, is given by $\mathfrak{T}_{\mathsf{D}',\mathsf{D}}(A^{\ast})$. 
\end{lemma}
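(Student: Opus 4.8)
The plan is to reduce the operator identity to a pointwise statement about basis elements and then prove the latter by structural induction. Concretely, since $\mcT(\mathsf{D})$ and $\mcT(\mathsf{D}')$ are (orthogonal) bases of $\mathfrak{T}(\decor)$ and $\mathfrak{T}(\decor')$ respectively, and since $\mathfrak{T}_{\mathsf{D},\mathsf{D}'}(A)$ and $\mathfrak{T}_{\mathsf{D}',\mathsf{D}}(A^\ast)$ are linear, the claim $\mathfrak{T}_{\mathsf{D},\mathsf{D}'}(A)^\ast = \mathfrak{T}_{\mathsf{D}',\mathsf{D}}(A^\ast)$ is equivalent to the identity
\begin{equ}
\langle \mathfrak{T}_{\mathsf{D},\mathsf{D}'}(A)\tau,\ \sigma\rangle
=
\langle \tau,\ \mathfrak{T}_{\mathsf{D}',\mathsf{D}}(A^\ast)\sigma\rangle
\qquad\text{for all }\tau\in\mcT(\mathsf{D}),\ \sigma\in\mcT(\mathsf{D}')\;.
\end{equ}
First I would record the grading remark: both $\mathfrak{T}_{\mathsf{D},\mathsf{D}'}(A)$ and $\mathfrak{T}_{\mathsf{D}',\mathsf{D}}(A^\ast)$ only relabel node species and so preserve the number of children at the root, i.e.\ they map $\mathfrak{T}_k(\decor)$ into $\mathfrak{T}_k(\decor')$ (and vice versa); since $\mathfrak{T}(\decor)=\bigoplus_k\mathfrak{T}_k(\decor)$ as an orthogonal direct sum, both sides above vanish unless $\tau$ and $\sigma$ have the same number $n$ of root children, so it suffices to treat that case.

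I would then induct on the tree $\tau$ (structural induction), the claim being quantified over all $\sigma$. For the base case $\tau=Y\in\mcT_0(\mathsf{D})=\mathsf{N}$: by the grading remark we may take $\sigma=\bar Y\in\mathsf{N}$, and then $\mathfrak{T}_{\mathsf{D},\mathsf{D}'}(A)Y=AY$, $\mathfrak{T}_{\mathsf{D}',\mathsf{D}}(A^\ast)\bar Y=A^\ast\bar Y$, so the identity is exactly the defining property of the adjoint $A^\ast$ with respect to the given inner products on $\langle\mathsf{N}\rangle$ and $\langle\mathsf{N}'\rangle$. For the inductive step, write $\tau=Y\prod_{i=1}^nI_i(\tau_i)$ and $\sigma=\bar Y\prod_{j=1}^n\bar I_j(\sigma_j)$. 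Expanding $AY$ as a linear combination of node species and using bilinearity of $\langle\cdot,\cdot\rangle$ together with the recursive definition of $\mathfrak{T}_{\mathsf{D},\mathsf{D}'}(A)$ and of the inner product on $\mathfrak{T}_n$, one obtains
\begin{equ}
\langle \mathfrak{T}_{\mathsf{D},\mathsf{D}'}(A)\tau,\ \sigma\rangle
=
\langle AY,\bar Y\rangle
\sum_{s\in S_n}\ \prod_{j=1}^n \delta_{I_j,\bar I_{s(j)}}\
\langle \mathfrak{T}_{\mathsf{D},\mathsf{D}'}(A)\tau_j,\ \sigma_{s(j)}\rangle\;.
\end{equ}
Applying the induction hypothesis to each proper subtree $\tau_j$ (with test element $\sigma_{s(j)}$) to rewrite $\langle \mathfrak{T}_{\mathsf{D},\mathsf{D}'}(A)\tau_j,\sigma_{s(j)}\rangle=\langle\tau_j,\mathfrak{T}_{\mathsf{D}',\mathsf{D}}(A^\ast)\sigma_{s(j)}\rangle$, and using $\langle AY,\bar Y\rangle=\langle Y,A^\ast\bar Y\rangle$, the right-hand side becomes precisely the expansion of $\langle\tau,\mathfrak{T}_{\mathsf{D}',\mathsf{D}}(A^\ast)\sigma\rangle$ coming from the recursive inner-product formula applied to $\tau$ and $\mathfrak{T}_{\mathsf{D}',\mathsf{D}}(A^\ast)\sigma=(A^\ast\bar Y)\prod_j\bar I_j(\mathfrak{T}_{\mathsf{D}',\mathsf{D}}(A^\ast)\sigma_j)$. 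This closes the induction.

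The only points requiring care — and these are essentially bookkeeping — are: (i) that the recursive definitions of $\mathfrak{T}_{\mathsf{D},\mathsf{D}'}(A)$ and of $\langle\cdot,\cdot\rangle$ are independent of the ordering chosen for the root's children (already built into the construction, and in any case the symmetrisation over $S_n$ in the displayed formula makes the expression manifestly order-independent); and (ii) the propagation of the bilinear extension of $\langle\cdot,\cdot\rangle$ through the recursion when $AY$ is a genuine linear combination of node species rather than a single one. The latter is the main, mild obstacle; everything else is a direct unwinding of the definitions of $\mathfrak{T}_{\mathsf{D},\mathsf{D}'}(\cdot)$, of the inductive inner product, and of the adjoint on $\langle\mathsf{N}\rangle$.
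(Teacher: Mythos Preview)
The paper states this lemma without proof, treating it as a routine functorial property of the construction in Section~\ref{sec: gen construction of inner products}. Your argument by structural induction on $\tau$ is correct and is exactly the natural way to fill in the details: the base case reduces to the defining relation of $A^\ast$, and the inductive step is a direct unwinding of the recursive inner-product formula combined with bilinearity. The bookkeeping points you flag (well-definedness independent of the ordering of children, and propagation of bilinearity when $AY$ and $\mathfrak{T}_{\mathsf{D},\mathsf{D}'}(A)\tau_i$ are genuine linear combinations) are handled correctly; the key observation is that the product $\prod_{j}\langle \mathfrak{T}_{\mathsf{D},\mathsf{D}'}(A)\tau_j,\sigma_{s(j)}\rangle$ factorises over the independent sums in each slot, which is what makes the displayed identity hold for linear combinations and not just single trees.
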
 
\subsection{The trees of \texorpdfstring{$\VV$}{V}}
\label{Notations for trees}
We introduce a new notation for rooted decorated combinatorial trees, closely related to the 
notation of Section~\ref{black box section}.
Let $\VV$\label{VV page ref} be the set of all decorated trees of the form $ T_{\mff}^{\mfm} $ where $ \mfm = (\mnoise,\mpoly) : N_T \rightarrow  \mfD  \times \N^{d+1} $ and $ \mff :
 E_T \rightarrow \mcb{O}$ are arbitrary maps.

As in Remark~\ref{rem:trees}, we formulate this in the language of Section~\ref{sec: gen construction of inner products}.
We set $\nodes
\eqdef
\mfD \times \N^{d+1}$ and $\edges \eqdef \mcb{O}$, so that $\VV = \mcT(\decor)$ with $\decor = (\nodes,\edges)$. 
We also give an inner product on $\langle \nodes \rangle$ by setting (using the same notational identifications as above),
\[
\langle \Xi_{\mfl} \mathbf{X}^{k}, \Xi_{\bar{\mfl}}  \mathbf{X}^{\bar{k}} \rangle
=
\delta_{\mfl, \bar{\mfl}} \delta_{k,\bar{k}}k!\;.
\]
A tree $\tau \in \VV$ is of the form
\begin{equ}\label{eq:general tree of V}
\tau = 
\Xi_{\mfl} \mathbf{X}^{k} \prod_{i=1}^{n} \mcb{I}_{(\mft_{i},p_{i})}(\tau_{i})\;,
\end{equ}
with $\mfl \in \mfD$, $k \in \N^{d+1}$, $n \ge 0$, $\tau_{1},\dots,\tau_{n} \in \VV$, and $(\mft_{1},p_{1}),\dots,(\mft_{n},p_{n}) \in \mcb{O}$. 
We then set $\VVspan \eqdef \mathfrak{T}(\decor)$\label{VVspan page ref} and denote by $\langle \cdot, \cdot \rangle$ the induced inner product on $\VVspan$ as described in Section~\ref{sec: gen construction of inner products}.

It is not hard to see that this inner product just keeps track of the symmetry factors analogous to that defined in~\eqref{def: symmetry factor in section 2}. 
Extending the definition of $S(\cdot)$ in the natural way one has, for any $\tau,\bar{\tau} \in \VV$, $\langle \tau,\bar{\tau} \rangle = \delta_{\tau,\bar{\tau}} S(\tau)$.

Since $\mfL_- \sqcup \{\mathbf{0}\}$ can be identified with a subset of $\mfD$
by identifying $\mathbf{0}$ with $(\mathbf{0},0)$ and $\mfl$ with 
$(\hat \mfl, 0)$ where $\hat\mfl = \{(\mfl,0)\} \in \hat\mfD$,
 $ \mathring{\mathscr{T}}$ can (and will) be identified with the corresponding subset of $\VV$.

We also identify $\mcT^{\ex} $ with a subset of $ \VV $ as follows.
 To a decorated tree $ T^{\Labn,\Labo}_{\Labe} $ equipped with a type map $ \Labhom : E_T \rightarrow \mfL $, we associate the decorated tree $ \bar T^{\mfm}_{\mff} $
where $ \bar T $ is obtained from $ T $ by removing all the edges with type in $ \mfL_{-} $. (This is indeed again a tree
since normal rules forbid to attach any further edge to an edge with a label in $\mfL_-$.)
The decoration $ \mfm $ is given by  $ \mfm = (\mnoise,\mpoly) = ((\hat{\mfl},\Labo),\Labn) $ where for every $ x \in N_T $, $ \hat{\mfl}(x) $ is equal to $ \lbrace (\Labhom(e),\Labe(e)): \;e \in E_{x}^{-} \rbrace $ where $ E_x^{-} $ are the edges incident to $ x $ with type belonging to $ \mfL_- $. The edge decoration $ \mff $ is defined by $ \mff  = (\Labhom,\Labe)  $.
For the rest of the paper, we use the notation
$ T^{\mfm}_{\mff} $ and we revert to the notation $ T^{\Labn,\Labo}_{\Labe} $ only when we need to rely on 
some results from \cite{BHZalg}, e.g.\ for the proofs given in Appendix~\ref{subsec:coInteractProofs}.

\begin{remark}\label{rem:noise_node_vs_edge}
We choose to treat drivers as part of the node decoration, rather than as collections of edges as done in~\cite{BHZalg}, for two reasons.
First, it is more natural from the definition of the map $\Upsilon$ in which all edges are treated as differential operators, see~\eqref{eq: first Upsilon}.
Second, it yields a more natural form of trees for the pre-Lie structures appearing in Section~\ref{sec:proof_of_thm}. 
In particular, the set $\{\mathbf{X}^k\Xi_\mfl \, : \, k \in \N^{d+1},\;\mfl\in\mfD\}$ forms a generating set 
for $\VVspan$ equipped with a family of grafting operators, which makes this set a natural choice for node decorations.
\end{remark}

\begin{remark}
Formally, the difference between $\mcT^\ex$ and $\VV$ is that $\VV$ does not enforce the restrictions that trees should conform to the rule $R$ and that extended decorations should be compatible with edge types as dictated in~\cite[Def.~5.24]{BHZalg}: the only role played by $R$ in the definition of $\VV$ is through the definition of the label set $\mfD$.
\end{remark}
We will often use the symbolic notation~\eqref{eq:general tree of V} as in~\cite[Sec.~8]{Regularity}
and~\cite[Sec.~4.3]{BHZalg}. 
In particular, the drivers $ \Xi_{\mfl} $, $ \mfl \in \mfD $  are given by \eqref{e:drivers}.
For $o \in \mcb{O}$, we also define an operator $ \mcb{I}_{o} : \VV \rightarrow \VV $ 
as suggested by~\eqref{eq:general tree of V}: given
$ T^{\mfm}_{\mff} \in \VV $, $ \mcb{I}_{o}(T^{\mfm}_{\mff}) $ is the decorated tree obtained by adding a new root with node decoration equal to zero and joining this new root to the root of $ T $ with an edge 
decorated by $o$.
\begin{remark}
Note that as in \cite{BHZalg} we do allow symbols of the form $\mcb{I}_{(\mft,p)}[X^{k}]$. 
\end{remark}

%

\subsection{A class of allowable equations}\label{subsubsec:AllowableEqs}
Recall that in the theory of regularity structures one lifts a concrete fixed point problem to an abstract fixed point problem in a space of modelled distributions. 

We define $\mathring{\G}$\label{mathring G page ref} to consist of all tuples $(F_{\mft}^{\mfl})_{\mft,\mfl}$ where $\mft$ ranges over $\mfL_{+}$, $\mfl$ ranges over $\mfD_{\mft}$ and for each such $\mft$ and $\mfl$ one has $F^{\mfl}_{\mft} \in \Poly$. 
There is a restriction on the equations we can work with in that they must be compatible with the rule $R$ used to construct our regularity structure\dash we now describe a subset $\G \subset \mathring{\G}$ which enforces this constraint. 
First, define $\mcb{N}_{+} \subset \mcb{N}$\label{pos node-types} to be collection of all node-types whose elements are all members of $\mfL_{+} \times \N^{d+1}$.
We then define a map $\hat{\mcb{N}}:\Poly \rightarrow \mcb{P}(\mcb{N}_{+})$ by setting, for $F$ given by \eqref{expansion of nonlinearity}, 
\[
\hat{\mcb{N}}(F)
\eqdef
\bigcup_{
1 \le j \le m
}
\{ \alpha \sqcup \beta\,:\, \alpha \le \alpha_j,\, \beta \in \hat {\mcb{P}}(\mcb{O}(F_{j}))\}\;.
\]
\begin{definition}\label{def:obey} We say that $F \in \mathring{\G}$ \emph{obeys} our fixed rule $R$ if for every $\mft \in \mfL_{+}$, $(\hat{\mfl}, \mfo) \in \mfD$, and $ N \in \hat{\mcb{N}}(F_{\mft}^{(\hat{\mfl},\mfo)})$, one has $\mfo \in D( \mft , N \sqcup \hat{\mfl})$.
We denote by $\G$\label{G page ref} the set of all $F \in \mathring{\G}$ which obey $R$.
\end{definition}

\begin{remark}At first glance the definition above may seem to just enforce conditions on the labels $\mfo$, but recall that $D(\mft,N) = \emptyset$ if $N \not \in R(\mft)$, so that it implies in particular that
$\hat{\mcb{N}}(F_{\mft}^{\mfl}) \subset R(\mft)$ for every $\mfl \in \mfD$ and $\mft \in \mfL_{+}$.
\end{remark} 
Up to now, we have not required any additional properties on our rule $R$ beyond completeness and subcriticality with respect to $\reg$. 
We now introduce an additional non-degeneracy assumption.
\begin{assumption}\label{assump:RregComplete}
For every $\mft \in \mfL_+$, $N \in R(\mft)$, and $o \in \mcb{O}_{+}$, 
one has $N \sqcup \{o\}\in R(\mft)$.
\end{assumption}
Note that any subcritical rule $R$ can be trivially extended to satisfy Assumption~\ref{assump:RregComplete} while remaining subcritical with respect $\reg$, so this is really just a condition
guaranteeing that we are considering a sufficiently large class of SPDEs.
We give a simple equivalent definition of $\G$ under Assumption~\ref{assump:RregComplete}.
\begin{proposition}\label{prop:obeyEquivalence}
Let $F \in \mathring{\G}$. Consider the following statements.
\begin{enumerate}[label=\upshape(\roman*\upshape)]
\item \label{point:obey1} $F \in \G$.
\item \label{point:obey2} For all $\mfl = (\hat\mfl,\mfo) \in \mfD$, $\mft \in \mfL_+$, and $\alpha \in \N^{\mcb{O}}$ such that $\mfo \notin D(\mft,\alpha \sqcup \hat{\mfl})$, one has $D^\alpha F^\mfl_\mft \equiv 0$.
\end{enumerate}
Then~\ref{point:obey1}~$\Rightarrow$~\ref{point:obey2}. If Assumption~\ref{assump:RregComplete} holds, then~\ref{point:obey1}~$\Leftrightarrow$~\ref{point:obey2}.
\end{proposition}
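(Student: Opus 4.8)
The plan is to unwind the definitions of $\hat{\mcb{N}}(F)$, the set $\mfD$, and the obeying condition, and to reduce everything to the level of individual monomials $\Y^{\alpha}$ appearing in the expansion \eqref{expansion of nonlinearity} of each $F^{\mfl}_{\mft}$. Throughout, fix $\mft \in \mfL_+$ and $\mfl = (\hat\mfl,\mfo) \in \mfD$, and write $G \eqdef F^{\mfl}_\mft = \sum_{j=1}^m G_j(\Y)\Y^{\alpha_j}$ as in \eqref{expansion of nonlinearity}, with $\alpha_j$ supported on $\mcb{O}_-$ and $\mcb{O}(G_j)\subset\mcb{O}_+$.

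\textbf{Proof of \ref{point:obey1}~$\Rightarrow$~\ref{point:obey2}.} Suppose $F\in\G$ and let $\alpha\in\N^{\mcb{O}}$ be such that $D^\alpha G \not\equiv 0$; we must show $\mfo\in D(\mft,\alpha\sqcup\hat\mfl)$. Since $D^\alpha G\neq 0$, differentiating the expansion shows there is some $j$ with $D^\alpha(G_j\Y^{\alpha_j})\neq 0$; writing $\alpha = \alpha_- + \alpha_+$ where $\alpha_- \eqdef \alpha\wedge\alpha_j$ is the part supported on $\mcb{O}_-$ that can hit $\Y^{\alpha_j}$ and $\alpha_+$ is the remainder, one sees (since the negative and positive variables are disjoint by the structure of $\Poly$) that $\alpha_- \le \alpha_j$ and $\alpha_+ \in \hat{\mcb{P}}(\mcb{O}(G_j))$, so that $\alpha = \alpha_-\sqcup\alpha_+ \in \hat{\mcb{N}}(G)$ directly from the definition of $\hat{\mcb{N}}$. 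By Definition~\ref{def:obey}, $F\in\G$ gives $\mfo\in D(\mft,\alpha\sqcup\hat\mfl)$, which is exactly what \ref{point:obey2} demands. (Contrapositively: if $\mfo\notin D(\mft,\alpha\sqcup\hat\mfl)$ then $\alpha\notin\hat{\mcb{N}}(G)$, hence $D^\alpha G\equiv 0$.)

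\textbf{Proof of \ref{point:obey2}~$\Rightarrow$~\ref{point:obey1} under Assumption~\ref{assump:RregComplete}.} Now assume \ref{point:obey2} and Assumption~\ref{assump:RregComplete}; we must check Definition~\ref{def:obey}, i.e.\ that for every $N\in\hat{\mcb{N}}(G)$ one has $\mfo\in D(\mft,N\sqcup\hat\mfl)$. By definition of $\hat{\mcb{N}}$, such an $N$ is of the form $N = \alpha\sqcup\beta$ with $\alpha\le\alpha_j$ for some $j$ and $\beta\in\hat{\mcb{P}}(\mcb{O}(G_j))$. The key point is that $D^{\alpha_j}G \not\equiv 0$: indeed $G_j\not\equiv 0$ and $D^{\alpha_j}(G_j\Y^{\alpha_j})$ has $G_j\,\alpha_j!$ as a summand while all other terms $D^{\alpha_j}(G_k\Y^{\alpha_k})$, $k\neq j$, vanish or involve different monomials since the $\alpha_k$ are distinct. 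By \ref{point:obey2} applied with exponent $\alpha_j$, this forces $\mfo\in D(\mft,\alpha_j\sqcup\hat\mfl)$, so in particular $\alpha_j\sqcup\hat\mfl\in R(\mft)$ (recalling $D(\mft,M)=\emptyset$ unless $M\in R(\mft)$). Now $N\sqcup\hat\mfl = (\alpha\sqcup\beta)\sqcup\hat\mfl$ is obtained from $\alpha_j\sqcup\hat\mfl$ by deleting the elements of $\alpha_j\setminus\alpha$ (all in $\mcb{O}_-$, hence removable without leaving $R(\mft)$ since rules are closed under taking subsets — this is part of the normality/completeness already assumed) and then adjoining the elements of $\beta$, which lie in $\mcb{O}(G_j)\subset\mcb{O}_+$; each such adjunction preserves membership in $R(\mft)$ by Assumption~\ref{assump:RregComplete}. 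Hence $N\sqcup\hat\mfl\in R(\mft)$. It remains to upgrade this to $\mfo\in D(\mft,N\sqcup\hat\mfl)$: here one observes that $D^{\alpha\sqcup\beta}G\not\equiv 0$ (again picking out the surviving summand from $G_j\Y^{\alpha_j}$, using $\alpha\le\alpha_j$ and $\beta\in\hat{\mcb{P}}(\mcb{O}(G_j))$, and distinctness of the $\alpha_k$ to kill cross terms), so applying \ref{point:obey2} with exponent $\alpha\sqcup\beta = N$ yields exactly $\mfo\in D(\mft,N\sqcup\hat\mfl)$.

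I expect the main obstacle to be the bookkeeping around the sets $D(\mft,\cdot)$ from \cite[Def.~5.22]{BHZalg}: one needs the precise behaviour of $D(\mft,N)$ under adding or removing elements of $N$ (in particular that it is nonempty precisely on $R(\mft)$, and how the extended decoration $\mfo$ constrains things), and one must be careful that the "removal of $\mcb{O}_-$ elements stays in $R(\mft)$" step really follows from normality/completeness of $R$ rather than needing a separate hypothesis. A secondary subtlety is verifying the "$D^\alpha G\neq 0$" equivalences cleanly, which is where the uniqueness of the representation \eqref{expansion of nonlinearity} (distinctness of the $\alpha_j$, $G_j\not\equiv 0$, $\mcb{O}(G_j)\subset\mcb{O}_+$ disjoint from the support of the $\alpha_k$) does all the work; this is essentially the same computation as in the proof of Lemma~\ref{lem:partialK F vanishes} and should be routine once set up carefully.
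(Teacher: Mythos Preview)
Your proof of \ref{point:obey1}~$\Rightarrow$~\ref{point:obey2} is correct and essentially the contrapositive of the paper's one-line argument.

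Your proof of \ref{point:obey2}~$\Rightarrow$~\ref{point:obey1} has a genuine gap in the final step. You claim that $D^{\alpha\sqcup\beta}G\not\equiv 0$ for every $N=\alpha\sqcup\beta\in\hat{\mcb{N}}(G)$, but this is false: the definition of $\hat{\mcb{N}}(G)$ allows $\beta$ to be \emph{any} multisubset of $\mcb{O}(G_j)$, with arbitrarily large multiplicities. If for instance $G_j=\Y_o$ for some $o\in\mcb{O}_+$, then $\beta=2e_o\in\hat{\mcb{P}}(\mcb{O}(G_j))$ but $D^\beta G_j=0$, so $D^{N}G$ may well vanish. Thus you cannot invoke \ref{point:obey2} at the exponent $N$ directly, and your detour through $\alpha_j\sqcup\hat\mfl\in R(\mft)$ (which is correct but only yields membership in $R(\mft)$, not $\mfo\in D(\mft,\cdot)$) does not close the gap either.

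The paper's fix is exactly the monotonicity property you flagged as a potential obstacle: under Assumption~\ref{assump:RregComplete} one has $D(\mft,N^-\sqcup\hat\mfl)\subset D(\mft,N\sqcup\hat\mfl)$, where $N^-$ is the restriction of $N$ to $\mcb{O}_-$. Since $N^-=\alpha\le\alpha_j$, the same computation you did for $D^{\alpha_j}G$ shows $D^{N^-}G\not\equiv 0$ (here the $\beta$-derivatives are absent, so there is no issue), and then \ref{point:obey2} at exponent $N^-$ plus the inclusion gives $\mfo\in D(\mft,N\sqcup\hat\mfl)$. This monotonicity of $D(\mft,\cdot)$ under adjoining $\mcb{O}_+$-elements is the one nontrivial input from \cite[Def.~5.22]{BHZalg}, and it cannot be bypassed by a direct nonvanishing argument.
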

\begin{proof}
\ref{point:obey1}~$\Rightarrow$~\ref{point:obey2}: Let $F \in \mathring{\G}$ and let $\mfl, \mft, \alpha$ be as in point~\ref{point:obey2}.
Then necessarily $\alpha \notin \hat{\mcb{N}}(F^\mfl_\mft)$, from which it readily follows that $D^\alpha F^\mfl_\mft \equiv 0$, which proves~\ref{point:obey2}.

For $N \in \N^{\mcb{O}}$, define $N^- \eqdef N \mathbbm{1}_{\reg(\mft,p) < 0} \in \N^{\mcb{O}}$ (i.e., considering $N$ as an element of $\mcb{N}$, $N^-$ is obtained by removing all edge types $(\mft,p)$ for which $\reg(\mft,p) \geq 0$).
Observe that under Assumption~\ref{assump:RregComplete}, it follows readily from the definition of $D(\mft,N)$ that $D(\mft, N^-\sqcup\hat{\mfl}) \subset D(\mft,N\sqcup\hat{\mfl})$.

Suppose now that Assumption~\ref{assump:RregComplete} holds. Let $\mft \in \mfL_+$, $\mfl = (\hat\mfl,\mfo) \in \mfD$, and $N \in \hat{\mcb{N}}(F^\mfl_\mft)$. 
To prove~\ref{point:obey1}, it suffices to show that $\mfo \in D(\mft,N^-\sqcup\hat{\mfl})$. Observe that the expansion~\eqref{expansion of nonlinearity} and definition of $\hat{\mcb{N}}(F^\mfl_\mft)$ implies that $D^{N^-} F^\mfl_\mft$ is not identically zero.
If~\ref{point:obey2} holds, then $\mfo \in D(\mft,N^-\sqcup\hat{\mfl})$, and therefore~\ref{point:obey1} holds.
\end{proof}
\subsection{Truncations}\label{subsec:truncations}
In practice, one works with a truncated version of the space $\cT^\ex$.
We describe here the truncated spaces and projections used to define our fixed point map.

\begin{definition}\label{def of the gamma trunc}
For $\gamma \in \R$, let $\mcT^\ex_{\leq \gamma} \eqdef \{\tau \in \mcT^\ex : |\tau|_+ \leq \gamma\}$. \label{mcT gamma page ref}
Let $\cT^\ex_{\leq \gamma} \leq \cT^\ex$ be the subspace spanned by $\mcT^\ex_{\leq \gamma}$,\label{cT gamma page ref}
and define the projection $\mcQ_{\leq \gamma} : \cT^\ex \to \cT^\ex_{\leq \gamma}$\label{mcQ page ref} which acts as the identity on $\tau \in \mcT^\ex$ if $\tau \in \mcT^\ex_{\leq \gamma}$, and maps $\tau$ to zero otherwise.
We define $\mcT^\ex_{<\gamma}$, $\cT^\ex_{<\gamma}$, and $\mcQ_{<\gamma} : \cT^\ex \to \cT^\ex_{< \gamma}$ similarly.
\end{definition}

We further introduce a truncation map with the important property that it is additive with respect to tree multiplication (which does not hold for the $|\cdot|_+$-degree).
\begin{definition}
For $\tau = T^\mfm_\mff \in \mcT^\ex$ we call $\trunc(\tau) \eqdef |E_T| + |\mpoly|$ the truncation parameter of $\tau$.
For $L \in \N$, define
\[
\mcW_{\leq L} \eqdef \{\tau \in \mcT^\ex : \trunc(\tau) \leq L\}.
\]
Let $\mcb{W}_{\leq L} \subset \cT^\ex$ be the subspace spanned by $\mcW_{\leq L}$, and define the projection $\proj_{\leq L} : \cT^\ex \to \mcb{W}_{\leq L}$ which acts as the identity on $\tau \in \mcT^\ex$ if $\tau \in \mcW_{\leq L}$, and maps $\tau$ to zero otherwise.
We also set
\[
\gamma_L \eqdef \max\{|\tau|_+ :\ \tau \in \mcW_{\leq L}\}\;,
\]
and, for $\alpha \in \R$, set
\begin{equ}
L_\alpha \eqdef \max\{\trunc(\tau) : \tau \in \mcT^\ex, |\tau|_+ \leq \alpha\}.
\end{equ}
\end{definition}
Note that $\mcW_{\leq L}$ is a finite set for any $L \in \N$ and thus $0 \leq \gamma_L < \infty$.
Note also that $L_\alpha < \infty$ since, by subcriticality of the rule $R$, there are only finitely many $\tau \in \mcT^\ex$ for which $|\tau|_+ \leq \alpha$.
It holds that $L_\alpha$ is the smallest natural number for which $\tau \in \mcW_{\leq L_\alpha}$ for all $\tau \in \mcT^\ex$ such that $|\tau|_+ \leq \alpha$. 

Note that $\mcb{W}_{\leq L}$ is closed under the action of $\mfR$ but not in general under the action of the structure group of $\mathscr{T}$.
Finally, we will require the following definition when dealing with renormalised equations.
\begin{definition}\label{def: definition of bar L}
For $L \in \N$, let
\begin{equ}
\bar L \eqdef \max \{L(\tau) : \tau \in \mcT^\ex, \; |\tau|_+ \le \gamma_L \}\;.
\end{equ}
\end{definition}
Note that $L \leq \bar L < \infty$ and that for all $M \in \mfR$
\begin{equ}
M, M^* : \cT^\ex_{\leq \gamma_L} \to \mcb{W}_{\leq \bar L}\;,\label{eq:M maps L to bar L}
\end{equ}
which follows from the fact that $M$ and $M^*$ preserve the $|\cdot|_+$-degree.
\begin{remark}
In the remainder of the paper we will often continue working with the untruncated regularity structure $\mathscr{T}$ and then insert the needed projections into various expressions; these are easily converted to statements on an appropriately truncated regularity structure.
We also remark that any statements we give involving continuity with respect to or convergence of models assume that one has truncated the regularity structure at some level. 
\end{remark}
\subsection{Nonlinearities on trees}\label{subsec:nonlinearities}
Let $\bar\cT^\ex \eqdef \Span{\{\mathbf{X}^{p}:
p \in \N^{d+1}
\}}$\label{poly reg page ref} denote the sector\footnote{A sector is a subspace of $\cT^\ex$ which is stable under the structure group and respects the decomposition into homogeneous subspaces, see~\cite[Def.~2.5]{Regularity}.} of abstract Taylor polynomials in $\cT^\ex$.
For every $\mft \in \mfL_{+}$ we define $\mcT_{\mft}^{\ex}, \widetilde{\mcT}_{\mft}^{\ex} \subset \mcT^{\ex}$
and $\mcb{T}_{\mft}^{\ex}, \widetilde{\mcb{T}}_{\mft}^{\ex} \subset \mcb{T}^{\ex}$\label{tilde mcbT_mft page ref}
\label{mcbT_mft page ref}
via
\begin{equs}[2]
\mcT_{\mft}^{\ex} &\eqdef \{ \tau \in \mcT^{\ex}: \tau = \mcb{I}_{(\mft,0)}[\bar{\tau}]\ \textnormal{for some}\ \bar{\tau} \in \mcT^{\ex}\}\;,&\quad
\mcb{T}_{\mft}^{\ex}
&\eqdef
\bar\cT^\ex \oplus
\Span{ \mcT_{\mft}^{\ex} }\;,
\\
\widetilde{\mcT}_{\mft}^{\ex} &\eqdef \{ \bar{\tau} \in \mcT^{\ex}: \mcb{I}_{(\mft,0)}[\bar{\tau}] \in \mcT^{\ex} \}\;,&
\widetilde{\mcb{T}}_{\mft}^{\ex}
&\eqdef
\Span{\widetilde{\mcT}_{\mft}^{\ex}}\;.
\end{equs}
We note that one also has $\bar\cT^\ex \subset \widetilde{\mcb{T}}_{\mft}^{\ex}$.
The space $\mcb{T}_{\mft}^{\ex}$ contains all ``jets'' used to describe the left hand side of the $\mft$-component of
our equation \eqref{e:SPDE}, while $\widetilde{\mcb{T}}_{\mft}^{\ex}$ contains those used to describe its right hand side.
Thanks to our assumptions on the underlying rule $R$, one has the following lemma, cf.~\cite[Eq.~5.11]{BHZalg}.
Note that we always refer to the $|\cdot|_+$-degree when speaking about the regularity of a sector.
\begin{lemma}\label{lem:sectors of our regularity structure}
For each $\mft \in \mfL_{+}$, $\mcb{T}_{\mft}^{\ex}$, and $\widetilde{\mcb{T}}_{\mft}^{\ex}$ are sectors of $\mathscr{T}$ of respective regularities $\reg(\mft) \wedge 0 $ and $(\reg(\mft)-|\mft|_s) \wedge 0$.\qed
\end{lemma}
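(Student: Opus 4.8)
\textbf{Proof strategy for Lemma~\ref{lem:sectors of our regularity structure}.}
The plan is to verify directly that each of the two spaces $\mcb{T}_{\mft}^{\ex}$ and $\widetilde{\mcb{T}}_{\mft}^{\ex}$ satisfies the two defining properties of a sector: (1) it decomposes along the homogeneous subspaces $(\cT^\ex)_\alpha$ (with respect to the $|\cdot|_+$-grading), and (2) it is stable under the action of the structure group $G$. The homogeneity property is immediate by construction: each space is defined as the span of a set of basis trees, and $|\cdot|_+$ is constant on each basis tree, so both spaces are automatically graded subspaces. Hence the only real content is stability under $G$.

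First I would treat $\widetilde{\mcb{T}}_{\mft}^{\ex}$. Recall that $G$ acts via $\Gamma_g \tau = (\id \otimes g)\Delta^{\!+}\tau$, and that the coproduct $\Delta^{\!+}$, when applied to a tree $\bar\tau$, produces terms of the form $\bar\tau' \otimes (\text{something})$ where $\bar\tau'$ is obtained from $\bar\tau$ by contracting a collection of subtrees hanging off the root and possibly decreasing node decorations. The key point is that this operation never changes the type of the edge incident to the root (nor creates a forbidden configuration), so if $\mcb{I}_{(\mft,0)}[\bar\tau]$ conforms to the rule $R$, then so does $\mcb{I}_{(\mft,0)}[\bar\tau']$ for every term $\bar\tau'$ appearing in $\Delta^{\!+}\bar\tau$. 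This is essentially the content of \cite[Eq.~5.11]{BHZalg} and follows from $R$ being normal and complete. Consequently $\Gamma_g$ maps $\widetilde{\mcb{T}}_{\mft}^{\ex}$ into itself. Stability of $\mcb{T}_{\mft}^{\ex}$ is then deduced from that of $\widetilde{\mcb{T}}_{\mft}^{\ex}$ together with the well-known facts that $\bar\cT^\ex$ is a sector and that $\mcb{I}_{(\mft,0)}$ intertwines the structure group actions appropriately (the Duhamel operator is compatible with $\Delta^{\!+}$ up to the polynomial sector, cf.\ \cite[Sec.~8]{Regularity}): applying $\Gamma_g$ to $\mcb{I}_{(\mft,0)}[\bar\tau]$ produces $\mcb{I}_{(\mft,0)}[\Gamma_g \bar\tau]$ plus abstract polynomial corrections of the appropriate degrees, all of which lie in $\mcb{T}_{\mft}^{\ex}$.

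For the regularity computations: the regularity of a sector is the minimal $|\cdot|_+$-degree of a nonzero homogeneous component. For $\mcb{T}_{\mft}^{\ex}$, it contains $\mathbf{X}^0 = \one$ of degree $0$ and planted trees $\mcb{I}_{(\mft,0)}[\bar\tau]$; the minimal degree of such a planted tree is bounded below using subcriticality of $R$ with respect to $\reg$, which gives $|\mcb{I}_{(\mft,0)}[\bar\tau]|_+ \geq |\mft|_\s + \inf_{\mcN \in R(\mft)}\sum_{(\mfb,p)\in\mcN}\reg(\mfb,p) > \reg(\mft)$, and since extended decorations only increase $|\cdot|_+$, one always has $|\mcb{I}_{(\mft,0)}[\bar\tau]|_+ > \reg(\mft)$; combined with the degree-$0$ element this yields regularity $\reg(\mft)\wedge 0$. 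For $\widetilde{\mcb{T}}_{\mft}^{\ex}$ one shifts by removing the root edge of type $(\mft,0)$, which decreases degree by exactly $|\mft|_\s$, so the bound becomes $(\reg(\mft) - |\mft|_\s)\wedge 0$, the $\wedge 0$ again coming from $\bar\cT^\ex \subset \widetilde{\mcb{T}}_{\mft}^{\ex}$.

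The main obstacle I anticipate is the bookkeeping in the stability argument: one must be careful that the restriction to the subspace of $\mcb{T}^\ex$ made in Section~\ref{subsec: set-up} (no edges $\Xi^\mfl_{k,\ell}$ with $\ell\neq 0$) is itself preserved by $G$ — but this was already asserted there (``$\mcb{T}^\ex$ forms a sector closed under the action of $\mfR$'' and the analogous statement for $G$ follows from normality of $R$), so one may invoke it. The genuinely delicate point is thus confirming that contracting subtrees at the root in $\Delta^{\!+}$ cannot turn a rule-conforming tree into a non-conforming one, which is exactly where completeness and normality of $R$ enter, as in \cite{BHZalg}.
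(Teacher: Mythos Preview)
The paper gives no proof here: the lemma carries a terminal proof-box and the preamble ``cf.~\cite[Eq.~5.11]{BHZalg}'', deferring entirely to that reference. Your outline is essentially the argument behind that citation and is correct in strategy --- stability under $G$ follows from normality and completeness of $R$, and the regularity bounds from subcriticality --- so there is nothing in the paper to compare against beyond confirming alignment with \cite{BHZalg}.

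Two imprecisions in your sketch are worth fixing. First, describing the left factors of $\Delta^{\!+}\bar\tau$ as obtained by ``contracting a collection of subtrees hanging off the root'' is inaccurate: $\Delta^{\!+}$ \emph{removes} positive-degree branches (at any node, not just the root), leaving a subtree containing the root with possibly increased edge decorations; ``contraction'' is the operation of $\Delta^{\!-}$. Your conclusion (rule-conformance of the left factor is preserved, by normality of $R$) is unaffected. Second, the claim ``extended decorations only increase $|\cdot|_+$'' is wrong: extended decorations $\mfo$ arising in $\mcT^\ex$ have $|\mfo|_\s \leq 0$, so if anything $|\cdot|_+ \leq |\cdot|_-$ on such trees. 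The correct step (cf.~\cite[Lem.~5.25]{BHZalg}) is that every $\tau \in \mcT^\ex$ arises by contracting a negative-degree subforest of some $\tau'$ with trivial extended decoration, a contraction designed to \emph{preserve} $|\cdot|_+$; hence $|\tau|_+ = |\tau'|_+ = |\tau'|_-$, and the regularity bound reduces to an induction over the tree structure (which you left implicit) using the subcriticality inequality on trees without extended decoration. With these two corrections your argument goes through.
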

We also define $\mcb{H}^{\ex} \eqdef \bigoplus_{\mft \in \mfL_{+}} \mcb{T}_{\mft}^{\ex}$\label{Hex page ref}
and $\widetilde{\mcb{H}}^{\ex} \eqdef \bigoplus_{\mft \in \mfL_{+}} \widetilde{\mcb{T}}_{\mft}^{\ex}$.\label{tilde Hex page ref}

For $p \in \N^{d+1}$ we write $\DD^p$\label{gradient page ref} for the abstract differential operator $\DD^p : \mcb{H}^\ex \to \cT^\ex$ given by $\mcb{I}_{(\mft,0)}[\tau] \mapsto \mcb{I}_{(\mft,p)}[\tau]$, as well as $\mathbf{X}^q \mapsto \frac{q!}{(q-p)!}\mathbf{X}^{q-p}$ if $q \geq p$ and $\mathbf{X}^q \mapsto 0$ otherwise.
Given $U = (U_{\mft})_{\mft \in \mfL_{+}} \in \mcb{H}^{\ex}$ we define $\boldsymbol{U} = (U_{(\mft,p)})_{(\mft,p) \in \mcb{O}} \in \bigoplus_{(\mft,p) \in \mcb{O}} \DD^p \mcb{T}_\mft^{\ex}$ by setting $U_{(\mft,p)}\eqdef \DD^{p} U_{\mft}$.

Writing $\mcb{W} \eqdef \bigoplus_{\mft \in \mfL_{+}} \mcb{T}^{\ex}$, we immediately obtain the following lemma from the implication~\ref{point:obey1}~$\Rightarrow$~\ref{point:obey2} of Proposition~\ref{prop:obeyEquivalence}.
\begin{lemma}\label{lem: nonlinearity on regularity structure} 
Let $F \in \G$.
Write $F\Xi \colon \mcb{W} \to \mcb{W}$ for the map given by 
$U \mapsto \big(\sum_{\mfl \in \mfD_{\mfb}}\bigl(\mathbf{F}^{\mfl}_{\mfb}(U)
\Xi_\mfl\bigr) : \mfb \in \mfL_{+}\big)$ where
\begin{equ}\label{def: lift of smooth functions} 
\mathbf{F}^{\mfl}_{\mfb}(U)
\eqdef
\sum_{
\alpha \in \N^{\mcb{O}}
}
\frac{D^{\alpha}F^{\mfl}_{\mfb}\big(\langle \boldsymbol{U}, \bone \rangle \big)}{\alpha!}
(\boldsymbol{U} - \langle \boldsymbol{U}, \bone \rangle \bone)^{\alpha} \;,
\end{equ}
and, writing $\boldsymbol{U} = (U_{o})_{o \in \mcb{O}}$, we set $\langle \boldsymbol{U}, \mathbf{1} \rangle \eqdef \big(\langle U_{o}, \bone \rangle \big)_{o \in \mcb{O}} \in \R^{\mcb{O}}$. 
Then, for any $\gamma \in \R$, $\mcQ_{ \le \gamma}F\Xi$ maps $\mcb{H}^{\ex}$ to $\widetilde{\mcb{H}}^{\ex}$.
\end{lemma}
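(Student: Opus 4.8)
The plan is to show that each component $\mcQ_{\le\gamma}\big(\mathbf{F}^{\mfl}_{\mfb}(U)\Xi_\mfl\big)$ lands in $\widetilde{\mcb{T}}^{\ex}_{\mfb}$, i.e.\ that every tree appearing with nonzero coefficient in the truncated expansion is of the form $\bar\tau$ with $\mcb{I}_{(\mfb,0)}[\bar\tau]\in\mcT^{\ex}$. First I would unwind the definition~\eqref{def: lift of smooth functions}: given $U=(U_\mft)_{\mft\in\mfL_+}\in\mcb{H}^{\ex}$, each $U_\mft\in\mcb{T}^{\ex}_\mft = \bar\cT^\ex\oplus\Span{\mcT^{\ex}_\mft}$, so $\boldsymbol U = (\DD^p U_\mft)_{(\mft,p)\in\mcb O}$ has $o$-component a linear combination of abstract polynomials and planted trees $\mcb{I}_{(\mft,p)}[\sigma]$. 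The term $(\boldsymbol U-\langle\boldsymbol U,\bone\rangle\bone)^\alpha$ is, by definition of the tree product, a linear combination of forests $\prod_j \mcb{I}_{(\mft_j,p_j)}[\sigma_j]$ (times a polynomial factor coming from the polynomial parts of the $U_{o}$'s), where the multiset of $(\mft_j,p_j)$ is $\alpha$ together with extra indices drawn from $\mcb{O}(F^\mfl_\mfb)\subset\mcb O$ via the Taylor factors $D^{\alpha'}F^\mfl_\mfb(\cdots)/\alpha'!$ in the outer sum. Multiplying by $\Xi_\mfl$ and applying $\mcb{I}_{(\mfb,0)}$ produces a tree whose root edge has type $(\mfb,0)$, whose root node type is $\Xi_\mfl$ (i.e.\ the node decoration records $\hat\mfl$ and the extended decoration $\mfo$, with $\mfl=(\hat\mfl,\mfo)$), and whose children subtrees are the planted pieces just described.

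The key step is then a combinatorial bookkeeping: I would check that for any monomial $x^\alpha$ actually appearing in $F^\mfl_\mfb$ (so $\alpha\le\alpha_j$ for one of the exponents $\alpha_j$ in~\eqref{expansion of nonlinearity}) together with any further multiset $\beta\in\hat{\mcb P}(\mcb O(F_j))$ coming from differentiating the positive-degree factor $F_j$, the node type at the new root is $\Xi_\mfl$ attached to children edge-types $\alpha\sqcup\beta$, and that this is precisely an element $N=\alpha\sqcup\beta\in\hat{\mcb N}(F^\mfl_\mfb)$. By the implication~\ref{point:obey1}~$\Rightarrow$~\ref{point:obey2} of Proposition~\ref{prop:obeyEquivalence} (equivalently, directly from Definition~\ref{def:obey}), $F\in\G$ gives $\mfo\in D(\mfb,N\sqcup\hat\mfl)$, which is exactly the admissibility condition ensuring that the tree $\mcb{I}_{(\mfb,0)}\big[\Xi_\mfl\prod_j\mcb{I}_{(\mft_j,p_j)}[\sigma_j]\big]$ conforms to the rule $R$ and has a compatible extended decoration, i.e.\ lies in $\mcT^{\ex}_\mfb$. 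Hence the child subtree $\Xi_\mfl\prod_j\mcb{I}_{(\mft_j,p_j)}[\sigma_j]$ of that tree lies in $\widetilde{\mcT}^{\ex}_\mfb$. One also needs to handle the polynomial factor $\mathbf{X}^q$ that multiplies the planted forest (coming from polynomial parts of the $U_o$ and from $\DD^p$ acting on $\mathbf X^q$): since $\bar\cT^\ex\subset\widetilde{\mcb T}^{\ex}_\mfb$ and adding $\mathbf X^q$ at the root only raises the polynomial node decoration, this stays inside the span of $\widetilde{\mcT}^{\ex}_\mfb$ — here I would invoke Assumption~\ref{assump:RregComplete} to know the rule is closed under adjoining positive-degree elements, so decorations of the form $\mathbf X^q$ at the root of a tree already conforming to $R$ still conform.

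Two finiteness/well-definedness points remain, which I would address before the main argument. First, the outer sum over $\alpha$ in~\eqref{def: lift of smooth functions} is a priori infinite, but since $F^\mfl_\mfb\in\Poly$ there is $\alpha^\star$ supported on $\mcb O_-$ with $D^{\alpha^\star}F^\mfl_\mfb=0$, and the $\mcb O_-$-components of $\boldsymbol U-\langle\boldsymbol U,\bone\rangle\bone$ have strictly positive $|\cdot|_+$-degree (they are planted trees or nonconstant polynomials), so after applying $\mcQ_{\le\gamma}$ only finitely many $\alpha$ survive — this makes $\mcQ_{\le\gamma}F\Xi$ a genuine map into $\mcb T^{\ex}$ rather than a completion. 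Second, I should note that the full (untruncated) sum $\mathbf F^\mfl_\mfb(U)$ makes sense in the completion $\widehat{\mcb T}^{\ex}$ and that $\mcQ_{\le\gamma}$ commutes past the argument in the expected way. The main obstacle is the careful identification of the node-type/edge-type data of the tree $\mcb{I}_{(\mfb,0)}[\mathbf X^q\Xi_\mfl\prod_j\mcb{I}_{(\mft_j,p_j)}[\sigma_j]]$ with an element of $\hat{\mcb N}(F^\mfl_\mfb)\sqcup\hat\mfl$ so that Proposition~\ref{prop:obeyEquivalence} applies verbatim, together with tracking the extended decorations $\mfo$ correctly through the identification of $\mcT^{\ex}$ with a subset of $\VV$ described in Section~\ref{Notations for trees}; once the dictionary is set up, the membership $\mfo\in D(\mfb,N\sqcup\hat\mfl)$ closes the argument.
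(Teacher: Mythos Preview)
Your approach is essentially the paper's: the proof there is a one-liner observing that for every $\alpha$ with $D^{\alpha}F^{\mfl}_{\mfb}\not\equiv 0$, the implication \ref{point:obey1}~$\Rightarrow$~\ref{point:obey2} of Proposition~\ref{prop:obeyEquivalence} gives $\mfo\in D(\mfb,\alpha\sqcup\hat\mfl)$, hence $(\boldsymbol{U}-\langle\boldsymbol{U},\bone\rangle\bone)^{\alpha}\Xi_{\mfl}\in\widetilde{\mcb{T}}^{\ex}_{\mfb}$. You have unpacked this in considerably more detail, which is fine.

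There is, however, one misstep in your treatment of the polynomial factors. You invoke Assumption~\ref{assump:RregComplete} to absorb the $\mathbf{X}^q$ at the root, but that assumption concerns adjoining \emph{edge} types $o\in\mcb{O}_+$ to a node type, not polynomial node decorations; it is both inapplicable here and not part of the hypotheses of the lemma. The correct observation is simpler: when some factors $U_{o_i}-\langle U_{o_i},\bone\rangle$ in the product $(\boldsymbol U-\langle\boldsymbol U,\bone\rangle\bone)^\alpha$ contribute a monomial $\mathbf{X}^{k_i}$ instead of a planted tree, the resulting tree has root edge-multiset $\alpha'\sqcup\hat\mfl$ for some $\alpha'\le\alpha$ (polynomial decorations do not enter the rule). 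Since $D^{\alpha}F^{\mfl}_{\mfb}=D^{\alpha-\alpha'}(D^{\alpha'}F^{\mfl}_{\mfb})$, nonvanishing of $D^{\alpha}F^{\mfl}_{\mfb}$ forces $D^{\alpha'}F^{\mfl}_{\mfb}\not\equiv 0$, so Proposition~\ref{prop:obeyEquivalence} applied to $\alpha'$ already gives $\mfo\in D(\mfb,\alpha'\sqcup\hat\mfl)$. This closes the gap without any extra assumption.
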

\begin{proof}
This follows from the fact that for $\mfb \in \mfL_{+}$, $\mfl \in \mfD_{\mfb}$, and any $\alpha \in \N^{\mcb{O}}$ with $D^{\alpha}F^{\mfl}_{\mfb} \not = 0$, Proposition~\ref{prop:obeyEquivalence} guarantees that $(\boldsymbol{U} - \langle \boldsymbol{U}, \mathbf{1} \rangle \mathbf{1})^{\alpha} \Xi_{\mfl} \in \widetilde{\mcb{T}}^{\ex}_{\mfb}$.
\end{proof}
\subsection{Coherence}\label{subsec:coherence}
For each $F \in \mathring{\G}$ we define $\Upsilon^{F}:\VV \rightarrow \Poly^{\mfL_{+}}$, $\Upsilon^F : \tau \mapsto \Upsilon^{F}[\tau] = (\Upsilon^{F}_{\mft}[\tau])_{\mft \in \mfL_{+}}$ 
by \eqref{eq: first Upsilon}, the only difference being that we now allow
to have $\mfl \in \mfD$. Recall also that we identify $\mcT^\ex$ with a subset of $\VV$.

For $U \in \mcb{H}^{\ex}$ we define $U^{R} \in \widetilde{\mcb{H}}^{\ex}$ by setting, for each $\mft \in \mfL_{+}$, 
\[
U^{R}_{\mft} = \sum_{\tau \in \widetilde{\mcT}_{\mft}^{\ex}} \frac{\langle U, \mcb{I}_{(\mft,0)}[\tau] \rangle}{\langle \tau, \tau \rangle} \tau\;.
\]
Additionally, we define a tuple $\mathbf{u}^{U} = (u_{\alpha}^{U})_{\alpha \in \mcb{O}}$ where $u_{\alpha}^{U} \in \R$ is given by setting $u_{(\mft,p)}^{U} \eqdef \langle \mathbf{X}^{p},U_{\mft}\rangle = \langle \mathbf{1},U_{(\mft,p)} \rangle$.
In this way, every $U \in \mcb{H}^{\ex}$ can be written uniquely as
\begin{equation}\label{eq:generalU}
U_\mft = \sum_{p \in \N^{d+1}} \frac{1}{p!}u^U_{(\mft,p)}\mathbf{X}^p +  \mcb{I}_{(\mft,0)}[U_\mft^R]\;.
\end{equation}

\begin{definition}\label{def: coherent}
We say that $U \in \mcb{H}^{\ex}$ is \textit{coherent} to order $L \in \N$ with $F \in \G$ if, for every $\mft \in \mfL_{+}$ and 
every $\tau$ such that $\mcb{I}_{(\mft,0)}[\tau] \in \mcW_{\leq L+1}$, one has
\[
\langle U_\mft, \mcb{I}_{(\mft,0)}[\tau] \rangle = \Upsilon^F_{\mft}[\tau](\mathbf{u}^U).
\]
\end{definition}
We note the following equivalence.
\begin{lemma}\label{lemma: coherence identity}
Fix $F \in \G$ and $L \in \N$. Consider $U \in \mcb{H}^{\ex}$ of the form~\eqref{eq:generalU}.
Then $U$ is coherent to order $L$ with $F$ if and only if, for every $\mft \in \mfL_{+}$,
\[
\proj_{\leq L} \sum_{\mfl \in \mfD_{\mft}}
F^{\mfl}_{\mft}(U)\Xi_\mfl
=
\proj_{\leq L} U_\mft^R\;.
\]
\end{lemma}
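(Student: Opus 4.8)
The statement is essentially an unwinding of the definitions of coherence, of the symbolic nonlinearity $\mathbf{F}^{\mfl}_{\mft}(U)$ from Lemma~\ref{lem: nonlinearity on regularity structure}, and of $\Upsilon^F_{\mft}$ from~\eqref{eq: first Upsilon}, so the proof is by induction on the truncation parameter $L$ and ultimately a pairing computation. First I would fix $F\in\G$ and $U\in\mcb{H}^{\ex}$ written in the canonical form~\eqref{eq:generalU}, and observe that by definition $U^R_\mft = \sum_{\tau\in\widetilde{\mcT}_{\mft}^{\ex}}\langle\tau,\tau\rangle^{-1}\langle U_\mft,\mcb{I}_{(\mft,0)}[\tau]\rangle\,\tau$, so that $\proj_{\le L}U^R_\mft = \sum_{\tau\,:\,\mcb{I}_{(\mft,0)}[\tau]\in\mcW_{\le L+1}}\langle\tau,\tau\rangle^{-1}\langle U_\mft,\mcb{I}_{(\mft,0)}[\tau]\rangle\,\tau$, using that $\mcb{I}_{(\mft,0)}[\tau]\in\mcW_{\le L+1}$ iff $\tau\in\mcW_{\le L}$ (the new edge contributes $1$ to the truncation parameter). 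On the other side, $\proj_{\le L}\sum_{\mfl\in\mfD_\mft}\mathbf{F}^{\mfl}_{\mft}(U)\Xi_\mfl$ can similarly be expanded in the basis $(\langle\tau,\tau\rangle^{-1}\tau)$ of $\widetilde{\mcb{T}}^{\ex}_\mft$ (legitimate by Lemma~\ref{lem: nonlinearity on regularity structure}, which guarantees the sum lands in $\widetilde{\mcb{H}}^{\ex}$). Hence the asserted identity holds iff, for every $\mft\in\mfL_+$ and every $\tau\in\widetilde{\mcT}_{\mft}^{\ex}$ with $\tau\in\mcW_{\le L}$,
\begin{equ}
\langle U_\mft,\mcb{I}_{(\mft,0)}[\tau]\rangle
=
\Big\langle \sum_{\mfl\in\mfD_\mft}\mathbf{F}^{\mfl}_{\mft}(U)\Xi_\mfl,\ \tau\Big\rangle\;,
\end{equ}
so everything reduces to identifying the right-hand pairing with $\Upsilon^F_\mft[\tau](\mathbf{u}^U)$.

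The core of the argument is therefore the claim that for any $\tau\in\widetilde{\mcT}^{\ex}_\mft$,
\begin{equ}
\Big\langle \sum_{\mfl\in\mfD_\mft}\mathbf{F}^{\mfl}_{\mft}(U)\Xi_\mfl,\ \tau\Big\rangle
=
\Upsilon^F_\mft[\tau](\mathbf{u}^U)\;.
\end{equ}
I would prove this by induction on the number of edges of $\tau$. Write $\tau=\Xi_\mfl\,\mathbf{X}^k\prod_{j=1}^n\mcb{I}_{o_j}[\tau_j]$ as in~\eqref{eq:general tree of V} with $o_j=(\mft_j,p_j)$. Using the definition~\eqref{def: lift of smooth functions} of $\mathbf{F}^{\mfl}_{\mft}(U)$ as a Taylor expansion of $F^{\mfl}_\mft$ around $\langle\boldsymbol{U},\bone\rangle=\mathbf{u}^U$ in the variables $\boldsymbol{U}-\langle\boldsymbol{U},\bone\rangle\bone$, and the multiplicativity of $\langle\cdot,\cdot\rangle$ with respect to the tree product (which, by the construction of Section~\ref{sec: gen construction of inner products}, just bookkeeps symmetry factors $S(\tau)$), the pairing of $\mathbf{F}^{\mfl}_\mft(U)\Xi_\mfl$ against $\tau$ picks out: the $\mathbf{X}^k$ factor forces a $k$-th derivative and a factor $k!$; the $\Xi_\mfl$ forces the driver labels to match; and each factor $U_{o_j}-\langle U_{o_j},\bone\rangle\bone = \DD^{p_j}U_{\mft_j}-(\ldots)$ paired against $\mcb{I}_{o_j}[\tau_j]$ produces, by the very definition of $\DD^{p_j}$ on $\mcb{I}_{(\mft_j,0)}[\,\cdot\,]$ and of the inner product, a factor $D_{o_j}$ together with $\langle U_{\mft_j},\mcb{I}_{(\mft_j,0)}[\tau_j]\rangle$. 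The chain rule~\eqref{chain rule} turns the $\partial^k$ appearing in~\eqref{eq: first Upsilon} into the appropriate combination of the $D_o$'s, and the inductive hypothesis applied to each $\tau_j$ (which has strictly fewer edges, and lies in $\widetilde{\mcT}^{\ex}_{\mft_j}$ when $U$ is restricted appropriately — using that $U_{\mft_j}^R$ has the right form) replaces $\langle U_{\mft_j},\mcb{I}_{(\mft_j,0)}[\tau_j]\rangle$ by $\Upsilon^F_{\mft_j}[\tau_j](\mathbf{u}^U)$, modulo the coherence hypothesis relating $\langle U_{\mft_j},\mcb{I}_{(\mft_j,0)}[\tau_j]\rangle$ to $\Upsilon$ — but here one must be careful: the equivalence is stated at fixed order $L$, so the induction must be organised so that the coherence relation is only invoked for subtrees $\tau_j$ with $\mcb{I}_{(\mft_j,0)}[\tau_j]\in\mcW_{\le L+1}$, which is automatic since $\tau\in\mcW_{\le L}$ implies each $\mcb{I}_{o_j}[\tau_j]$ has truncation parameter $\le L$. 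Reassembling, one recovers exactly~\eqref{eq: first Upsilon} evaluated at $\mathbf{u}^U$, with the symmetry factors $S(\tau_j)$ and $S(\tau)=k!\prod S(\tau_j)^{\beta_j}\beta_j!$ cancelling correctly against the combinatorial factors coming from the multinomial expansion and from grouping equal subtrees.

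The main obstacle I anticipate is precisely this combinatorial bookkeeping: matching, on the nose, the symmetry factors built into the inner product $\langle\cdot,\cdot\rangle$ on $\VVspan$ (via Section~\ref{sec: gen construction of inner products}, so that $\langle\tau,\tau\rangle=S(\tau)$) against (a) the $1/\alpha!$ and multinomial coefficients in the Taylor expansion~\eqref{def: lift of smooth functions}, and (b) the product-over-$j$ structure when several of the $\tau_j$ coincide (the exponents $\beta_j$ in~\eqref{eq: display of tree}). One has to check that the orbit-counting implicit in $\langle\cdot,\cdot\rangle$ — summing over permutations $s\in S_k$ of the children — exactly compensates the overcounting in $(\boldsymbol U-\langle\boldsymbol U,\bone\rangle\bone)^\alpha$. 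A secondary subtlety is the bookkeeping of the polynomial decoration $\mathbf{X}^k$ at the root versus the action of $\partial^k$ through the chain rule: one should verify that $\langle\mathbf{X}^k,\cdot\rangle$ contributing $k!$ together with differentiation $\prod D_{o}$ reproduces $\partial^k\prod D_{o_j}$ as in~\eqref{eq: first Upsilon}, using Lemma~\ref{lem:partialK F vanishes} only insofar as one needs $\partial^k$ to be well-defined on $\Poly$ (it is, by~\eqref{chain rule}). Once the base case (no edges, $\tau=\Xi_\mfl\mathbf{X}^k$, where both sides equal $\partial^k F^{\mfl}_\mft(\mathbf{u}^U)\cdot k!/k!$) and the inductive step are in place, the lemma follows by combining the two reductions above.
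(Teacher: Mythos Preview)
Your overall route is sound but differs from the paper's, and there is one logical slip worth flagging.

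\textbf{The logical slip.} Your ``core claim'' that
\[
\Big\langle \sum_{\mfl\in\mfD_\mft}\mathbf{F}^{\mfl}_{\mft}(U)\Xi_\mfl,\ \tau\Big\rangle = \Upsilon^F_\mft[\tau](\mathbf{u}^U)
\]
holds for arbitrary $U$ is false as stated: unwinding the Taylor expansion, the left side equals the $\Upsilon$-recursion~\eqref{eq: first Upsilon} but with each $\Upsilon^F_{\mft_j}[\tau_j](\mathbf{u}^U)$ replaced by $\langle U_{\mft_j},\mcb{I}_{(\mft_j,0)}[\tau_j]\rangle$. You notice this yourself midway through. The fix is to state this unconditional recursive identity as the core lemma; then both implications of the ``if and only if'' follow by a single induction on the edge count of $\tau$ (one direction using the coherence hypothesis for the subtrees, the other using the truncated identity for the subtrees, both legitimate since $\trunc(\mcb{I}_{o_j}[\tau_j]) \le \trunc(\tau)$). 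The paper's proof of the converse in Lemma~\ref{lemma: coherence identity - big space} does exactly this: it constructs the coherent $V$ with the same polynomial part and shows $\langle U,\mcb{I}_{(\mft,0)}[\tau]\rangle=\langle V,\mcb{I}_{(\mft,0)}[\tau]\rangle$ by induction on the number of edges.

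\textbf{Comparison with the paper.} The paper does \emph{not} compute the pairing directly in $\VVspan$ as you propose. Instead it defers to Lemma~\ref{lemma: coherence identity - big space} (the untruncated version) and deduces the truncated statement from additivity of $\trunc$ under tree products. The proof of Lemma~\ref{lemma: coherence identity - big space} passes to the richer space $\BBspan$ of Section~\ref{sec:SpacesOfTrees}, whose node decorations record \emph{which} $U_{(\mfb,p)}$ each polynomial factor $\mathbf{X}^q$ came from. In $\BBspan$ the map $\mathring\Upsilon^F$ has no $\partial^k$ (only $D_o$'s and explicit $\Y$-factors), so the Taylor expansion matches term by term with no Fa\`a di Bruno bookkeeping. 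The passage back to $\VVspan$ is handled once and for all by Lemma~\ref{lem:UpsilonSum} ($\Upsilon^F=\mathring\Upsilon^F\circ Q^*$), whose proof in Appendix~\ref{proof lemma Upsilon sum identity} is precisely the multivariate Fa\`a di Bruno formula you anticipate as your main obstacle. So your approach is more elementary (no auxiliary space) but front-loads the delicate symmetry-factor and chain-rule combinatorics you flag; the paper's approach isolates that combinatorics into a single clean lemma and keeps the coherence proof itself a straightforward multinomial expansion.
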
 
\begin{proof}
This follows from Lemma~\ref{lemma: coherence identity - big space} below, combined with 
the additivity of tree multiplication with respect to the truncation parameter $\trunc(\tau)$.
\end{proof}
\subsection{The main theorem}\label{subsec: main thm}
We define an action of $\mfR$ on $\mathring{\G}$ written, for $M \in \mfR$, as $F \mapsto MF$ where $MF \in \mathring{\G}$ is defined by setting, for each $\mft \in \mfL_{+}$ and $\mfl \in \mfD_{\mft}$, 
\begin{equs} \label{def:MP}
(M F)_{\mft}^\mfl \eqdef \Upsilon^{F}_{\mft}[M^*\Xi_{\mfl}] =  
\sum_{
\tau \in \mcT^{\ex}}
\frac{
\langle M \tau, \Xi_{\mfl} \rangle}{\langle \tau, \tau \rangle}
\Upsilon_{\mft}^{F}[\tau]
\in \Poly\;.
\end{equs}
We defer the proof of the following lemma to Appendix~\ref{subsec:GPreserveProof}.
\begin{lemma}\label{lem:GPreserve}
Let $F \in \G$ and $M \in \mfR$. Under Assumption~\ref{assump:RregComplete}, it holds that $MF \in \G$.
\end{lemma}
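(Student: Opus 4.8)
\textbf{Proof proposal for Lemma~\ref{lem:GPreserve}.}

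The plan is to verify that $MF \in \G$ by checking criterion~\ref{point:obey2} of Proposition~\ref{prop:obeyEquivalence}, which (under Assumption~\ref{assump:RregComplete}) is equivalent to membership in $\G$. So fix $\mfl = (\hat\mfl,\mfo) \in \mfD$, $\mft \in \mfL_+$, and $\alpha \in \N^{\mcb{O}}$ with $\mfo \notin D(\mft, \alpha \sqcup \hat\mfl)$; we must show $D^\alpha (MF)^\mfl_\mft \equiv 0$. Expanding the definition~\eqref{def:MP}, $(MF)^\mfl_\mft = \sum_{\tau \in \mcT^\ex} \frac{\langle M\tau, \Xi_\mfl\rangle}{\langle\tau,\tau\rangle}\Upsilon^F_\mft[\tau]$, so it suffices to show that for every $\tau \in \mcT^\ex$ appearing with nonzero coefficient (i.e.\ $\langle M\tau, \Xi_\mfl\rangle \neq 0$), one has $D^\alpha \Upsilon^F_\mft[\tau] \equiv 0$. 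The strategy is therefore: (i) understand which $\tau$ can contribute, using structural properties of $M$; (ii) for such $\tau$, analyse the differential structure of $\Upsilon^F_\mft[\tau]$ via its inductive definition~\eqref{eq: first Upsilon}; (iii) combine with the fact that $F \in \G$ (so $F$ itself satisfies the vanishing constraints of Proposition~\ref{prop:obeyEquivalence}) to deduce the desired vanishing.

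First I would record the key structural input: since $\Xi_\mfl = \Xi_{\hat\mfl}\cdot \bullet^{0,\mfo}$ is a tree consisting of a single node carrying noise-data $\hat\mfl$ and extended decoration $\mfo$ and no edges, and $M = M_g$ is upper-triangular with respect to $|\cdot|_+$, the condition $\langle M\tau,\Xi_\mfl\rangle \neq 0$ forces $|\tau|_+ = |\Xi_\mfl|_+$ and, more importantly, constrains the ``shape'' of $\tau$: $M$ acts by contracting negative-degree subtrees and modifying extended decorations, so if $\Xi_\mfl$ appears in $M\tau$ then $\tau$ is obtained from $\Xi_{\hat\mfl}$ by grafting a forest of negative-degree trees onto its root (this is exactly the content of the co-interaction/contraction picture of \cite{BHZalg}, and is made precise in Appendix~\ref{subsec:GPreserveProof} where this lemma is proved in detail). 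In particular $\tau$ has the form~\eqref{eq:generic T} with noise label $\hat\mfl$ at the root and the extended decoration $\mfo$ at the root arising from the contracted subtrees. Then I would use the inductive formula~\eqref{eq: first Upsilon}: $\Upsilon^F_\mft[\tau] = \big(\prod_{j=1}^n \Upsilon^F_{\mft_j}[\tau_j]\big)\cdot \big(\partial^k \prod_j D_{o_j}\big)F^{\hat\mfl}_\mft$, so the dependence of $\Upsilon^F_\mft[\tau]$ on the variables $\Y_o$ factors through $F^{\hat\mfl}_\mft$ and the $\Upsilon^F_{\mft_j}[\tau_j]$. Applying $D^\alpha$ and the multivariate Leibniz rule reduces matters to showing that a derivative of $F^{\hat\mfl}_\mft$ of the form $D^\beta F^{\hat\mfl}_\mft$ (for a suitable $\beta$) vanishes whenever the corresponding combined decoration is incompatible with $D(\mft, \cdot \sqcup \hat\mfl)$.

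The crux — and the step I expect to be the main obstacle — is the bookkeeping that matches the ``incompatibility'' hypothesis $\mfo \notin D(\mft,\alpha\sqcup\hat\mfl)$ (a condition on the extended decoration of the renormalised driver) to the vanishing of the appropriate derivative of $F$ (a condition coming from $F \in \G$, i.e.\ from Proposition~\ref{prop:obeyEquivalence}\ref{point:obey2} applied to $F$). Concretely one needs: the node-type $N$ of edges emanating from the root of $\tau$, together with the extended decoration $\mfo$ produced by $M$, and the multiset $\alpha$ of $\mcb{O}$-variables hit by $D^\alpha$, assemble into a combined datum for which $D(\mft, \cdot \sqcup \hat\mfl)$-membership fails — and this should follow from the definition of $D(\mft,N)$ together with the constraint $\mfo \in D(\mft,\hat\mfl)$ built into the very definition~\eqref{e:defDt'} of $\mfD_\mft$, plus Assumption~\ref{assump:RregComplete} which guarantees that adding positive-degree edges (the ones killed by passing from $N$ to $N^-$ as in the proof of Proposition~\ref{prop:obeyEquivalence}) does not change membership. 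I would handle the induction over the height of $\tau$: the leaves $\tau_j$ are themselves $\mft_j$-non-vanishing trees (or vanish under $\Upsilon$), so $\Upsilon^F_{\mft_j}[\tau_j] \in \Poly$ and the product structure is preserved; the inductive hypothesis then controls the derivatives of the inner factors while the outer factor $\big(\partial^k\prod D_{o_j}\big)F^{\hat\mfl}_\mft$ is controlled directly by $F \in \G$. Since the detailed verification rests on the contraction/co-interaction machinery of \cite{BHZalg}, the cleanest route is to defer it to Appendix~\ref{subsec:GPreserveProof} as the paper does, presenting here only the reduction to criterion~\ref{point:obey2} and the identification of the relevant combinatorial constraint.
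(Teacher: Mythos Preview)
Your reduction to criterion~\ref{point:obey2} of Proposition~\ref{prop:obeyEquivalence} and then to the vanishing of $D^\alpha\Upsilon^F_\mft[\tau]$ for each $\tau$ with $\langle M^*\Xi_\mfl,\tau\rangle\neq 0$ is exactly how the paper begins. But after that point your proposal diverges from the paper's argument in a way that introduces both a factual error and a genuine gap.

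The error is your structural claim about $\tau$: it is \emph{not} true in general that $\tau$ has noise label $\hat\mfl$ at the root. When $M$ contracts a subgraph $A$ of $\tau$ to produce $\Xi_\mfl$, the noise content $\hat\mfl$ is the multiset of noise edges of $\tau$ that lie \emph{outside} $A$, and these may be attached to any of the nodes of $\tau$, all of which collapse to the single node of $\Xi_\mfl$. Consequently the factor at the root of the recursive formula~\eqref{eq: first Upsilon} is $F^{\mfl'}_\mft$ for the root noise $\mfl'$ of $\tau$, not $F^{\hat\mfl}_\mft$, so your Leibniz reduction ``to showing that a derivative of $F^{\hat\mfl}_\mft$ vanishes'' is aiming at the wrong target.

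The gap is that, even with the correct $\mfl'$, you have not indicated how the Leibniz distribution of $D^\alpha$ over the recursive factors closes. The paper sidesteps this entirely by a trick you do not mention: it adjoins an auxiliary driver $\check\Xi$ with $\Upsilon^F_\mfb[\check\Xi]\equiv 1$, so that $D_o = \Upsilon^F[\check\Xi]\triangleleft_o$, and then invokes the grafting morphism identity~\eqref{eq:graftSmall1} to rewrite $D^\alpha\Upsilon^F_\mft[\tau]$ as $\Upsilon^F_\mft[\check\tau]$, where $\check\tau$ is the sum over all ways of grafting $|\alpha|$ copies of $\check\Xi$ onto $\tau$ with the edge types prescribed by $\alpha$. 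This converts the differential calculation into a purely combinatorial one on trees. Each summand $\check\tau_i$ either has two adjacent $\check\Xi$-nodes (so $\Upsilon^F$ vanishes trivially since $D_o 1=0$) or corresponds to a tree $\tau_i\in\VV$ obtained from $\tau$ by inserting extra edges of types drawn from $\alpha$. Completeness of $R$ together with $\mfo\notin D(\mft,\alpha\sqcup\hat\mfl)$ and $\langle M^*\Xi_\mfl,\tau\rangle\neq 0$ then forces $\mcb{I}_{(\mft,0)}[\tau_i]\notin\mcT^\ex$, so some node of $\tau_i$ has incompatible data and Proposition~\ref{prop:obeyEquivalence}\ref{point:obey2} applied to $F$ gives $\Upsilon^F_\mft[\check\tau_i]=0$. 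The grafting trick is what makes the bookkeeping tractable; your Leibniz-plus-induction sketch is morally equivalent but you have not supplied the mechanism that replaces it.
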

The following lemma, whose proof is deferred to the very end of Section~\ref{subsec:interactionWithRenormG}, implies that the map
$F \mapsto MF$ defines a (left) group action\footnote{One also needs injectivity of the map $F \mapsto (\Upsilon^{F}_{\mft})_{\mft \in \mfL_{+}}$ but this immediately follows from the fact that $F_{\mft}^{\mfl} = \Upsilon^{F}_{\mft}[\Xi_{\mfl}]$ for any $\mft \in \mfL_{+}$, $\mfl \in \mfD$.} of $\mfR$ on $\G$, which is not obvious from 
\eqref{def:MP}.
\begin{lemma}\label{lem: renormalisation of upsilon}
Suppose Assumption~\ref{assump:RregComplete} holds.
Then for all $F \in \G$, $M \in \mfR$, and $\tau \in \mcT^\ex$, it holds that
$
\Upsilon^{F} [M^* \tau] = \Upsilon^{ MF}[\tau]
$.
\end{lemma}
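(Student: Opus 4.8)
The plan is to prove the identity $\Upsilon^F[M^*\tau] = \Upsilon^{MF}[\tau]$ by an inductive/algebraic argument that reduces it to the defining relation \eqref{def:MP} together with the structural compatibility of $\Upsilon^\bullet$ and the renormalisation maps with the grafting operations introduced in Section~\ref{sec: grafting and renormalisation}. The key conceptual point is that both sides of the claimed identity, viewed as functions of $\tau$, are morphisms out of the space $\VVspan$ (or rather out of $\cT^\ex$) equipped with its family of grafting operators, so by the universal property of Proposition~\ref{prop:generate} it suffices to check that they agree on the generators $\{\mathbf{X}^k\Xi_\mfl\}$ and that they intertwine the grafting operators in the same way.

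First I would record what happens on generators. For $\tau = \Xi_\mfl$ (up to a polynomial decoration, which is handled by the $\partial^k$ in \eqref{eq: first Upsilon}), the left side is $\Upsilon^F[M^*\Xi_\mfl]$, which by \eqref{def:MP} is precisely $(MF)^\mfl_\mft$ componentwise, i.e. $\Upsilon^{MF}_\mft[\Xi_\mfl]$; so the base case is essentially the definition of the action. The polynomial-decorated generators $\mathbf{X}^k\Xi_\mfl$ follow because $M^*$ commutes appropriately with the operation that attaches $\mathbf{X}^k$ at the root (a consequence of how the renormalisation group respects the polynomial sector, using that $M^*$ preserves the $|\cdot|_+$-degree and acts triangularly), combined with the chain-rule definition \eqref{chain rule} of $\partial^k$ in $\Upsilon$. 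Second, I would invoke the two families of lemmas promised in the outline (the ``morphism'' properties of $\Upsilon^\bullet$ and of the operators $M\in\mfR$ with respect to the grafting operators — stated in Section~\ref{sec: grafting and renormalisation}, with proofs using the co-interaction property of \cite{BHZalg} deferred to Appendix~\ref{subsec:coInteractProofs}): these say, schematically, that grafting a subtree $\sigma$ at a node corresponds on the $\Upsilon$ side to applying the derivative operator $D_{o}$ (paired with multiplication by $\Upsilon[\sigma]$), and that $M^*$ (equivalently $M$) is compatible with these same grafting operations up to lower-order terms that are themselves accounted for. Feeding this through Proposition~\ref{prop:generate} then forces $\Upsilon^F\circ M^*$ and $\Upsilon^{MF}$ to coincide on all of $\cT^\ex$.

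Concretely, the induction is on the truncation parameter $\trunc(\tau)$ (or the number of edges/nodes): assuming the identity for all trees strictly smaller, one writes $\tau = \mathbf{X}^k\Xi_\mfl\prod_{j=1}^n \mcb{I}_{o_j}[\tau_j]$, expands $M^*\tau$ using the compatibility of $M^*$ with grafting (so that $M^*\tau$ is built from $M^*$ applied to the $\tau_j$, grafted onto $M^*(\mathbf{X}^k\Xi_\mfl)$, plus contributions from contracting subtrees into the root that, by \eqref{def:MP}, modify the nonlinearity from $F$ to $MF$), then applies $\Upsilon^F$ using its grafting-morphism property, and finally uses the inductive hypothesis $\Upsilon^F[M^*\tau_j] = \Upsilon^{MF}[\tau_j]$ on each factor. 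The recursive definition \eqref{eq: first Upsilon} of $\Upsilon^{MF}[\tau]$ then matches the resulting expression term by term; Lemma~\ref{lem:GPreserve} guarantees $MF\in\G$ so that $\Upsilon^{MF}$ is defined. Note that Assumption~\ref{assump:RregComplete} enters exactly through Lemma~\ref{lem:GPreserve} and through the statement of Proposition~\ref{prop:generate}/the grafting lemmas.

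The main obstacle I anticipate is bookkeeping the interaction between renormalisation and grafting precisely — i.e. establishing that $M^*$ ``commutes'' with the grafting operators in the controlled sense needed, which is genuinely the content of the co-interaction property of \cite{BHZalg} and is why the proof is split off into Section~\ref{sec: grafting and renormalisation} and Appendix~\ref{subsec:coInteractProofs}. Once that compatibility is in hand, assembling the pieces via the universal property of Proposition~\ref{prop:generate} is formal; the subtlety is purely in verifying that the grafting-morphism hypotheses of that proposition are met by both $\Upsilon^F\circ M^*$ and $\Upsilon^{MF}$, with the base-case check on generators being exactly \eqref{def:MP}. A secondary, more technical point is making sure the polynomial decorations $\mathbf{X}^k$ and the extended decorations $\mfo$ are threaded through correctly, but these do not pose conceptual difficulties given the setup in Sections~\ref{subsec: drivers} and~\ref{Notations for trees}.
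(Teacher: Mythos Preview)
Your proposal is correct and follows essentially the same approach as the paper: verify agreement on the generators $\bar\mfD = \{\mathbf{X}^k\Xi_\mfl\}$, show both $\Upsilon^F\circ M^*$ and $\Upsilon^{MF}$ are morphisms for the grafting operators, and conclude via the generation property (Corollary~\ref{cor:generateT}). One point you gloss over that the paper makes explicit: to show $\Upsilon^{MF}$ is a $\bgraft{o}$-morphism one needs $\Upsilon^{MF}\circ\pi_{\cT^\ex}=\Upsilon^{MF}$, which holds because $MF\in\G$ (Lemma~\ref{lem:GPreserve} and Proposition~\ref{prop:obeyEquivalence}) forces $\Upsilon^{MF}$ to vanish on $\VV\setminus\mcT^\ex$; this is what lets you pass from the $\hgraft{o}$-morphism property of Corollary~\ref{cor:graftMorphismSmall} to the projected operator $\bgraft{o}$, and is precisely where Assumption~\ref{assump:RregComplete} enters.
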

For the rest of this section, we suppose that Assumption~\ref{assump:RregComplete} is in place.
%
\begin{proposition}\label{main prop}
Fix $F \in \G$, $L\in\N$, and $M \in \mfR$.
Suppose $U \in \mcb{H}^{\ex}$ is coherent to order $\bar L$ with $F$, with $\bar L$ as in Definition~\ref{def: definition of bar L}. Then $MU$ is coherent to order $L$ with $MF$. 
\end{proposition}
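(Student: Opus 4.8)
The plan is to combine Lemma~\ref{lemma: coherence identity} with Lemma~\ref{lem: renormalisation of upsilon}, using the intertwining between the renormalisation action on trees and the action on nonlinearities. First I would unpack what coherence of $MU$ to order $L$ with $MF$ means via Definition~\ref{def: coherent}: for every $\mft\in\mfL_+$ and every $\tau$ with $\mcb{I}_{(\mft,0)}[\tau]\in\mcW_{\le L+1}$, one must have $\langle (MU)_\mft, \mcb{I}_{(\mft,0)}[\tau]\rangle = \Upsilon^{MF}_\mft[\tau](\mathbf{u}^{MU})$. The natural strategy is to expand the left-hand side by writing $M$ in terms of its action on the basis of $\mcT^\ex$ and moving it onto $\mcb{I}_{(\mft,0)}[\tau]$ via the adjoint $M^*$: since $\langle MU, \mcb{I}_{(\mft,0)}[\tau]\rangle = \langle U, M^*\mcb{I}_{(\mft,0)}[\tau]\rangle$, and the coherence of $U$ (to the larger order $\bar L$) lets us replace each inner product $\langle U_\mft, \mcb{I}_{(\mft,0)}[\sigma]\rangle$ appearing in the expansion of $M^*\mcb{I}_{(\mft,0)}[\tau]$ by $\Upsilon^F_\mft[\sigma](\mathbf{u}^U)$. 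Here the use of $\bar L$ (Definition~\ref{def: definition of bar L}) is crucial: by~\eqref{eq:M maps L to bar L}, $M^*$ maps $\cT^\ex_{\le\gamma_L}$ into $\mcb{W}_{\le\bar L}$, so that all the trees $\sigma$ produced by applying $M^*$ to something of truncation parameter $\le L+1$ are still within the range where coherence of $U$ has been assumed.

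The key algebraic step is then to identify the resulting sum $\sum_\sigma \frac{\langle M^*\mcb{I}_{(\mft,0)}[\tau], \mcb{I}_{(\mft,0)}[\sigma]\rangle}{\langle\sigma,\sigma\rangle}\Upsilon^F_\mft[\sigma](\mathbf{u}^U)$ with $\Upsilon^{MF}_\mft[\tau](\mathbf{u}^{MU})$. This is where Lemma~\ref{lem: renormalisation of upsilon} enters: it gives $\Upsilon^F[M^*\tau] = \Upsilon^{MF}[\tau]$ as an identity of $\Poly^{\mfL_+}$-valued functions, so the only remaining subtlety is bookkeeping about which part of $M^*\mcb{I}_{(\mft,0)}[\tau]$ contributes, and showing that the polynomial evaluation point $\mathbf{u}^{MU}$ agrees with $\mathbf{u}^U$ on the relevant components. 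For this last point I would observe that $M$ (being an element of $\mfR$) preserves the $|\cdot|_+$-degree and the polynomial sector, and more precisely that $M^*$ fixes the abstract monomials $\mathbf{X}^p$ and sends any planted tree to a combination of a planted tree plus lower-order terms without polynomial content at the root of $\mcb{I}_{(\mft,0)}$; one should check $u^{MU}_{(\mft,p)} = \langle\mathbf{X}^p, (MU)_\mft\rangle = \langle\mathbf{X}^p, U_\mft\rangle = u^U_{(\mft,p)}$, i.e.\ the renormalisation does not alter the polynomial coefficients. This is the kind of statement that likely already appears implicitly in the proof of Lemma~\ref{lemma: coherence identity}; alternatively it follows from the structure of $\mcb{G}^\ex_-$ acting trivially on $\bar\cT^\ex$.

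An alternative, cleaner route which I would actually prefer is to bypass Definition~\ref{def: coherent} entirely and argue through the fixed-point characterisation of Lemma~\ref{lemma: coherence identity}. Namely: $U$ coherent to order $\bar L$ with $F$ is equivalent to $\proj_{\le \bar L}\sum_{\mfl\in\mfD_\mft} F^\mfl_\mft(U)\Xi_\mfl = \proj_{\le\bar L} U^R_\mft$ for each $\mft$. Applying $M$ to both sides, using that $M$ commutes with $\proj_{\le L}$ up to the degree considerations encoded in $\bar L$, and that $M$ acts as a morphism compatible with the lift $F\Xi$ — this is precisely the content one extracts from Lemma~\ref{lem: renormalisation of upsilon} together with Lemma~\ref{lem:GPreserve} ensuring $MF\in\G$ — one should obtain $\proj_{\le L}\sum_{\mfl\in\mfD_\mft}(MF)^\mfl_\mft(MU)\Xi_\mfl = \proj_{\le L}(MU)^R_\mft$, which by Lemma~\ref{lemma: coherence identity} is exactly coherence of $MU$ to order $L$ with $MF$. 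The main obstacle, and the step I expect to require the most care, is precisely the commutation of the renormalisation map $M$ with the truncation projections: $M$ preserves $|\cdot|_+$-degree but \emph{not} the truncation parameter $\trunc(\cdot)$, so one genuinely needs the two-scale setup ($L$ versus $\bar L$, and $\gamma_L$ as intermediary) to make the bookkeeping work. I would handle this by passing through the $|\cdot|_+$-graded truncation $\mcQ_{\le\gamma_L}$, on which $M$ does commute, and only at the end converting back to $\trunc$-truncations using the relation $\mcW_{\le L}\subset\{\,|\tau|_+\le\gamma_L\,\}$ and Definition~\ref{def: definition of bar L}.
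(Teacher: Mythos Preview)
Your first approach is exactly the paper's proof, and it is correct. The paper's argument is a four-line chain of equalities: $\langle MU_\mft,\mcb{I}_{(\mft,0)}[\tau]\rangle = \langle U_\mft, M^*\mcb{I}_{(\mft,0)}[\tau]\rangle = \langle U_\mft, \mcb{I}_{(\mft,0)}[M^*\tau]\rangle = \Upsilon^F_\mft[M^*\tau](\mathbf{u}^U) = \Upsilon^{MF}_\mft[\tau](\mathbf{u}^{MU})$, invoking exactly the ingredients you list.

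Two remarks. First, the ``bookkeeping about which part of $M^*\mcb{I}_{(\mft,0)}[\tau]$ contributes'' that you flag as a subtlety is in fact trivial once you use the identity $M^*\mcb{I}_{(\mft,0)}[\tau] = \mcb{I}_{(\mft,0)}[M^*\tau]$: renormalisation maps in $\mfR$ commute with abstract integration (this is built into the construction in \cite{BHZalg}), so no polynomial or non-planted terms appear and the whole sum you write collapses to $\Upsilon^F_\mft[M^*\tau]$ directly. Second, your ``alternative, cleaner route'' through Lemma~\ref{lemma: coherence identity} is actually the less clean route here: as you yourself note, $M$ does not commute with $\proj_{\le L}$, so you would have to shuttle between $\trunc$- and $|\cdot|_+$-truncations. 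The paper avoids this entirely by working directly with Definition~\ref{def: coherent}; the fixed-point characterisation is only invoked afterwards, in the proof of Theorem~\ref{thm: algebraic main theorem}, where Proposition~\ref{main prop} is used as input rather than the other way around. Stick with your first approach.
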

\begin{proof}
One has, for every $\mft \in \mfL_{+}$ and $\mcb{I}_{(\mft,0)}[\tau] \in \mcW_{\leq L + 1}$
\begin{equs}
\langle MU_{\mft}, \mcb{I}_{(\mft,0)}[\tau] \rangle
&=
\langle U_{\mft}, M^* \mcb{I}_{(\mft,0)}[\tau] \rangle
=
\langle U_{\mft}, \mcb{I}_{(\mft,0)}[M^*\tau] \rangle
\\
&=
\Upsilon^F_\mft[M^* \tau](\mathbf{u}^U)
=
\Upsilon^{MF}_\mft[\tau](\mathbf{u}^{MU})\;.
\end{equs}
Here the third equality uses the definition of coherence and that $M^*\tau \in \mcb{W}_{\leq \bar L}$ (which follows from~\eqref{eq:M maps L to bar L}). 
The fourth equality uses Lemma~\ref{lem: renormalisation of upsilon} and that $\mathbf{u}^U = \mathbf{u}^{MU}$.
\end{proof}

Our main algebraic result can be stated as follows.
\begin{theorem}\label{thm: algebraic main theorem}
Let $F \in \G$, $L \in \N$, and $U \in \mcb{H}^{\ex}$ written as~\eqref{eq:generalU}.
Suppose that $U$ satisfies, for every $\mft \in \mfL_{+}$,
\begin{equ}
\proj_{\leq\bar L} \sum_{\mfl \in \mfD_{\mft}}
F^{\mfl}_{\mft}(U)\Xi_\mfl
=
\proj_{\leq\bar L} U_\mft^R\;,
\end{equ}
where $\bar L$ defined as in Definition~\ref{def: definition of bar L}.
Then $U$ is coherent to order $\bar L$ with $F$, and, for all $M \in \mathfrak{R}$ and $\mft \in \mfL_+$,
\begin{equ}\label{eq:renormalised fixed point}
\proj_{\leq L} M U^R_\mft
=
\proj_{\leq L} \sum_{\mfl \in \mfD_{\mft}}
(MF)^{\mfl}_{\mft}(MU)\Xi_\mfl\;.
\end{equ}
\end{theorem}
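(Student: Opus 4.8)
The plan is to derive Theorem~\ref{thm: algebraic main theorem} directly from Lemma~\ref{lemma: coherence identity} and Proposition~\ref{main prop}, so essentially no new work is needed beyond assembling these pieces correctly (and checking that the truncation orders line up). First I would observe that the hypothesis of the theorem is precisely the statement of Lemma~\ref{lemma: coherence identity} applied with truncation order $\bar L$ in place of $L$: it says that $\proj_{\leq \bar L}\sum_{\mfl \in \mfD_\mft} F^\mfl_\mft(U)\Xi_\mfl = \proj_{\leq \bar L} U^R_\mft$ for every $\mft \in \mfL_+$. Lemma~\ref{lemma: coherence identity} then gives immediately that $U$ is coherent to order $\bar L$ with $F$, which is the first assertion of the theorem.

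Next I would invoke Proposition~\ref{main prop}: since $U$ is coherent to order $\bar L$ with $F$ and $\bar L$ is defined as in Definition~\ref{def: definition of bar L}, the proposition (which requires Assumption~\ref{assump:RregComplete}, in force for the rest of the section) yields that $MU$ is coherent to order $L$ with $MF$, for the given $M \in \mfR$. Here I should note that $MF \in \G$ by Lemma~\ref{lem:GPreserve}, so that coherence of $MU$ with $MF$ is a meaningful statement and Lemma~\ref{lemma: coherence identity} applies to the pair $(MU, MF)$; I also need that $MU \in \mcb{H}^\ex$, which follows from~\eqref{eq:M maps L to bar L} together with the fact that $M$ preserves the planted/polynomial structure of trees (it maps $\mcb{I}_{(\mft,0)}[\tau] \mapsto \mcb{I}_{(\mft,0)}[M^*\text{-stuff}]$-type combinations, as used in the proof of Proposition~\ref{main prop}).

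Finally, I would apply Lemma~\ref{lemma: coherence identity} in the reverse direction to the pair $(MU, MF)$ at truncation order $L$: coherence of $MU$ to order $L$ with $MF$ is equivalent to
\begin{equ}
\proj_{\leq L} \sum_{\mfl \in \mfD_\mft} (MF)^\mfl_\mft(MU)\Xi_\mfl = \proj_{\leq L} (MU)^R_\mft
\end{equ}
for every $\mft \in \mfL_+$. Comparing with~\eqref{eq:renormalised fixed point}, it remains only to identify $(MU)^R_\mft$ with $MU^R_\mft$; this is the same identity $\langle MU_\mft, \mcb{I}_{(\mft,0)}[\tau]\rangle = \langle U_\mft, \mcb{I}_{(\mft,0)}[M^*\tau]\rangle = \langle MU^R_\mft, \tau\rangle$ (using self-adjointness of the pairing and that $M^*$ commutes appropriately with $\mcb{I}_{(\mft,0)}$) that is already exploited in the proof of Proposition~\ref{main prop}, so it is routine.

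I do not expect any serious obstacle here: the content of the theorem has been entirely front-loaded into Lemmas~\ref{lemma: coherence identity},~\ref{lem:GPreserve}, Proposition~\ref{main prop}, and Lemma~\ref{lem: renormalisation of upsilon}. The only point requiring genuine care — and hence the ``hard part'' of this particular assembly — is bookkeeping with the two truncation levels $L$ and $\bar L$: one must start with coherence at the higher order $\bar L$ (so that after applying the renormalisation map $M$, which can only be controlled via~\eqref{eq:M maps L to bar L} as a map $\cT^\ex_{\leq \gamma_L} \to \mcb{W}_{\leq \bar L}$, one still retains coherence at the target order $L$), and one must make sure that the projection $\proj_{\leq L}$ rather than $\proj_{\leq \bar L}$ appears in the conclusion~\eqref{eq:renormalised fixed point}. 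This is exactly the role of Definition~\ref{def: definition of bar L}, and the argument of Proposition~\ref{main prop} already handles it, so I would simply cite it.
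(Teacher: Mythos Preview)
Your proposal is correct and follows exactly the paper's own argument: apply Lemma~\ref{lemma: coherence identity} at level $\bar L$ to get coherence, then Proposition~\ref{main prop} to push coherence to $(MU,MF)$ at level $L$, then Lemma~\ref{lemma: coherence identity} in the reverse direction. You are even slightly more careful than the paper, which silently identifies $(MU)^R_\mft$ with $M U^R_\mft$; your observation that this follows from $M^*\mcb{I}_{(\mft,0)}[\tau]=\mcb{I}_{(\mft,0)}[M^*\tau]$ (as used in the proof of Proposition~\ref{main prop}) is exactly right.
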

\section{Proof of Theorem~\ref{thm: algebraic main theorem}}\label{sec:proof_of_thm}
With Lemma~\ref{lemma: coherence identity} and Proposition~\ref{main prop} at hand, we are ready to give a proof of Theorem~\ref{thm: algebraic main theorem}.
\begin{proof}[of Theorem~\ref{thm: algebraic main theorem}]
Coherence of $U$ to order $\bar L$ with $F$ follows from Lemma~\ref{lemma: coherence identity}.
It follows from Proposition~\ref{main prop} that $MU$ is coherent to order $L$ with $MF$, from which we obtain~\eqref{eq:renormalised fixed point} again by Lemma~\ref{lemma: coherence identity}.
\end{proof}
Lemma~\ref{lemma: coherence identity} and Proposition~\ref{main prop} in turn rely on Lemmas~\ref{lemma: coherence identity - big space} and~\ref{lem: renormalisation of upsilon} respectively.
In the rest of this section, we set up the combinatorial\slash algebraic framework which allows us to prove the latter two lemmas. 
The general strategy of the proof is as follows.
In Section~\ref{sec:SpacesOfTrees} we introduce a space $\BBspan$ consisting of trees with additional information on its polynomial decorations; see Remark~\ref{remark:intuitionBB} for further intuition.
We furthermore introduce a map $Q: \BBspan \to \VVspan$ which discards this additional information, and a map $\mathring\Upsilon^F : \BBspan \to \Poly^{\mfL_{+}}$ which ``lives above'' $\Upsilon^F:\VVspan \to \Poly^{\mfL_{+}}$ in the sense that $\Upsilon^F[\tau] = \mathring\Upsilon^F[Q^*\tau]$.

In Section~\ref{sec: grafting and renormalisation}, we introduce several grafting operators on our spaces of trees and show they possess several important properties.
First, the maps $\mathring\Upsilon^F : \BBspan \to \Poly^{\mfL_+}$, $Q^* : \VVspan \to \BBspan^*$, and $\Upsilon^F : \VVspan \to \Poly^{\mfL_+}$, all become pre-Lie morphisms with respect to these operators (Lemmas~\ref{lem:graftMorphism} and~\ref{lem:QMorphism}, and Corollary~\ref{cor:graftMorphismSmall}).
Second, they are in suitable ``co-interaction'' with the renormalisation group $ \mathfrak{R} $ (Proposition~\ref{prop:co-interaction}).
Third, they allow us to decompose the construction of trees into elementary grafting operations starting from a simple set of generators (Proposition~\ref{prop:generate}).
Together, these facts lead to the proof of Lemma~\ref{lem: renormalisation of upsilon}.

The reason for introducing the space $\BBspan$ is that it greatly facilitates the combinatorics required in the proof of Lemma~\ref{lemma: coherence identity - big space}, as well as the pre-Lie morphism properties in Section~\ref{sec: grafting and renormalisation}.
 
\begin{remark}\label{rem: intro to grafting and pre-lie structures}
The key point in obtaining the renormalised equation relies on the pre-Lie structure associated to the coefficients $ \Upsilon^{F} $ (see \cite{MR2839054} for a survey on pre-Lie algebras).
The presence of edge and polynomial decorations makes this pre-Lie structure somewhat complicated.
In the case of B-series, however, it is quite easy to describe.
Indeed, we just have one grafting operator $  \graft{} $ defined for two trees $ \tau_1 $ and $ \tau_2 $ by
\begin{equ}
\tau_2 \graft{} \tau_1 = \sum_{v \in N_{\tau_2}} \tau_2 \graft{v} \tau_1\;,
\end{equ}
where $ \graft{v} $ means that we attach the tree $ \tau_2 $ to the tree  $ \tau_1 $ by adding a new edge between the root of $ \tau_2  $ and the node $ v $. 
For instance we get in the next example:
\begin{equ}
 \bullet \graft{} \<IXi2> = 2 \<IXi4>  +  \<IXi3>\;\;.
\end{equ}
This grafting operator satisfies the pre-Lie identity for all trees $ \tau_1, \tau_2 $ and $ \tau_3 $
\begin{equ} \label{e:pre_Lie identity}
( \tau_1 \graft{} \tau_2 ) \graft{} \tau_3 - \tau_1 \graft{} (\tau_2 \graft{} \tau_3 ) = (\tau_2 \graft{} \tau_1) \graft{} \tau_3 - \tau_2 \graft{} (\tau_1 \graft{} \tau_3)\;.
\end{equ}
This identity is called pre-Lie because one can derive a Lie bracket from the grafting operator: $ [\tau_1,\tau_2] = \tau_1 \graft{} \tau_2 - \tau_2 \graft{} \tau_1 $.
A vector space endowed with a product $ \graft{} $ satisfying~\eqref{e:pre_Lie identity} is called a pre-Lie algebra.
The identity~\eqref{e:pre_Lie identity} has to be compared with~\eqref{multi pre-Lie} below, where there are several grafting operators coming from the decorations on the edges.
The map $ \Upsilon^{F} $ in the case of B-series is a morphism of pre-Lie algebras between the space of rooted trees and the pre-Lie algebra given by vector fields on $ \mathbf{R}^d $.
Indeed, one has 
\begin{equs}
\Upsilon^{F} [ \bullet \graft{} \<IXi2> ] = 
\Upsilon^{F} [ \bullet] \triangleleft \Upsilon^F[ \<IXi2> ]\;, \quad (F  \triangleleft G) = F \cdot D G\;.
\end{equs}
In the case of SPDEs, the morphism property is given by Corollary~\ref{cor:graftMorphismSmall}.
In~\cite{Chapoton01}, a universal result has been proved, namely that the pre-Lie algebra of rooted trees is freely generated by $ \bullet $ and the grafting operator $ \graft{} $.
Such a result is true for the branched rough path case when the set of generators is replaced by $ \bullet_i $.
In this context, one has also the same morphism property, which is useful for understanding how a map acting on the trees acts on the coefficients associated to them, and thus how to obtain the renormalised equation for rough differential equations in~\cite[Thm~38]{BCFP}.
Indeed, the renormalisation map $ M $ in~\cite{BCFP} is \emph{defined} as the unique pre-Lie morphism translating the generators, which is then verified to agree with the renormalisation maps introduced~\cite{BHZalg}.
In order to repeat the same strategy here, a universal result is given in Proposition~\ref{prop:generate} and the morphism property for the renormalised map $ M $ is proved in Proposition~\ref{prop:co-interaction}.
\end{remark}
\subsection{The space \texorpdfstring{$\BBspan$}{B}}\label{sec:SpacesOfTrees}
We apply the procedure in Section~\ref{sec: gen construction of inner products} to define another space of trees with a specified inner product.
We keep $\mathsf{E}$ as in Section~\ref{Notations for trees} but introduce a new set of node decorations
\[
\nodes'
\eqdef
\Big\{ \Xi_{\mfl} \prod_{i=1}^n \mcI_{(\mft_i,p_i)}[X^{k_i}] :\ \mfl \in \mfD, n \geq 0, (\mft_i,p_i) \in \mcb{O}, k_i \in \N^{d+1}, p_i < k_i \Big\}\;.
\]
where the product over $[n]$ appearing above is treated as commutative. 
The inner product on $\langle \nodes' \rangle$ is defined by
\begin{equation}\label{eq:innerProdBB}
\Big\langle \Xi_{\mfl} \prod_{i=1}^n \mcI_{o_i}[X^{k_i}], \Xi_{\bar\mfl} \prod_{i=1}^{\bar n} \mcI_{\bar o_i}[X^{\bar k_i}] \Big\rangle
\eqdef \delta_{\mfl,\bar\mfl}\delta_{n,\bar n} \sum_{s \in S_n} \prod_{i=1}^n\delta_{o_i,\bar o_{s(i)}} \delta_{k_i,\bar k_{s(i)}} (k_i-p_i)!\;.
\end{equation}
Setting $\decor' \eqdef(\nodes',\edges)$, we then define the set $\BB \eqdef \mcT(\decor')$\label{BB page ref}
and the inner product space $\BBspan \eqdef \mfT(\decor')$ \label{BBspan page ref} as prescribed in Section~\ref{sec: gen construction of inner products}.
In particular, it holds that every tree $\sigma \in \BB$ can be uniquely written as
\begin{equ}\label{eq:generic tree in BB}
\sigma = Y \prod_{j \in J} \mcb{I}_{o_j}[\sigma_j] = 
\Xi_\mfl 
\Big( 
\prod_{i\in I} \mcI_{o_i}[X^{k_i}]\Big)
\Big(
\prod_{j \in J} \mcb{I}_{o_j}[\sigma_j]
\Big)
\end{equ}
where $I$ and $J$ are finite index sets, $\mfl \in \mfD$, $o_i,o_j \in \mcb{O}$, $\sigma_j \in \BB$, and $o_i = (\mft_i,p_i)$ with $p_i<k_i$ (and conversely, any such expression corresponds to a unique tree in $\BB$).
Equivalently, $\BB$ is the set of all decorated trees of the form $ T_{\mff}^{\mfm} $ where $ \mfm : N_T \rightarrow  \nodes$ and $ \mff :
 E_T \rightarrow \edges$ are arbitrary maps.

For every $F \in \mathring{\G}$ we define a linear map $\mathring\Upsilon^F[\cdot]:\BBspan \rightarrow \Poly^{\mfL_{+}}$\label{mathringupsilon page ref} as follows. 
For $\sigma \in \BB$ of the form~\eqref{eq:generic tree in BB} and $\mft \in \mfL_{+}$, we define $\mathring\Upsilon^F_{\mft}[\sigma] \in \Poly$ inductively by setting
\begin{equ}\label{definition: general def of upsilon on big space}
\mathring\Upsilon^F_\mft[\sigma] \eqdef 
\Big(
\prod_{i\in I} \mcb{X}_{(\mft_i,k_i)} 
\Big)
\Big(
\prod_{j\in J} \mathring\Upsilon^F_{\mft_j}[\sigma_j]
\Big)
\Big[
\Bigl(\prod_{i \in I\sqcup J} D_{o_i}\Bigr)F_\mft^\mfl
\Big]
\end{equ}
(the base case being implicitly defined with $J=\emptyset$).

Next we define a linear operator $Q : \langle \mathsf{N}' \rangle \rightarrow \langle \mathsf{N} \rangle$ \label{Q page ref} as follows.
Given $\sigma \in \BB$ as in~\eqref{eq:generic tree in BB} we set
\[
Q\sigma \eqdef \Xi_\mfl \Big(\prod_{i \in I} \mathbf{X}^{k_i-p_i}\Big)
\]
We then have an extension $\mathfrak{T}_{\mathsf{D}',\mathsf{D}}(Q)$ of $Q$ which is a linear map from $\BBspan$ to $\VVspan$. 
We will abuse notation and just write $Q$ instead of $\mathfrak{T}_{\mathsf{D}',\mathsf{D}}(Q)$ for this extension, we will commit such an abuse of notation for other linear operators as well. 
\begin{remark}\label{remark:intuitionBB}
The intuition behind the set $\BB$ is that its trees contain information about which components of the solution (and derivatives thereof) every polynomial term came from.
In other words, a term $\mcI_{(\mft,p)}[X^k]$ at the root of a tree $\sigma \in \BB$ indicates that the expansion of $\DD^p U_\mft$ contributed a polynomial term $\mathbf{X}^{k-p}$ to $\sigma$.
The map $Q: \BB \to \VV$ simply discards this information.
\end{remark}
\begin{lemma}\label{lem:P obeys rule}
Fix $F \in \G$. It holds that $\mathring\Upsilon^F_\mft[\sigma] = 0$ for every $\sigma \in \BB$ and $\mft \in \mfL_+$ for which $\mcb{I}_{(\mft,0)}[Q\sigma] \in \VV \setminus \mcT^\ex$.
\end{lemma}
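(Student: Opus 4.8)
The plan is to reduce the claim to a statement about the map $\Upsilon^F$ on $\VV$, using the tower identity $\Upsilon^F[Q\sigma\,\text{-type expansion}]$ more precisely the fact announced in the text that $\Upsilon^F_\mft[\tau] = \mathring\Upsilon^F_\mft[Q^*\tau]$, together with the elementary observation that if $\mcb{I}_{(\mft,0)}[Q\sigma] \notin \mcT^\ex$ then the tree $Q\sigma$ (which lies in $\VV$) violates the rule $R$ in a way that forces a vanishing factor in the inductive formula~\eqref{eq: first Upsilon}. So first I would make precise the relationship between $\mathring\Upsilon^F$ on $\BBspan$ and $\Upsilon^F$ on $\VVspan$: unpacking~\eqref{definition: general def of upsilon on big space}, each factor $\mcb{X}_{(\mft_i,k_i)}$ together with $D_{o_i}$ with $o_i = (\mft_i,p_i)$, $p_i < k_i$, corresponds under $Q$ to the monomial $\mathbf{X}^{k_i - p_i}$ decorating the root and a derivative $D_{o_i}$, which is exactly what $\partial^k \prod D_{o_j}$ produces in~\eqref{eq: first Upsilon} after identifying $\partial^k = \prod_i D_{o_i}$-type contributions via the chain rule~\eqref{chain rule}. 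Hence it suffices to prove: for $F \in \G$ and $\tau \in \VV$ with $\mcb{I}_{(\mft,0)}[\tau] \notin \mcT^\ex$, one has $\Upsilon^F_\mft[\tau] = 0$.

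Next I would prove that statement by induction on the size of $\tau$, writing $\tau = \Xi_\mfl \mathbf{X}^k \prod_{j=1}^n \mcb{I}_{o_j}[\tau_j]$ as in~\eqref{eq:general tree of V} with $o_j = (\mft_j, p_j)$. There are two cases. If $\mcb{I}_{(\mft,0)}[\tau_j] \notin \mcT^\ex$ for some $j$ (equivalently, some subtree fails the rule), then by the inductive hypothesis $\Upsilon^F_{\mft_j}[\tau_j] = 0$, so the product in~\eqref{eq: first Upsilon} vanishes and we are done. Otherwise every $\mcb{I}_{(\mft_j,0)}[\tau_j] \in \mcT^\ex$, and the failure $\mcb{I}_{(\mft,0)}[\tau] \notin \mcT^\ex$ must come from the way the edges are attached at the root: the node-type $N$ at the root of $\mcb{I}_{(\mft,0)}[\tau]$ — namely $N = \{o_1,\dots,o_n\}$ together with the driver data $\hat\mfl$ coming from $\mfl$ — does not belong to $R(\mft)$ (here I use that $\mcT^\ex$ consists exactly of trees conforming to $R$ with compatible extended decorations, and that all strict subtrees already conform). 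In the notation of Definition~\ref{def:obey} and Proposition~\ref{prop:obeyEquivalence}, this means that the relevant exponent $\alpha = \sum_j e_{o_j}$ (with driver $\mfl$) has $\alpha \sqcup \hat\mfl \notin R(\mft)$, so $\mfo \notin D(\mft,\alpha\sqcup\hat\mfl)$ trivially since $D(\mft,N) = \emptyset$ whenever $N \notin R(\mft)$; by Proposition~\ref{prop:obeyEquivalence}\ref{point:obey2} we conclude $D^\alpha F^\mfl_\mft \equiv 0$, i.e.\ $(\prod_{j=1}^n D_{o_j})F^\mfl_\mft$ has a vanishing factor after we account for the polynomial decoration $\mathbf{X}^k$ via the chain rule. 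Strictly one should be slightly careful that the $\partial^k$ in~\eqref{eq: first Upsilon} only raises derivative indices $p_j \mapsto p_j + (\text{stuff})$ and combines components, so the relevant non-vanishing test is whether $D^\beta F^\mfl_\mft \equiv 0$ for every $\beta$ in the (finite) support of $\partial^k \prod_j D_{o_j}$; but each such $\beta$ still has $\beta \sqcup \hat\mfl \notin R(\mft)$ because raising an index $p$ to $p + e_i$ keeps it inside $\mcb{O}$ and the node-type is still not in $R(\mft)$ — here I would invoke Assumption~\ref{assump:RregComplete} or the structure of $D(\mft,\cdot)$ to handle the bookkeeping — so $D^\beta F^\mfl_\mft \equiv 0$ for all of them and hence $\Upsilon^F_\mft[\tau] = 0$.

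Finally I would translate back: given $\sigma \in \BB$ with $\mcb{I}_{(\mft,0)}[Q\sigma] \notin \mcT^\ex$, apply $Q$ to the top-level decomposition~\eqref{eq:generic tree in BB} and observe that $Q\sigma$, as an element of $\VV$, has the property just analysed; moreover the defining recursion~\eqref{definition: general def of upsilon on big space} for $\mathring\Upsilon^F_\mft[\sigma]$ contains precisely the factor $(\prod_{i\in I\sqcup J} D_{o_i})F^\mfl_\mft$ (up to the $\mcb{X}$-factors which act as multiplication operators and cannot resurrect a zero), and by the same rule-failure argument this factor vanishes, OR one of the recursive pieces $\mathring\Upsilon^F_{\mft_j}[\sigma_j]$ vanishes by induction (when $\mcb{I}_{(\mft_j,0)}[Q\sigma_j] \notin \mcT^\ex$). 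Either way $\mathring\Upsilon^F_\mft[\sigma] = 0$.

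The main obstacle I anticipate is the bookkeeping in the second bullet of the middle paragraph: making rigorous the claim that $\mcb{I}_{(\mft,0)}[\tau] \notin \mcT^\ex$ while all strict subtrees conform forces the root node-type out of $R(\mft)$, and that this persists after the derivative-index shifts induced by $\partial^k$ in~\eqref{eq: first Upsilon}. One must be careful about the distinction between the rule-conformance failure and the extended-decoration-compatibility failure (the two conditions defining $\mcT^\ex \subset \VV$), since only the former directly gives $D(\mft,N) = \emptyset$; for the latter one uses instead Proposition~\ref{prop:obeyEquivalence}\ref{point:obey2} with $N \in R(\mft)$ but $\mfo \notin D(\mft, N\sqcup\hat\mfl)$, which again yields $D^\alpha F^\mfl_\mft \equiv 0$. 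Handling both sub-cases uniformly, and checking that the finitely many $\beta$'s appearing through the chain rule all still fail the membership test, is where the real (if routine) work lies; everything else is a straightforward induction.
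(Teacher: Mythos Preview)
Your third paragraph contains the correct direct argument, and it is exactly the paper's proof, which simply cites Proposition~\ref{prop:obeyEquivalence}\ref{point:obey1}$\Rightarrow$\ref{point:obey2}. The induction you spell out is right: either some $\mcb{I}_{(\mft_j,0)}[Q\sigma_j]\notin\mcT^\ex$ and induction kills the factor $\mathring\Upsilon^F_{\mft_j}[\sigma_j]$, or the failure is at the root, meaning $\mfo\notin D(\mft,\{o_j:j\in J\}\sqcup\hat\mfl)$, whence \ref{point:obey2} gives $\bigl(\prod_{j\in J}D_{o_j}\bigr)F^\mfl_\mft\equiv 0$ and a fortiori $\bigl(\prod_{i\in I\sqcup J}D_{o_i}\bigr)F^\mfl_\mft\equiv 0$.

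Two points of criticism, though. First, the reduction in your opening paragraph is not valid: the identity $\Upsilon^F_\mft[\tau]=\mathring\Upsilon^F_\mft[Q^*\tau]$ expresses $\Upsilon^F$ as a \emph{sum} of $\mathring\Upsilon^F$ values over preimages under $Q$, so knowing $\Upsilon^F_\mft[Q\sigma]=0$ does not yield $\mathring\Upsilon^F_\mft[\sigma]=0$ for an individual $\sigma$. Your ``hence it suffices'' is unjustified; the middle paragraph proves a true auxiliary statement that is never actually used. Second, the ``main obstacle'' you flag is a phantom. Once $\bigl(\prod_j D_{o_j}\bigr)F^\mfl_\mft\equiv 0$ as a function, applying any further operator---whether $\partial^k$ in the $\VV$ formula or the extra $D_{o_i}$'s for $i\in I$ in the $\BB$ formula---to the zero function gives zero; there is no need to track the $\beta$'s produced by the chain rule, and no need to invoke Assumption~\ref{assump:RregComplete}.
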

\begin{proof}
This follows from the implication~\ref{point:obey1}~$\Rightarrow$~\ref{point:obey2} of Proposition~\ref{prop:obeyEquivalence}.
\end{proof}
\subsection{Coherent expansion on \texorpdfstring{$\VVspan$}{V}}
We denote by $Q^* : \VVspan^* \rightarrow \BBspan^*$ the adjoint of $Q$, where $\VVspan^*$ and $\BBspan^*$ are the algebraic duals of $\VVspan$ and $\BBspan$ respectively (identified with the space of series in $\VV$ and $\BB$).
Recall that $\VVspan$ and $\BBspan$ are equipped with an inner product, in particular we identify $\VVspan$ as a subspace of $\VVspan^*$.
With our definitions we have the following lemma which is proved in Appendix~\ref{proof lemma Upsilon sum identity}.
\begin{lemma}\label{lem:UpsilonSum}
For any $\tau \in \VV$, $\mft \in \mfL_{+}$, and $F \in \mathring{\G}$ one has
\begin{equ}\label{eq: Upsilon sum identity}
\Upsilon^{F}_{\mft}[\tau]
=
\mathring{\Upsilon}^{F}_{\mft}[Q^{\ast} \tau].
\end{equ}
\end{lemma}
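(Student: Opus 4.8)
The plan is to prove the identity $\Upsilon^F_\mft[\tau] = \mathring\Upsilon^F_\mft[Q^*\tau]$ by induction on the structure (e.g.\ the number of edges, or more precisely the pair consisting of the number of edges and the total polynomial decoration) of the tree $\tau \in \VV$. The crux of the matter is understanding the operator $Q^* : \VVspan \to \BBspan$ explicitly: since $Q$ sends a basis tree $\sigma \in \BB$ of the form \eqref{eq:generic tree in BB} to $\Xi_\mfl \prod_{i\in I}\mathbf{X}^{k_i - p_i}$ (forgetting, at each node, which component $(\mft_i,p_i)$ each abstract monomial $\mathbf{X}^{k_i-p_i}$ came from), the adjoint $Q^*$ applied to a tree $\tau = \mathbf{X}^k \Xi_\mfl \prod_{j} \mcb{I}_{o_j}[\tau_j] \in \VV$ must, roughly speaking, sum over all ways of distributing the monomial $\mathbf{X}^k$ at the root of $\tau$ as a product $\prod_{i\in I}\mcb{I}_{(\mft_i,p_i)}[X^{k_i}]$ with $k_i - p_i$ recording the "visible'' part, weighted by the combinatorial factors coming from the two inner products~\eqref{eq:innerProdBB} (namely the $(k_i-p_i)!$ factors) versus the inner product on $\langle\nodes\rangle$ (the $k!$ factor in $\langle \Xi_\mfl\mathbf{X}^k,\Xi_\mfl\mathbf{X}^k\rangle = k!$). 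First I would compute $Q^*$ on the generators $\mathbf{X}^k\Xi_\mfl$ and show that the resulting weights are exactly those appearing in the chain rule~\eqref{chain rule}, i.e.\ that $\partial^k = \sum_{o}\d_i\Y_o D_o$ iterated produces precisely the multinomial combinatorics encoded by $Q^*$.

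The main technical input, and what I expect to be the real obstacle, is a multivariate Faà di Bruno–type identity: one must check that
\[
\partial^k \prod_{j} D_{o_j} F_\mft^\mfl
=
\sum
\Bigl(\prod_{i} \mcb{X}_{(\mft_i,k_i)}\Bigr)\Bigl(\prod_{i\in I\sqcup J} D_{o_i}\Bigr) F_\mft^\mfl
\]
with the sum and weights matching those produced by $Q^*$, where $\mcb{X}_{(\mft,k)}$ is the monomial-valued placeholder appearing in~\eqref{definition: general def of upsilon on big space}. This is exactly the content deferred to Appendix~\ref{proof lemma Upsilon sum identity}, and it is where the bookkeeping is genuinely delicate: one has to track how the chain rule~\eqref{chain rule} expands a space-time derivative $\partial^k$ acting on a function of the $\Y_{(\mft,p)}$ into a sum over all ways of "spending'' each of the $|k|$ derivatives either by incrementing the derivative-index of an existing slot $\Y_{(\mft_j,p_j)}$ (corresponding to a $\mcb{I}_{(\mft_j,p_j)}$ edge-child $j\in J$ whose decoration gets bumped) or by creating a fresh factor $\Y_{(\mft_i,p_i + k_i)}\cdot(\text{monomial})$ (corresponding to the formal $\mcI$-slots $i\in I$ in $\BB$). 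The inductive hypothesis is applied to each $\tau_j$, using that $Q^*$ is "multiplicative with grafting'' in the appropriate sense — concretely, $Q^*$ distributes over the expression \eqref{eq:general tree of V} because $Q$ is defined tree-recursively via $\mathfrak{T}_{\mathsf{D}',\mathsf{D}}(Q)$, so its adjoint respects the inductive structure by Lemma~\ref{lem: adjoint identity}.

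To organise this, the steps in order are: (i) write $\tau$ in the canonical form~\eqref{eq:general tree of V}, let $o_j = (\mft_j,p_j)$ be the edge-decorations at the root; (ii) expand $Q^*\tau$ as a series in $\BB$ using Lemma~\ref{lem: adjoint identity} to reduce the adjoint to the action on the root node decoration $\mathbf{X}^k\Xi_\mfl$ together with $Q^*\tau_j$ on the subtrees, obtaining a sum over finite index sets $I$ and exponents $(k_i)_{i\in I}$ with $\sum_{i\in I}(k_i - p_i) = k$ in the multi-index sense, with explicit rational weights; (iii) apply $\mathring\Upsilon^F_\mft[\cdot]$ to each term using~\eqref{definition: general def of upsilon on big space}, bringing out a factor $\prod_i \mcb{X}_{(\mft_i,k_i)}$ and the differential operator $\bigl(\prod_{i\in I\sqcup J}D_{o_i}\bigr)F_\mft^\mfl$, and invoke the inductive hypothesis $\mathring\Upsilon^F_{\mft_j}[Q^*\tau_j] = \Upsilon^F_{\mft_j}[\tau_j]$; (iv) recognise the resulting sum over $(I,(k_i))$ as the Faà di Bruno expansion of $\partial^k\prod_{j\in J}D_{o_j}F_\mft^\mfl$ evaluated via~\eqref{chain rule}, so that the whole expression collapses to $\bigl(\prod_{j}\Upsilon^F_{\mft_j}[\tau_j]\bigr)\cdot\bigl(\partial^k\prod_j D_{o_j}\bigr)F_\mft^\mfl = \Upsilon^F_\mft[\tau]$ by~\eqref{eq: first Upsilon}. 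The verification that the weights in (ii) exactly cancel the $(k_i-p_i)!$ denominators against the $k!$ and reproduce the multinomial coefficients of the iterated chain rule is the heart of the argument and is precisely the multivariate Faà di Bruno formula of Appendix~\ref{proof lemma Upsilon sum identity}.
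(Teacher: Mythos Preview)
Your plan is correct and follows essentially the same approach as the paper: induction on the tree structure, computing $Q^*$ recursively via Lemma~\ref{lem: adjoint identity}, applying the inductive hypothesis to the subtrees $\tau_j$, and then invoking the multivariate Fa\`a di Bruno formula (Lemma~\ref{lem: Faa Di Bruno}) to collapse the resulting sum back to $\partial^k\prod_j D_{o_j}F_\mft^\mfl$. One small caveat: your middle paragraph's description of ``incrementing the derivative-index of an existing slot $\Y_{(\mft_j,p_j)}$ (corresponding to a $\mcb{I}_{(\mft_j,p_j)}$ edge-child $j\in J$ whose decoration gets bumped)'' is not what happens --- $\partial^k$ acts only on $F_\mft^\mfl$ and only creates new $\mcI$-slots (the $i\in I$ part), leaving the existing edges $\mcb{I}_{(\mft_j,p_j)}[\tau_j]$ untouched --- but your actual step-by-step plan (i)--(iv) is correct and does not rely on this.
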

\begin{remark}
While in principle $Q^*\tau \in \BBspan^*$ is an infinite series, it is easy to see that $\mathring\Upsilon^F_\mft$ is non-zero for only finitely many of its terms. Hence $\mathring\Upsilon^F_\mft[Q^*\tau]$ is a well-defined element of $\Poly$. 
\end{remark}
The following result is used in the proof of Lemma~\ref{lemma: coherence identity}.
\begin{lemma}\label{lemma: coherence identity - big space}
Let $F \in \G$ and $U \in \mcb{H}^{\ex}$ written as~\eqref{eq:generalU}. Then $U$ is coherent to all orders with $F$ if and only if for any $\mft \in \mfL_{+}$,
\begin{equ}\label{eq: coherence relation on big space}
\sum_{\mfl \in \mfD_{\mft}}
F^{\mfl}_{\mft}(U)\Xi_\mfl
=
U_\mft^R\;.
\end{equ}
\end{lemma}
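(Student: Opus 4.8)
The plan is to reduce \eqref{eq: coherence relation on big space} to a coefficientwise identity and then to identify those coefficients with the values of $\Upsilon^{F}$ by exploiting the space $\BBspan$. Throughout we work in the space of formal series in $\VV$ (both sides of \eqref{eq: coherence relation on big space} are generally infinite sums). First, by Lemma~\ref{lem: nonlinearity on regularity structure}, applied termwise, both sides of \eqref{eq: coherence relation on big space} lie in $\widetilde{\mcb{T}}^{\ex}_{\mft}$, so the identity is equivalent to equality of all coefficients $\langle\,\cdot\,,\tau\rangle$ with $\tau\in\widetilde{\mcT}_{\mft}^{\ex}$. Using the recursive construction of $\langle\cdot,\cdot\rangle$ from Section~\ref{sec: gen construction of inner products}\dash in particular that $\mcb{I}_{(\mft,0)}$ adjoins a new root carrying trivial node decoration, so that $\langle\mcb{I}_{(\mft,0)}[\sigma],\mcb{I}_{(\mft,0)}[\bar\sigma]\rangle=\langle\sigma,\bar\sigma\rangle$, and that the abstract monomials $\mathbf{X}^{p}$ are orthogonal to planted trees\dash the decomposition \eqref{eq:generalU} gives $\langle U_{\mft},\mcb{I}_{(\mft,0)}[\tau]\rangle=\langle U^{R}_{\mft},\tau\rangle$. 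Moreover, by Lemmas~\ref{lem:UpsilonSum} and~\ref{lem:P obeys rule}, $\Upsilon^{F}_{\mft}[\tau]=0$ whenever $\mcb{I}_{(\mft,0)}[\tau]\notin\mcT^{\ex}$, so restricting the coherence condition of Definition~\ref{def: coherent} to $\tau\in\widetilde{\mcT}_{\mft}^{\ex}$ is harmless. Hence it suffices to establish, for every $\mft\in\mfL_{+}$ and $\tau\in\widetilde{\mcT}_{\mft}^{\ex}$, the identity
\[
\Big\langle \textstyle\sum_{\mfl\in\mfD_{\mft}}F^{\mfl}_{\mft}(U)\,\Xi_{\mfl},\ \tau\Big\rangle=\Upsilon^{F}_{\mft}[\tau](\mathbf{u}^{U})\qquad(\star)
\]
under the inductive hypothesis that $\langle U^{R}_{\mft'},\tau'\rangle=\Upsilon^{F}_{\mft'}[\tau'](\mathbf{u}^{U})$ for all $\tau'$ with $\trunc(\tau')<\trunc(\tau)$. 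Granting $(\star)$, both implications follow at once: if $U$ is coherent to all orders then $(\star)$ holds for every $\tau$ and, combined with $\langle U^{R}_{\mft},\tau\rangle=\Upsilon^{F}_{\mft}[\tau](\mathbf{u}^{U})$, yields \eqref{eq: coherence relation on big space}; conversely, if \eqref{eq: coherence relation on big space} holds, then $\langle U^{R}_{\mft},\tau\rangle=\langle\sum_{\mfl}F^{\mfl}_{\mft}(U)\Xi_{\mfl},\tau\rangle$ and an induction on $\trunc(\tau)$ using $(\star)$ gives coherence at every order.

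The core is therefore $(\star)$, and this is exactly what the auxiliary space $\BBspan$, the map $\mathring\Upsilon^{F}$, and the operators $Q$, $Q^{*}$ are designed to handle. Substituting the decomposition \eqref{eq:generalU} into the lift \eqref{def: lift of smooth functions} of $F^{\mfl}_{\mft}$, one sees that $U_{(\mft',p')}=\DD^{p'}U_{\mft'}$ has constant part $u^{U}_{(\mft',p')}\bone$, a higher--polynomial part, and a planted part $\mcb{I}_{(\mft',p')}[U^{R}_{\mft'}]$. Extracting the coefficient of $\tau=\Xi_{\mfl}\mathbf{X}^{k}\prod_{j}\mcb{I}_{(\mft_j,p_j)}[\tau_j]$ from $(\boldsymbol{U}-\langle\boldsymbol U,\bone\rangle\bone)^{\alpha}\Xi_{\mfl}$ forces each planted subtree $\mcb{I}_{(\mft_j,p_j)}[\tau_j]$ to be supplied by one factor $U_{(\mft_j,p_j)}-u^{U}_{(\mft_j,p_j)}\bone$, contributing a derivative $D_{(\mft_j,p_j)}$ together with, by the inductive hypothesis, the coefficient $\Upsilon^{F}_{\mft_j}[\tau_j](\mathbf{u}^{U})/S(\tau_j)$; while the root monomial $\mathbf{X}^{k}$ is assembled out of the higher--polynomial parts of various $U_{(\mft',p')}-u^{U}_{(\mft',p')}\bone$, each such contribution being precisely the provenance information encoded by a node decoration $\mcb{I}_{(\mft',p')}[X^{q}]$ with $q>p'$ in $\BB$. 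Thus one lifts $\tau$ to $Q^{*}\tau\in\BBspan^{*}$: by Lemma~\ref{lem:UpsilonSum}, $\Upsilon^{F}_{\mft}[\tau]=\mathring\Upsilon^{F}_{\mft}[Q^{*}\tau]$, and on $\BBspan$ the evaluation map $\mathring\Upsilon^{F}$ is given by the purely multiplicative recursion \eqref{definition: general def of upsilon on big space}, with no residual chain--rule combinatorics. Matching the multinomial factors produced by expanding \eqref{def: lift of smooth functions}, the symmetry factors $S(\tau_j)=\langle\tau_j,\tau_j\rangle$ carried automatically by the formalism of Section~\ref{sec: gen construction of inner products}, and the factorials $(q-p')!$ of the inner product \eqref{eq:innerProdBB}, and summing over all ways of distributing the polynomial contributions, one recovers exactly the operator $\partial^{k}$ of \eqref{chain rule}. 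Carrying this through yields
\[
\Big\langle \textstyle\sum_{\mfl\in\mfD_{\mft}}F^{\mfl}_{\mft}(U)\Xi_{\mfl},\tau\Big\rangle=\Big(\partial^{k}\textstyle\prod_{j}D_{(\mft_j,p_j)}\Big)F^{\mfl}_{\mft}(\mathbf{u}^{U})\cdot\textstyle\prod_{j}\Upsilon^{F}_{\mft_j}[\tau_j](\mathbf{u}^{U})=\Upsilon^{F}_{\mft}[\tau](\mathbf{u}^{U}),
\]
which is $(\star)$. (Here we also use that $(\boldsymbol U-\langle\boldsymbol U,\bone\rangle\bone)^{\alpha}\Xi_{\mfl}\in\widetilde{\mcb{T}}^{\ex}_{\mft}$ by Proposition~\ref{prop:obeyEquivalence}, so no spurious terms arise.)

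The only genuinely delicate step is this last combinatorial reconciliation: verifying that the multinomial coefficients from the Taylor expansion \eqref{def: lift of smooth functions}, the symmetry factors $S(\tau)$, and the factorials in \eqref{eq:innerProdBB} fit together so that summing over the placements of polynomial factors reconstitutes precisely $\partial^{k}$ as in \eqref{chain rule}. This is exactly what the inner--product formalism of Section~\ref{sec: gen construction of inner products} (which tracks symmetry factors via $\langle\tau,\tau\rangle=S(\tau)$) and the multivariate Faà di Bruno identity of Appendix~\ref{proof lemma Upsilon sum identity} are tailored for, and once these are invoked the argument is routine bookkeeping. Everything else\dash the reduction to coefficients, the inner--product identity $\langle U_{\mft},\mcb{I}_{(\mft,0)}[\tau]\rangle=\langle U^{R}_{\mft},\tau\rangle$, the vanishing of $\Upsilon^{F}_{\mft}$ off $\mcT^{\ex}$, and the two induction arguments\dash is immediate.
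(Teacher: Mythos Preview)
Your approach is correct and rests on the same toolkit as the paper: the auxiliary space $\BBspan$, the adjoint $Q^{*}$, Lemma~\ref{lem:UpsilonSum}, and the Fa\`a di Bruno combinatorics of Appendix~\ref{proof lemma Upsilon sum identity}. The organisation differs slightly. You isolate a single conditional identity $(\star)$\dash the coefficient of $\tau$ in $\sum_{\mfl}F^{\mfl}_{\mft}(U)\Xi_{\mfl}$ equals $\Upsilon^{F}_{\mft}[\tau](\mathbf{u}^{U})$ once coherence is known for all strictly smaller trees\dash and then derive both implications by an induction on $\trunc(\tau)$. The paper instead proves the forward direction by a direct global expansion: it writes $U^{R}_{\mft}=\sum_{\sigma\in\BB}\mathring\Upsilon^{F}_{\mft}[\sigma](\mathbf{u}^{U})\,Q\sigma/\langle\sigma,\sigma\rangle$ using coherence, expands $\sum_{\mfl}F^{\mfl}_{\mft}(U)\Xi_{\mfl}$ explicitly via Taylor and a multinomial reorganisation indexed by $\N^{\mcA}$ with $\mcA=(\mcb{O}\times\BB)\sqcup(\mcb{O}\times\N^{d+1}\setminus\{0\})$, and matches the two sums term by term. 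For the converse the paper introduces an auxiliary $V\in\mcb{H}^{\ex}$ coherent with $\mathbf{u}^{V}=\mathbf{u}^{U}$ and shows $U=V$ by induction on the number of edges, using the forward direction applied to $V$. Your unified inductive route avoids the auxiliary $V$ and is arguably cleaner; the paper's route makes the combinatorial matching fully explicit rather than deferring it to the inner--product formalism. The substance is the same.
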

\begin{proof}
Suppose $U$ is coherent to all orders with $F$.
For any $\mft \in \mfL_{+}$ and $\tau \in \mcT^{\ex}$ one can use Lemmas~\ref{lem:P obeys rule} and~\ref{lem:UpsilonSum} along with the fact that $\langle U_\mft, \mcb{I}_{(\mft,0)}[\tau] \rangle \neq 0$ only if $\mcb{I}_{(\mft,0)}[\tau] \in \mcT^\ex$ to see that
\begin{equs}
\langle U_\mft, \mcb{I}_{(\mft,0)}&[\tau] \rangle 
=
\Upsilon_{\mft}^{F}[\tau](\mathbf{u}^{U})
=\mathring{\Upsilon}_{\mft}^{F}[Q^{\ast}\tau](\mathbf{u}^{U})
=
\sum_{\sigma \in \BB}
\mathring\Upsilon^{F}_{\mft}[\sigma](\mathbf{u}^{U}) \frac{ \langle Q \sigma,\tau \rangle}{\langle \sigma, \sigma \rangle}\;.
\end{equs}
Thus $U^{R}_{\mft} = \sum_{\sigma \in \BB}
\mathring\Upsilon^{F}_{\mft}[\sigma](\mathbf{u}^{U}) \frac{Q \sigma}{\langle \sigma, \sigma \rangle}$ and, for each $(\mft,p) \in \mcb{O}$, we can write
\[
U_{(\mft,p)}
=
u_{(\mft,p)} \Xi_{\mathbf{0}} 
+ 
\mathring{U}_{(\mft,p)}
+
Q 
\widehat{U}_{(\mft,p)}
\]
where $\mathring{U}_{(\mft,p)} \in \mcb{T}$ and $\widehat{U}_{(\mft,p)} \in \BBspan$ are given by
\begin{equ}
\mathring{U}_{(\mft,p)}
\eqdef
\sum_{
\substack{
q \in \N^{d+1}\\
q \not = 0
}
}
\frac{u_{(\mft,p+q)}}{q!}
\mathbf{X}^{q}
\enskip
\textnormal{and}
\enskip
\widehat{U}_{(\mft,p)}
\eqdef
\sum_{\sigma \in \BB}
\mathring{\Upsilon}^{F}_{\mft}[\sigma]\frac{\mcb{I}_{(\mft,p)}[\sigma]}{\langle \sigma, \sigma \rangle}\;.
\end{equ}
We introduce some shorthand. 
First we set $\mcA \eqdef (\mcb{O} \times \BB) \sqcup (\mcb{O} \times \N^{d+1} \setminus \{0\})$.
Second, for any $\nu \in \N^{\mcA}$, we define $\bar{\nu} \in \N^{\mcb{O}}$ by setting
\[
\bar{\nu}[(\mfb,p)]
\eqdef
\sum_{\sigma \in \BB} \nu[((\mfb,p),\sigma)]
+
\sum_{q \in \N^{d+1} \setminus \{0\}}
\nu[((\mfb,p),q)]\;.
\] 
Finally, for any $\nu \in \N^{\mcA}$ with $|\nu| < \infty$ we use the shorthand
\begin{equs}
u^{\nu}
\eqdef&
\prod_{
\substack{
(\mfb,p) \in \mcb{O}\\
q \in \N^{d+1} \setminus\{0\}
}
}
\Big(
\frac{u_{(\mfb,p+q)}}{q!}
\Big)^{\nu[((\mfb,p),q)]}\;,
\enskip
\big(\mathring{\Upsilon}^{F}\big)^{\nu}
\eqdef
\prod_{
\substack{
(\mfb,p) \in \mcb{O}\\
\sigma \in \BB
}
}
\mathring{\Upsilon}^{F}_{\mfb}[\sigma]^{\nu[((\mfb,p),\sigma)]}\\
\mcb{I}^{\nu}
\eqdef&
\prod_{
\substack{
(\mfb,p) \in \mcb{O}\\
\sigma \in \BB
}
}
\Big(
\frac{\mcb{I}_{(\mfb,p)}[\sigma]}{\langle \sigma, \sigma \rangle}
\Big)^{\nu[((\mfb,p),\sigma)]}
,\;
\sigma(\nu)
\eqdef
\mcb{I}^{\nu}
\prod_{
\substack{
(\mfb,p) \in \mcb{O}\\
q \in \N^{d+1} \setminus \{0\}
}
}
\Big(
\CI_{(\mfb,p)}[X^{p+q}]
\Big)^{\nu[((\mfb,p),q)]}
\end{equs}
Now fix $\mft \in \mfL_{+}$, then by Taylor expansion one has $\sum_{\mfl \in \mfD_{\mft}}
F^{\mfl}_{\mft}(U)\Xi_\mfl$ is equal to
\begin{equs}
{}
&
\sum_{
\substack{
\mfl \in \mfD_{\mft}\\
\eta \in \N^{\mcb{O}}
}
}
\frac{D^{\eta}F^{\mfl}_{\mft}(\mathbf{u}^{U})\Xi_{\mfl}}
{\eta!}
\Big[
\prod_{(\mfb,p) \in \mcb{O}}
(
\mathring{U}_{(\mfb,p)}
+
Q \widehat{U}_{(\mfb,p)})^{\eta[(\mfb,p)]}
\Big]\\
&=
\sum_{
\substack{
\mfl \in \mfD_{\mft}\\
\eta \in \N^{\mcb{O}}
}
}
\frac{D^{\eta}F^{\mfl}_{\mft}(\mathbf{u}^{U})}
{\eta!}
\sum_{
\substack{
\nu \in \N^{\mcA}\\
\bar{\nu} = \eta
}
}
\frac{\eta!}{\nu!}
u^{\nu}
Q
\big[
\Xi_{\mfl}
(\mathring{\Upsilon}^{F})^{\nu}(\mathbf{u}^{U})
\mcb{I}^{\nu}
\big]\\
&=
\enskip
\sum_{
\substack{
\mfl \in \mfD_{\mft}\\
\nu \in \N^{\mcA}
}
}
\frac{D^{\bar{\nu}}F^{\mfl}_{\mft}(\mathbf{u}^{U})}{\nu!}
u^{\nu}
Q
\big[
\Xi_{\mfl}
(\mathring{\Upsilon}^{F})^{\nu}(\mathbf{u}^{U})
\mcb{I}^{\nu}
\big]\\
&=
\sum_{
\substack{
\mfl \in \mfD_{\mft}\\
\nu \in \N^{\mcA}
}
}
\mathring{\Upsilon}^{F}_{\mft}[\sigma^{\nu}\Xi_{\mfl}](\mathbf{u}^{U}) 
Q
\big[
\sigma(\nu)\Xi_{\mfl}
\big]
\enskip
=
\enskip
\sum_{
\sigma \in \BB
}
\mathring{\Upsilon}^{F}_{\mft}[\sigma](\mathbf{u}^{U}) \frac{Q\sigma}{\langle \sigma, \sigma \rangle}
\end{equs}
Above, in the sums over $\nu$ and $\eta$, we implicitly enforce that $|\nu|$, $|\eta| < \infty$. 

Conversely, suppose that~\eqref{eq: coherence relation on big space} holds.
Let $V \in \mcb{H}^{\ex}$ be the unique element for which $\mathbf{u}^V = \mathbf{u}^U$ and which is coherent to all orders with $F$.
It suffices to show that for $\mft \in \mfL_{+}$ and every $\tau \in \mcT^{\ex}$ one has 
$\langle V_\mft, \mcb{I}_{\mft}(\tau)  \rangle = \langle U_\mft, \mcb{I}_{\mft}(\tau)  \rangle$. We prove this by induction on the number of edges in $\tau$.
The base case when $\tau$ has no edges follows from the definition of $V$. 
Suppose now that $\tau$ has $k \geq 1$ edges and that the claim is true for all $\mfb \in \mfL_{+}$ and all trees with at most $k-1$ edges. 
Then
\[
\langle U_\mft, \mcb{I}_{\mft}(\tau)  \rangle = \langle \sum_{\mfl\in\mfD_{\mft}} F^\mfl_\mft (U) \Xi_\mfl, \tau \rangle = \langle \sum_{\mfl\in\mfD_{\mft}} F^\mfl_\mft (V) \Xi_\mfl, \tau \rangle = \langle V_\mft, \mcb{I}_{\mft}(\tau)  \rangle,
\]
where the first equality follows from~\eqref{eq: coherence relation on big space}, the second from the inductive hypothesis and the fact that the number of edges is additive with respect to tree multiplication, and the third from the coherence of $V$ and the previous part.
\end{proof}
\subsection{Grafting operators and action of the renormalisation group}\label{sec: grafting and renormalisation}
\subsubsection{Grafting operators on \texorpdfstring{$\BBspan$}{B}}\label{subsec:graftB}
\begin{definition}\label{def:graftingLarge}
\begin{enumerate}[label=\upshape(\roman*\upshape)]
\item For $o \in \mcb{O}$, let $\graft{o} : \BBspan \otimes \BBspan \rightarrow \BBspan$\label{graft page ref} be the linear map which, for all $\sigma, \tilde \sigma \in \BB$ with $\sigma$ of the form~\eqref{eq:generic tree in BB}, is given inductively by
\begin{equs}\label{eq:graftDef}
\tilde \sigma \graft{o} \sigma
&\eqdef \tilde \sigma \graft{o}^{\root} \sigma + \tilde \sigma \graft{o}^{\nonroot} \sigma + \tilde \sigma \graft{o}^{\poly} \sigma
\\
&\eqdef
\Xi_\mfl\Big( \prod_{i\in I} \CI_{o_i}[X^{k_i}]\Big) \mcb{I}_o[\tilde\sigma] \Big(\prod_{j\in J} \mcb{I}_{o_j}[\sigma_j]\Big)
\\
&\quad + \Xi_\mfl \Big(\prod_{i\in I} \CI_{o_i}[X^{k_i}] \Big)\sum_{\bar \jmath \in J}\mcb{I}_{o_{\bar \jmath}}[\tilde \sigma \graft{o} 
\sigma_{\bar \jmath}]\Big( \prod_{j\in J\setminus \{\bar \jmath\}} \mcb{I}_{o_j}[\sigma_j]\Big)
\\
&\quad + \Xi_\mfl \sum_{\bar \imath \in I}\delta_{o,(\mft_{\bar\imath},k_{\bar\imath})}  \Big(\prod_{i\in I\setminus\{\bar\imath\}} \CI_{o_i}[X^{k_i}]\Big) \mcb{I}_{o_{\bar\imath}}[\tilde \sigma] \Big(\prod_{j\in J} \mcb{I}_{o_j}[\sigma_j]\Big)\;.
\end{equs}
\item For $l=0,\ldots, d$, define $\uparrow_l : \BBspan \rightarrow \BBspan^*$\label{uparrow page ref}
for $\sigma \in \BB$ of the form~\eqref{eq:generic tree in BB} inductively by
\begin{equs}\label{eq:graftPolyDef}
\uparrow_l \sigma
&\eqdef \Xi_\mfl \sum_{\bar \imath \in I}\mcI_{o_{\bar \imath}}[X^{k_{\bar \imath}+e_l}] \prod_{i \in I\setminus{\bar \imath}} \mcI_{o_i}[X^{k_i}]\prod_{j\in J}\mcb{I}_{o_j}[\sigma_j]
\\
&\quad + \Xi_\mfl \sum_{(\mft,p) \in \mcb{O}}\mcI_{(\mft,p)}[X^{p+e_l}] \prod_{i \in I} \mcI_{o_i}[X^{k_i}]\prod_{j\in J}\mcb{I}_{o_j}[\sigma_j]
\\
&\quad + \Xi_\mfl \prod_{i \in I} \mcI_{o_i}[X^{k_i}]\sum_{\bar\jmath\in J} \mcb{I}_{o_{\bar\jmath}}[\uparrow_l \sigma_{\bar\jmath}]\prod_{j\in J\setminus\{\bar\jmath\}}\mcb{I}_{o_j}[\sigma_j]\;.
\end{equs}
\end{enumerate} 
\end{definition}
Remark that, in the last line of~\eqref{eq:graftDef}, the $\mft_{\bar\imath}$ in the subscript of the Kronecker delta refers to the $\mfL_{+}$-component of $o_{\bar\imath}$, while the $\N^{d+1}$-component of $o_{\bar\imath}$ does not appear.

We give two pictures below which demonstrate these operators. First, for $ \graft{(\mft,p)}$, 
\begin{equs}
\tilde \sigma \graft{(\mft,p)}
\begin{tikzpicture}  [baseline=-0.3cm] 
    \node [dot] (uuul) at (-3, 1) {};
    \node [dot] (uu) at (-2.5, 0) {};
    \node [dot] (uuur) at (-2, 1) {};
    \node [dot,label=360:{\scriptsize $\Xi_{\mfl}\mcI_{(\mft,q)}[X^{p}]\mcI_{(\mft',q')}[X^{k}]$}] (m) at (-2, -1) {};
    \node [dot] (ml) at (-1.5, 0) {};
    \draw (uuul) to (uu);
    \draw (uu) to (uuur);
    \draw (uu) to (m);
    \draw (m) to (ml);
\end{tikzpicture}
&=
\begin{tikzpicture}  [baseline=-0.3cm] 
    \node [dot] (uuul) at (-3, 1) {};
    \node [dot] (uu) at (-2.5, 0) {};
    \node [dot] (uuur) at (-2, 1) {};
    \node [dot,label=360:{\scriptsize $\Xi_{\mfl}\mcI_{(\mft,q)}[X^{p}]\mcI_{(\mft',q')}[X^{k}]$}] (m) at (-2, -1) {};
    \node [dot] (ml) at (-1.5, 0) {};
    \node (new) at (-3.5, 0) {};
    \node [label={$\tilde\sigma$}] (new1) at (-3.5, -.35) {};
    \draw (uuul) to (uu);
    \draw (uu) to (uuur);
    \draw (uu) to (m);
    \draw (m) to (ml);
    \draw (m) to node[sloped,midway,below] {\scriptsize $(\mft,p)$} (new);
\end{tikzpicture}
\\
&
\quad + 
\begin{tikzpicture}  [baseline=-0.3cm] 
    \node [dot] (uuul) at (-3, 1) {};
    \node [dot] (uu) at (-2.5, 0) {};
    \node [dot] (uuur) at (-2, 1) {};
    \node [dot,label=360:{\scriptsize $\Xi_{\mfl}\mcI_{(\mft',q')}[X^{k}]$}] (m) at (-2, -1) {};
    \node [dot] (ml) at (-1.5, 0) {};
    \node (new) at (-3.5, 0) {};
    \node [label={$\tilde\sigma$}] (new1) at (-3.5, -.35) {};
    \draw (uuul) to (uu);
    \draw (uu) to (uuur);
    \draw (uu) to (m);
    \draw (m) to (ml);
    \draw (m) to node[sloped,midway,below] {\scriptsize $(\mft,q)$} (new);
\end{tikzpicture}
\\
&\quad
+
\tilde\sigma \graft{(\mft,p)}^{\nonroot} \sigma
\;.
\end{equs}
Above, we suppose $p \neq k$. The first and second terms correspond to $\graft{(\mft,p)}^{\root}$ and $\graft{(\mft,p)}^{\poly}$ respectively. The symbol $\tilde{\sigma}$ above doesn't represent a node decoration but is a placeholder 
for the tree $\tilde{\sigma}$. Next, for $\uparrow_{l}$, 
\begin{equs}
\uparrow_l
\begin{tikzpicture}  [baseline=-0.3cm] 
    \node [dot] (uuul) at (-3, 1) {};
    \node [dot] (uu) at (-2.5, 0) {};
    \node [dot] (uuur) at (-2, 1) {};
    \node [dot,label=360:{\scriptsize $\Xi_{\mfl}\mcI_{(\mft,q)}[X^{p}]\mcI_{(\mft',q')}[X^{k}]$}] (m) at (-2, -1) {};
    \node [dot] (ml) at (-1.5, 0) {};
    \draw (uuul) to (uu);
    \draw (uu) to (uuur);
    \draw (uu) to (m);
    \draw (m) to (ml);
\end{tikzpicture}
&=
\begin{tikzpicture}  [baseline=-0.3cm] 
    \node [dot] (uuul) at (-3, 1) {};
    \node [dot] (uu) at (-2.5, 0) {};
    \node [dot] (uuur) at (-2, 1) {};
    \node [dot,label=360:{\scriptsize $\Xi_{\mfl}\mcI_{(\mft,q)}[X^{p+e_l}]\mcI_{(\mft',q')}[X^{k}]$}] (m) at (-2, -1) {};
    \node [dot] (ml) at (-1.5, 0) {};
    \draw (uuul) to (uu);
    \draw (uu) to (uuur);
    \draw (uu) to (m);
    \draw (m) to (ml);
\end{tikzpicture}
\\
&\enskip
+
\begin{tikzpicture}  [baseline=-0.3cm] 
    \node [dot] (uuul) at (-3, 1) {};
    \node [dot] (uu) at (-2.5, 0) {};
    \node [dot] (uuur) at (-2, 1) {};
    \node [dot,label=360:{\scriptsize $\Xi_{\mfl}\mcI_{(\mft,q)}[X^{p}]\mcI_{(\mft',q')}[X^{k+e_l}]$}] (m) at (-2, -1) {};
    \node [dot] (ml) at (-1.5, 0) {};
    \draw (uuul) to (uu);
    \draw (uu) to (uuur);
    \draw (uu) to (m);
    \draw (m) to (ml);
\end{tikzpicture}
\\
&\enskip
+ \sum_{(\bar \mft, \bar p) \in \mcb{O}}
\begin{tikzpicture}  [baseline=-0.3cm] 
    \node [dot] (uuul) at (-3, 1) {};
    \node [dot] (uu) at (-2.5, 0) {};
    \node [dot] (uuur) at (-2, 1) {};
    \node [dot,label=360:{\scriptsize $\Xi_{\mfl}\mcI_{(\mft,q)}[X^{p}]\mcI_{(\mft',q')}[X^{k}]\mcI_{(\bar \mft, \bar p)}[X^{ \bar{p} +e_l}]$}] (m) at (-2, -1) {};
    \node [dot] (ml) at (-1.5, 0) {};
    \draw (uuul) to (uu);
    \draw (uu) to (uuur);
    \draw (uu) to (m);
    \draw (m) to (ml);
\end{tikzpicture}
\\
&\quad +
\cdots
\;\;.
\end{equs}
Above, the first two terms correspond to the first term in~\eqref{eq:graftPolyDef}, the third term corresponds to the second term in~\eqref{eq:graftPolyDef}, while $\cdots$ represents the final term in~\eqref{eq:graftPolyDef}.

The motivation for these definitions comes from the following lemma.
For $o \in \mcb{O}$, let $\triangleleft_{o} : \Poly^{\mfL_+} \times \Poly^{\mfL_+} \rightarrow \Poly^{\mfL_+}$ be the bilinear map given by setting, for any $\mfb \in \mfL_{+}$ and $F$, $\bar{F} \in \Poly^{\mfL_{+}}$, 
$
\big( 
F \triangleleft_{o} \bar F 
\big)_{\mfb}
\eqdef F_{\mft}
\cdot 
D_{o} \bar F_{\mfb}
$.
\begin{lemma}\label{lem:graftMorphism}
Let $F \in \mathring{\G}$, $o \in \mcb{O}$, $l\in\{0,\ldots, d\}$, and $\sigma, \tilde \sigma \in \BB$.
It holds that
\begin{equation}\label{eq:graftMorph}
\mathring \Upsilon^F[\tilde \sigma \graft{o} \sigma] 
= 
\mathring \Upsilon^F[\tilde \sigma] \triangleleft_{o} \mathring \Upsilon^F[\sigma]
\end{equation}
and
\begin{equation}\label{eq:graftPolyMorph}
\mathring \Upsilon^F[\uparrow_l \sigma] 
= 
\partial_l \mathring\Upsilon^F[\sigma]\;.
\end{equation}
\end{lemma}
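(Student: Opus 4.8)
The statement is a pair of morphism identities for the map $\mathring\Upsilon^F$ with respect to the grafting operators $\graft{o}$ and $\uparrow_l$ on $\BBspan$. Since $\graft{o}$ is defined as a sum of three pieces $\graft{o}^{\root} + \graft{o}^{\nonroot} + \graft{o}^{\poly}$, and all the operators involved are defined inductively over the recursive structure~\eqref{eq:generic tree in BB} of trees in $\BB$, the natural approach is a double structural induction: fix $\tilde\sigma \in \BB$ arbitrary, and induct on the depth (or number of nodes) of $\sigma$. The base case is $\sigma = \Xi_\mfl \prod_{i\in I}\mcI_{o_i}[X^{k_i}]$ with $J = \emptyset$, where $\graft{o}^{\nonroot}$ vanishes, $\graft{o}^{\root}$ produces $\Xi_\mfl(\prod_i \mcI_{o_i}[X^{k_i}])\mcI_o[\tilde\sigma]$, and $\graft{o}^{\poly}$ produces the sum over those $\bar\imath\in I$ with $o = (\mft_{\bar\imath}, k_{\bar\imath})$. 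Evaluating $\mathring\Upsilon^F_\mft$ on each via~\eqref{definition: general def of upsilon on big space} and comparing with $\mathring\Upsilon^F[\tilde\sigma]\triangleleft_o\mathring\Upsilon^F[\sigma]$, which by definition of $\triangleleft_o$ equals $\mathring\Upsilon^F_{\mfs}[\tilde\sigma]\cdot D_o\mathring\Upsilon^F_\mft[\sigma]$ for the relevant index $\mfs$ of $o$, is a direct computation; the key combinatorial fact needed is the Leibniz rule for $D_o$ acting on the product $\prod_{i\in I\sqcup J}D_{o_i}F_\mft^\mfl$ together with the factors $\prod_{i\in I}\mcb{X}_{(\mft_i,k_i)}$ and $\prod_{j\in J}\mathring\Upsilon^F_{\mft_j}[\sigma_j]$.

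\textbf{Key steps for~\eqref{eq:graftMorph}.} First I would write out $\mathring\Upsilon^F_\mft[\tilde\sigma\graft{o}\sigma]$ as the sum of the $\mathring\Upsilon^F_\mft$-values of the three summands. For $\graft{o}^{\root}\sigma$, the tree has root decoration unchanged, the same $I$, and $J$ augmented by one new planted subtree $\mcb{I}_o[\tilde\sigma]$ (with $o = (\mft_{\tilde{}},p_{\tilde{}})$); applying~\eqref{definition: general def of upsilon on big space} produces exactly $\big(\prod_{i\in I}\mcb{X}_{(\mft_i,k_i)}\big)\big(\prod_{j\in J}\mathring\Upsilon^F_{\mft_j}[\sigma_j]\big)\mathring\Upsilon^F_{\mfs}[\tilde\sigma]\cdot\big(\prod_{i\in I\sqcup J}D_{o_i}\big)D_o F_\mft^\mfl$. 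For $\graft{o}^{\poly}\sigma$, a factor $\mcI_{o_{\bar\imath}}[X^{k_{\bar\imath}}]$ in $I$ is replaced by a planted subtree $\mcI_{o_{\bar\imath}}[\tilde\sigma]$, so $\mcb{X}_{(\mft_{\bar\imath},k_{\bar\imath})}$ is replaced by $\mathring\Upsilon^F_{\mft_{\bar\imath}}[\tilde\sigma]$ while the differential operator structure is unchanged; the Kronecker delta $\delta_{o,(\mft_{\bar\imath},k_{\bar\imath})}$ ensures this term only contributes when $o_{\bar\imath} = o$ as an element of $\mcb{O}$ --- wait, more precisely when the $\mfL_+$-component matches and the $\N^{d+1}$-component of $o$ equals $k_{\bar\imath}$ --- so that summing over $\bar\imath$ picks up the derivative contributions to $D_o$ hitting one of the $\mcb{X}$-placeholders. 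For $\graft{o}^{\nonroot}\sigma$, one descends into a subtree $\sigma_{\bar\jmath}$, and by the inductive hypothesis $\mathring\Upsilon^F_{\mft_{\bar\jmath}}[\tilde\sigma\graft{o}\sigma_{\bar\jmath}] = \mathring\Upsilon^F_{\mfs}[\tilde\sigma]\cdot D_o\mathring\Upsilon^F_{\mft_{\bar\jmath}}[\sigma_{\bar\jmath}]$; substituting this back into~\eqref{definition: general def of upsilon on big space} gives the terms where $D_o$ in the Leibniz expansion hits one of the $\mathring\Upsilon^F_{\mft_j}[\sigma_j]$ factors. Summing the three contributions and applying the Leibniz rule for $D_o$ on the full product $\big(\prod_{i\in I}\mcb{X}_{(\mft_i,k_i)}\big)\big(\prod_{j\in J}\mathring\Upsilon^F_{\mft_j}[\sigma_j]\big)\cdot\big(\prod_{i\in I\sqcup J}D_{o_i}\big)F_\mft^\mfl$ in reverse yields precisely $\mathring\Upsilon^F_{\mfs}[\tilde\sigma]\cdot D_o\mathring\Upsilon^F_\mft[\sigma] = \big(\mathring\Upsilon^F[\tilde\sigma]\triangleleft_o\mathring\Upsilon^F[\sigma]\big)_\mft$.

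\textbf{Key steps for~\eqref{eq:graftPolyMorph}.} This is analogous but simpler, and one could either repeat the structural induction or deduce it formally. The three lines of~\eqref{eq:graftPolyDef} correspond exactly to the three ways the operator $\partial_l = \sum_{o\in\mcb{O}}\d_l\Y_o\,D_o$ of~\eqref{chain rule} can act: incrementing the polynomial degree $k_{\bar\imath}\mapsto k_{\bar\imath}+e_l$ of an existing $\mcI_{o_{\bar\imath}}[X^{k_{\bar\imath}}]$ corresponds to hitting one of the $\mcb{X}_{(\mft_i,k_i)}$ factors (since $D_{o}\mcb{X}_{(\mft,k)}$ relates $\mcb{X}$-values at shifted indices, and the operator $\mcb{X}_{(\mft,k)}$ is designed precisely so that $\partial_l$ acts as multiplication by $\Y_{(\mft,k+e_l)}$-type placeholders); adding a fresh $\mcI_{(\mft,p)}[X^{p+e_l}]$ for all $(\mft,p)\in\mcb{O}$ corresponds to the $\sum_o\d_l\Y_o D_o$ acting on $F_\mft^\mfl$ directly, i.e.\ $D_o F_\mft^\mfl$ picks up a new variable $\Y_o$ and the $\d_l$ prefactor is recorded by the new placeholder $\mcI_{(\mft,p)}[X^{p+e_l}]$; and the last line, descending into a subtree, is handled by the inductive hypothesis $\mathring\Upsilon^F[\uparrow_l\sigma_{\bar\jmath}] = \partial_l\mathring\Upsilon^F[\sigma_{\bar\jmath}]$. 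Here the precise definitions of the operators $\mcb{X}_{(\mft,k)}$ and how $\mathring\Upsilon^F$ uses them --- which appear earlier in the paper --- must be invoked; the cleanest route is to show that both sides of~\eqref{eq:graftPolyMorph} satisfy the same recursion in the tree structure and agree on generators, which reduces everything to checking the base case plus compatibility with $\graft{o}$ (the operators are related).

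\textbf{Main obstacle.} The genuine difficulty is bookkeeping: matching the combinatorial terms of the three-piece grafting definition against the terms produced by the Leibniz rule for $D_o$ acting across the heterogeneous product of $\mcb{X}$-placeholders, $\mathring\Upsilon^F$-values of subtrees, and the string $\prod_{i\in I\sqcup J}D_{o_i}F_\mft^\mfl$. In particular one must be careful that the $\graft{o}^{\poly}$ term with its Kronecker delta $\delta_{o,(\mft_{\bar\imath},k_{\bar\imath})}$ exactly reproduces the ``$D_o$ hits $\mcb{X}_{(\mft_{\bar\imath},k_{\bar\imath})}$'' contributions and nothing more --- this hinges on the constraint $p_i < k_i$ built into $\nodes'$ and on the precise action of $D_o$ on the $\mcb{X}$-placeholders, so one should state that action as a preliminary sub-lemma. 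Once that dictionary between tree operations and derivative operations is set up, both~\eqref{eq:graftMorph} and~\eqref{eq:graftPolyMorph} fall out of the induction without further surprises.
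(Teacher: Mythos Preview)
Your proposal is correct and follows essentially the same approach as the paper: apply the Leibniz rule to $D_o$ acting on the product in the definition~\eqref{definition: general def of upsilon on big space}, match the resulting three terms to the three pieces $\graft{o}^{\root}$, $\graft{o}^{\poly}$, $\graft{o}^{\nonroot}$ of the grafting operator, and close the non-root piece by induction on the number of edges. One minor simplification: the symbols $\mcb{X}_{(\mft_i,k_i)}$ are nothing more than the indeterminates $\Y_{(\mft_i,k_i)}$, so $D_o\mcb{X}_{(\mft_i,k_i)} = \delta_{o,(\mft_i,k_i)}$ immediately, and no preliminary sub-lemma is needed.
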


\begin{remark}
Although $\uparrow_l\sigma$ is a series in $\BB$, note that $\mathring\Upsilon^F$ vanishes on all but a finite number of its terms, hence the LHS of~\eqref{eq:graftPolyMorph} is well-defined as an element of $\Poly^{\mfL_+}$.
\end{remark}

\begin{proof}
From the definition~\eqref{definition: general def of upsilon on big space} and the Leibniz rule, we see that $\mathring \Upsilon^F[\tilde \sigma] \triangleleft_{o} \mathring \Upsilon^F[\sigma]$ splits into a sum of three terms.
We immediately see that $\mathring\Upsilon^F[\tilde\sigma\graft{o}^{\root}\sigma]$ and $\mathring\Upsilon^F[\tilde\sigma\graft{o}^{\poly}\sigma]$ from the expression~\eqref{eq:graftDef} match two of the terms from the Leibniz rule.
The term in $\mathring\Upsilon^F[\tilde\sigma\graft{o}^{\nonroot} \sigma ]$ then matches the final term from the Leibniz rule by an induction on the number of edges in $\sigma$, which completes the proof of~\eqref{eq:graftMorph}.
The proof of~\eqref{eq:graftPolyMorph} follows in an identical manner.
\end{proof}

We now provide an expression for the adjoint of $\graft{o}$ and $\uparrow_l$.
For a tree $\sigma = T^\mfm_\mff \in \BB$ and an edge $(x,y) = e \in E_T$, let $\branch^e \sigma\in \BB$ be the subtree of $\sigma$ with node set $N_{\branch^e\sigma} \eqdef \{z \in N_T : z \geq y\}$,
and for which the corresponding decoration maps are given by restrictions of $\mfm$ and $\mff$.
Let $\trunk^e\sigma$ be the subtree of $\sigma$ with node set $N_{\trunk^e\sigma} \eqdef N_T\setminus N_{\branch^e\sigma}$ and decoration map again given by restrictions of $\mfm$ and $\mff$. 
We call $\branch^e \sigma$ and $\trunk^e \sigma$ the branch and trunk respectively of a cut at $e$.
Also, for $(\mft,k) \in \mcb{O}$ and $p \geq k$, we define $\trunk^e_{(\mft,k),p}\sigma := \trunk^e\sigma$ if $p=k$, and otherwise define $\trunk^e_{(\mft,k),p}\sigma \in \BB$ as the tree obtained from $\trunk^e\sigma$ by adding $\mcb{I}_{(\mft,k)}[X^p]$ to the node decoration at $x$.

\begin{lemma}\label{lem:graftAdjoint}
\begin{enumerate}[label=\upshape(\roman*\upshape)]
\item \label{point:adjoint1} For $o = (\mft,p) \in \mcb{O}$, consider the map $\graft{o}^* : \BBspan \rightarrow \BBspan \otimes \BBspan$ given for any $\sigma = T^\mfm_\mff \in \BB$ by
\begin{equation}\label{eq:graftAdjoint1}
\graft{o}^* \sigma \eqdef \sum_{0 \leq k \leq p} \sum_{e\in E_T} \frac{\delta_{\mff(e),(\mft,k)}}{(p-k)!} \branch^{e} \sigma \otimes \trunk^{e}_{(\mft,k),p}\sigma\;.
\end{equation}
Then for all $\sigma_1,\sigma_2,\sigma \in \BB$
\begin{equation}\label{eq:adjointIdentity}
\langle \sigma_1\otimes\sigma_2, \graft{o}^*\sigma \rangle = \langle\sigma_1\graft{o}\sigma_2,\sigma\rangle\;.
\end{equation}
\item \label{point:adjoint2} For $l=0,\ldots, d$, define $\uparrow_l^* : \BBspan \rightarrow \BBspan$ for all $\sigma = T^\mfm_\mff \in \BB$ by
\begin{equation}\label{eq:graftAdjoint2}
\uparrow_l^* \sigma \eqdef \sum_{x \in N_T} \sum_{\substack{\bar\imath \in I\\ k_{\bar\imath}-e_l\geq p_{\bar\imath}}} (k_{\bar\imath}[l]-p_{\bar\imath}[l]) \tilde\sigma\;,
\end{equation}
where in the final sum we denoted
$\mfm(x) = \Xi_{\mfl}\prod_{i \in I}\mcI_{(\mft_{i},p_{i})}[X^{k_i}]$
and $\tilde\sigma \eqdef T^{\tilde\mfm}_\mff\in \BB$ with $\tilde\mfm(y) = \mfm(y)$ for all $y \in N_T\setminus\{x\}$ and
\begin{equ}
\tilde\mfm(x) =
\begin{cases}
\Xi_{\mfl}\mcI_{(\mft_{\bar\imath},p_{\bar\imath})}[X^{k_{\bar\imath}-e_l}]\prod_{i \in I\setminus\{\bar\imath\}}\mcI_{(\mft_i,p_i)}[X^{k_i}] &\textnormal{ if } k_{\bar\imath}-e_l > p_{\bar\imath}\;,
\\
\Xi_{\mfl}\prod_{i \in I\setminus\{\bar\imath\}}\mcI_{(\mft_i,p_i)}[X^{k_i}] &\textnormal{ if } k_{\bar\imath}-e_l = p_{\bar\imath}\;.
\end{cases}
\end{equ}
Then $\langle\bar\sigma,\uparrow_l^*\sigma\rangle = \langle\uparrow_l\bar\sigma,\sigma\rangle$ for all $\sigma,\bar\sigma \in \BB$.\\
\end{enumerate}
\end{lemma}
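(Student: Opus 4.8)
The plan is to verify each identity directly on the orthogonal basis of decorated trees and then extend by (bi/tri)linearity. For part~\ref{point:adjoint1} it suffices to check \eqref{eq:adjointIdentity} when $\sigma_1,\sigma_2,\sigma\in\BB$ are basis trees, since the trees form an orthogonal basis of $\BBspan$ (and products of trees an orthogonal basis of $\BBspan\otimes\BBspan$); the analogous remark applies to part~\ref{point:adjoint2}. In both cases I would run an induction on the number of edges of $\sigma$, using that $\graft{o}$, $\uparrow_l$, their candidate adjoints $\graft{o}^*$, $\uparrow_l^*$, and the inner product $\langle\cdot,\cdot\rangle$ on $\BBspan$ are all defined recursively through the decomposition \eqref{eq:generic tree in BB} of a tree into its root node $Y$ and its collection of root subtrees.

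For part~\ref{point:adjoint1}, fix basis trees $\sigma_1,\sigma_2,\sigma$, write $\sigma$ as in \eqref{eq:generic tree in BB}, and put $o=(\mft,p)$. On one side, decompose $\langle\sigma_1\graft{o}\sigma_2,\sigma\rangle$ using the three summands $\graft{o}^{\root}$, $\graft{o}^{\poly}$, $\graft{o}^{\nonroot}$ of Definition~\ref{def:graftingLarge}. On the other side, split the sum over edges $e\in E_T$ in the formula \eqref{eq:graftAdjoint1} for $\graft{o}^*\sigma$ according to whether $e$ is incident to the root of $\sigma$ or lies strictly inside one of the root subtrees $\sigma_j$. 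One then matches: the root-incident edges $e$ with $\mff(e)=(\mft,p)$ (the case $k=p$, where no formal symbol is reinserted since $\trunk^e_{(\mft,p),p}\sigma=\trunk^e\sigma$) with $\langle\sigma_1\graft{o}^{\root}\sigma_2,\sigma\rangle$, in which $e$ records the fresh planted subtree $\mcb{I}_o[\sigma_1]$ attached to the root; the root-incident edges with $\mff(e)=(\mft,k)$ for $k<p$ with $\langle\sigma_1\graft{o}^{\poly}\sigma_2,\sigma\rangle$, in which case $e$ corresponds to a planted subtree $\mcb{I}_{(\mft,k)}[\sigma_1]$ that replaced one of the formal root-symbols $\mcI_{(\mft,k)}[X^p]$ of $\sigma_2$, precisely the symbol reinserted on the trunk by $\trunk^e_{(\mft,k),p}\sigma$; and the edges strictly inside some $\sigma_j$ with $\langle\sigma_1\graft{o}^{\nonroot}\sigma_2,\sigma\rangle$, which reduces to the statement for $\graft{o}^*\sigma_j$ by the inductive hypothesis.

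The delicate part — and the one I expect to absorb most of the work — is keeping track of symmetry factors, since the basis is orthogonal but not orthonormal. By \eqref{eq:innerProdBB}, a formal symbol $\mcI_{(\mft,k)}[X^p]$ contributes a weight $(p-k)!$ to the node inner product, while repeated root subtrees and repeated formal symbols contribute the usual multiplicity factorials to $\langle\sigma,\sigma\rangle$, $\langle\sigma_1,\sigma_1\rangle$ and $\langle\sigma_2,\sigma_2\rangle$. In the $k<p$ case, if $\sigma_2$ carries $\mu$ copies of $\mcI_{(\mft,k)}[X^p]$ at its root, then $\graft{o}^{\poly}$ produces $\mu$ graft positions yielding isomorphic trees, whereas $\sigma$ carries $\mu-1$ such symbols plus one planted subtree, and one checks from the elementary identities for the inner product on $\langle\nodes'\rangle$ that $\mu\langle\sigma,\sigma\rangle=\frac{1}{(p-k)!}\langle\sigma_1,\sigma_1\rangle\langle\sigma_2,\sigma_2\rangle$; that is, the prefactor $\frac1{(p-k)!}$ inserted in \eqref{eq:graftAdjoint1} is exactly what reconciles the $(p-k)!$ weight of the reinserted symbol with the multiplicity bookkeeping (and the $k=p$, root-graft, case works the same way with the trivial factor $0!$). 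It is cleanest to isolate once and for all the observations that removing one copy of a root subtree $\mcb{I}_{o}[\rho]$ of multiplicity $m$ divides $\langle\theta,\theta\rangle$ by $m\langle\rho,\rho\rangle$ (and the analogue for formal root-symbols), and then feed these into the recursion so that the coefficients match term by term.

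Part~\ref{point:adjoint2} follows by the same inductive scheme but is lighter: decomposing $\langle\uparrow_l\bar\sigma,\sigma\rangle$ along the three lines of \eqref{eq:graftPolyDef}, the first line (incrementing the exponent of an existing root symbol $\mcI_{o_{\bar\imath}}[X^{k_{\bar\imath}}]\mapsto\mcI_{o_{\bar\imath}}[X^{k_{\bar\imath}+e_l}]$) is adjoint to the ``$k_{\bar\imath}-e_l>p_{\bar\imath}$'' branch of \eqref{eq:graftAdjoint2}, with the weight $k_{\bar\imath}[l]-p_{\bar\imath}[l]$ being precisely the $l$-th component of the ratio of node-weights $(k_{\bar\imath}-p_{\bar\imath})!/((k_{\bar\imath}-e_l)-p_{\bar\imath})!$; the second line (appending a fresh $\mcI_{(\mft,p)}[X^{p+e_l}]$ for each $(\mft,p)\in\mcb{O}$) is adjoint to the ``$k_{\bar\imath}-e_l=p_{\bar\imath}$'' branch, which deletes a root symbol of the form $\mcI_{(\mft_{\bar\imath},p_{\bar\imath})}[X^{p_{\bar\imath}+e_l}]$ and carries weight $1$; and the third line reduces to the subtrees via the inductive hypothesis, the outer sum over $x\in N_T$ in \eqref{eq:graftAdjoint2} being exactly what collects the ``at the root'' contributions produced along the recursion. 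Since no divided factorials enter here, this part is routine once the framework of part~\ref{point:adjoint1} is in place.
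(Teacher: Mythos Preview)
Your approach is essentially the same as the paper's: induction on the number of edges of $\sigma$, splitting the candidate adjoint into a root-level piece and a recursive piece, and matching the root piece against $\graft{o}^{\root}+\graft{o}^{\poly}$ (respectively the first two lines of \eqref{eq:graftPolyDef}) while the recursive piece is handled by the inductive hypothesis. The paper records this by writing $\graft{o}^*$ and $\uparrow_l^*$ in an explicit inductive form and then simply asserting that the root-level identities ``follow from the definition of the inner product~\eqref{eq:innerProdBB}''.

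One small comment on the bookkeeping: your displayed identity $\mu\langle\sigma,\sigma\rangle=\frac{1}{(p-k)!}\langle\sigma_1,\sigma_1\rangle\langle\sigma_2,\sigma_2\rangle$ is only the special case where $\sigma_2$ has no pre-existing root branch isomorphic to $\mcb{I}_{(\mft,k)}[\sigma_1]$; in general, if $\sigma_2$ already carries $\nu$ such branches, both sides of \eqref{eq:adjointIdentity} pick up a factor $\nu+1$ (on the left from the $\nu+1$ edges available to cut in $\sigma$, on the right from the symmetry factor of the $(\nu+1)$-fold repeated branch in $\sigma$). This is exactly why the paper avoids tracking multiplicities explicitly and instead appeals directly to the permutation-sum definition of $\langle\cdot,\cdot\rangle$: the sum over $S_k$ built into the inner product automatically absorbs all these factorials, so the matching becomes a one-line observation rather than a case analysis.
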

\begin{proof}
\ref{point:adjoint1} For $\sigma$ given by~\eqref{eq:generic tree in BB}, note that $\graft{o}^*$ admits the inductive form
\begin{equs}
\graft{o}^*\sigma
&= \graft{o}^{*,\root}\sigma + \graft{o}^{*,\nonroot}\sigma
\\
&\eqdef \sum_{0\leq k \leq p}\sum_{\bar\jmath \in J} \frac{\delta_{(\mft,k),o_{\bar\jmath}}}{(p-k)!} \sigma_{\bar\jmath} \otimes Y \mcI_{(\mft,k)}[X^p] \prod_{j \in J\setminus\{\bar\jmath\}} \mcb{I}_{o_j}[\sigma_j]
\\
&\quad + \sum_{\bar\jmath \in J} \sum\sigma_{\bar\jmath}^{(1)} \otimes Y \mcb{I}_{o_{\bar\jmath}}[\sigma_{\bar\jmath}^{(2)}] \prod_{j \in J\setminus\{\bar\jmath\}} \mcb{I}_{o_j}[\sigma_j] \;,
\end{equs}
where we used the shorthand $\graft{o}^* \sigma_{\bar\jmath} = \sum \sigma_{\bar\jmath}^{(1)}\otimes\sigma_{\bar\jmath}^{(2)}$.
For any $\sigma_1,\sigma_2 \in \BB$, it follows from the definition of the inner product~\eqref{eq:innerProdBB} that $\langle \sigma_1 \graft{o}^{\root}\sigma_2,\sigma\rangle$ and $\langle \sigma_1 \graft{o}^{\poly}\sigma_2,\sigma\rangle$ are given by the terms in $\langle \sigma_1 \otimes \sigma_2,\graft{o}^{*,\root}\sigma\rangle$ with $p=k$ and $p<k$ respectively.
It now follows by an induction on the number of edges in $\sigma$ that $\langle \sigma_1 \graft{o}^{\nonroot}\sigma_2,\sigma\rangle = \langle \sigma_1 \otimes \sigma_2,\graft{o}^{*,\nonroot}\sigma\rangle$, which completes the proof of~\eqref{eq:adjointIdentity}.
Point~\ref{point:adjoint2} follows by identical considerations by noting that $\uparrow_l^*$ admits the inductive form
\begin{equs}
\uparrow_l^*\sigma
&= \Xi_\mfl \sum_{\substack{\bar\imath\in I \\ k_{\bar\imath}-e_l \geq p_{\bar\imath}}} (k_{\bar\imath}[l] - p_{\bar\imath}[l])\mcI_{(\mft_{\bar\imath},p_{\bar\imath})}[X^{k_{\bar\imath}-e_l}] \prod_{i \in I\setminus\{\bar\imath\}} \mcI_{o_i}[X^{k_i}]\prod_{j \in J}\mcb{I}_{o_{j}}[\sigma_j]
\\
&\quad + Y\sum_{\bar\jmath\in J}\mcb{I}_{o_{\bar\jmath}}[\uparrow_l^* \sigma_{\bar\jmath}]\prod_{j \in J\setminus\{\bar\jmath\}}\mcb{I}_{o_{j}}[\sigma_j]\;,
\end{equs}
where we used an abuse of notation by assuming that $\mcI_{(\mft_{\bar\imath},p_{\bar\imath})}[X^{k_{\bar\imath}-e_l}]$ is missing in the first sum in the case that $k_{\bar\imath}-e_l=p_{\bar\imath}$.
\end{proof}
\subsubsection{Grafting operators on \texorpdfstring{$\VVspan$}{V}} \label{subsec:graftV}
For a tree $\tau = T^\mfm_\mff \in \VV$ and an edge $(x,y)=e \in E_T$, we define the branch and trunk $\branch^e\tau, \trunk^e\tau \in \VV$ of a cut at $e$ in the identical manner as for $\BB$. 
For a node $x \in N_T$ and $q \in \N^{d+1}$, let $\mfm \pm_x^{q} : N_T \to \mfD \times \Z^{d+1}$ be defined by
\[
\mfm \pm_x^{q}(y) \eqdef
\begin{cases}
\left(\mnoise(x), \mpoly(x) \pm q \right) &\mbox{if } x=y,
\\
\mfm(y) &\mbox{otherwise}.
\end{cases}
\]
$\mfm \pm_x^{q}$ agrees with $\mfm$ at every node of $T$ except $x$ and increases\slash decreases by $q$ the second component of $\mfm(x)$ at $x$.
We extend the notation to
\[
\tau \pm_x^{q} \eqdef \one\{\mpoly(x) \pm q \geq 0\}(T, \mfm\pm_x^{q}, \mff),
\]
where the RHS is understood as an element of $\VVspan$.

We now describe a family of grafting operators $(\hgraft{(\mft,p)})_{(\mft,p) \in \mcb{O}}$ on the space of trees $\VVspan$ for which it holds that $Q^* \hgraft{(\mft,p)} = \graft{(\mft,p)} (Q^* \otimes Q^*)$.
We prefer to define $\hgraft{(\mft,p)}$ in terms of its adjoint.
\begin{definition} \label{def_grafting_explicit}
For $(\mft,p) \in \mcb{O}$, let $\hgraft{(\mft,p)}: \VVspan \otimes \VVspan \rightarrow \VVspan$\label{hgraft page ref} be the unique linear map whose adjoint $\hgraft{(\mft,p)}^* : \VVspan \rightarrow \VVspan \otimes \VVspan$ is given for all $\tau = T^\mfm_\mff \in \VV$ by
\begin{equation}\label{eq:graftAdjointHat}
\hgraft{(\mft,p)}^* \tau \eqdef \sum_{0 \leq k \leq p} \sum_{(x,y) \in E_T} \frac{\mathbbm{1}_{\mff(x,y) = (\mft,k)}}{(p-k)!} \branch^{(x,y)}\tau \otimes \left[ (\trunk^{(x,y)}\tau) +_x^{p-k} \right].
\end{equation}
\end{definition}
In words, the factor in the right tensor of the summands appearing on the RHS of~\eqref{eq:graftAdjointHat} is obtained from $\tau$ by removing the branch $\mcb{I}_{(\mft,k)}[\branch^{(x,y)}\tau]$ and adding $\mathbf{X}^{p-k}$ to the decoration at $x$ (adding nothing if $k=p$). 
We give a pictorial example below in which $p \geq k$ and where we show only the decorations of the edges with type $ \mft $ and the decorations of their incoming nodes. 
\begin{equs}
\hgraft{(\mft,p)}^*
\begin{tikzpicture}  [baseline=0] 
    \node [dot] (uuul) at (-3, 2) {};
    \node [dot] (uu) at (-2.5, 1.25) {};
    \node [dot] (uuur) at (-2, 2) {};
    \node [dot, label=180:{\scriptsize $\Xi_{\mfl}\mathbf{X}^{q}$}] (m) at (-2, 0) {};
    \node [dot] (ml) at (-1.5, 1) {};
    \node [dot] (mr) at (-1, 0.75) {};
    \node [dot] (root) at (-1.25, -1) {};
    \node [dot, label=0:{\scriptsize $\Xi_{\mfl'}\mathbf{X}^{q'}$}] (l) at (-0.75, -0.25) {};
    \node [dot] (new) at (-0.75,.75) {};
    \draw (uuul) to (uu);
    \draw (uu) to (uuur);
    \draw (uu) to node[sloped,midway,above] {\scriptsize $(\mft,k)$} (m);
    \draw (m) to (ml);
    \draw (m) to (mr);
    \draw (m) to (root);
    \draw (root) to (l);
    \draw (l) to node[sloped,midway,below] {\scriptsize $(\mft,p)$} (new);
\end{tikzpicture}
&=
\frac{1}{(p-k)!}
\begin{tikzpicture}  [baseline=0.2cm] 
    \node [dot] (uuul) at (-3, .75) {};
    \node [dot] (uu) at (-2.5, 0) {};
    \node [dot] (uuur) at (-2, .75) {};
    \draw (uuul) to (uu);
    \draw (uu) to (uuur);
\end{tikzpicture}
\otimes
\begin{tikzpicture}  [baseline=0] 
    \node [dot, label=left:{\scriptsize $\Xi_{\mfl}\mathbf{X}^{q + p - k}$}] (m) at (-2, 0) {};
    \node [dot] (ml) at (-1.5, 1) {};
    \node [dot] (mr) at (-1, 0.75) {};
    \node [dot] (root) at (-1.25, -1) {};
    \node [dot, label=0:{\scriptsize $\Xi_{\mfl'}\mathbf{X}^{q'}$}] (l) at (-0.75, -0.25) {};
    \node [dot] (new) at (-0.75,.75) {};
    \draw (m) to (ml);
    \draw (m) to (mr);
    \draw (m) to (root);
    \draw (root) to (l);
    \draw (l) to node[sloped,midway,below] {\scriptsize $(\mft,p)$} (new);
\end{tikzpicture}\\
&
\quad + 
\begin{tikzpicture}[baseline=-0.08cm]
\node [dot] (uu) at (0, 0) {};
\node (empty) at (0, 0) {};
\end{tikzpicture}
\otimes
\begin{tikzpicture}  [baseline=0] 
    \node [dot] (uuul) at (-3, 2) {};
    \node [dot] (uu) at (-2.5, 1.25) {};
    \node [dot] (uuur) at (-2, 2) {};
    \node [dot, label=180:{\scriptsize $\Xi_{\mfl}\mathbf{X}^{q}$}] (m) at (-2, 0) {};
    \node [dot] (ml) at (-1.5, 1) {};
    \node [dot] (mr) at (-1, 0.75) {};
    \node [dot] (root) at (-1.25, -1) {};
    \node [dot, label=above:{\scriptsize $\Xi_{\mfl'}\mathbf{X}^{q'}$}] (l) at (-0.75, -0.25) {};
    \draw (uuul) to (uu);
    \draw (uu) to (uuur);
    \draw (uu) to node[sloped,midway,above] {\scriptsize $(\mft,k)$} (m);
    \draw (m) to (ml);
    \draw (m) to (mr);
    \draw (m) to (root);
    \draw (root) to (l);
\end{tikzpicture}
\;\;.
\end{equs}

\begin{remark}\label{rem:formula for hgraft}
For $(\mft,p) \in \mcb{O}$, one is able to give a precise definition of the grafting operator  $\hgraft{(\mft,p)}$ similar to~\eqref{eq:graftDef}.
Indeed, for $ \tau \in \VV $ and $ \bar \tau = \Xi_\mfl \mathbf{X}^k\Big(\prod_{j\in J} \mcb{I}_{o_j}[\tau_j]\Big) $, we have
\begin{equs}
\tau  \hgraft{(\mft,p)} \bar \tau
& = \sum_{\ell} \binom{ k }{\ell}
\Xi_\mfl \mathbf{X}^{k-\ell}   \mcb{I}_{(\mft,p-\ell)}[\tau] \Big(\prod_{j\in J} \mcb{I}_{o_j}[\tau_j]\Big) 
\\ & +  \sum_{j \in J} \Xi_\mfl \mathbf{X}^k\mcb{I}_{o_j}[\tau  \hgraft{(\mft,p)} \tau_j] \Big(\prod_{k \neq j} \mcb{I}_{o_k}[\tau_k]\Big)\;.
\end{equs}
\end{remark}

\begin{definition} \label{def_increasing_poly}
For $i \in \{0,\ldots, d\}$, let $\hat\uparrow_i : \VVspan \rightarrow \VVspan$\label{hatuparrow page ref} be the unique linear map with adjoint given for all $\tau = T^\mfm_\mff \in \VV$ by
\begin{equation}\label{eq:raiseAdjointHat}
\hat\uparrow_i^* \tau \eqdef \sum_{x \in N_T} \mpoly(x)[i] \tau-_x^{e_i},
\end{equation}
where we write $\mfm(x) = \left(\mnoise(x), \left(\mpoly(x)[0],\ldots, \mpoly(x)[d]\right)\right) \in \mfD \times \N^{d+1}$.
\end{definition}
We give a pictorial example for the above definition.
\begin{equs}
\hat\uparrow_i^*
\begin{tikzpicture}  [baseline=-0.4cm] 
    \node [dot, label=above:$\Xi_{\mfl_2}\mathbf{X}^{q_2}$] (uu) at (-2.75, 0) {};
    \node [dot, label=below:$\Xi_{\mfl_1}\mathbf{X}^{q_1}$] (m) at (-2, -1) {};
    \node [dot, label=above:$\Xi_{\mfl_3}\mathbf{X}^{q_3}$] (ml) at (-1.25, 0) {};
    
    \draw (uu) to (m);
    \draw (m) to (ml);
\end{tikzpicture}
&=
q_1[i]
\begin{tikzpicture}  [baseline=-0.4cm] 
    \node [dot, label=above:$\Xi_{\mfl_2}\mathbf{X}^{q_2}$] (uu) at (-2.75, 0) {};
    \node [dot, label=below:$\Xi_{\mfl_1}\mathbf{X}^{q_1-e_i}$] (m) at (-2, -1) {};
    \node [dot, label=above:$\Xi_{\mfl_3}\mathbf{X}^{q_3}$] (ml) at (-1.25, 0) {};
    
    \draw (uu) to (m);
    \draw (m) to (ml);
\end{tikzpicture}
+
q_2[i]
\begin{tikzpicture}  [baseline=-0.4cm] 
    \node [dot, label=above:$\Xi_{\mfl_2}\mathbf{X}^{q_2-e_i}$] (uu) at (-2.75, 0) {};
    \node [dot, label=below:$\Xi_{\mfl_1}\mathbf{X}^{q_1}$] (m) at (-2, -1) {};
    \node [dot, label=above:$\Xi_{\mfl_3}\mathbf{X}^{q_3}$] (ml) at (-1.25, 0) {};
    
    \draw (uu) to (m);
    \draw (m) to (ml);
\end{tikzpicture}
\\
&\quad +
q_3[i]
\begin{tikzpicture}  [baseline=-0.4cm] 
    \node [dot, label=above:$\Xi_{\mfl_2}\mathbf{X}^{q_2}$] (uu) at (-2.75, 0) {};
    \node [dot, label=below:$\Xi_{\mfl_1}\mathbf{X}^{q_1}$] (m) at (-2, -1) {};
    \node [dot, label=above:$\Xi_{\mfl_3}\mathbf{X}^{q_3-e_i}$] (ml) at (-1.25, 0) {};
    
    \draw (uu) to (m);
    \draw (m) to (ml);
\end{tikzpicture}
\;\;.
\end{equs}
Note that $\graft{o}$ and $\uparrow_l$ extend to well-defined maps $\BBspan^*\otimes\BBspan^* \to \BBspan^*$ and $\BBspan^* \to \BBspan^*$ respectively (this can be seen from Lemma~\ref{lem:graftAdjoint} or directly from the triangular structure of the maps).
\begin{lemma}\label{lem:QMorphism}
Let $(\mft,p) \in \mcb{O}$ and $l \in \{0,\ldots, d\}$.
Then, as maps from $\VVspan\otimes\VVspan$ to $\BBspan^*$,
\begin{equation}\label{eq:QMorphism1}
Q^* \hgraft{(\mft,p)} = \graft{(\mft,p)} (Q^* \otimes Q^*)
\end{equation}
and, as maps from $\VVspan \to \BBspan^*$,
\begin{equation}\label{eq:QMorphism2}
\uparrow_l Q^* = Q^* \hat\uparrow_l.
\end{equation}
\end{lemma}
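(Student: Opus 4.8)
The plan is to verify both identities by passing to adjoints, since the maps $\hgraft{(\mft,p)}$ and $\hat\uparrow_l$ were defined on $\VVspan$ precisely through their adjoints in~\eqref{eq:graftAdjointHat} and~\eqref{eq:raiseAdjointHat}, while $\graft{(\mft,p)}$ and $\uparrow_l$ on $\BBspan$ have their adjoints computed in Lemma~\ref{lem:graftAdjoint}. Taking adjoints of~\eqref{eq:QMorphism1} and~\eqref{eq:QMorphism2} and using that $\VVspan,\BBspan$ are equipped with inner products (so that the double adjoint of $Q$ is $Q^*$, and $(Q^*)^* = Q$ on the relevant subspaces), the two claims become, as maps on $\BBspan$,
\begin{equs}
\hgraft{(\mft,p)}^* Q &= (Q \otimes Q) \graft{(\mft,p)}^*\;, \qquad
\hat\uparrow_l^* Q = Q \uparrow_l^*\;.
\end{equs}
So it suffices to check these operator identities on an arbitrary basis tree $\sigma = T^\mfm_\mff \in \BB$ written as in~\eqref{eq:generic tree in BB}.

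For the first identity I would compute both sides explicitly. On the right, $\graft{(\mft,p)}^*\sigma$ is the sum over $0\le k\le p$ and edges $e = (x,y)\in E_{T}$ with $\mff(e) = (\mft,k)$ of $\frac{1}{(p-k)!}\branch^e\sigma \otimes \trunk^e_{(\mft,k),p}\sigma$ by~\eqref{eq:graftAdjoint1}; applying $Q\otimes Q$ replaces $\branch^e\sigma$ by $Q\branch^e\sigma$ and $\trunk^e_{(\mft,k),p}\sigma$ by $Q\trunk^e_{(\mft,k),p}\sigma$, where the extra node decoration $\mcb{I}_{(\mft,k)}[X^p]$ that was added to the trunk at $x$ (when $p\neq k$) becomes, under $Q$, an extra factor $\mathbf{X}^{p-k}$ in the $\VV$-node decoration at $x$. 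On the left, $Q\sigma = \Xi_\mfl\big(\prod_{i\in I}\mathbf{X}^{k_i - p_i}\big) \prod_{j\in J}\mcb{I}_{o_j}[Q\sigma_j]$ and $\hgraft{(\mft,p)}^*$ applied to this, via~\eqref{eq:graftAdjointHat}, sums over $0\le k\le p$ and edges of $Q\sigma$ of type $(\mft,k)$; the key point is that $Q$ does not change the underlying tree $T$ nor the edge types (both come from $\mff$), so the edge sets and type-matching conditions correspond exactly, and $(\trunk^{(x,y)}(Q\sigma))+_x^{p-k}$ simply adds $\mathbf{X}^{p-k}$ at $x$ — matching the right-hand side term by term. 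The only subtlety is bookkeeping of the factors $(p-k)!$ and of how the commutative products over $I$ and $J$ at the root (and inductively at interior nodes) interact with the cut; I would handle this by an induction on the number of edges of $\sigma$, reducing to the inductive form of $\graft{o}^*$ recorded in the proof of Lemma~\ref{lem:graftAdjoint}\ref{point:adjoint1} and the analogous inductive description of $\hgraft{(\mft,p)}^*$ from Remark~\ref{rem:formula for hgraft}.

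The second identity is easier and entirely parallel: by~\eqref{eq:graftAdjoint2}, $\uparrow_l^*\sigma$ sums over nodes $x$ and over factors $\mcI_{(\mft_{\bar\imath},p_{\bar\imath})}[X^{k_{\bar\imath}}]$ at $x$ with $k_{\bar\imath}-e_l\ge p_{\bar\imath}$, with multiplicity $k_{\bar\imath}[l]-p_{\bar\imath}[l]$, lowering $k_{\bar\imath}$ by $e_l$; applying $Q$ turns each such factor into $\mathbf{X}^{k_{\bar\imath}-p_{\bar\imath}}$, and lowering $k_{\bar\imath}$ by $e_l$ (possibly deleting the factor when $k_{\bar\imath}-e_l=p_{\bar\imath}$) becomes lowering the exponent $k_{\bar\imath}-p_{\bar\imath}$ by $e_l$ with the same multiplicity — which is exactly what $\hat\uparrow_l^*$ does on $Q\sigma$ by~\eqref{eq:raiseAdjointHat}, since $\mpoly(x)[l]$ in $Q\sigma$ is $\sum_{i\in I}(k_i[l]-p_i[l])$. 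Again one checks this by induction on the number of edges, peeling off the root using the inductive forms of $\uparrow_l^*$ and $\hat\uparrow_l^*$.

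The main obstacle I anticipate is purely combinatorial rather than conceptual: matching the factorials $(k_i-p_i)!$ built into the inner product~\eqref{eq:innerProdBB} on $\BBspan$ against the factorials $k!$ in the inner product on $\VVspan$ through the (non-injective, non-isometric) map $Q$, and making sure that the multiplicities $(k_{\bar\imath}[l]-p_{\bar\imath}[l])$ and the $\frac{1}{(p-k)!}$ coefficients come out consistent when several of the monomial factors $\mcI_{o_i}[X^{k_i}]$ at a node happen to coincide. This is exactly the sort of symmetry-factor bookkeeping that the inner-product formalism of Section~\ref{sec: gen construction of inner products} was designed to automate, so I expect it to go through cleanly once the induction is set up, but it is where all the care is needed.
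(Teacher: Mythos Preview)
Your proposal is correct and takes the same approach as the paper: pass to adjoints and verify $\hgraft{(\mft,p)}^* Q = (Q\otimes Q)\graft{(\mft,p)}^*$ and $\hat\uparrow_l^* Q = Q\uparrow_l^*$ directly on $\sigma \in \BB$ via the edge bijection and the multiplicity count $\sum_i(k_i[l]-p_i[l]) = \mpoly(x)[l]$. Your anticipated factorial/symmetry-factor obstacle is a non-issue: once the adjoints are explicit (Lemma~\ref{lem:graftAdjoint}, \eqref{eq:graftAdjointHat}, \eqref{eq:raiseAdjointHat}) the identities are equalities of maps $\BBspan\to\VVspan\otimes\VVspan$ and $\BBspan\to\VVspan$ in which no inner products appear, and the paper's verification is correspondingly direct (no induction is needed).
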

\begin{proof}
Considering the dual statements, it suffices to show that for all $\sigma \in \BB$,
\begin{equation}\label{eq:QMorphismAd1}
\hgraft{(\mft,p)}^* (Q\sigma) = (Q\otimes Q) \graft{(\mft,p)}^* \sigma,
\end{equation}
and
\begin{equation}\label{eq:QMorphismAd2}
\hat\uparrow_l^* (Q\sigma) = Q (\uparrow_l^* \sigma).
\end{equation}
To show~\eqref{eq:QMorphismAd1}, observe that, by definition of $Q$, there is a bijection between the edges with decoration $(\mft,k)$ in $Q\sigma$ and edges with decoration $(\mft,k)$ in $\sigma$.
Furthermore, for every cut appearing in the sum~\eqref{eq:graftAdjoint1} of Lemma~\ref{lem:graftAdjoint} with corresponding term $\frac{1}{(p-k)!}b \otimes t$, it holds that $\frac{1}{(p-k)!}(Q b) \otimes (Q t)$ is the term appearing from the corresponding cut in~\eqref{eq:graftAdjointHat}, from which~\eqref{eq:QMorphismAd1} follows.

To show~\eqref{eq:QMorphismAd2}, consider a node $x$ in $\sigma$ with decoration $\Xi_\mfl\prod_{i \in I}\mcI_{(\mft_i,p_i)}[X^{k_i}]$.
Denote by $\bar x$ the corresponding node in $Q\sigma$.
Note that the polynomial decoration at $\bar x$ is $k \eqdef \sum_{i \in I}(k_i-p_i)$.
Every term $\tilde\sigma$ of $\uparrow_l^*\sigma$ in the sum~\eqref{eq:graftAdjoint2} then corresponds to a term in $Q\bar\sigma$, up to a combinatorial factor, obtained by lowering the polynomial decoration at $\bar x$ by $e_l$.
It remains to verify that the correct combinatorial factor is obtained.
To this end, the contribution from $x$ to the factor in front of $\hat\uparrow_l^*Q\sigma$ is $k[l]$.
On the other hand, if $\mcI_{(\mft_{\bar\imath},p_{\bar\imath})}[X^{k_{\bar\imath}}]$ at $x$ was lowered first by $e_l$ from $\uparrow_l^*$, and then $Q$ was applied, its contribution to the combinatorial factor becomes $k_{\bar\imath}[l]-p_{\bar\imath}[l]$ (provided $k_{\bar\imath}-p_{\bar\imath} \geq e_l$).
Running over all polynomial decorations at $x$ gives the total combinatorial factor of $\sum_{i \in I}(k_i[l]-p_i[l]) = k[l]$ as desired.
\end{proof}
\begin{corollary}\label{cor:graftMorphismSmall}
For all $F \in \mathring{\G}$, $o \in \mcb{O}$, and $\tau,\bar\tau \in \VV$,
\begin{equation}\label{eq:graftSmall1}
\Upsilon^F[\tau \hgraft{o} \bar\tau] = \Upsilon^F[\tau] \triangleleft_o \Upsilon^F[\bar\tau].
\end{equation}
Furthermore, for all $i =0,\ldots, d$, 
\begin{equation}\label{eq:graftSmall2}
\partial_i \Upsilon^F[\tau] = \Upsilon^F[\hat\uparrow_i \tau]\;.
\end{equation}
\end{corollary}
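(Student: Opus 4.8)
The plan is to chain together the three identities already proved so far, with no new computation. By Lemma~\ref{lem:UpsilonSum} we have $\Upsilon^F = \mathring\Upsilon^F\circ Q^*$; by Lemma~\ref{lem:QMorphism} the map $Q^*$ intertwines the grafting (resp. raising) operators on $\VVspan$ and $\BBspan$; and by Lemma~\ref{lem:graftMorphism} the map $\mathring\Upsilon^F$ is itself a morphism for $\graft{o}$ and $\uparrow_l$ on $\BBspan$. Concretely, for~\eqref{eq:graftSmall1} I would write
\begin{equs}
\Upsilon^F[\tau \hgraft{o}\bar\tau]
&= \mathring\Upsilon^F[Q^*(\tau\hgraft{o}\bar\tau)]
= \mathring\Upsilon^F\bigl[\graft{o}(Q^*\tau\otimes Q^*\bar\tau)\bigr]\\
&= \mathring\Upsilon^F[Q^*\tau] \triangleleft_o \mathring\Upsilon^F[Q^*\bar\tau]
= \Upsilon^F[\tau]\triangleleft_o \Upsilon^F[\bar\tau]\;,
\end{equs}
and symmetrically for~\eqref{eq:graftSmall2},
\begin{equ}
\partial_i \Upsilon^F[\tau] = \partial_i \mathring\Upsilon^F[Q^*\tau] = \mathring\Upsilon^F[\uparrow_i Q^*\tau] = \mathring\Upsilon^F[Q^*\hat\uparrow_i\tau] = \Upsilon^F[\hat\uparrow_i\tau]\;.
\end{equ}

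The only genuine point requiring care is that $Q^*\tau$, $\graft{o}(Q^*\tau\otimes Q^*\bar\tau)$ and $\uparrow_i Q^*\tau$ are a priori \emph{infinite series} in $\BB$, whereas Lemma~\ref{lem:graftMorphism} is stated for genuine trees $\sigma,\tilde\sigma\in\BB$. So I would first record an extension step: as already noted in the remarks following Lemmas~\ref{lem:UpsilonSum} and~\ref{lem:graftMorphism}, for fixed $F$ the functional $\mathring\Upsilon^F_\mft$ annihilates all but finitely many of the basis trees occurring in each of these series, so it extends to a well-defined linear map on the relevant completions; likewise $\graft{o}$ and $\uparrow_i$ extend to $\BBspan^*\otimes\BBspan^*\to\BBspan^*$ and $\BBspan^*\to\BBspan^*$ (as stated just before Lemma~\ref{lem:QMorphism}, visible from the triangular structure of the maps). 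Since~\eqref{eq:graftMorph} is an identity of bilinear maps of $(\sigma,\tilde\sigma)$ and $\triangleleft_o$ is bilinear, it passes to arbitrary (possibly infinite) linear combinations after applying $\mathring\Upsilon^F$, because all resulting sums are in fact finite; the same holds for the linear identity~\eqref{eq:graftPolyMorph}. This justifies the two displayed chains literally.

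The main obstacle is therefore purely the "series versus finite support" bookkeeping rather than anything conceptual: one must verify that in each intertwining relation the argument of $\mathring\Upsilon^F$ has finite support and that the extensions of $\graft{o}$, $\uparrow_i$ and $\mathring\Upsilon^F$ to series are compatible with $Q$ and $Q^*$. Once this is in place the corollary follows by composing Lemmas~\ref{lem:UpsilonSum},~\ref{lem:QMorphism} and~\ref{lem:graftMorphism} exactly as above.
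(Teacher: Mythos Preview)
Your proof is correct and takes essentially the same approach as the paper: both chain together Lemmas~\ref{lem:UpsilonSum}, \ref{lem:QMorphism}, and \ref{lem:graftMorphism} in the obvious way, with the paper writing the chain of equalities in the reverse order (starting from $\Upsilon^F[\tau]\triangleleft_o\Upsilon^F[\bar\tau]$). Your explicit discussion of the finite-support issue for the series $Q^*\tau$ is a welcome clarification that the paper leaves implicit in this proof (relying on earlier remarks).
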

\begin{proof}
To prove~\eqref{eq:graftSmall1}, observe that, by Lemmas~\ref{lem:UpsilonSum} and~\ref{lem:graftMorphism},
\begin{align*}
\Upsilon^F[\tau] \triangleleft_o \Upsilon^F[\bar\tau]
&= \mathring \Upsilon^F[Q^*\tau] \triangleleft_o \mathring\Upsilon^F[Q^*\bar\tau]
= \mathring\Upsilon^F[(Q^*\tau) \graft{o} (Q^*\bar\tau)]
\\
&= \mathring\Upsilon^F[Q^*(\tau \hgraft{o} \bar\tau)]
= \Upsilon^F[\tau \hgraft{o} \bar\tau],
\end{align*}
where the third equality follows from~\eqref{eq:QMorphism1}. The proof of~\eqref{eq:graftSmall2} follows in the same manner using now~\eqref{eq:QMorphism2}.
\end{proof}
\subsubsection{Interaction with the renormalisation group}\label{subsec:interactionWithRenormG}
An important property of the grafting operators $\hgraft{(\mft,p)}$ is that its adjoint suitably preserves $\mcb{T}^\ex \subset \VVspan$.
\begin{lemma}\label{lem:adjointPreservesT}
For all $o\in\mcb{O}$ and $i\in\{0,\ldots, d\}$, it holds that $\hgraft{o}^*$ (resp. $\hat\uparrow_i$) maps $\mcb{T}^\ex$ to $\mcb{T}^\ex \otimes \mcb{T}^\ex$ (resp. $\mcb{T}^\ex$).
\end{lemma}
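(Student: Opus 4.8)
The plan is to verify the claim directly on generators by understanding precisely how $\hgraft{o}^*$ and $\hat\uparrow_i$ act on a tree $\tau = T^\mfm_\mff \in \mcT^\ex$, and checking that each summand in the defining formulas \eqref{eq:graftAdjointHat} and \eqref{eq:raiseAdjointHat} again lands in $\mcT^\ex \otimes \mcT^\ex$ (resp.\ $\mcT^\ex$). Recall that $\mcT^\ex \subset \VV$ is, concretely, the set of trees that conform to the rule $R$ and whose extended decorations $\mfo$ are compatible with edge types as in \cite[Def.~5.24]{BHZalg}. So I would first unwind, for a given $\tau \in \mcT^\ex$, what it means for each branch $\branch^{(x,y)}\tau$ and each modified trunk $(\trunk^{(x,y)}\tau)+_x^{p-k}$ appearing in \eqref{eq:graftAdjointHat} to conform to $R$; since noise-edges have been absorbed into node decorations in the $\VV$-formalism, the rule $R$ constrains, at each node, the multiset of outgoing edge-types together with the driver decoration.

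First I would treat the branch $\branch^{(x,y)}\tau$. This is a genuine subtree of $\tau$ rooted at $y$, with all decorations (including the node decoration at $y$, which records both a driver in $\mfD$ and a polynomial exponent in $\N^{d+1}$) inherited by restriction. Because $R$ is a rule and $\tau$ conforms to it, the subtree rooted at any node of $\tau$ automatically conforms to $R$: every node of $\branch^{(x,y)}\tau$ has exactly the same outgoing edges and driver decoration it had in $\tau$. The extended-decoration compatibility is likewise inherited. Hence $\branch^{(x,y)}\tau \in \mcT^\ex$, with no condition needed.

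The work is in the trunk. Passing from $\trunk^{(x,y)}\tau$ to $(\trunk^{(x,y)}\tau)+_x^{p-k}$ does two things at $x$: it deletes one outgoing edge (the one of type $(\mft,k)$ leading to $y$), and it raises the polynomial exponent $\mpoly(x)$ by $p-k$, i.e.\ it effectively replaces an outgoing edge of type $(\mft,k)$ by a polynomial decoration $\mathbf{X}^{p-k}$, while nothing else changes. At every other node the local data is untouched, so conformity there is inherited. At $x$ one must check that the new local node-type still lies in $R(\mnoise(x))$ (with the driver part unchanged) and that extended decorations remain compatible. This is exactly the content of Assumption~\ref{assump:RregComplete} together with the structure of the decoration sets $D(\mft,N)$: Assumption~\ref{assump:RregComplete} says that adjoining a positive-degree index $o\in\mcb{O}_+$ to any $N\in R(\mft)$ keeps it in $R(\mft)$, and more relevantly here the definition of $D(\mft,N)$ in \cite[Def.~5.22]{BHZalg} is precisely designed so that replacing an edge $(\mft,k)$ by the polynomial $\mathbf{X}^{p-k}$ (the $+_x^{p-k}$ operation, with the bookkeeping that $(\mft,p)$ was the edge being grafted) preserves the decoration set — this is the same mechanism used in \cite{BHZalg} to show that negative renormalisation preserves $\mcT^\ex$. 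I would extract the precise combinatorial statement from \cite[Sec.~5.3, Sec.~6]{BHZalg} and quote it. The term $\bullet \otimes \tau$ in \eqref{eq:graftAdjointHat} (the $E_T=\emptyset$ contribution, i.e.\ the single-node tree tensored with $\tau$ itself) trivially causes no problem. For $\hat\uparrow_i^*$, the analysis is strictly easier: \eqref{eq:raiseAdjointHat} only ever \emph{lowers} a polynomial exponent $\mpoly(x)$ by $e_i$ (and kills the term when this would go negative); lowering a polynomial exponent never enlarges the outgoing-edge multiset at a node, and by the same $D(\mft,N)$ bookkeeping it stays within the allowed decoration set, so $\hat\uparrow_i^*\tau \in \mcT^\ex$.

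The main obstacle I anticipate is not conceptual but notational: reconciling the two tree formalisms (edges-as-noises in $\mcT^\ex$ of \cite{BHZalg} versus noises-as-node-decorations in $\VV$, as set up in Section~\ref{Notations for trees}) so that ``conforms to $R$'' and ``$\mfo$ compatible'' are stated in the $\VV$-language in a way that makes the branch/trunk bookkeeping transparent, and then citing the right lemma from \cite{BHZalg} (essentially the statement that the co-action $\Delta^-$ / twisted antipode lands in $\mcT^\ex\otimes\mcT^\ex$) to conclude the trunk case. Once that dictionary is in place, the proof is a short case check on the three bullets of \eqref{eq:graftDef}/\eqref{eq:graftAdjointHat} and the single case of \eqref{eq:raiseAdjointHat}. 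I would present it as: (i) branches conform by the hereditary property of rules; (ii) trunks conform by Assumption~\ref{assump:RregComplete} and the definition of $D(\mft,N\sqcup\hat\mfl)$, invoking the relevant preservation statement of \cite[Sec.~6]{BHZalg}; (iii) the degenerate single-node terms are immediate; hence $\hgraft{o}^*\mcT^\ex \subset \mcT^\ex\otimes\mcT^\ex$ and $\hat\uparrow_i\mcT^\ex\subset\mcT^\ex$.
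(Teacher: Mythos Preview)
Your plan has the right shape but invokes the wrong structural property for the trunk, and this is a genuine gap rather than a cosmetic slip. At the node $x$, passing to $(\trunk^{(x,y)}\tau)+_x^{p-k}$ \emph{removes} one outgoing edge of type $(\mft,k)$ from the multiset $N$ and raises the polynomial decoration. Rule conformity does not see polynomial decorations at all, so the only thing to check is that $N\setminus\{(\mft,k)\}\in R(\cdot)$ whenever $N\in R(\cdot)$. This is exactly the \emph{normality} of $R$ assumed from the outset in Section~\ref{subsec: set-up}, not Assumption~\ref{assump:RregComplete}. Assumption~\ref{assump:RregComplete} lets you \emph{enlarge} $N$ by elements of $\mcb{O}_+$; it says nothing about shrinking $N$, so it cannot do the work you assign it. The paper's proof is correspondingly much shorter: normality of $R$ together with the explicit description of $\mcT^\ex$ in \cite[Lem.~5.25]{BHZalg} immediately gives $\branch^e\tau\otimes\trunk^e\tau\in\cT^\ex\otimes\cT^\ex$, and the polynomial shift $+_x^{p-k}$ is then harmless because neither the rule nor the extended-decoration constraint depends on $\mpoly$.

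Two smaller points. First, the statement concerns $\hat\uparrow_i$, not $\hat\uparrow_i^*$; you analysed the adjoint. Since both operators only alter $\mpoly$ and leave edges and drivers untouched, the conclusion is the same and the paper dismisses this case as obvious, but you should at least name the right map. Second, your instinct to worry about extended-decoration compatibility via $D(\mft,N)$ is reasonable, but you do not need a bespoke argument: the description in \cite[Lem.~5.25]{BHZalg} already packages the fact that cutting an edge (and hence passing to branch and trunk) preserves membership in $\mcT^\ex$, which is why the paper simply cites it.
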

\begin{proof}
The claim that $\hat\uparrow_i$ maps $\mcb{T}^\ex$ to $\mcb{T}^\ex$ is obvious.
For $\hgraft{o}^*$, observe that normality of the rule $R$ and the explicit description of the set of trees $\mcT^\ex$~\cite[Lem.~5.25]{BHZalg} imply that $\branch^e\tau \otimes \trunk^e\tau \in \cT^\ex$ for any $\tau \in \mcT^\ex$ and $e \in E_\tau$. The conclusion follows from the expression~\eqref{eq:graftAdjointHat} for $\hgraft{o}^*$.
\end{proof}

Define the linear map $\bgraft{o} : \cT^\ex \otimes \cT^\ex \to \cT^\ex$\label{bgraft page ref} given by $\tau \bgraft{o}\bar\tau \eqdef \pi_{\cT^\ex} (\tau \hgraft{o} \bar\tau)$, where $\pi_{\cT^\ex} : \VVspan \to \cT^\ex$ is the canonical projection. The following is a consequence of Lemma~\ref{lem:adjointPreservesT}.
\begin{corollary}\label{cor:graftMorphT}
It holds that $\bgraft{o}$ is the adjoint of $\hat\curvearrowright^{*}_{o} : \cT^\ex \to \cT^\ex \otimes \cT^\ex$. 
\end{corollary}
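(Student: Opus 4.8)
\textbf{Plan of proof for Corollary~\ref{cor:graftMorphT}.}
The statement to prove is that $\bgraft{o}$ is the adjoint of $\hat\curvearrowright^{*}_{o}\colon \cT^\ex \to \cT^\ex \otimes \cT^\ex$, where the latter is viewed as a map on the subspace $\cT^\ex$ of $\VVspan$ equipped with the restricted inner product. The plan is to deduce this directly from the combination of two facts established just above: first, the adjoint identity~\eqref{eq:adjointIdentity}/\eqref{eq:graftAdjointHat} on the full space $\VVspan$, which says $\langle \tau_1\otimes\tau_2, \hgraft{o}^*\tau\rangle = \langle \tau_1 \hgraft{o}\tau_2, \tau\rangle$ for all $\tau_1,\tau_2,\tau \in \VV$; and second, Lemma~\ref{lem:adjointPreservesT}, which guarantees that $\hgraft{o}^*$ maps $\cT^\ex$ into $\cT^\ex \otimes \cT^\ex$. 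The key point is that $\mcT^\ex$ is an orthogonal subset of $\VV$, so the orthogonal projection $\pi_{\cT^\ex}\colon\VVspan\to\cT^\ex$ is precisely the adjoint of the inclusion $\iota\colon\cT^\ex\hookrightarrow\VVspan$ with respect to the given inner products.

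First I would record that, since $\mcT^\ex\subset\VV$ consists of distinct basis trees and the inner product on $\VVspan$ is diagonal in the basis $\VV$ (with $\langle\tau,\bar\tau\rangle=\delta_{\tau,\bar\tau}S(\tau)$, cf. Section~\ref{Notations for trees}), the subspace $\cT^\ex$ and its orthogonal complement $(\cT^\ex)^\perp=\Span{\VV\setminus\mcT^\ex}$ give an orthogonal decomposition $\VVspan=\cT^\ex\oplus(\cT^\ex)^\perp$, and $\pi_{\cT^\ex}$ is the associated orthogonal projection. Consequently, for any $v\in\VVspan$ and any $\sigma\in\cT^\ex$ one has $\langle \pi_{\cT^\ex}v,\sigma\rangle=\langle v,\sigma\rangle$, and the same holds for the analogous projection $\pi_{\cT^\ex\otimes\cT^\ex}$ on $\VVspan\otimes\VVspan$. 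Next I would compute, for arbitrary $\tau_1,\tau_2,\tau\in\mcT^\ex$,
\begin{equs}
\langle \tau_1\otimes\tau_2,\hat\curvearrowright^{*}_{o}\tau\rangle
&=\langle \tau_1\otimes\tau_2,\hgraft{o}^*\tau\rangle
=\langle \tau_1\hgraft{o}\tau_2,\tau\rangle
=\langle \pi_{\cT^\ex}(\tau_1\hgraft{o}\tau_2),\tau\rangle
=\langle \tau_1\bgraft{o}\tau_2,\tau\rangle\;,
\end{equs}
where the first equality uses that $\hat\curvearrowright^{*}_{o}$ is by definition the corestriction of $\hgraft{o}^*$ to $\cT^\ex\otimes\cT^\ex$ (legitimate by Lemma~\ref{lem:adjointPreservesT}), the second is~\eqref{eq:adjointIdentity}, the third uses $\tau\in\mcT^\ex$ together with orthogonality of $\cT^\ex$, and the last is the definition of $\bgraft{o}$. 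Since $\mcT^\ex$ spans $\cT^\ex$, bilinearity gives $\langle v_1\otimes v_2,\hat\curvearrowright^{*}_{o}\tau\rangle=\langle v_1\bgraft{o}v_2,\tau\rangle$ for all $v_1,v_2,\tau\in\cT^\ex$, which is exactly the claim.

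I do not anticipate a serious obstacle here: the content is essentially bookkeeping, and the only subtlety worth spelling out is that the inner product used to define the adjoint of $\hat\curvearrowright^{*}_{o}$ on $\cT^\ex$ is the \emph{restriction} of the one on $\VVspan$, so that orthogonality of the basis $\mcT^\ex$ makes $\pi_{\cT^\ex}$ self-dual in the required sense. If one prefers to avoid invoking orthogonal projections explicitly, the same computation goes through by expanding $\hgraft{o}^*\tau$ for $\tau\in\mcT^\ex$ in the basis $\VV\otimes\VV$ and noting via Lemma~\ref{lem:adjointPreservesT} that only terms in $\mcT^\ex\otimes\mcT^\ex$ survive, so that applying $\pi_{\cT^\ex\otimes\cT^\ex}$ (i.e. passing from $\hgraft{o}^*$ to $\hat\curvearrowright^{*}_{o}$) changes nothing; this is the route I would write up.
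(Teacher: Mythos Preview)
Your proposal is correct and is precisely the unpacking of what the paper leaves implicit: the paper simply states that the corollary ``is a consequence of Lemma~\ref{lem:adjointPreservesT}'', and your argument is exactly that consequence made explicit via the orthogonality of the basis $\VV$ and the definition of $\bgraft{o}$ as $\pi_{\cT^\ex}\circ\hgraft{o}$. One minor citation slip: the adjoint identity $\langle \tau_1\hgraft{o}\tau_2,\tau\rangle=\langle \tau_1\otimes\tau_2,\hgraft{o}^*\tau\rangle$ on $\VVspan$ holds \emph{by definition} (Definition~\ref{def_grafting_explicit}), not by~\eqref{eq:adjointIdentity}, which is the analogous statement on $\BBspan$.
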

The following proposition is proved in Appendix~\ref{subsec:coInteractProofs}.
\begin{proposition}\label{prop:co-interaction}
Let $M \in \mfR$. Then 
\begin{itemize}
\item For $i=0,\ldots, d$, it holds on $\mcT^{\ex}$ that
\begin{equs}[co-interaction polynomial]
M^*\hat\uparrow_i = \hat\uparrow_i M^*.
\end{equs}
\item For all $ \tau,\bar\tau \in \mcT^\ex $ and $o \in \mcb{O}$
\begin{equs}[co-interaction renormalisation]
(M^*\tau) \bgraft{o} (M^*\bar\tau) = M^*(\tau \bgraft{o} \bar \tau).
\end{equs}
\end{itemize}
\end{proposition}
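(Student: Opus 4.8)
\textbf{Proof strategy for Proposition~\ref{prop:co-interaction}.}
The plan is to reduce both identities to the co-interaction property between the structure group and the renormalisation group established in \cite{BHZalg}, using the explicit formulae for the adjoint grafting operators derived in Lemma~\ref{lem:graftAdjoint} and Definitions~\ref{def_grafting_explicit}--\ref{def_increasing_poly}. The first identity, \eqref{co-interaction polynomial}, is the easier one: the operator $\hat\uparrow_i^*$ simply increments the polynomial decoration $\mfn$ at a node by $-e_i$ (with the appropriate combinatorial factor), and since the renormalisation maps $M \in \mfR$ act without touching the $\mfn$-decoration except in the way dictated by the co-product of \cite{BHZalg}, one checks that $\hat\uparrow_i^*$ commutes with $M$ on $\cT^\ex$, and dualises. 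Concretely, I would pass to the notation $T^{\Labn,\Labo}_{\Labe}$ of \cite{BHZalg}, recall that $M = M_g$ acts by contracting negative-degree subtrees and relabelling extended decorations, and observe that the positive renormalisation does not interact with the $\Labn$-decoration of the node being differentiated; the combinatorial factors $\mpoly(x)[i]$ match because contraction of a subtree rooted elsewhere leaves $\mpoly(x)$ intact.

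For the second identity, \eqref{co-interaction renormalisation}, the key is to recognise $\bgraft{o}$ (equivalently its adjoint $\hat\curvearrowright^{*}_o$, which by Corollary~\ref{cor:graftMorphT} maps $\cT^\ex$ to $\cT^\ex\otimes\cT^\ex$ thanks to Lemma~\ref{lem:adjointPreservesT}) as essentially the ``extraction of a planted subtree through an edge'' operation. The adjoint formula \eqref{eq:graftAdjointHat} says that $\hat\curvearrowright^{*}_{(\mft,p)}\tau$ sums, over edges $e=(x,y)$ of $\tau$ with $\mff(e)=(\mft,k)$ for $k\le p$, the pair $\branch^e\tau \otimes \bigl((\trunk^e\tau) +_x^{p-k}\bigr)$, which is exactly the kind of cut that appears in the coaction/coproduct combinatorics of \cite{BHZalg}. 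The statement $(M^*\tau)\bgraft{o}(M^*\bar\tau) = M^*(\tau\bgraft{o}\bar\tau)$ is then the assertion that the renormalisation maps $M^*$ (adjoints of elements of $\mfR$) are morphisms for this grafting product, which is the co-interaction between the renormalisation group and the extraction/grafting coproduct. I would deduce this from \cite[Sec.~6]{BHZalg}: the co-interaction property there states that the two Hopf-algebraic structures (the one governing the structure group, or equivalently the one governing grafting, and the one governing negative renormalisation) are compatible in the sense of a bialgebra-in-cointeraction, and the desired morphism property is obtained by evaluating this compatibility against the appropriate characters. The main bookkeeping is to match the edge-extraction in \eqref{eq:graftAdjointHat}, together with the $+_x^{p-k}$ polynomial shift coming from Taylor-expanding the kernel, with the corresponding terms in the coproduct $\Delta^{\!\!-}$ and the extraction maps of \cite{BHZalg}, and to check that the extended decorations $\Labo$ transform correctly under both operations.

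The main obstacle, and the place where real work is needed, is reconciling the two tree formalisms and the two sets of combinatorial factors. In $\VV$ the noises are node decorations and the polynomial decoration at a node is a single multi-index, whereas in the $T^{\Labn,\Labo}_{\Labe}$ formalism of \cite{BHZalg} each planted edge $\mcb{I}_o[X^k]$ arriving at a node contributes separately; the map $Q$ of Section~\ref{sec:SpacesOfTrees} and the identification of $\mcT^\ex$ with a subset of $\VV$ from Section~\ref{Notations for trees} must be invoked carefully so that the grafting product $\bgraft{o}$ on $\cT^\ex$ genuinely corresponds to the operation for which co-interaction is known. In particular one must verify that the projection $\pi_{\cT^\ex}$ in the definition $\tau\bgraft{o}\bar\tau = \pi_{\cT^\ex}(\tau\hgraft{o}\bar\tau)$ is harmless, i.e.\ that $M^*$ and $\hat\curvearrowright^{*}_o$ already preserve $\cT^\ex$ so that the projection commutes with everything in sight — this is precisely what Lemma~\ref{lem:adjointPreservesT} and the fact that $\mcb{T}^\ex$ is a sector closed under $\mfR$ give us. Once the dictionary is set up, \eqref{co-interaction renormalisation} should follow from a diagram chase using the bialgebra-in-cointeraction structure of \cite{BHZalg}, with the only genuinely delicate point being the treatment of the polynomial shifts $+_x^{p-k}$ and the extended decorations; I would handle these by the same Taylor-remainder / Chu--Vandermonde identities used throughout \cite{BHZalg} for the positive renormalisation. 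The details are deferred to Appendix~\ref{subsec:coInteractProofs}.
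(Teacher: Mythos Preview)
Your overall strategy---reduce everything to the co-interaction between the negative renormalisation coaction and the positive/extraction coproduct from \cite{BHZalg}---is correct and is exactly what the paper does. However, the execution you sketch diverges from the paper's, and in one place your reasoning is actually wrong.

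The paper's key technical move is one you do not mention: both adjoints $\hat\curvearrowright^{*}_{(\mft,p)}$ and $\hat\uparrow_i^*$ are rewritten as $\Delta_2$ (the positive coproduct of \cite{BHZalg}) followed by projections that \emph{commute} with $\Deltam_{\ex}$. Concretely,
\[
\hat\curvearrowright^{*}_{(\mft,p)} = \mathcal{M}^{(2)(1)}\bigl(\id \otimes \CR_{2}\circ\mathfrak{p}_{(\mft,p)}\circ\Pi_{\hat\CP^{\ex}_+}\bigr)\Delta_2,
\qquad
\hat\uparrow_i^* = \mathcal{M}^{(1)}\bigl(\id\otimes\mathfrak{p}_{X_i}\bigr)\Delta_2.
\]
Once this is established and the projections are checked to slide through $\Deltam_{\ex}$ (easy, since they only see the root structure, which $\Deltam_{\ex}$ does not touch), both Lemmas~\ref{lem: grafting cointeraction} and~\ref{lem: poly cointeraction} follow by a short formal manipulation from the co-interaction identity $\mathcal{M}^{(13)(2)(4)}(\Deltam_{\ex}\otimes\Deltam_{\ex})\Delta_2 = (\id\otimes\Delta_2)\Deltam_{\ex}$ of \cite[Thm.~3.22]{BHZalg}. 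The proposition itself then follows by applying $(\ell\otimes\id)$ (resp.\ $(\ell\otimes\id\otimes\id)$) for $M_\ell = (\ell\otimes\id)\Deltam_{\ex}$. This buys you a proof with essentially no combinatorics: no term-matching, no Chu--Vandermonde, no tracking of polynomial shifts.

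Your route---direct matching of the cut/extraction terms in \eqref{eq:graftAdjointHat} with terms of $\Deltam_{\ex}$, handling the $+_x^{p-k}$ shifts by binomial identities---can be made to work; the paper even remarks that this is a viable alternative. But it is more laborious, and one of your claims along the way is wrong: you say for \eqref{co-interaction polynomial} that ``positive renormalisation does not interact with the $\Labn$-decoration of the node being differentiated''. First, $M\in\mfR$ is \emph{negative} renormalisation. Second, $\Deltam_{\ex}$ very much does interact with $\Labn$, splitting it via binomials $\binom{\Labn}{\Labn_A}$ between the extracted forest and the contracted tree, so the commutation of $\hat\uparrow_i^*$ with $M$ is not free: it requires exactly the identity $a\binom{a-1}{b}=(a-b)\binom{a}{b}$ that the paper singles out. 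Your argument for the first part as written has a genuine gap there.
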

\begin{corollary}\label{cor:XMorphT}
 Let $M \in \mfR$. Then for any $\mft \in \mfL_{+}$, $\mfl \in \mfD$, $k \in \N^{d+1}$:
\begin{equs}[co-interaction poly]
\Upsilon^{MF}_{\mft}[\Xi_\mfl\mathbf{X}^k] = \Upsilon^{F}_{\mft}[M^*(\Xi_\mfl \mathbf{X}^k)].
\end{equs}
\end{corollary}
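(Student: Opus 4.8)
The plan is to prove Corollary~\ref{cor:XMorphT} as a direct consequence of Lemma~\ref{lem: renormalisation of upsilon} together with the compatibility of $\Upsilon^F$ with the grafting operators (Corollary~\ref{cor:graftMorphismSmall}) and the co-interaction of $M^*$ with them (Proposition~\ref{prop:co-interaction}). The statement $\Upsilon^{MF}_\mft[\Xi_\mfl \mathbf{X}^k] = \Upsilon^F_\mft[M^*(\Xi_\mfl\mathbf{X}^k)]$ is precisely the $\mft$-component of the identity $\Upsilon^{MF}[\tau] = \Upsilon^F[M^*\tau]$ from Lemma~\ref{lem: renormalisation of upsilon} applied to the particular tree $\tau = \Xi_\mfl \mathbf{X}^k$. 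However, Lemma~\ref{lem: renormalisation of upsilon} is itself proved \emph{using} this corollary as a base case (its proof is deferred to ``the very end of Section~\ref{subsec:interactionWithRenormG}''), so the corollary must be established independently, before Lemma~\ref{lem: renormalisation of upsilon}. Thus the real content is to treat the node-generator trees $\Xi_\mfl \mathbf{X}^k$ directly.

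First I would recall that $M^*$, being the adjoint of a renormalisation map $M \in \mfR$, preserves the $|\cdot|_+$-degree and, by the discussion in Section~\ref{subsec: set-up} and~\cite{BHZalg}, fixes noise symbols in the sense that $M^*\Xi_\mfl$ differs from $\Xi_\mfl$ only by lower terms obtained from contracting subtrees; since $\Xi_\mfl$ has a single node and no edges, the only trees that can appear in $M^*(\Xi_\mfl\mathbf{X}^k)$ are those of the same form, and in fact $M^*$ acts on $\langle\nodes\rangle$ — on the span of the $\Xi_\mfl\mathbf{X}^k$ — as some linear operator. The key step is then to identify this action: I would use the co-interaction identity~\eqref{co-interaction polynomial}, which says $M^*\hat\uparrow_i = \hat\uparrow_i M^*$ on $\mcT^\ex$, to reduce the polynomial decoration. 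Starting from $\Xi_\mfl \mathbf{X}^0 = \Xi_\mfl$, and noting that $\hat\uparrow_i$ raises the polynomial decoration at the root (when applied to a single node), one generates all $\Xi_\mfl \mathbf{X}^k$ by iterating $\hat\uparrow_i$. On the $\Upsilon$ side, Corollary~\ref{cor:graftMorphismSmall}, equation~\eqref{eq:graftSmall2}, gives $\partial_i \Upsilon^F[\tau] = \Upsilon^F[\hat\uparrow_i\tau]$, and by the definition~\eqref{def:MP}, $(MF)^\mfl_\mft = \Upsilon^F_\mft[M^*\Xi_\mfl]$, i.e.\ the base case $k=0$ holds by definition.

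The inductive step would go as follows: assume $\Upsilon^{MF}_\mft[\Xi_\mfl\mathbf{X}^k] = \Upsilon^F_\mft[M^*(\Xi_\mfl\mathbf{X}^k)]$. Then for any $i$,
\[
\Upsilon^{MF}_\mft[\Xi_\mfl \mathbf{X}^{k+e_i}]
= \tfrac{1}{(k_i+1)}\Upsilon^{MF}_\mft[\hat\uparrow_i(\Xi_\mfl\mathbf{X}^k)]
= \tfrac{1}{(k_i+1)}\partial_i \Upsilon^{MF}_\mft[\Xi_\mfl\mathbf{X}^k]
= \tfrac{1}{(k_i+1)}\partial_i \Upsilon^F_\mft[M^*(\Xi_\mfl\mathbf{X}^k)]\;,
\]
where the first equality records that $\hat\uparrow_i$ applied to the single-node tree $\Xi_\mfl\mathbf{X}^k$ equals $(k_i+1)\,\Xi_\mfl\mathbf{X}^{k+e_i}$ (from the adjoint formula~\eqref{eq:raiseAdjointHat}), the second is~\eqref{eq:graftSmall2}, and the third is the inductive hypothesis. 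Now $\partial_i \Upsilon^F_\mft[M^*(\Xi_\mfl\mathbf{X}^k)] = \Upsilon^F_\mft[\hat\uparrow_i M^*(\Xi_\mfl\mathbf{X}^k)]$ by~\eqref{eq:graftSmall2} again, and $\hat\uparrow_i M^*(\Xi_\mfl\mathbf{X}^k) = M^*\hat\uparrow_i(\Xi_\mfl\mathbf{X}^k) = (k_i+1)M^*(\Xi_\mfl\mathbf{X}^{k+e_i})$ by~\eqref{co-interaction polynomial}. Dividing by $(k_i+1)$ closes the induction. The main obstacle I anticipate is checking carefully that $\hat\uparrow_i$ acts on the node-generator $\Xi_\mfl\mathbf{X}^k$ exactly as multiplication of the root polynomial decoration, i.e.\ that no ``nonroot'' or edge-type contributions appear — this is immediate from the single-node structure and the definition~\eqref{eq:raiseAdjointHat}, but it must be stated — and, relatedly, ensuring that the reduction via $\hat\uparrow_i$ does not leave the space $\mcT^\ex$ where~\eqref{co-interaction polynomial} is valid, which follows from Lemma~\ref{lem:adjointPreservesT}. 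One should also note that all these trees $\Xi_\mfl\mathbf{X}^k$ indeed lie in $\mcT^\ex$ for the relevant $\mfl\in\mfD$, which holds by completeness and the construction of $\mfD$ in Section~\ref{subsec: drivers}.
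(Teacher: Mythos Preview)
Your approach is essentially the paper's: reduce from $\Xi_\mfl\mathbf{X}^k$ to $\Xi_\mfl$ via the raising operators $\hat\uparrow_i$, use \eqref{eq:graftSmall2} to convert $\hat\uparrow_i$ into $\partial_i$ on the $\Upsilon$ side, and use \eqref{co-interaction polynomial} to commute $M^*$ through $\hat\uparrow_i$; the base case $k=0$ is the definition \eqref{def:MP}. The paper does this in one line using $(\hat\uparrow)^k\Xi_\mfl=\Xi_\mfl\mathbf{X}^k$ rather than by induction, but that is cosmetic.

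Two corrections. First, your motivational claim that ``the only trees that can appear in $M^*(\Xi_\mfl\mathbf{X}^k)$ are those of the same form'' is false: $M^*\Xi_\mfl$ is in general a linear combination of many trees with edges (this is precisely why $(MF)^\mfl_\mft$ in \eqref{def:MP} involves a sum over $\tau\in\mcT^\ex$). Fortunately your actual inductive argument never uses this. Second, your formula $\hat\uparrow_i(\Xi_\mfl\mathbf{X}^k)=(k_i+1)\Xi_\mfl\mathbf{X}^{k+e_i}$ is wrong: computing the adjoint of \eqref{eq:raiseAdjointHat} with respect to the inner product $\langle\Xi_\mfl\mathbf{X}^k,\Xi_\mfl\mathbf{X}^k\rangle=k!$ gives simply $\hat\uparrow_i(\Xi_\mfl\mathbf{X}^k)=\Xi_\mfl\mathbf{X}^{k+e_i}$ with no prefactor (this is exactly the identity $(\hat\uparrow)^k\Xi_\mfl=\Xi_\mfl\mathbf{X}^k$ the paper invokes). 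You introduce the spurious $(k_i+1)$ twice and it cancels, so your conclusion survives, but the displayed chain of equalities as written is incorrect.
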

\begin{proof}
For any $\mft \in \mfL_{+}$, $\mfl \in \mfD$, $k \in \N^{d+1}$,
\[
\Upsilon^{MF}[\Xi_{\mfl}\mathbf{X}^k] = \partial^k \Upsilon^{MF}[\Xi_{\mfl}] = \partial^k \Upsilon^F[M^*\Xi_{\mfl}] = \Upsilon^F[ M^*(\Xi_{\mfl}\mathbf{X}^k)],
\]
where we have used~\eqref{eq:graftSmall2}, the identity $(\hat\uparrow)^k \Xi_\mfl = \Xi_\mfl \mathbf{X}^k$ (where $(\hat\uparrow)^k \eqdef \prod_{i=0}^d (\hat\uparrow_{i})^{k[i]}$, which is well defined due to the commutativity of $\hat\uparrow_i$ and $\hat\uparrow_j$), and \eqref{co-interaction polynomial} which implies $M^*(\hat\uparrow)^k = (\hat\uparrow)^k M^*$.
\end{proof}

Let us write $\bar\mfD \subset \mcT^\ex$\label{bar mfD page ref} for the set of all elements of the form 
$\Xi_\mfl \mathbf{X}^k$ with $k \in \N^{d+1}$ and $\mfl \in \mfD$.
Observe that the grafting operators
$(\hgraft{o})_{o \in \mcb{O}}$
satisfy a pre-Lie type identity:
\begin{equs}\label{pre_lie_identity}
& \left( \tau_1 \hgraft{(\Labhom_{1},p_1)} \tau_2 \right) \hgraft{(\Labhom_{2},p_2)}  
\tau_3 -  \tau_1 \hgraft{(\Labhom_{1},p_1)} \left( \tau_2 \hgraft{(\Labhom_{2},p_2)} \tau_3 \right)  \\
& =
\left( \tau_2  \hgraft{(\Labhom_{2},p_2)}  \tau_1 \right) \hgraft{(\Labhom_{1},p_1)} 
\tau_3 - \tau_2 \hgraft{(\Labhom_{2},p_2)}  \left( \tau_1 \hgraft{(\Labhom_{1},p_1)} \tau_3 \right)\;.
\end{equs}
We next show a universal property of the space $\VVspan$ as a pre-Lie type algebra with the grafting operators $(\hgraft{o})_{o \in \mcb{O}}$, the proof of which will be given in Appendix~\ref{subsec:freelyGenProof}.

\begin{remark}
Each occurrence of a grafting operator $\hgraft{(\Labhom,p)}$ is a linear combination of other grafting operators which are the decorated analogues of those discussed Remark~\ref{rem: intro to grafting and pre-lie structures}. The only difficult part in obtaining~\eqref{pre_lie_identity} is when the grafting occurs at the same node.
For $ \tau_{3} = \Xi_\mfl \mathbf{X}^k $, the identity \eqref{pre_lie_identity} is then equivalent to
\begin{equs}
\sum_{\ell,\ell'} \binom{ k }{\ell'}  \binom{ k-\ell' }{\ell} \Xi_\mfl \mathbf{X}^{k-\ell-\ell'} \mcI_{(\mft_{1},p_{1}-\ell)}[ \tau_1] \mcI_{(\mft_{2},p_{2}-\ell')}[ \tau_2] \\ = \sum_{\ell,\ell'}  \binom{ k }{\ell}  \binom{ k-\ell }{\ell'} \Xi_\mfl \mathbf{X}^{k-\ell-\ell'}\mcI_{(\mft_{1},p_{1}-\ell)}[ \tau_1] \mcI_{(\mft_{2},p_{2}-\ell')}[ \tau_2]\;. 
\end{equs}
Then it is quite straightforward to see that the coefficients of the two previous sums match.
\end{remark}

\begin{proposition}\label{prop:generate}
The space $\VVspan$ is freely generated by the family $(\hgraft{o})_{o \in \mcb{O}}$ with generators $\bar\mfD$. More precisely, consider any vector space $V$ equipped with bilinear operators $(\triangleleft_{\alpha})_{\alpha \in A}$, $\triangleleft_\alpha : V \times V \to V$, which satisfy the pre-Lie identity for all $\alpha,\bar \alpha \in A$ and $x,y,z \in V$,
\begin{equs} \label{multi pre-Lie}
( x \triangleleft_{\alpha} y) \triangleleft_{\bar \alpha} z - x \triangleleft_{\alpha} (y \triangleleft_{\bar \alpha} z ) = (y \triangleleft_{\bar \alpha} x) \triangleleft_{\alpha} z - y \triangleleft_{\bar \alpha} (x \triangleleft_{\alpha} z).
\end{equs}
Then for any map $\Phi : \bar\mfD \to V$ and $\Psi : \mcb{O} \to A$, there exists a unique extension of $\Phi$ to a linear map $\hat \Phi : \VVspan \to V$ which satisfies for all $o \in \mcb{O}$ and $\tau, \bar \tau \in \VVspan$
\[
\hat \Phi(\tau \hgraft{o} \bar \tau) = (\hat \Phi\tau) \triangleleft_{\Psi(o)} (\hat \Phi \bar \tau).
\]
\end{proposition}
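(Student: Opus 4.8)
The plan is to mimic the strategy of Chapoton's proof that the pre-Lie operad is freely generated, adapted to the multi-coloured setting with polynomial decorations. Concretely, I would first reduce the problem to a purely combinatorial statement about $\VVspan$ by observing that, as a vector space, $\VVspan$ has basis $\VV$, so the required map $\hat\Phi$ is uniquely determined on $\bar\mfD$ and one must show there is a unique consistent way to extend it. The key is to find, for every tree $\tau \in \VV$, a canonical way to write $\tau$ (modulo the relations coming from \eqref{multi pre-Lie}) as an iterated grafting of elements of $\bar\mfD$, and to show that any two such ``grafting expressions'' give the same value under the target operators $(\triangleleft_\alpha)$.

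First I would set up the \emph{existence} of the extension. Given $\tau \in \VV$ written as in \eqref{eq:general tree of V}, with root node decoration $\Xi_\mfl\mathbf{X}^k$ and planted branches $\mcb{I}_{o_j}[\tau_j]$, I would peel off one branch at a time: choose (say, by a fixed total order on $\mcb{O}\times\VV$) a branch $\mcb{I}_{o_{j_0}}[\tau_{j_0}]$, inductively define $\hat\Phi$ on $\tau_{j_0}$ and on the tree $\tau'$ obtained by deleting that branch, and set $\hat\Phi(\tau)$ using the ``leading term'' (the root-grafting piece $\graft{o}^{\root}$) of the inductive formula in Remark~\ref{rem:formula for hgraft} inverted: since $\tau_{j_0}\hgraft{o_{j_0}}\tau' = \tau + (\text{trees with fewer nodes at the root / more nodes deep down})$, one can solve recursively for $\hat\Phi(\tau)$ in terms of $\hat\Phi(\tau_{j_0})\triangleleft_{\Psi(o_{j_0})}\hat\Phi(\tau')$ minus correction terms already defined by induction on a suitable size functional (e.g. the lexicographic pair (number of nodes, number of nodes not adjacent to the root)). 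Here the polynomial decorations $\mathbf{X}^k$ at internal nodes are produced exactly by the $\uparrow$-type terms, so one also needs the operators $\hat\uparrow_i$; but since $\uparrow_i$ is definable as a combination of graftings of $\bar\mfD$-elements against the empty-decoration node — or rather can be folded into the recursion — no extra generators beyond $\bar\mfD$ are needed. This shows $\hat\Phi$ exists.

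Next comes \emph{uniqueness / well-definedness}, which is the main obstacle. One must check that the value $\hat\Phi(\tau)$ obtained does not depend on the order in which branches were peeled off, and more generally that $\hat\Phi$ really does satisfy $\hat\Phi(\tau\hgraft{o}\bar\tau) = \hat\Phi(\tau)\triangleleft_{\Psi(o)}\hat\Phi(\bar\tau)$ for \emph{all} $\tau,\bar\tau$, not just the ones arising in the recursion. The standard route is: (i) the ambiguity in the order of peeling is controlled precisely by the pre-Lie identity \eqref{multi pre-Lie} — swapping which of two branches is removed first is exactly an instance of \eqref{pre_lie_identity} on the tree side and \eqref{multi pre-Lie} on the target side — together with the commutativity of distinct $\uparrow_i$; (ii) once order-independence is known, a double induction on the total number of nodes of $\tau$ and $\bar\tau$ proves the full morphism identity, by expanding $\tau\hgraft{o}\bar\tau$ via Remark~\ref{rem:formula for hgraft} and applying the inductive hypothesis to each summand. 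I would organise this using a normal-form / rewriting argument: define a confluent rewriting system on ``grafting words'' whose critical pairs are resolved by \eqref{multi pre-Lie}, conclude uniqueness of normal forms, and identify $\hat\Phi(\tau)$ with the evaluation of the normal form.

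I would then appeal to the techniques of \cite{Chapoton01} (as the paper promises in Appendix~\ref{subsec:freelyGenProof}) to make the rewriting/confluence argument precise: Chapoton's description of the free pre-Lie algebra on a set of generators in terms of rooted trees generalises verbatim once one records the colours $o\in\mcb{O}$ on edges and uses the semidirect-product-type structure to absorb the polynomial decorations. The upshot is that $\VV$ is in bijection with a basis of the free object on $\bar\mfD$ for the operators $(\hgraft{o})_{o\in\mcb{O}}$, and the universal property follows. The delicate bookkeeping will be the interplay between the three pieces $\graft{o}^{\root}$, $\graft{o}^{\nonroot}$, $\graft{o}^{\poly}$ of the grafting operator (equivalently the $\binom{k}{\ell}$ coefficients in Remark~\ref{rem:formula for hgraft}) and verifying that these combinatorial coefficients are exactly what is forced by iterating \eqref{multi pre-Lie}; this is the computation I would expect to consume most of the appendix, but it is routine given the identity already checked in the Remark preceding the Proposition.
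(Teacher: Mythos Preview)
Your overall strategy is the same as the paper's: follow Chapoton's proof that rooted trees realise the free pre-Lie algebra, adapted to carry edge colours in $\mcb{O}$ and node decorations in $\bar\mfD$. Two points deserve correction or sharpening.

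First, your paragraph on the $\hat\uparrow_i$ operators is a red herring. The proposition makes no reference to $\hat\uparrow$, and neither does the paper's proof. Polynomial decorations $\mathbf{X}^k$ at internal nodes are \emph{not} produced by raising operators; they are already present in the generators $\bar\mfD = \{\mathbf{X}^k\Xi_\mfl\}$, and the grafting formula of Remark~\ref{rem:formula for hgraft} governs how they interact with incoming edges via the binomial sum $\sum_\ell \binom{k}{\ell}$. The actual new ingredient relative to Chapoton is the explicit \emph{inversion} of that binomial sum: the paper shows
\[
\bullet^{k_r}_r \,\mcb{I}_{(\mft,p)}[\tau] \;=\; \sum_{\ell} (-1)^{|\ell|_\s}\binom{k_r}{\ell}\bigl(\tau \hgraft{(\mft,p-\ell)} \bullet^{k_r-\ell}_r\bigr)\,,
\]
which expresses a single planted branch as a signed combination of graftings and is what makes the ``peel off one branch'' recursion close. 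Your proposal gestures at this (``solve recursively\dots minus correction terms'') but does not identify the formula; without it the induction does not obviously terminate, since grafting onto a node with nonzero $k_r$ produces several trees of the same size.

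Second, the paper organises the argument operadically: it first proves the isomorphism between the free multi-pre-Lie object $\mathcal{PL}^{\ex}$ (parenthesised decorated words modulo the relations~\eqref{multi pre-Lie}) and the space $\mathcal{RT}^{\ex}(I)$ of trees with nodes \emph{labelled} by a finite set $I$, by exhibiting mutually inverse maps $\Phi$ and $\Psi$, and only at the very end substitutes the labelled nodes by drivers $\Xi_\mfl\mathbf{X}^k$. The induction is then on $(|I|, N)$ with $N$ the number of branches at the root, which is cleaner than your proposed lexicographic functional. Your direct approach in $\VVspan$ can be made to work, but the labelled-node detour avoids symmetry-factor bookkeeping when checking that the order of ungrafting does not matter.
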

\begin{remark}
In what follows, we will only use the fact that $\VVspan$ is generated by $\bar\mfD$ and $(\hgraft{o})_{o\in\mcb{O}}$; we emphasize that this generation is free only to highlight the algebraic structure of $\VVspan$.
\end{remark}
\begin{corollary}\label{cor:generateT}
$\cT^\ex$ is generated by the family $(\bgraft{o})_{o \in \mcb{O}}$ with generators $\bar\mfD$.
\end{corollary}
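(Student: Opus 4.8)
The statement to prove is Corollary~\ref{cor:generateT}: that $\cT^\ex$ is generated by the grafting operators $(\bgraft{o})_{o\in\mcb{O}}$ starting from the generating set $\bar\mfD$. The natural approach is to deduce this from Proposition~\ref{prop:generate}, which asserts that $\VVspan$ is (freely) generated by $(\hgraft{o})_{o\in\mcb{O}}$ with generators $\bar\mfD$, together with the fact from Lemma~\ref{lem:adjointPreservesT} that $\hgraft{o}^*$ preserves $\cT^\ex$ (equivalently, via Corollary~\ref{cor:graftMorphT}, that $\bgraft{o}$ is the adjoint of $\hat\curvearrowright^{*}_{o}$ and maps $\cT^\ex\otimes\cT^\ex$ into $\cT^\ex$). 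So the goal is to show that the subspace $\mcb{S}\subset\VVspan$ spanned by all elements obtainable from $\bar\mfD$ by iterated application of the $\bgraft{o}$ coincides with $\cT^\ex$.

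First I would observe that $\bar\mfD\subset\cT^\ex$: indeed each $\Xi_\mfl\mathbf{X}^k$ with $\mfl=(\hat\mfl,\mfo)\in\mfD$ corresponds, under the identification of Section~\ref{Notations for trees}, to an element of $\mcT^\ex$ since by definition $\mfo\in D(\mft,\hat\mfl)$ for some $\mft$, and completeness/subcriticality of $R$ guarantees these lie in the regularity structure. Then, since $\bgraft{o}$ maps $\cT^\ex\otimes\cT^\ex\to\cT^\ex$ by Corollary~\ref{cor:graftMorphT}, an immediate induction on the number of grafting operations shows $\mcb{S}\subseteq\cT^\ex$. The content of the corollary is therefore the reverse inclusion $\cT^\ex\subseteq\mcb{S}$, i.e.\ that \emph{every} tree in $\mcT^\ex$ can be built up from $\bar\mfD$ using only the $\bgraft{o}$.

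For this I would argue by induction on the number of edges of $\tau\in\mcT^\ex$ (equivalently, on the truncation parameter $\trunc(\tau)$). The base case is a tree with no edges, which is of the form $\Xi_\mfl\mathbf{X}^k\in\bar\mfD$. For the inductive step, write $\tau=\Xi_\mfl\mathbf{X}^k\bigl(\prod_{j\in J}\mcb{I}_{o_j}[\tau_j]\bigr)$ and pick an edge incident to the root, say the one carrying $\mcb{I}_{o_{j_0}}[\tau_{j_0}]$ with $o_{j_0}=(\mft_{j_0},p_{j_0})$. Using Proposition~\ref{prop:generate} applied with $V=\VVspan$ and the grafting operators $(\hgraft{o})$ themselves, one knows $\VVspan$ is generated by $(\hgraft{o})$ from $\bar\mfD$; the point is to transport this fact to $\cT^\ex$ and replace $\hgraft{o}$ by $\bgraft{o}$. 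Concretely, $\tau$ differs from $\tau_{j_0}\hgraft{(\mft_{j_0},p_{j_0})}\bigl(\Xi_\mfl\mathbf{X}^k\prod_{j\neq j_0}\mcb{I}_{o_j}[\tau_j]\bigr)$ only by terms in which $\tau_{j_0}$ is grafted at a non-root node or at a polynomial slot, and all such terms have the same number of edges but strictly simpler structure (smaller subtrees attached at the root), so a secondary induction — on, say, the number of edges strictly below the root, or on a lexicographic order on the multiset of root-subtrees — lets one peel these off. Since $\tau_{j_0}$ and the reduced tree each have strictly fewer edges, by the primary inductive hypothesis both lie in $\mcb{S}$; and because the projection $\pi_{\cT^\ex}$ restricted to $\cT^\ex$ is the identity, $\tau_{j_0}\bgraft{(\mft_{j_0},p_{j_0})}(\cdots)=\pi_{\cT^\ex}\bigl(\tau_{j_0}\hgraft{(\mft_{j_0},p_{j_0})}(\cdots)\bigr)$ equals $\tau$ plus the already-controlled correction terms, all of which lie in $\cT^\ex$ and in $\mcb{S}$ by the secondary induction. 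Hence $\tau\in\mcb{S}$, completing the induction.

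The main obstacle I anticipate is not the top-level logic but the bookkeeping in the inductive step: one must argue carefully that the ``extra'' terms produced by $\hgraft{o}$ beyond the clean root-grafting term (the $\graft{o}^{\nonroot}$ and $\graft{o}^{\poly}$ contributions, cf.~\eqref{eq:graftDef} and Remark~\ref{rem:formula for hgraft}) genuinely lie in $\cT^\ex$ — this is where Lemma~\ref{lem:adjointPreservesT} (normality of $R$ and the explicit description~\cite[Lem.~5.25]{BHZalg} of $\mcT^\ex$) is essential — and that they admit a well-founded recursion so the argument does not circle back. An alternative, perhaps cleaner, route that sidesteps the combinatorics is to appeal to Proposition~\ref{prop:generate} more abstractly: the span of $\bar\mfD$ under $(\hgraft{o})$ is all of $\VVspan$, and the projection $\pi_{\cT^\ex}$ is a morphism of the pre-Lie-type structure from $(\VVspan,(\hgraft{o}))$ onto $(\cT^\ex,(\bgraft{o}))$ fixing $\bar\mfD$ (by Lemma~\ref{lem:adjointPreservesT} / Corollary~\ref{cor:graftMorphT}), so the image of the generating procedure is exactly the generating procedure for the image, namely all of $\cT^\ex$. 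I would present the projection-morphism version as the proof, falling back on the explicit induction only if the morphism property of $\pi_{\cT^\ex}$ needs justification beyond what Lemma~\ref{lem:adjointPreservesT} already gives.
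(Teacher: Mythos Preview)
Your proposal is correct, and your ``alternative'' projection-morphism route is precisely the argument the paper intends: the corollary is stated without proof because it follows immediately from Proposition~\ref{prop:generate} once one notes that $\pi_{\cT^\ex}$ is a morphism from $(\VVspan,(\hgraft{o}))$ onto $(\cT^\ex,(\bgraft{o}))$ fixing $\bar\mfD\subset\mcT^\ex$. The one point worth making explicit (since you flag it as possibly needing justification) is why $\pi_{\cT^\ex}$ is a morphism on all of $\VVspan$ and not just on $\cT^\ex$: Lemma~\ref{lem:adjointPreservesT} says $\hgraft{o}^*$ maps $\cT^\ex$ into $\cT^\ex\otimes\cT^\ex$, and since the trees form an orthogonal basis this is dually equivalent to saying that if either factor lies in $\ker\pi_{\cT^\ex}=\Span{\VV\setminus\mcT^\ex}$ then $\tau\hgraft{o}\bar\tau\in\ker\pi_{\cT^\ex}$, which gives $\pi_{\cT^\ex}(\tau\hgraft{o}\bar\tau)=\pi_{\cT^\ex}(\tau)\bgraft{o}\pi_{\cT^\ex}(\bar\tau)$ in general. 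With this in hand the explicit double induction you sketch first is unnecessary.
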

\begin{proof}[of Lemma~\ref{lem: renormalisation of upsilon}]
By Proposition~\ref{prop:co-interaction} identity \eqref{co-interaction poly}, $\Upsilon^F \circ M^*$ and $\Upsilon^{MF}$ agree on $\bar\mfD$.
Since $MF \in \G$ by Lemma~\ref{lem:GPreserve}, observe that Proposition~\ref{prop:obeyEquivalence} implies that $\Upsilon^{MF}[\tau] = 0$ for all $\tau \in \VV\setminus\mcT^\ex$, and thus $\Upsilon^{MF} \circ \pi_{\cT^\ex} = \Upsilon^{MF}$. Therefore, applying~\eqref{eq:graftSmall1} to $\Upsilon^{MF}$, we see that for all $\tau,\bar\tau \in \mcT^\ex$ and $o \in \mcb{O}$
\[
\Upsilon^{MF}[\tau \bgraft{o} \bar \tau] = \Upsilon^{MF}[\tau] \triangleleft_{o} \Upsilon^{MF}[\bar\tau]\;.
\]
On the other hand, by \eqref{co-interaction renormalisation}, we have
\[
\Upsilon^{F} \bigl[M^* (\tau \bgraft{o} \bar \tau)\bigr] = \left(\Upsilon^{F} [M^* \tau] \right) \triangleleft_{o} \left(\Upsilon^{F} [M^* \bar \tau] \right)\;.
\]
It thus follows from Corollary~\ref{cor:generateT} that $\Upsilon^{MF} = \Upsilon^{F} \circ M^*$ as desired.
\end{proof}
\section{Analytic theory and a generalised Da Prato--Debussche trick}\label{sec: analytic aspects and DPD}
\subsection{Admissible models}\label{subsec: admissible models}
For each $\mft \in \mfL_{+}$ we fix a decomposition
 $G_{\mft} = K_{\mft} + R_{\mft}$\label{Kmft page ref} on $\Lambda \setminus \{0\}$ where 
\begin{itemize}
\item $K_{\mft}(x)$ is supported in the ball $|x|_{\s} \le 1$ and coincides with $G_{\mft}(x)$ whenever $|x|_{\s} \le 1/2$.
\item For a parameter $\gamma \in \R$ to be defined later, and for every polynomial $Q$ on $\Lambda$ of $\s$-degree less than $\gamma + |\mft|_{\s}$, one has
\[
\int_{\Lambda} K_\mft (z) Q(z)\, dz=0\;.
\] 
\item One has $K_{\mft}(t,x) = R_{\mft}(t,x) = 0$ whenever $t < 0$. 
\item $R_{\mft}:\Lambda \rightarrow \R$ is a smooth function and satisfies, for every $k \in \N^{d+1}$, the bound
\[
\sup_{t \ge 0}
\sup_{x \in \T^{d}}
e^{\chi t}
|(D^{k}R_{\mft})(t,x)|
< \infty\; \textnormal{ for some } \chi > 0.
\]
\end{itemize} 
We write $K$ for the tuple $(K_{\mft})_{\mft \in \mfL_{+}}$.
We also write $\Omega_{\infty}$ for the set of all tuples $\xi = (\xi_{\mfl})_{\mfl \in \mfL_{-}}$ where for each $\mfl \in \mfL_{-}$, $\xi_{\mfl}:\Lambda \rightarrow \R$ is a smooth function. 
For $\xi \in \Omega_{\infty}$ we denote by $Z^{\xi}$ the model on $\mathscr{T}$ given by the canonical $K$-admissible lift of $\xi$.
We write $\mathscr{M}_{\infty} $\label{Minfty page ref} for the space of all smooth $K$-admissible models on $\mathscr{T}$. 

We introduce a family of pseudo-metrics on $\mathscr{M}_{\infty}$ indexed by compact $\K \subset \R^{d+1}$ and $\ell \in A = \{|\tau|_{+} : \tau \in \mcT^{\ex} \}$.
Given $(\Pi,\Gamma)$ and $(\bar\Pi,\bar\Gamma)$ one sets $\mathscr{M}_{\infty}$,
\begin{equ}[e:defDistModel]
\$(\Pi,\Gamma) ; (\bar\Pi,\bar\Gamma)\$_{\ell;\K} \eqdef  
\|\Pi - \bar \Pi\|_{\ell;\K} + \|\Gamma - \bar \Gamma\|_{\ell;\K}\;,
\end{equ}
where
\begin{equs}\label{e: def of model seminorms Pi}
\|\Pi - \bar \Pi\|_{\ell;\K}&\eqdef \sup\left\{ 
\frac{
|\big((\Pi_x - \bar \Pi_x) \tau,S^{\lambda}_{\s}\phi \big)|}{\lambda^{\ell}} :
\ 
\begin{array}{c}
x \in \K, \tau \in \mcT^{\ex}_{\ell},\\
\lambda \in (0,1], \phi \in \CB_{x,r}
\end{array}
\right\},\quad
\\ \label{e: def of model seminorms Gamma}
\|\Gamma - \bar \Gamma\|_{\ell;\K}&\eqdef \sup\left\{\frac{\|\Gamma_{xy}\tau -\bar\Gamma_{xy}\tau\|_m}{\|x-y\|_\s^{m-\ell}}:\ 
\begin{array}{c}
x,y \in \K, x\ne y,\\
\tau \in \mcT^{\ex}_{\ell}, m \in [r,\ell) \cap A
\end{array}
\right\}.
\end{equs}
Above, we have used the notation $\mcT^{\ex}_{\ell} \eqdef \{ \tau \in \mcT^{\ex}:\ |\tau|_{+} = \ell\}$, for $a \in \mcb{T}^{\ex}$ of the form $a = \sum_{\tau \in \mcT^{\ex}} a_{\tau} \tau$ and $m \in A$ we set $\|a\|_{m} \eqdef \sup \{ |a_{\tau}|: \tau \in \mcT^{\ex}_{m}\}$, and we set $r \eqdef 1 - \min A$. 
Note that for any fixed $\gamma \ge 0$, the family of pseudo-metrics 
\[
\Big\{ 
\$ 
\bullet ; \bullet 
\$_{\ell,\mathfrak{K}}: 
\ell \in A \cap(-\infty, \gamma], 
\mathfrak{K} \subset \R^{d+1} \textnormal{ compact} 
\Big\}
\]
generates a metric $d_{\gamma}$ on $\mathscr{M}_{\infty}$. 
Denote by $\mathscr{M}_{0}$ \label{M0 page ref} the completion\footnote{The completion does not depend on the choice of $\gamma \ge 0$. This is a consequence of the fact that admissible models are completely determined (in a continuous way) once one knows their restriction to symbols $\tau$ with $|\tau|_{+} \le 0$.} of $\mathscr{M}_{\infty}$ under $d_{\gamma}$.

To prepare for Proposition~\ref{prop:stationary objects} we define a stronger metric on a subset 
of $\mathscr{M}_{0}$. 
\begin{definition}\label{def: L1 norm}
Given $Z,\bar{Z} \in \mathscr{M}_{0}$ we define
\[
\$
Z;\bar{Z}
\$
=
\sup_{n \in \Z} 
\frac{1}{n^{2}+1}
\$Z ; \bar{Z}\$_{\K_{n}}\;,
\]
where 
\begin{equ}\label{compact set def}
\K_{n} \eqdef [n - 1, n + 1] \times \T^{d} \subset \Lambda\;,
\end{equ} 
and for any compact $\K \subset \Lambda$, 
$\$Z ; \bar{Z}\$_{\K}
\eqdef
\max_{
\ell \le 0
}
\$Z ; \bar{Z}\$_{\ell, \K}$. 
We define $\$
Z
\$_{\mathfrak{K}}
=
\$
Z;0
\$_{\mathfrak{K}}$ and $\$
Z
\$
=
\$
Z;0
\$$ analogously by removing the presence of $\bar{\Pi}$ and $\bar{\Gamma}$ in~\eqref{e: def of model seminorms Pi} and~\eqref{e: def of model seminorms Gamma}.
We let $\mathscr{M}_{0,1} \subset \mathscr{M}_{0}$ (resp. $\mathscr{M}_{\infty,1} \subset \mathscr{M}_{\infty}$) denote the collection of $Z \in \mathscr{M}_{0}$  (resp. $Z \in \mathscr{M}_{\infty}$) with $\$Z\$ < \infty $.
\end{definition}
Clearly $ \$\cdot;\cdot\$$ is a stronger metric than $d_{\gamma}$ for any $\gamma \ge 0$ and moreover $\mathscr{M}_{0,1}$ is a complete metric space with respect to $ \$\cdot;\cdot\$$. 

One important fact regarding $\mathfrak{R}$ is the following from \cite[Thm.~6.15]{BHZalg}.
\begin{theorem} \label{partial theorem 6.15}
Any $ M \in \mfR $ defines a map from $ \mathscr{M}_{\infty} $ into itself which associates to a model $ Z = ( \Pi,\Gamma)  $ a renormalised model $ Z^{M} = ( \hat \Pi, \hat \Gamma) $ . This renormalised model satisfies for every $ x \in \Lambda $
 \begin{equ} \label{explicit renormalised model}
 \hat \Pi_x =  \Pi_x M\;.
 \end{equ}
Furthermore, this action of $\RR$ extends to a continuous right action on $\mathscr{M}_0$.
\end{theorem}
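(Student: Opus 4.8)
The statement to prove (Theorem~\ref{partial theorem 6.15}) is essentially a restatement of \cite[Thm.~6.15]{BHZalg}, so the bulk of the work is to match the present setup to that of \cite{BHZalg} and to check that the continuous extension to $\mathscr{M}_0$ respects the completion procedure described here. First I would recall from \cite[Sec.~6.3]{BHZalg} that an element $M \in \mfR$ is of the form $M_g$ for some character $g \in \mcb{G}^\ex_-$, and that \cite[Thm.~6.15]{BHZalg} constructs, for each such $g$, a renormalised model $Z^M = (\hat\Pi,\hat\Gamma)$ on the full extended regularity structure, with $\hat\Pi_x = \Pi_x M$ for every $x$. Since our $\mcb{T}^\ex$ is a sector of the regularity structure of \cite{BHZalg} which is closed under the action of $\mfR$ (as noted in Section~\ref{subsec: set-up}), and since $K$-admissibility is preserved by the renormalisation procedure of \cite{BHZalg} (the renormalisation maps act trivially on the polynomial sector and commute with the abstract integration operators $\mcb{I}_o$ in the appropriate sense), the restriction of $Z^M$ to our $\mcb{T}^\ex$ is again a smooth $K$-admissible model, i.e.\ an element of $\mathscr{M}_\infty$. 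This gives the map $M : \mathscr{M}_\infty \to \mathscr{M}_\infty$ together with identity~\eqref{explicit renormalised model}.

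Next I would establish the continuity estimate needed to extend the action to $\mathscr{M}_0$. The key point is that for each fixed $\gamma \ge 0$ and compact $\mathfrak{K}$, the map $Z \mapsto Z^M$ is Lipschitz (indeed bounded-linear in the appropriate sense) with respect to the pseudo-metrics $\$\cdot;\cdot\$_{\ell;\mathfrak{K}}$, uniformly over $\ell \in A \cap (-\infty,\gamma]$. This follows from~\eqref{explicit renormalised model}: since $M : \cT^\ex \to \cT^\ex$ is a fixed linear map preserving the $|\cdot|_+$-grading, one has $(\hat\Pi_x - \bar{\hat\Pi}_x)\tau = (\Pi_x - \bar\Pi_x)(M\tau)$ and $M\tau$ is a finite linear combination of trees of the same degree as $\tau$, so $\|\hat\Pi - \bar{\hat\Pi}\|_{\ell;\mathfrak{K}} \lesssim \|\Pi - \bar\Pi\|_{\ell;\mathfrak{K}}$ with a constant depending only on $M$ and $\ell$; the bound on $\|\hat\Gamma - \bar{\hat\Gamma}\|_{\ell;\mathfrak{K}}$ is obtained in the same way using that $\hat\Gamma$ is determined from $\hat\Pi$ through the admissibility structure, exactly as in \cite[Thm.~6.15]{BHZalg}. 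Consequently $M$ is uniformly continuous from $(\mathscr{M}_\infty, d_\gamma)$ into itself, hence extends uniquely to a continuous map on the completion $\mathscr{M}_0$; uniqueness of the extension is automatic since $\mathscr{M}_\infty$ is dense in $\mathscr{M}_0$.

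Finally I would verify that this extension is indeed a \emph{right action} of $\mfR$ on $\mathscr{M}_0$, i.e.\ $(Z^{M})^{M'} = Z^{M M'}$ and $Z^{\id} = Z$. On $\mathscr{M}_\infty$ this is immediate from~\eqref{explicit renormalised model}: $\hat\Pi_x \mapsto \hat\Pi_x M' = \Pi_x M M' = \Pi_x (MM')$, together with the fact that $\mfR$ is a group of linear operators on $\cT^\ex$ so that composition of the operators corresponds to the group product (this is part of the structure recalled in Section~\ref{subsec: set-up} and \cite[Sec.~6.3]{BHZalg}). The identity then passes to $\mathscr{M}_0$ by density and continuity of all maps involved. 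The one point requiring a little care is confirming that the completion $\mathscr{M}_0$ is independent of the choice of $\gamma \ge 0$ (as asserted in the footnote), so that ``continuous right action on $\mathscr{M}_0$'' is unambiguous; this is handled exactly by the remark that an admissible model is determined continuously by its restriction to symbols of non-positive degree, which I would simply cite. I expect the main obstacle to be purely bookkeeping: carefully translating the (un)truncated and node-vs-edge conventions of the present article back into the formalism of \cite{BHZalg} so that the cited theorem applies verbatim to the sector $\mcb{T}^\ex$; once that translation is in place the analytic content is entirely inherited from \cite{BHZalg}.
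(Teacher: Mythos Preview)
Your proposal is correct and matches the paper's approach: the paper does not give a proof for this theorem at all but simply states it as a fact imported from \cite[Thm.~6.15]{BHZalg}, so your elaboration of how the citation applies (restriction to the sector $\mcb{T}^\ex$, degree-preservation of $M$ giving the Lipschitz bound on $\hat\Pi$, and passage to the completion) is more detailed than what the paper itself provides but entirely consistent with it.
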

\subsection{The space of jets}\label{subsec:space of jets}
We set $P \eqdef \{(t,x) \in \Lambda : t = 0\}$\label{P page ref} (thought of as the singular set of a modelled distribution)
and define $\jets$\label{jets page ref} as the space of all maps $U: \Lambda\setminus P \rightarrow \mcb{H}^{\ex}$
and $\widetilde{\jets}$ the maps $U: \Lambda\setminus P \rightarrow \widetilde{\mcb{H}}^{\ex}$.
For $U\in \jets$ or $U\in \widetilde\jets$, we write $U = (U_{\mft})_{\mft \in \mfL_{+}}$.
For $U\in \jets$ we also write $U_{(\mft,p)} = \DD^{p}U_{\mft}$ and, by applying the definitions in Section~\ref{subsec:nonlinearities} pointwise,
we define $U^{R} \in \widetilde{\jets}$ as well as the tuple of functions $\mathbf{u}^{U} = (u_{\alpha}^{U})_{\alpha \in \mcb{O}}$ (so that $u_{\alpha}^{U}:\Lambda\setminus P \rightarrow \R$ is given by setting $u_{(\mft,p)}^{U}(x) \eqdef \langle \mathbf{X}^{p},U_{\mft}(x)\rangle = \langle \mathbf{1},U_{(\mft,p)}(x) \rangle$).
For any $F \in \mcb{C}_{\mcb{O}}$ we write $F(\mathbf{u}^{U})$ for the real-valued function on $\Lambda\setminus P$  given by $F(\mathbf{u}^{U})(x) \eqdef F(\mathbf{u}^{U}(x))$.

Following Lemma~\ref{lem: nonlinearity on regularity structure}, given $U\in \jets$, we henceforth write $\mcQ_{ \le \gamma}F(U)\Xi \in \widetilde{\jets}$ for the element
obtained by applying the map $\mcQ_{ \le \gamma}F\Xi$ to $U$ pointwise.
Also following Definition~\ref{def: coherent}, we say that $U \in \jets$ is coherent to order $L$ with $F$ if $U(z)$ is for all $z \in \Lambda\setminus P$.

We record the following simple lemma for the canonical model.
\begin{lemma}\label{lem:canonical models commutes with F}
Consider $F \in \G$, $Z^\xi = (\Pi,\Gamma)$ the canonical model built from some $\xi\in\Omega_\infty$, and an element $U = (U_{\mft})_{\mft \in \mfL_{+}} \in \mcb{H}^{\ex}$.
It holds for all $x \in \Lambda$, $\mft \in \mfL_{+}$, and $\mfl=(\hat{\mfl},\mfo) \in \mfD_{\mft}$ that
\begin{equ}
\Pi_x
\left[
\mcQ_{\leq 0} \mathbf{F}^\mfl_\mft(U)\Xi_\mfl
\right](x)
=
F^\mfl_\mft (\boldsymbol{\phi}(x))
\xi_{\hat\mfl}(x)\;,
\end{equ}
where $\mathbf{F}^\mfl_\mft(U)$ is given by~\eqref{def: lift of smooth functions}, $
\xi_{\hat{\mfl}}
\eqdef
\prod_{(\mfb,\mfe) \in \hat{\mfl}}
D^{\mfe}\xi_{\mfb}$, $\phi = (\phi_\mft)_{\mft\in\mfL_+}$ is given by $\phi_\mft \eqdef \Pi_x U_{\mft}$, and $\boldsymbol{\phi}$ is defined as in~\eqref{eq:bold phi def}.
\end{lemma}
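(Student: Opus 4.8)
The statement is a pointwise compatibility between the abstract lift of the nonlinearity and its concrete counterpart, specialised to the canonical model. The plan is to reduce the identity to the multiplicativity of the canonical model together with the definition~\eqref{def: lift of smooth functions} and the fact that $\Pi_x$ sends abstract monomials to concrete ones in the expected way. First I would recall that, by definition of the canonical model $Z^\xi$, one has $\Pi_x(\tau\bar\tau) = (\Pi_x\tau)(\Pi_x\bar\tau)$ for all $\tau,\bar\tau$ (the product being the pointwise product of functions, which is legitimate since $\xi\in\Omega_\infty$ is smooth), $\Pi_x \Xi_\mfl = \xi_{\hat\mfl}\,(\,\cdot\,)^{\mfo}$-type expressions degenerating to $\Pi_x \Xi_{\hat\mfl} = \xi_{\hat\mfl}$ on the extended-decoration-trivial part, and $\Pi_x \mathbf{X}^q = (\cdot - x)^q$. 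The key observation is that $\bigl(\Pi_x \mcb{I}_{(\mfb,0)}[\tau]\bigr)(y) = (K_\mfb * \Pi_x\tau)(y) + (\text{poly})$, so differentiating and evaluating, $\bigl(\Pi_x \DD^p U_\mft\bigr)(x) = \d^p \phi_\mft(x)$ where $\phi_\mft = \Pi_x U_\mft$; this is precisely the content of the bold-$\phi$ notation~\eqref{eq:bold phi def}, and it identifies $\langle \mathbf{1}, U_{(\mft,p)}(x)\rangle$-type terms with derivatives of $\phi_\mft$ at $x$. In particular $\bigl(\Pi_x U_o\bigr)(x) = \boldsymbol{\phi}(x)_o$ for $o\in\mcb{O}$, while for $q\neq 0$, $\bigl(\Pi_x \mathbf{X}^q\bigr)(x) = 0$, so $\bigl(\Pi_x(U_o - \langle U_o,\bone\rangle\bone)\bigr)(x) = 0$.

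Next I would plug the Taylor-type expansion~\eqref{def: lift of smooth functions} into $\Pi_x[\cdot](x)$. Using multiplicativity of $\Pi_x$ and the fact that $\mcQ_{\le 0}$ only drops terms of strictly positive degree (which therefore evaluate, after applying $\Pi_x$ at the base point $x$, to quantities that vanish or get absorbed — more precisely, one argues that the truncation is irrelevant here because $\Pi_x \Xi_\mfl$ carries the only negative-degree contribution and all the polynomial correction factors $(\boldsymbol U - \langle\boldsymbol U,\bone\rangle\bone)^\alpha$ evaluate to zero at $x$ unless $\alpha = 0$), we get
\begin{equ}
\Pi_x\bigl[\mcQ_{\le 0}\mathbf{F}^\mfl_\mft(U)\Xi_\mfl\bigr](x)
=
\sum_{\alpha\in\N^{\mcb{O}}} \frac{D^\alpha F^\mfl_\mft\bigl(\langle\boldsymbol U,\bone\rangle(x)\bigr)}{\alpha!}\,
\bigl(\Pi_x(\boldsymbol U - \langle\boldsymbol U,\bone\rangle\bone)^\alpha\bigr)(x)\cdot\bigl(\Pi_x\Xi_\mfl\bigr)(x)\;.
\end{equ}
Since $\bigl(\Pi_x(\boldsymbol U - \langle\boldsymbol U,\bone\rangle\bone)^\alpha\bigr)(x) = \prod_{o}\bigl(\Pi_x(U_o - \langle U_o,\bone\rangle\bone)(x)\bigr)^{\alpha[o]} = \one\{\alpha = 0\}$ by the observation above, only $\alpha = 0$ survives, giving $F^\mfl_\mft\bigl(\langle\boldsymbol U,\bone\rangle(x)\bigr)\cdot\bigl(\Pi_x\Xi_\mfl\bigr)(x)$. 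Finally, $\langle\boldsymbol U,\bone\rangle(x)_{(\mft',p')} = \langle\mathbf{1},U_{(\mft',p')}(x)\rangle = \d^{p'}\phi_{\mft'}(x)$, so $\langle\boldsymbol U,\bone\rangle(x) = \boldsymbol\phi(x)$, and $\bigl(\Pi_x \Xi_\mfl\bigr)(x) = \bigl(\Pi_x \Xi_{\hat\mfl}\bigr)(x) = \xi_{\hat\mfl}(x)$ because the extended-decoration factor $\bullet^{0,\mfo}$ is mapped by the canonical model to $1$ (the canonical model ignores extended labels). This yields exactly $F^\mfl_\mft(\boldsymbol\phi(x))\,\xi_{\hat\mfl}(x)$.

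The main obstacle I anticipate is the careful bookkeeping around the truncation $\mcQ_{\le 0}$ and the extended decorations: one must verify that discarding the positive-degree part of $\mathbf{F}^\mfl_\mft(U)\Xi_\mfl$ genuinely does not change $\Pi_x[\cdot](x)$, which requires knowing that the positive-degree terms, after applying $\Pi_x$ and evaluating at the base point, contribute only through products of factors $(\Pi_x(U_o-\langle U_o,\bone\rangle\bone))(x)$ that vanish — i.e. one needs that the negative-degree ``core'' $\Xi_\mfl$ is the unique source of negativity and everything it multiplies in the retained expansion is evaluated to a constant at $x$. A clean way to handle this is to note that $\mcQ_{\le 0}(G\Xi_\mfl) - G\Xi_\mfl$ for $G\in\widetilde{\mcb{T}}$ lies in the span of trees of the form $\tau'\Xi_\mfl$ with $\tau'$ of positive degree containing at least one genuine $\mcb{I}$-edge or a nonzero polynomial, and each such term, under $y\mapsto(\Pi_x\,\cdot\,)(y)$, is $O(|y-x|_\s^{>0})$ hence vanishes at $y=x$; combined with the vanishing of $(\Pi_x(U_o-\langle U_o,\bone\rangle\bone))(x)$ this closes the argument. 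I would also need the routine input — available from admissibility of $Z^\xi$ — that $(\Pi_x\DD^pU_\mft)(x) = \d^p(\Pi_xU_\mft)(x)$, which follows since $\DD^p$ commutes with $\Pi_x$ up to lower-order polynomial terms that are killed by the derivative and evaluation at $x$ for the $K$-admissible canonical lift.
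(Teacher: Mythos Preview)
Your argument contains a genuine error. You claim that $(\Pi_x(U_o - \langle U_o,\bone\rangle\bone))(x) = 0$ for \emph{every} $o\in\mcb{O}$, and you justify it via $(\Pi_x\mathbf{X}^q)(x)=0$ for $q\neq 0$. But $U_o - \langle U_o,\bone\rangle\bone$ is not a combination of polynomials only: it also contains planted trees $\mcb{I}_{(\mft,p)}[\tau]$. When $o=(\mft,p)\in\mcb{O}_-$, the sector $\DD^p\cT^\ex_\mft$ has negative regularity, so there are such trees with $|\mcb{I}_{(\mft,p)}[\tau]|_+\le 0$; for those, admissibility gives $(\Pi_x\mcb{I}_{(\mft,p)}[\tau])(x)=(D^pK_\mft*\Pi_x\tau)(x)$, which is generically nonzero. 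Consequently your final identification $\langle\bone,U_{(\mft',p')}\rangle=\d^{p'}\phi_{\mft'}(x)$ is also wrong: the left side is the $\bone$-coefficient of $U_{(\mft',p')}$, while the right side equals $(\Pi_xU_{(\mft',p')})(x)$, and these differ exactly by the contribution of the negative-degree planted trees. If only $\alpha=0$ survived, you would obtain $F^\mfl_\mft(\langle\boldsymbol U,\bone\rangle)\,\xi_{\hat\mfl}(x)$, not $F^\mfl_\mft(\boldsymbol\phi(x))\,\xi_{\hat\mfl}(x)$.

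The paper's proof repairs this by splitting $\mcb{O}=\mcb{O}_+\sqcup\mcb{O}_-$. For $o\in\mcb{O}_+$ the sector is function-like, so your vanishing argument is valid there; hence any $\alpha$ with $\alpha[o]>0$ for some $o\in\mcb{O}_+$ contributes zero. For the surviving $\alpha$ supported on $\mcb{O}_-$, one expands $F^\mfl_\mft$ as in~\eqref{expansion of nonlinearity}: the dependence on the $\mcb{O}_-$ variables is \emph{polynomial} (this is the definition of $\Poly$), so the Taylor expansion in those variables is finite and exact. Multiplicativity of the canonical model (together with its being reduced and compatible with $\DD$) then lets one re-sum that finite polynomial at the values $(\Pi_xU_o)(x)=\boldsymbol\phi(x)_o$, yielding the claim. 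In short, the missing ingredient in your argument is the use of the polynomial structure of $F^\mfl_\mft$ in the $\mcb{O}_-$ directions; without it the sum over $\alpha$ does not collapse to $\alpha=0$.
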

\begin{proof}
Note that $\Pi_x(\boldsymbol{U} - \langle\boldsymbol{U},\bone\rangle)^\alpha(x) = 0$ for any $\alpha\in\N^{\mcb{O}}$ such that $\alpha(o) > 0$ for some $o\in\mcb{O}_+$.
Expanding $F^\mfl_\mft$ as in~\eqref{expansion of nonlinearity}, the claim follows from the fact that a polynomial is given exactly by its Taylor expansion, as well as the fact that the canonical model is multiplicative, reduced (i.e., ignores the value of the extended label $\mfo$), and compatible with the abstract gradient $\DD$ (see~\cite[Def.~5.26]{Regularity}).
\end{proof}
\subsection{Modelled distributions}\label{subsec: modelled distributions}
Throughout this subsection we fix $F \in \G$. Our definitions and results, unless explicitly stated, are given with respect to some arbitrary fixed model $Z \in \mathscr{M}_{0}$. 
We often drop dependence on $Z$ from the notation. 

For any sector $V$ of the regularity structure $\mathscr{T}$, and any $\gamma, \eta \in \R$, recall that~\cite[Def.~6.2]{Regularity} defines a corresponding space of singular modelled distributions $\D^{\gamma,\eta}_{P}(V)$\label{Dgamma page ref} with respect to $Z$ over $\Lambda \setminus P$, with values in the sector $V$.
We will often drop the reference to $P$ and $V$ when it is clear from the context.
We will also often write $\D^{\gamma,\eta}_\alpha$ to emphasise that the underlying sector is of regularity $\alpha \leq 0$.

Denote by $\one_+: \Lambda \to \{0,1\}$ the indicator function of the set $\{(t,x) : t > 0\}$, which we canonically identify with an element of $\D^{\infty,\infty}_0$. 
We recall two results from~\cite{Regularity}.
\begin{lemma}\label{lem:abstractIntegral}
Let $\gamma, \alpha, \eta \in \R$, and $\mft \in \mfL_+$.
Suppose that $\gamma, \eta \notin \N $, $\gamma - |\mft|_\s > 0$, and $\eta \wedge \alpha > -\s_{0}+|\mft|_\s$.
Then $\mathcal{K}_{\gamma -|\mft|_\s}^{\mft}$, defined by~\cite[Eq.~5.15]{Regularity} using the kernel $K_\mft$, is a locally Lipschitz map from $\D^{\gamma - | \mft |_{\s},\eta - | \mft |_{\s}}_{(\alpha-|\mft|_\s)\wedge 0}(\widetilde{\cT}^\ex_\mft)$ to $\D^{\gamma,\eta\wedge \alpha\wedge|\mft|_\s}_{\alpha\wedge 0}(\cT^\ex_\mft)$.

Moreover, for all $\kappa \geq 0$ and $T \in (0,1]$, it holds that
\[
\wwnorm{\mcK_{\gamma - |\mft|_\s}^\mft \one_+ f}_{\gamma; \eta \wedge \alpha; T} \lesssim T^{\kappa/\s_{0}}\wwnorm{f}_{\gamma-|\mft|_\s; \eta - | \mft |_{\s} + \kappa; T}
\]
for all $f \in \D^{\gamma - | \mft |_{\s},\eta - | \mft |_{\s} + \kappa}_{(\alpha-|\mft|_\s+\kappa)\wedge 0}(\widetilde\cT^\ex_\mft)$, where the proportionality constant depends on $\wwnorm{Z}_{[-1,2]\times \R^{d}}$.
\end{lemma}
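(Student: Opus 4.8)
The statement is essentially a restatement of the Schauder estimate from~\cite[Prop.~6.16]{Regularity} in our notational setting, together with the sharper short-time estimate~\cite[Thm.~7.1]{Regularity} (or rather its analogue for singular modelled distributions near the hyperplane $P$). The plan is therefore to reduce to those results. First I would check that the hypotheses on the exponents $\gamma,\eta,\alpha,|\mft|_\s$ translate into the hypotheses needed for the abstract integration operator $\mcK_{\gamma-|\mft|_\s}^\mft$ to be well-defined on $\D^{\gamma-|\mft|_\s,\eta-|\mft|_\s}_{(\alpha-|\mft|_\s)\wedge 0}(\widetilde\cT^\ex_\mft)$: namely, that $\gamma-|\mft|_\s\notin\N$ (which holds since $\gamma\notin\N$ and $|\mft|_\s$ is the order of the kernel, so one must be mildly careful here—if $|\mft|_\s$ is an integer then $\gamma-|\mft|_\s\notin\N$ follows from $\gamma\notin\N$; in general one invokes that the construction of $K_\mft$ in Section~\ref{subsec: admissible models} annihilates polynomials of degree $<\gamma+|\mft|_\s$, exactly matching the hypothesis of~\cite[Prop.~6.16]{Regularity}), and that the lower regularity bound $(\eta-|\mft|_\s)\wedge(\alpha-|\mft|_\s)>-\s_0$, which is precisely the assumption $\eta\wedge\alpha>-\s_0+|\mft|_\s$.

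Next, the target space: by~\cite[Prop.~6.16]{Regularity}, $\mcK_{\gamma-|\mft|_\s}^\mft$ maps into $\D^{\bar\gamma,\bar\eta}_{\bar\alpha}$ where the output regularity exponent is $\bar\alpha=(\alpha-|\mft|_\s+|\mft|_\s)\wedge 0=\alpha\wedge 0$ (the kernel raises regularity by $|\mft|_\s$, and the abstract integration raises the homogeneity of the sector accordingly), the differentiability exponent is $\bar\gamma=\gamma$, and the exponent controlling the behaviour near $P$ is $\bar\eta=(\eta\wedge\alpha)\wedge|\mft|_\s$—the appearance of $|\mft|_\s$ in the minimum is due to the non-vanishing Taylor polynomial added by $\mcK$ in the reconstruction of the singular part, exactly as in~\cite[Prop.~6.16]{Regularity}. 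I would also verify that the image indeed lies in the sub-sector $\cT^\ex_\mft$ (not just in a generic sector obtained by integration): this is immediate from the definition of $\mcT^\ex_\mft$ as planted trees $\mcb{I}_{(\mft,0)}[\bar\tau]$ and the fact that $\mcK_{\gamma-|\mft|_\s}^\mft$ applied to a tree $\bar\tau$ produces $\mcb{I}_{(\mft,0)}[\bar\tau]$ plus polynomial corrections, combined with Lemma~\ref{lem:sectors of our regularity structure} and Assumption~\ref{assump:RregComplete} guaranteeing that the relevant polynomial symbols belong to $\mcb{T}^\ex_\mft$. Local Lipschitz continuity in the modelled distribution argument (uniformly on bounded sets of models) is also part of the statement of~\cite[Prop.~6.16]{Regularity}, so this transfers directly.

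For the second, quantitative, assertion I would invoke the short-time smoothing estimate: applying $\mcK$ to $\one_+ f$, where $\one_+$ is the indicator of positive times, gains a positive power $T^{\kappa/\s_0}$ of the time horizon at the cost of lowering the singularity exponent at $P$ by $\kappa$. This is precisely the content of~\cite[Prop.~6.16 and Thm.~7.1]{Regularity} in the singular setting; the key mechanism is that multiplication by $\one_+$ moves $f$ into $\D^{\gamma-|\mft|_\s,\,(\eta-|\mft|_\s+\kappa)\wedge(\alpha-|\mft|_\s+\kappa)\wedge 0}$ with the extra room coming from the fact that $\one_+ f$ vanishes for $t\le 0$, and then the integration estimate, localised to $t\in(0,T]$, produces the stated bound with proportionality constant depending only on $\wwnorm{Z}_{[-1,2]\times\R^d}$ through the model norm appearing in the Schauder estimate. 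I would be careful to state that $\kappa\ge 0$ is allowed to be zero, in which case the estimate is just the plain continuity bound restricted to $(0,T]$.

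\textbf{Main obstacle.} The genuinely delicate point is bookkeeping of the exponent $\bar\eta=(\eta\wedge\alpha)\wedge|\mft|_\s$ and the lower-regularity exponent $\bar\alpha=\alpha\wedge 0$ of the output sector, and checking that the combination of hypotheses $\gamma-|\mft|_\s>0$, $\eta\wedge\alpha>-\s_0+|\mft|_\s$, together with $\gamma,\eta\notin\N$, is exactly what is required to apply~\cite[Prop.~6.16]{Regularity} without any further conditions (in particular that one does not additionally need $\eta-|\mft|_\s\notin\N$ or $\alpha$ in some admissible range beyond what is assumed); this must be cross-checked against the precise hypotheses in~\cite{Regularity}, which is why I would phrase the proof essentially as ``this is~\cite[Prop.~6.16]{Regularity} (respectively~\cite[Thm.~7.1]{Regularity}) applied with the following identification of parameters'', listing the identification explicitly and verifying each required inequality, rather than reproving the Schauder estimate. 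Everything else—multiplicativity, the sector membership, local Lipschitz dependence—is routine given the earlier setup.
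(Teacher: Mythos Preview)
Your proposal is correct and follows essentially the same route as the paper: the proof there consists of one sentence observing that $\mcK_{\gamma-|\mft|_\s}^\mft f$ lands in $\cT^\ex_\mft$ by definition of $\widetilde\cT^\ex_\mft$, and then invoking~\cite[Prop.~6.16, Thm.~7.1]{Regularity}. One small simplification: you do not need Assumption~\ref{assump:RregComplete} or Lemma~\ref{lem:sectors of our regularity structure} to place the polynomial corrections in $\cT^\ex_\mft$, since $\bar\cT^\ex \subset \cT^\ex_\mft$ holds directly by the definition $\cT^\ex_\mft = \bar\cT^\ex \oplus \Span{\mcT^\ex_\mft}$.
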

\begin{proof}
For all $ f \in \D^{\gamma - | \mft |_{\s},\eta - | \mft |_{\s}}_{\reg(\mft)-|\mft|_\s}(\widetilde\cT^\ex_\mft) $, it holds by definition of $\widetilde\cT^\ex_\mft$ that $\mathcal{K}_{\gamma - |\mft|_\s}^{\mft} f$ is a function from $\Lambda\setminus P$ to $\cT^\ex_\mft$. The conclusion follows at once from~\cite[Prop.~6.16, Thm.~7.1]{Regularity}.
\end{proof}
Concerning the initial condition, we recall the following result.
\begin{lemma}\label{lem:initialCond}
Let $\alpha \in \R$ such that $\alpha \notin \N$ and $u_0^{\mft} \in \mathcal{C}_{\bar\s}^{\alpha}(\T^{d})$. Then the function
\begin{equs}
v_{\mft}(t,x) \eqdef (G_{\mft} u^{\mft}_0)(t,x) = \int_{\T^{d}} G_{\mft}(t,x-y) u^{\mft}_0(y) dy\;.
\end{equs}
lifts canonically to a singular modelled distribution in $\D^{\gamma,\alpha}(\bar\cT^\ex)$ for all $\gamma > \alpha\vee 0$.
\end{lemma}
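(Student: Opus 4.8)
\textbf{Plan of proof for Lemma~\ref{lem:initialCond}.}
The statement is essentially a restatement of the Schauder-type estimate for the heat semigroup in the scale of H\"older--Besov spaces, adapted to our setting where $G_\mft$ is the Green's function of $\d_0 - \mathscr{L}_\mft$ satisfying Assumption~\ref{ass:kernels}. The plan is to reduce the assertion to the combination of two ingredients: (i) the classical regularising estimate~\eqref{eq:initialBound} together with the exponential decay~\eqref{eq:exponentialBound}, which together show that $v_\mft$ is a well-defined smooth function on $\Lambda\setminus P$ with the correct blow-up rate near $P$; and (ii) the abstract lifting result~\cite[Lem.~7.5]{Regularity} (or the argument in~\cite[Sec.~7.1]{Regularity}), which shows that a function on $\Lambda\setminus P$ obtained by convolving a spatial distribution of regularity $\alpha$ against such a kernel admits a canonical lift to $\D^{\gamma,\alpha}(\bar\cT^\ex)$, the lift being built purely from abstract Taylor polynomials $\mathbf{X}^p$ since the input is a genuine function of positive-time regularity.

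First I would split $G_\mft = K_\mft + R_\mft$ using the decomposition of Section~\ref{subsec: admissible models} (or the decomposition into a singular part near $t=0$ and a smooth tail). The contribution of $R_\mft$ is harmless: since $R_\mft$ is smooth on all of $\Lambda$ with the bound $\sup_{t\ge 0}\sup_x e^{\chi t}|D^k R_\mft(t,x)| < \infty$, its convolution against $u_0^\mft \in \mcC^\alpha_{\bar\s}(\T^d)$ is a smooth bounded function of space-time, which trivially lifts to $\D^{\gamma,\infty} \subset \D^{\gamma,\alpha}$. For the singular part, I would invoke~\eqref{eq:initialBound}: for $\eta = \alpha \in (-\infty,0)\setminus\N$ and each multi-index $k$, this gives $\sup_{t\in(0,1]} t^{-(\alpha - |k|_\s)/\s_0}\sup_x |\int D^k G_\mft(t,x-y)u_0^\mft(y)\,dy| < \infty$, which is exactly the pointwise bound needed to place $v_\mft$ (and its space-time derivatives) in the local description of $\D^{\gamma,\alpha}$ near $P$; for $t$ bounded away from $0$ one uses the smoothness of $G_\mft$ away from the origin together with~\eqref{eq:exponentialBound} for large $t$.

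The lift itself is then defined in the standard way: at each point $z = (t,x)$ with $t>0$ one sets $v_\mft(z) \eqdef \sum_{|p|_\s < \gamma} \frac{1}{p!}(\d^p v_\mft)(z)\,\mathbf{X}^p \in \bar\cT^\ex$, and one checks the two bounds in the definition of $\D^{\gamma,\alpha}$ (the ``interior'' bound $\|v_\mft(z)\|_m \lesssim (t\wedge 1)^{(\alpha - m)/\s_0}$ for $m < \gamma$, and the ``translation'' bound on $v_\mft(z) - \Gamma_{zz'}v_\mft(z')$). Both follow from Taylor's theorem applied to the smooth function $\d^p v_\mft$ combined with the derivative bounds established in the previous step; since $Z$ is assumed to be a canonical admissible model (or, for a general model in $\mathscr{M}_0$, since $\bar\cT^\ex$ is the polynomial sector on which $\Gamma$ acts in the classical way), the structure group action $\Gamma_{zz'}$ on $\bar\cT^\ex$ is just the classical Taylor-polynomial shift, so no model-dependent subtleties enter. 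The only mild point requiring care is the behaviour as $z \to P$: one must verify that the singular exponent appearing in $\D^{\gamma,\alpha}$ matches $\alpha$ and not something worse, which is precisely guaranteed by the power $t^{-(\alpha-|k|_\s)/\s_0}$ in~\eqref{eq:initialBound} — this is the main (though not severe) obstacle, and it is exactly why Assumption~\ref{ass:kernels} was formulated with that specific exponent. Since all of this is a routine adaptation of~\cite[Sec.~7.1, Lem.~7.5]{Regularity} to our class of kernels, I would present the proof as a short verification citing those results, treating $G_\mft$ via its decomposition and Assumption~\ref{ass:kernels}, and leave the elementary estimates to the reader.
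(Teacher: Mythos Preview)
Your proposal is correct and takes essentially the same approach as the paper: the paper's proof is the single sentence ``Identical to the proof of~\cite[Lem.~7.5]{Regularity} upon using~\eqref{eq:initialBound},'' and your plan is precisely a fleshed-out version of that reduction. Your additional remarks on the decomposition $G_\mft = K_\mft + R_\mft$ and the use of~\eqref{eq:exponentialBound} for large times are reasonable elaborations but not strictly needed beyond what~\cite[Lem.~7.5]{Regularity} already handles.
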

\begin{proof}
Identical to the proof of~\cite[Lem.~7.5]{Regularity} upon using~\eqref{eq:initialBound}.
\end{proof}
For $\alpha \in \R$ and $\gamma > 0$, recall the operator $R^\mft_{\gamma} : \mcC_\s^\alpha(\Lambda) \to \D^{\gamma}(\bar\cT^\ex)$ defined by~\cite[Eq.~7.7]{Regularity} using the smooth kernel $R_\mft$.
Let $\mcR$\label{reconstruction} denote the reconstruction operator\footnote{See~\cite[Sec.~6.1]{Regularity}} associated to the model $Z$.

For a choice of $\gamma_\mft,\eta_\mft \in \R$, with $\mft \in \mfL_+$, let us define
\[
\jets^{\gamma,\eta} \eqdef \bigoplus_{\mft \in \mfL_+} \D^{\gamma_\mft,\eta_\mft}(\cT^\ex_{\mft})\;,\label{jets gamma page ref}
\]
which is a subspace of $\jets$ by definition.

To formulate the fixed point map, we introduce the operator
\[
f \mapsto \mcP_{\mft}\one_+f \eqdef (\mcK^\mft_{\gamma_\mft-|\mft|_\s} + R^\mft_{\gamma_\mft}\mcR)\one_+ f\;,
\]
which is a locally Lipschitz map from $\D^{\gamma - | \mft |_{\s},\eta - | \mft |_{\s}}_{(\alpha-|\mft|_\s)\wedge 0}(\widetilde\cT^\ex_\mft)$ to $\D^{\gamma ,\eta \wedge \alpha\wedge|\mft|_\s}_{\alpha\wedge 0}(\cT^\ex_\mft)$ for appropriate $\gamma,\eta, \alpha \in \R$ (see Lemma~\ref{lem:abstractIntegral}).

The direct abstract version of the initial value problem~\eqref{ivp} is given by
\begin{equ}\label{eq: Dgamma fp Naive}
U_{\mft}
=
\mcP_{\mft}
\Big[ 
\one_+ \mcQ_{\leq \gamma_\mft - |\mft|_\s}\Big(\sum_{\mfl \in \mfD_{\mft}}
F^{\mfl}_{\mft}(U)\Xi_{\mfl} \Big) 
\Big]  + G_\mft u_0^{\mft}\;, \quad
\forall \mft \in \mfL_{+}\;.
\end{equ}
In a number of examples, however, one encounters a problem when trying to naively solve~\eqref{eq: Dgamma fp Naive} in $\jets^{\gamma,\eta}$.
The difficulty comes from the fact that some of the terms $\one_+ F^\mfl_\mft(U)\Xi_\mfl$ may take values in a sector 
of regularity $\alpha \leq -\s_{0}$: because of the singularity at $t=0$, the reconstruction operator $\mcR$ 
(and thus the maps $\mcK^\mft_{\gamma_\mft-|\mft|_\s}$ and $\mcP_\mft$) is not a priori well-defined for such 
terms, see~\cite[Prop.~6.9]{Regularity}.
A related difficulty is that (reconstructions of) solutions to~\eqref{eq: Dgamma fp Naive} can be distribution-valued, and thus one needs additional assumptions guaranteeing that they can be evaluated at a fixed time slice;
this is necessary if we wish to restart our fixed point map to obtain a well-posed
notion of maximal solution.
Both of these difficulties already appear in the $\Phi^4_3$ model~\cite[Sec.~9.4]{Regularity}, where they
are dealt with in a somewhat ad hoc manner.
\begin{remark}\label{rem:mcPf}
Note that $\mcP_\mft\one_+ f$ in general makes sense for any singular modelled distribution $f$ for which $\mcR \one_+ f$ can be appropriately defined, see~\cite[Rem.~6.17]{Regularity}; see also~\cite{Mate} where such problems arise on the boundary of the domain.
\end{remark}
\subsection{Renormalised PDEs}\label{subsec: coherence of fp}
%
In the scope of the problems we consider, we wish to apply $\mcP_\mft$ to modelled distributions $\one_+ f \in \D^{\bar\gamma,\bar\eta}_\alpha$ with $\alpha \leq -\s_0$ (as in, e.g., $\Phi^4_d$ with $d \geq 2$);
however it will always be the case that $\bar\eta > -\s_0$ (namely by Assumption~\ref{assump:Schauder1} or~\ref{assump:Schauder2}), so this parameter will not be a problem.
Following Remark~\ref{rem:mcPf}, it suffices to give a canonical definition for $\mcR \one_+ f$ with the expected regularity.

In this subsection, we resolve this issue by assuming that 
the underlying model $Z$ is \emph{smooth} and that 
$\one_+ f \in \D^{\bar\gamma,\bar\eta}_\alpha$ with $\bar\gamma>0$ and $\bar\eta > -\s_0$.
In this case, one can readily see (e.g., by inspecting the proof of~\cite[Prop.~6.9]{Regularity}) that $\mcR \one_+ f$ is canonically defined as a continuous function on $\Lambda\setminus P$ with a blow-up of order $\bar\eta$ at $P$.
As a result, $\mcP_\mft \one_+ f \in \D^{\bar\gamma+|\mft|_\s, (\bar\eta\wedge\alpha)+|\mft|_\s}_{(\alpha+|\mft|_\s)\wedge0}$ is likewise canonically defined.
(In this case, however, $\mcR \one_+ f$ and $\mcP_\mft \one_+ f$ will generally fail to be continuous functions of $f$ and the model!)
In this case, we also note that
\begin{equ}\label{eq:commuteIntegrals}
\mcR \mcP_\mft\one_+ f = G_{\mft} \ast \mcR\one_+ f\;,
\end{equ}
and, for all $x \in \Lambda\setminus P$,
\begin{equ}\label{eq:pointwiseIdentity}
(\mcR \one_+ f)(x) = (\Pi_{x}(\one_+f)(x))(x)\;.
\end{equ}

With these considerations in mind, we can reformulate the purely algebraic result of Theorem~\ref{thm: algebraic main theorem} in the setting of modelled distributions.
\begin{theorem} \label{thm:renormalised_equation}
Fix $F \in \G$, $\xi \in \Omega_{\infty}$, and $M \in \mfR$. 
Let $Z = (\Pi,\Gamma) \in \mathscr{M}_{\infty}$ be the canonical $K$-admissible lift of $\xi$ and 
write $\hat Z = (\hat\Pi,\hat\Gamma)$ for the renormalised model $Z^{M}$ obtained in Theorem~\ref{partial theorem 6.15}. 
For each $\mft \in \mfL_+$, let $\eta_\mft > -\s_0$ and $\gamma_\mft \eqdef \gamma+\reg(\mft)$ for some fixed $\gamma\in\R$.
Suppose that $\gamma_\mft-|\mft|_\s > \gamma_{L}$, with $L \eqdef \overline{L_0}$ and $\gamma_L$ defined as in Section~\ref{subsec:truncations}.

Suppose also that there exists $U \in \jets^{\gamma,\eta}$ with respect to $\hat Z$, defined on an interval $(0,T)$, such that, for each $\mft\in \mfL_+$, there exist $\bar\gamma>0$ and $\bar\eta > -\s_0$ such that $\one_+ \sum_{\mfl\in\mfD_\mft}F^{\mfl}_{\mft}(U)\Xi_{\mfl}$ is an element of $\D^{\bar\gamma,\bar\eta}$.
Suppose finally that $U$ is a solution on $(0,T)$ to the fixed point problem~\eqref{eq: Dgamma fp Naive} with some initial data $u^\mft_0$.

Then for every $\mft \in \mfL_{+}$, the function $u_{\mft} \eqdef \hat{\mcR} U_\mft$ is the unique solution on $(0,T)$
to the stochastic PDE
\[
\d_0 u_{\mft} = 
\mathscr{L}_\mft u_\mft + 
\sum_{(\hat{\mfl},\mfo) \in \mfD_{\mft}}
(MF)^{(\hat{\mfl},\mfo)}_{\mft}(\mathbf{u})
\xi_{\hat{\mfl}}\;,
\]
with initial condition $u^\mft_0$,
where $
\xi_{\hat{\mfl}}
\eqdef
\prod_{(\mfl,\mfe) \in \hat{\mfl}}
D^{\mfe}\xi_{\mfl}$
and the tuple $\mathbf{u} = (u_{o})_{o \in \mcb{O}}$ is given by $u_{(\mfb,q)} \eqdef \partial^{q}u_{\mfb}$. 
\end{theorem}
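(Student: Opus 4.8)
\textbf{Proof strategy for Theorem~\ref{thm:renormalised_equation}.}
The plan is to connect the abstract identity~\eqref{eq:renormalised fixed point} of Theorem~\ref{thm: algebraic main theorem} with the analytic machinery of Section~\ref{subsec: modelled distributions}, and to read off the renormalised PDE using the explicit description~\eqref{explicit renormalised model} of the renormalised model together with Lemma~\ref{lem:canonical models commutes with F}. First I would set $V_\mft \eqdef M U_\mft$ for each $\mft \in \mfL_+$, and observe that $V = (V_\mft)_{\mft\in\mfL_+}$ is a modelled distribution with respect to the canonical model $Z$ (since $M$ acts at a purely algebraic level and intertwines the two models by Theorem~\ref{partial theorem 6.15}, applying $M$ to $U\in\jets^{\gamma,\eta}$ over $\hat Z$ yields an element of $\jets^{\gamma,\eta}$ over $Z$, up to possibly increasing truncation as in~\eqref{eq:M maps L to bar L}; here the hypothesis $\gamma_\mft-|\mft|_\s>\gamma_L$ with $L=\overline{L_0}$ is exactly what guarantees enough room for this). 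The key algebraic input is that $U$, being a solution to~\eqref{eq: Dgamma fp Naive}, satisfies the coherence hypothesis of Theorem~\ref{thm: algebraic main theorem} pointwise in $z\in(0,T)\times\T^d$: indeed Lemma~\ref{lemma: coherence identity} (applied pointwise via the discussion in Section~\ref{subsec: modelled distributions}, together with the fact that $\mcP_\mft$ does not affect the $U^R_\mft$ component) shows that $\proj_{\leq\bar L}\sum_{\mfl\in\mfD_\mft}F^\mfl_\mft(U)\Xi_\mfl = \proj_{\leq\bar L}U^R_\mft$, where $\bar L$ is as in Definition~\ref{def: definition of bar L} with $L=\overline{L_0}$. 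Hence~\eqref{eq:renormalised fixed point} applies and gives, pointwise,
\begin{equ}
\proj_{\leq L_0} (MU)^R_\mft = \proj_{\leq L_0}\sum_{\mfl\in\mfD_\mft}(MF)^\mfl_\mft(MU)\Xi_\mfl\;.
\end{equ}

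Next I would transfer this to the level of PDEs. Applying $\mcP_\mft$ to~\eqref{eq:renormalised fixed point} (for the relevant truncation) and using that $V_\mft = \mcP_\mft[\one_+ (MU)^R_\mft]+G_\mft u_0^\mft$ — which follows from the fact that $M$ fixes the classical-polynomial sector and commutes with $\DD$, so that $MU$ satisfies the same structural decomposition~\eqref{eq:generalU} as $U$ — one sees that $V$ solves
\begin{equ}
V_\mft = \mcP_\mft\Big[\one_+\mcQ_{\leq\gamma_\mft-|\mft|_\s}\Big(\sum_{(\hat\mfl,\mfo)\in\mfD_\mft}(MF)^{(\hat\mfl,\mfo)}_\mft(V)\Xi_{(\hat\mfl,\mfo)}\Big)\Big]+G_\mft u_0^\mft
\end{equ}
with respect to $Z$. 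Since $Z$ is the canonical lift of the smooth noise $\xi$, I would now apply $\mcR$ to both sides: using~\eqref{eq:commuteIntegrals}, $\mcR\mcP_\mft\one_+ f = G_\mft * \mcR\one_+ f$, and using~\eqref{eq:pointwiseIdentity} together with Lemma~\ref{lem:canonical models commutes with F} (which evaluates $\Pi_x$ of the lifted nonlinearity at $x$ to the classical product $(MF)^{\hat\mfl}_\mft(\boldsymbol\phi(x))\xi_{\hat\mfl}(x)$, the extended decoration $\mfo$ being ignored by the canonical model), one obtains that $v_\mft \eqdef \mcR V_\mft$ satisfies the mild formulation
\begin{equ}
v_\mft = G_\mft *\Big(\one_+\sum_{\hat\mfl}(MF)^{\hat\mfl}_\mft(\mathbf v)\xi_{\hat\mfl}\Big)+G_\mft u_0^\mft\;,
\end{equ}
i.e.\ the claimed renormalised PDE, where $\mathbf v = (\partial^q v_\mfb)_{(\mfb,q)\in\mcb{O}}$ (using that $\mcR$ commutes with $\DD^p$ and that $\reg(\mfl)$ captures the correct regularities so the products make sense). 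Finally I would note $v_\mft = \hat{\mcR}U_\mft$: since $\hat Z = Z^M$ has $\hat\Pi_x = \Pi_x M$, and $MU_\mft = V_\mft$, the reconstruction of $U_\mft$ against $\hat Z$ coincides with the reconstruction of $V_\mft$ against $Z$, by the defining property of $\mcR$ and uniqueness of the reconstructed distribution. Uniqueness of the solution on $(0,T)$ is then the standard local well-posedness of the classical PDE for smooth data, or equivalently uniqueness of the fixed point in $\jets^{\gamma,\eta}$.

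\textbf{Main obstacle.} The delicate point is the bookkeeping of truncation levels: Theorem~\ref{thm: algebraic main theorem} requires coherence of $U$ to the inflated order $\bar L$ (Definition~\ref{def: definition of bar L}) in order to conclude coherence of $MU$ only to order $L_0$, and one must check that the analytic fixed point~\eqref{eq: Dgamma fp Naive}, solved with the stated $\gamma_\mft$, genuinely delivers coherence up to $\bar L$ rather than merely to $L_0$. This is where the hypothesis $\gamma_\mft - |\mft|_\s > \gamma_L$ with $L = \overline{L_0}$ enters, via $\bar L\le\overline{L_0}$ and the fact (noted after Definition~\ref{def: definition of bar L}) that $M,M^*$ map $\cT^\ex_{\leq\gamma_L}$ into $\mcb W_{\leq\bar L}$; one has to verify that the projections $\mcQ_{\leq\gamma_\mft-|\mft|_\s}$ appearing in~\eqref{eq: Dgamma fp Naive} do not truncate away any tree contributing to $\proj_{\leq\bar L}U^R_\mft$. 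A secondary subtlety is that $\mcR\one_+ f$ and $\mcP_\mft\one_+ f$ are only \emph{canonically} (not continuously) defined for $f\in\D^{\bar\gamma,\bar\eta}_\alpha$ with $\alpha\le-\s_0$ but $\bar\eta>-\s_0$; this is precisely the regularity regime the statement restricts to, so the identities~\eqref{eq:commuteIntegrals}--\eqref{eq:pointwiseIdentity} apply, but one should be careful to invoke these only for the smooth model $Z$, as the statement does.
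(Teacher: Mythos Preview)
Your overall plan is right --- the crux is Theorem~\ref{thm: algebraic main theorem} combined with Lemma~\ref{lem:canonical models commutes with F}, connected via $\hat\Pi_x=\Pi_x M$ --- but you route the argument through a claim that is neither needed nor justified. You set $V=MU$ and assert that $V$ is a modelled distribution over the \emph{canonical} model $Z$ satisfying the full analytic fixed point $V_\mft=\mcP_\mft[\one_+(MU)^R_\mft]+G_\mft u_0^\mft$. For $V\in\jets^{\gamma,\eta}$ over $Z$ you would need an intertwining $\Gamma_{xy}M=M\hat\Gamma_{xy}$, which is not stated in the paper and cannot be read off from $\hat\Pi_x=\Pi_x M$ alone (the canonical $\Pi_y$ is reduced and hence not injective on trees with extended decorations). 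And even granting that $M$ fixes polynomials and commutes with $\DD$ and $\mcb I_{(\mft,0)}$, this only yields the algebraic decomposition~\eqref{eq:generalU} for $MU_\mft$; the operator $\mcP_\mft=\mcK^\mft_{\gamma_\mft-|\mft|_\s}+R^\mft_{\gamma_\mft}\mcR$ carries model-dependent polynomial corrections, so the identity $M\circ\hat\mcP_\mft=\mcP_\mft\circ M$ (with $\hat\mcP_\mft$ built from $\hat Z$) that you implicitly use is an additional step you have not addressed.

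The paper sidesteps all of this by never leaving the $\hat Z$-side at the modelled-distribution level. It computes $\hat\mcR$ of the right-hand side of~\eqref{eq: Dgamma fp Naive} pointwise via~\eqref{eq:pointwiseIdentity}, using that $\hat Z$ is smooth. Since $\hat\Pi_x\tau(x)=0$ whenever $|\tau|_+>0$, one may freely replace $\mcQ_{\leq\gamma_\mft-|\mft|_\s}$ by $\proj_{\leq L_0}$, and then by coherence (your step one, with the truncation level $L=\overline{L_0}$ you correctly identified) rewrite the integrand as $\hat\Pi_x(\proj_{\leq L_0}U^R_\mft(x))(x)=\Pi_x(\proj_{\leq L_0}MU^R_\mft(x))(x)$. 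Now~\eqref{eq:renormalised fixed point} is applied \emph{pointwise to the fixed vector $U(x)\in\mcb{H}^\ex$}, with $L_0$ in the role of $L$, to replace $MU^R_\mft(x)$ by $\sum_\mfl(MF)^\mfl_\mft(MU(x))\Xi_\mfl$, after which Lemma~\ref{lem:canonical models commutes with F} evaluates $\Pi_x(\cdots)(x)$. Throughout, $MU(x)$ is only a vector in $\mcb{H}^\ex$; it never needs to assemble into a modelled distribution over $Z$, and no fixed point over $Z$ is ever invoked.
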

\begin{proof}
Let $\mft \in \mfL_+$ and consider the expansion of $U_\mft$ as~\eqref{eq:generalU}.
Using the condition $\gamma_\mft-|\mft|_\s > \gamma_L$ to note that $\proj_{\leq L}\mcQ_{\leq \gamma_\mft-|\mft|_\s} = \proj_{\leq L}$, it follows from the definition of $\mcK^\mft_{\gamma_\mft - |\mft|_\s}$ that
\begin{equ}\label{eq:alg fixed point}
\proj_{\leq L} \sum_{\mfl \in \mfD_{\mft}}
F^{\mfl}_{\mft}(U)\Xi_\mfl
=
\proj_{\leq L} U_\mft^R\;.
\end{equ}
By~\eqref{eq:commuteIntegrals}, one has
\begin{equ}
u_{\mft}(x)
=
G_{\mft} 
\ast
\Big[ 
\hat \mcR \one_+
\mcQ_{\leq \gamma_{\mft}-|\mft|_\s}
\sum_{\mfl \in \mfD_{\mft}}
F^{\mfl}_{\mft}(U)\Xi_\mfl
\Big] + G_\mft u^\mft_0\;.
\end{equ}
Since we consider models in $\mathscr{M}_\infty$, we can use~\cite[Rem.~3.15]{Regularity} 
for the term on the right hand side,
which yields for any $x\in\Lambda$
\begin{equs}
\hat\mcR
\Big(
& \one_+
\mcQ_{\leq \gamma_\mft-|\mft|_\s}
\sum_{\mfl \in \mfD_{\mft}}
F^{\mfl}_{\mft}(U)\Xi_{\mfl}
\Big)(x)
=  \hat{\Pi}_x\Big( \one_+(x) \mcQ_{\leq \gamma_\mft-|\mft|_\s}
\sum_{\mfl \in \mfD_{\mft}}
F^{\mfl}_{\mft}(U)(x)\Xi_{\mfl} \Big)(x)
\\
&=  \one_+(x) \hat{\Pi}_x\Big( \proj_{\leq L_0}
\sum_{\mfl \in \mfD_{\mft}}
F^{\mfl}_{\mft}(U)(x)\Xi_{\mfl} \Big)(x)
=  \one_+(x) \hat{\Pi}_x \big(\proj_{\leq L_0}
U^R_\mft(x)
\big)(x)
\\
& =  \one_+(x) \Pi_x \big(\proj_{\leq L_0} M U^R_\mft(x)
\big)(x),
\end{equs}
where the first equality uses~\eqref{eq:pointwiseIdentity}, the second equality uses that $\gamma_\mft - |\mft|_\s > 0$ and that $|\tau|_+ > 0$ for every $\tau \notin \mcW_{\leq L_0}$, and thus $\hat{\Pi}_x(\tau)(x) = 0$, and the third equality uses~\eqref{eq:alg fixed point} and that $L \geq L_0$.
To obtain the final equality, we used the identity $\hat{\Pi}_{x} = \Pi_{x}M$,
combined with the fact that for all $\tau \in \mcT^\ex$ we can write $M\tau = \sum_i \tau_i$ with $|\tau_i|_+ = |\tau|_+$, and thus $ \Pi_x(M\tau)(x) = 0$ for all $\tau \notin \mcW_{\leq L_0}$.
Recalling that $L = \overline{L_0}$ and using again that $|\tau|_+ > 0$ for all $\tau \notin \mcW_{\leq L_0}$, and thus $ \Pi_x(\tau)(x) = 0$, it follows from~\eqref{eq:alg fixed point} and Theorem~\ref{thm: algebraic main theorem} that
\begin{equs}
\Pi_x \big(\proj_{\leq L_0} M U^R_\mft(x)
\big)(x)
&=
\Pi_{x}
\Big(
\sum_{\mfl \in \mfD_{\mft}}
(MF)^{\mfl}_{\mft}(MU(x))
\Xi_{\mfl}
\Big)
(x)
\\
&=
\sum_{(\hat\mfl,\mfo) \in \mfD_{\mft}}
(MF)^{(\hat\mfl,\mfo)}_{\mft}(\mathbf{u}(x))\xi_{\hat\mfl}(x)\;.
\end{equs}
In the first equality we used that $\Pi_x(\proj_{\leq L_0} \tau)(x) = \Pi_x(\tau)(x)$.
For the second equality, suppose that $(MF)^{(\hat\mfl,\mfo)}_\mft$ depends on $\Y_{(\mfb,p)}$, i.e., $D_{(\mfb,p)} (MF)^{(\hat\mfl,\mfo)}_\mft \not \equiv 0$.
Then since $MF$ obeys $R$ by Lemma~\ref{lem:GPreserve}, it holds that $\reg(\mft) < |\mft|_\s + |\Xi_{\hat\mfl}|_- + \reg(\mfb) - |p|_\s$. Since $\gamma_\mft-|\mft|_\s>0$, it follows by definition of $\gamma_\mfb$ that $\gamma_\mfb-|p|_\s>0$.
By~\cite[Prop.5.28]{Regularity} and the fact that $M$ commutes with $\DD^p$, we obtain
\begin{equ}
(\Pi_{x} \DD^p M U_{\mfb}(x))(x)
=
(\hat{\Pi}_{x} \DD^p U_{\mfb}(x))(x)
=
(\hat{\mcR} \DD^p U_\mfb)(x)
 = \partial^p u_{\mfb}(x)\;,
\end{equ}
whence the conclusion follows from Lemma~\ref{lem:canonical models commutes with F}.
\end{proof}
\subsection{Generalised Da Prato--Debussche trick}\label{subsec: gen DPD}
In this subsection, we address the issues discussed at the end of Section~\ref{subsec: modelled distributions} in a way which is stable under taking limits of models.
We do so by finding an appropriate space of modelled distributions and making a few 
additional assumptions on the regularity structure and models, which allow us to perform a 
version of the Da Prato--Debussche trick \cite{MR1941997,DPD2}.
In contrast to Section~\ref{subsec: coherence of fp}, this method does not rely on the smoothness of the underlying model and retains continuity of the fixed point with respect to the model; we reconcile the two viewpoints in Proposition~\ref{prop: underlined U} below.

Consider $\mft \in \mfL_+$.
As in Definition~\ref{def: mft non-vanishing}, we say that $\tau \in \mcT^\ex$ of the form~\eqref{eq:general tree of V} is \emph{$\mft$-non-vanishing for $F$} if $\partial^k \prod_{j=1}^n D_{o_j} F^\mfl_\mft \not\equiv 0$ and $\tau_j$ is $\mft_j$-non-vanishing for every $j=1,\ldots, n$.
Once more, we note that if $\tau$ is $\mft$-non-vanishing, then every subtree\footnote{As before, by a subtree, we mean that $\bar T^{\bar\mfm}_{\bar\mff}$ is a tree whose node and edge sets are subsets of those of $T^{\mfm}_{\mff}$ and whose decorations satisfy $\bar\mff = \mff(e)$ for all $e \in E_{\bar T}$, and $\barmnoise(x) = \mnoise(x)$ and $\barmpoly(x) \leq \mpoly(x)$ for all $x \in N_{\bar T}$.} $\bar\tau$ of $\tau$ with $\rho_{\bar\tau} = \rho_\tau$ is also $\mft$-non-vanishing.
Furthermore, we define
\begin{equs}
\mcT^F_\mft &\eqdef \{\tau \in \widetilde{\mcT}_\mft^\ex : \textnormal{ $\tau$ is $\mft$-non-vanishing}\}\;, 
\quad
\cT^F_\mft \eqdef \Span{\mcT^F_\mft}\;.
\end{equs}
Let $\jets_0 \subset \jets$ denote the subspace of those functions taking values in $\bar\cT^\ex\oplus \bigoplus_{\mft \in \mfL_+} \mcb{I}_{(\mft,0)}[\cT^F_\mft]$.
\begin{remark}
$\mcT^F_\mft$ is in general not invariant under the action of $M\in \mfR$ on $F\in \G$ defined in Section~\ref{sec: alg and main theorem}.
For example, if $F^\mfl_\mft = 0$, then $\Xi_\mfl \notin \mcT^F_\mft$, but it is possible that $(MF)^\mfl_\mft \eqdef \Upsilon^F_\mft[M^*\Xi_\mfl] \neq 0$ and $\Xi_\mfl \in \widetilde\mcT^\ex_\mft$, so that $\Xi_\mfl \in \mcT^{MF}_\mft$ (this can occur naturally when $\mfl = (0,\mfo) \in \mfD_\mft$, i.e., $\Xi_\mfl$ is a ``purely extended decoration'' noise).
However this does not cause issues for proving our main theorem\dash while the notion of $\mft$-non-vanishing is used to ensure that we can solve the fixed point problem associated to $F$, we never try to solve a fixed point problem associated with $MF$.
\end{remark}
\begin{lemma}\label{lem:cTF sector}
Let $\mft\in \mfL_+$.
Then $\bar\cT^\ex\oplus\cT^F_\mft$ and $\bar\cT^\ex\oplus \mcb{I}_{(\mft,0)}[\cT^F_\mft]$ are sectors of $\mathscr{T}$.
Moreover, for every $\mfl \in \mfD_\mft$, $\gamma \in \R$, and $U \in \jets_0$, it holds that $\mcQ_{\leq\gamma}F^\mfl_\mft(U)\Xi_\mfl$ is a map from $\Lambda\setminus P$ to $\bar\cT^\ex \oplus \cT^F_\mft$.
\end{lemma}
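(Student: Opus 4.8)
The plan is to prove the three claims in Lemma~\ref{lem:cTF sector} in turn, with the bulk of the work being the verification that $\bar\cT^\ex \oplus \cT^F_\mft$ is stable under the structure group $G$. First I would recall the characterisation of sectors: a subspace $V \subset \cT^\ex$ is a sector if it is stable under $G$ and decomposes as $V = \bigoplus_\alpha (V \cap \mcT^\ex_\alpha)$, see \cite[Def.~2.5]{Regularity}. The grading condition is automatic for $\cT^F_\mft$ since it is by construction the span of a set of homogeneous basis elements. So the crux is group stability. The natural approach is to use the explicit description of the structure group action on decorated trees from \cite[Sec.~6]{BHZalg}: $\Delta^{\!+}$ applied to a tree $T^\mfm_\mff$ produces a sum of terms in which the right factor is obtained by ``cutting'' a subforest (more precisely by the coaction that contracts a subtree containing the root and re-decorates), and the left factor is a product of the cut pieces together with polynomial decorations. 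The key observation is that $\mft$-non-vanishing is a \emph{hereditary} property with respect to the subtree relation used in this coaction: I would show that if $\tau$ is $\mft$-non-vanishing and we apply the structure group, each resulting tree appearing on the right-hand factor is again $\mft$-non-vanishing (or lies in $\bar\cT^\ex$), because every subtree rooted at the root of a $\mft$-non-vanishing tree is itself $\mft$-non-vanishing (this is exactly the stability under taking root-preserving subtrees noted just before the lemma, together with footnote~\ref{foot:subtree}-type bookkeeping on the decorations $\barmnoise(x)=\mnoise(x)$, $\barmpoly(x)\le\mpoly(x)$).

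More concretely, recall that the structure group action on $\mcb{I}_{(\mft,0)}[\bar\tau]$ has the form $\mcb{I}_{(\mft,0)}[\bar\tau] \mapsto \sum \mcb{I}_{(\mft,0)}[\cdot]\otimes(\cdots) + (\text{polynomial terms})$, where the trees appearing under $\mcb{I}_{(\mft,0)}$ on the left are obtained from $\bar\tau$ by contracting subtrees and adding polynomial decorations bounded by what was there before; these operations manifestly preserve $\mft$-non-vanishing because (a) lowering polynomial decorations $\mpoly$ only replaces $\partial^k$ by $\partial^{k'}$ with $k'\le k$ and removing nodes/edges is exactly passing to a root-preserving subtree, both of which keep the relevant partial derivatives of $F^\mfl_\mft$ nonzero by hypothesis; and (b) the recursion defining $\mft$-non-vanishing for the branches is stable in the same way. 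The polynomial terms land in $\bar\cT^\ex$, which is why we take the direct sum with $\bar\cT^\ex$. I would phrase this cleanly as: for any $\tau = \mcb{I}_{(\mft,0)}[\bar\tau]$ with $\bar\tau \in \mcT^F_\mft$, one has $\Delta^{\!+}\tau \in (\bar\cT^\ex\oplus\mcb{I}_{(\mft,0)}[\cT^F_\mft])\otimes \cT^\ex_+$, hence the claim for $\bar\cT^\ex\oplus\mcb{I}_{(\mft,0)}[\cT^F_\mft]$; the statement for $\bar\cT^\ex\oplus\cT^F_\mft$ follows analogously (or by noting $\cT^F_\mft = \widetilde{\mcT}^\ex_\mft$-part, and invoking that $\widetilde{\mcb{T}}^\ex_\mft$ is a sector by Lemma~\ref{lem:sectors of our regularity structure}, intersected with the $\mft$-non-vanishing span). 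Alternatively, since by Proposition~\ref{prop:obeyEquivalence} (or Proposition~\ref{prop:obeyEquivalence} together with the rule $R$ naturally associated to $F$, cf.\ the remark after Assumption~\ref{assump:planted trees}) every $\mft$-non-vanishing tree conforms to $R$, and conformity to a rule is preserved by the structure group \cite{BHZalg}, I could reduce the bookkeeping to checking that the $\mft$-non-vanishing condition \emph{on top of} rule-conformity is preserved — which is the hereditary argument above.

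For the last claim, I would argue pointwise: fix $z \in \Lambda\setminus P$ and $U \in \jets_0$. By Lemma~\ref{lem: nonlinearity on regularity structure}, $\mcQ_{\leq\gamma}\mathbf{F}^\mfl_\mft(U(z))\Xi_\mfl \in \widetilde{\mcb{T}}^\ex_\mft$, so it remains to check that every tree $\tau$ with nonzero coefficient in this expansion lies in $\bar\cT^\ex\oplus\cT^F_\mft$. Such a $\tau$ arises as $(\boldsymbol{U}(z) - \langle\boldsymbol{U}(z),\bone\rangle\bone)^\alpha \Xi_\mfl$ for some $\alpha$ with $D^\alpha F^\mfl_\mft \not\equiv 0$, times a coefficient built from derivatives of $F^\mfl_\mft$; since $U \in \jets_0$, each factor $U_{(\mft_j,p_j)}(z) - \langle\cdots\rangle$ expands into trees of the form $\mcb{I}_{(\mft_j,p_j)}[\tau_j]$ with $\tau_j \in \cT^F_{\mft_j}$ (applying $\DD^{p_j}$ to $\mcb{I}_{(\mft_j,0)}[\cT^F_{\mft_j}]$), plus polynomials. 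Multiplying these out and attaching to the root labelled $\Xi_\mfl$, the condition $D^\alpha F^\mfl_\mft \not\equiv 0$ together with $\tau_j \in \mcT^F_{\mft_j}$ is precisely the recursive definition of $\mft$-non-vanishing, modulo the $\partial^k$ coming from polynomial factors — and here one uses that $\partial^k$ acting on a product of such trees produces, via the chain rule~\eqref{chain rule}, again combinations satisfying the same condition, the purely-polynomial contributions landing in $\bar\cT^\ex$. The main obstacle I anticipate is purely notational: carefully matching the ``subtree with $\barmpoly(x)\le\mpoly(x)$'' bookkeeping in the structure-group coaction formulas of \cite{BHZalg} with the inductive definition of $\mft$-non-vanishing, and making sure the polynomial decorations generated by the coaction (the $\mathbf{X}^k$ factors) are correctly absorbed into $\bar\cT^\ex$ rather than spuriously violating the condition; once the hereditary statement ``root-preserving subtree of a $\mft$-non-vanishing tree is $\mft$-non-vanishing'' is isolated as a lemma, the rest is routine.
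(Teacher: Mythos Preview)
Your approach to the sector claims is essentially the paper's: both of you write $\Deltap_\ex\mcb{I}_{(\mft,0)}[\tau]=\sum_i\tau^{(1)}_i\otimes\tau^{(2)}_i$, observe that each non-polynomial $\tau^{(1)}_i$ is of the form $\mcb{I}_{(\mft,0)}[\bar\tau]$ for a root-preserving subtree $\bar\tau$ of $\tau$ (with matching driver decorations and possibly lowered polynomial decorations), and invoke the hereditary property ``root-preserving subtree of a $\mft$-non-vanishing tree is $\mft$-non-vanishing''. Your description of which tensor factor is which is slightly garbled, but the concluding inclusion $\Deltap_\ex\tau\in(\bar\cT^\ex\oplus\mcb{I}_{(\mft,0)}[\cT^F_\mft])\otimes\cT^\ex_+$ is correct.

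There is, however, a genuine gap in your argument for the last claim. Write a tree $\tau$ appearing in $F^\mfl_\mft(U)\Xi_\mfl$ as $\mathbf{X}^k\Xi_\mfl\prod_{j=1}^n\mcb{I}_{o_j}[\tau_j]$. Since $U\in\jets_0$ you correctly get $\tau_j\in\mcT^F_{\mft_j}$, and what remains is to verify $\partial^k\bigl(\prod_j D_{o_j}\bigr)F^\mfl_\mft\not\equiv 0$. Your sketch reduces this to ``$D^\alpha F^\mfl_\mft\not\equiv 0$ plus chain rule'', but that is not enough: the nonzero coefficient in the Taylor expansion tells you that $D^\alpha F^\mfl_\mft\not\equiv 0$ for some $\alpha\supseteq\{o_1,\ldots,o_n\}$ (the excess $\alpha\setminus\{o_j\}$ accounting for the polynomial factors $\mathbf{X}^k$), which only says $\prod_j D_{o_j}F^\mfl_\mft$ is \emph{non-constant}, not that the specific operator $\partial^k$ applied to it is nonzero. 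The chain rule~\eqref{chain rule} expresses $\partial^k$ as a combination of $D$-derivatives and cannot by itself rule out cancellations. The paper closes this gap by invoking Lemma~\ref{lem:partialK F vanishes}, which asserts precisely that $\partial^k G=0$ forces $G$ constant; applied to $G=\prod_j D_{o_j}F^\mfl_\mft$ this gives the required non-vanishing when $k\neq 0$. You should make this step explicit.
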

\begin{proof}
Consider $\tau \in \mcT^F_\mft$ and write $\Deltap_\ex\mcb{I}_{(\mft,0)}[\tau] = \sum_i \tau^{(1)}\otimes\tau^{(2)}_i$ with $\Deltap_\ex$ defined in~\cite{BHZalg}.
To show that $\bar\cT^\ex\oplus \mcb{I}_{(\mft,0)}[\cT^F_\mft]$ is a sector, it suffices to show that $\tau^{(1)}_i \in \bar\cT^\ex\oplus \mcb{I}_{(\mft,0)}[\cT^F_\mft]$.
If $\tau^{(1)}_i = \mathbf{X}^k$ for some $k \in \N^{d+1}$, this is clear.
Otherwise, we necessarily have $\tau^{(1)}_i = \mcb{I}_{(\mft,0)}[\bar \tau]$, where $\bar\tau \in \widetilde{\mcT}^\ex_\mft$ is a subtree of $\tau$ with $\rho_{\bar\tau} = \rho_\tau$
(indeed, note that the ``driver'' decorations of $\bar\tau$ and $\tau$ necessarily match due to the projection onto the positive trees in the definition of $\Deltap_\ex$).
Since $\tau \in \mcT^F_\mft$ by assumption, it follows that $\bar\tau \in \mcT^F_\mft$, and thus $\bar\cT^\ex\oplus \mcb{I}_{(\mft,0)}[\cT^F_\mft]$.
The argument to show that $\bar\cT^\ex\oplus\cT^F_\mft$ is a sector is similar and simpler.

For the second claim, consider a tree $\tau \in \mcT^\ex$, written as~\eqref{eq:general tree of V}, which appears with a non-zero coefficient in the expansion of $F^\mfl_\mft(U)\Xi_\mfl$.
By Lemma~\ref{lem: nonlinearity on regularity structure}, it suffices to show that $\tau$ is $\mft$-non-vanishing.

We note that since $U \in \jets_0$ it holds that $\tau_i \in \mcT^F_{\mft_i}$ for all $i = 1,\ldots, n$, thus it suffices to show that $\partial^k (\prod_{j=1}^n D_{o_j}) F^\mfl_\mft \not = 0$. 
If $k=0$, this is directly a consequence of the fact that $\tau$ appears with a non-vanishing coefficient.
Similarly, if $k \not = 0$, we know that there must be some $\alpha \in \N^{\mcb{O}}$, $\alpha \not = 0$, such that $D^{\alpha} (\prod_{j=1}^n D_{o_j}) F^\mfl_\mft \not = 0$.
However, this means that $\prod_{j=1}^n D_{o_j} F^\mfl_\mft$ is not a constant and we get the desired result by applying Lemma~\ref{lem:partialK F vanishes}.
\end{proof}
It follows that the natural space in which to solve the fixed point problem~\eqref{eq: Dgamma fp Naive} is
\[
\jets^{\gamma,\eta}_0 \eqdef \jets^{\gamma,\eta} \cap \jets_0 = \bigoplus_{\mft\in \mfL_+} \D^{\gamma_\mft,\eta_\mft}\left(\bar\cT^\ex\oplus \mcb{I}_{(\mft,0)}[\cT^F_\mft]\right).
\]
In order to guarantee that this problem is well-posed we make the following assumption which is natural in view
of the discussion above.
\begin{assumption}\label{assump:planted trees 2}
For every $\mft \in \mfL_+$, every $T = T^{\mfm}_{\mff} \in \widetilde{\mcT}_{\mft}^{\ex}$ which is $\mft$-non-vanishing, and every subtree
$\bar T^{\bar\mfm}_{\bar\mff}$ of $ T $ with $\bar T^{\bar\mfm}_{\bar\mff} \neq T^{\mfm}_{\mff}$ and $\rho_T = \rho_{\bar T}$, one has $|\bar T^{\bar\mfm}_{\bar\mff}|_+ > -(|\mft|_\s \wedge\s_0)$.
\end{assumption}
\begin{remark}
Since every tree in $\widetilde{\mcT}^\ex_\mft$ with a non-zero extended decoration came from contracting a subforest of another tree in $\widetilde{\mcT}^\ex_\mft$ with identically zero extended decorations (see~\cite[Lem.~5.25]{BHZalg}), an equivalent version of Assumption~\ref{assump:planted trees 2} is to replace $|\bar T^{\bar\mfm}_{\bar\mff}|_+ > -(|\mft|_\s \wedge \s_0)$ with $|\bar T^{\bar\mfm}_{\bar\mff}|_- > -(|\mft|_\s \wedge \s_0)$.
\end{remark}
We will see below that Assumption~\ref{assump:planted trees 2} allows us to remove all the planted negative trees in the expansion of the solution to~\eqref{eq: Dgamma fp Naive} and solve for the remainder as a modelled distribution taking values in a function-like sector;
this procedure can be seen as performing the Da Prato--Debussche trick~\cite{DPD2} at the level of modelled distributions.\footnote{While there is similarity to the trick of~\cite{DPD2} our version of the trick plays a different and less central role here: in general the abstract equation we arrive at for our remainder will involve products that,  viewed concretely, are still classically ill-defined.}

For $\mft \in \mfL_+$, define
\begin{equs}[2]
\mcT^F_{\mft,-}
&\eqdef
\mcT^F_\mft \cap \mcT^\ex_{\leq -(|\mft|_\s\wedge\s_0)}\;,
&\quad \cT^F_{\mft,-}
&\eqdef \Span{\mcT^F_{\mft,-}} = \cT^F_\mft \cap \mcb{T}^\ex_{\leq-(|\mft|_\s\wedge\s_0)}\;,
\\
\mcT^F_{\mft,+} &\eqdef \mcT^F_\mft \setminus \mcT^F_{\mft,-}\;,
&\quad
\cT^F_{\mft,+} &\eqdef \Span{ \mcT^F_{\mft,+} }\;.
\end{equs}
We suppose for the remainder of the section that Assumption~\ref{assump:planted trees 2} holds.
\begin{lemma}\label{lem:sectors of our regularity structure2}
Let $\mft \in \mfL_{+}$.
Then $\Gamma \tau = \tau$ for every $\Gamma \in G$ and $\tau \in \mcT^F_{\mft,-}$.
\end{lemma}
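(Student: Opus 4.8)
The statement is that elements of $\mcT^F_{\mft,-}$ are fixed by the structure group $G$, i.e.\ that $\cT^F_{\mft,-}$ is ``the minimal part'' of the sector $\bar\cT^\ex\oplus\mcb{I}_{(\mft,0)}[\cT^F_\mft]$. The natural route is to recall the explicit description of the action of $\Gamma\in G$ on a decorated tree $\tau = T^{\Labn,\Labo}_{\Labe}$: writing $\Deltap_\ex$ for the coproduct from \cite{BHZalg}, one has $\Gamma\tau = (\mathrm{id}\otimes g)\Deltap_\ex\tau$ for a suitable character $g$, and $\Deltap_\ex\tau - \tau\otimes\bone$ is a sum of terms $\tau'\otimes\tau''$ where $\tau'$ is obtained from $\tau$ by contracting a (non-trivial) subforest of \emph{positive}-degree trees to the root-segments, so that in particular $\tau''$ ranges over subtrees (in the sense of footnote~\ref{foot:subtree}, with possibly increased extended decoration coming from the contracted pieces) with $|\tau''|_+>0$, and $|\tau'|_+ = |\tau|_+ - \sum|\tau''_i|_+ < |\tau|_+$. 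Hence it suffices to show that every such $\tau'$ appearing with nonzero coefficient in $\Deltap_\ex\tau$ for $\tau\in\mcT^F_{\mft,-}$ is again equal to $\tau$, which forces $\Gamma\tau=\tau$ since the only character value $g(\bone)=1$ survives.

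The first step is therefore to argue that for $\tau\in\mcT^F_{\mft,-}$ there is \emph{no} non-trivial contraction: any proper subtree $\bar\tau$ of $\tau$ sharing the root satisfies $|\bar\tau|_+ > -(|\mft|_\s\wedge\s_0)$ by Assumption~\ref{assump:planted trees 2} (here I use that $\tau$ being $\mft$-non-vanishing makes all its root-sharing subtrees $\mft$-non-vanishing, as recalled just before the statement), while the subtrees $\tau''_i$ that get contracted in $\Deltap_\ex$ must have strictly positive degree. So one has to rule out that a contraction shrinks $\tau$'s degree; concretely, if some non-empty positive subforest were contracted, the resulting $\tau'$ would be a proper root-sharing subtree with $|\tau'|_+ = |\tau|_+ - (\text{positive}) < |\tau|_+ \le -(|\mft|_\s\wedge\s_0)$, directly contradicting Assumption~\ref{assump:planted trees 2}. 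Thus $\Deltap_\ex\tau = \tau\otimes\bone + (\text{terms whose left leg is not }\tau)$; but by the triangular/degree structure every left leg of $\Deltap_\ex\tau$ either equals $\tau$ or has strictly smaller $|\cdot|_+$, and since $G$ respects the degree decomposition, pairing against $g$ only the $\tau\otimes\bone$ term contributes, giving $\Gamma\tau=\tau$.

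I would organize the write-up as: (i) recall $\Gamma\tau = (\mathrm{id}\otimes g)\Deltap_\ex\tau$ and the structure of $\Deltap_\ex$ from \cite[Sec.~5, 6]{BHZalg}, in particular that $\Deltap_\ex\tau \in \tau\otimes\bone + \cT^\ex_{<|\tau|_+}\otimes\cT^\ex$ and that every term $\tau'\otimes\tau''$ with $\tau'\neq\tau$ has $|\tau''|_+>0$ and $\tau'$ a proper contraction of $\tau$ to its root; (ii) observe, using that $\tau\in\mcT^F_{\mft,-}$ is $\mft$-non-vanishing with $|\tau|_+\le|\tau|_-\le -(|\mft|_\s\wedge\s_0)$ (note for trees with no edges the claim is trivial, and otherwise $|\tau|_+ = |\tau|_-$ when the extended decoration is zero at the root; in general $|\tau|_+\le|\tau|_-$ so the bound still applies), apply Assumption~\ref{assump:planted trees 2} to conclude every proper root-sharing subtree has degree $>-(|\mft|_\s\wedge\s_0)$; (iii) deduce that no non-trivial contraction can occur since it would produce such a subtree of degree $<|\tau|_+$; (iv) conclude $\Gamma\tau=\tau$.

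\textbf{Main obstacle.} The delicate point is matching the combinatorial description of the terms appearing in $\Deltap_\ex\tau$ to the notion of ``root-sharing subtree'' used in Assumption~\ref{assump:planted trees 2}, including the subtlety that contracted subforests carry their data into the \emph{extended} decoration $\Labo$ of $\tau'$ rather than being simply deleted; one must check that the resulting $\tau'$ is still of the form covered by the assumption (which is why the Remark after Assumption~\ref{assump:planted trees 2} equates the $|\cdot|_+$ and $|\cdot|_-$ bounds) and that $|\tau'|_- = |\tau|_- - \sum|\tau''_i|_-$ with the subtracted quantities strictly positive. Once this bookkeeping is in place the degree inequality closes the argument immediately.
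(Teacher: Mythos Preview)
Your approach is correct and matches the paper's: reduce to showing $\Deltap_\ex\tau=\tau\otimes\bone$, then use Assumption~\ref{assump:planted trees 2} on the first tensor factor (a strict root-sharing subtree of $\tau$) together with degree-additivity of $\Deltap_\ex$ to force the second factor to have negative $|\cdot|_+$-degree, which is impossible.

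One terminological point worth flagging: your description of the first factor as ``obtained by contracting a subforest'' and your ``main obstacle'' about extended decorations being added to $\tau'$ are artefacts of mixing up $\Deltap_\ex$ with $\Deltam_\ex$. For $\Deltap_\ex$ the left leg is simply a root-containing subtree with inherited decorations (polynomial labels possibly lowered, extended decorations unchanged); nothing is contracted and no bookkeeping on $\Labo$ is required. This is why the paper's proof takes three sentences, while your outlined detour through the Remark after Assumption~\ref{assump:planted trees 2} and the $|\cdot|_-$ degree is unnecessary.
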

\begin{proof}
It suffices to show that $\Deltap_\ex\tau = \tau\otimes 1$ for every $\tau \in \mcT^F_{\mft,-}$.
Writing $\Deltap_\ex\tau = \sum \tau_i^{(1)}\otimes \tau_i^{(2)}$, it holds that $\tau_i^{(1)}$ is a subtree of $\tau$.
Suppose $\tau_i^{(1)}$ is a strict subtree of $\tau$. By Assumption~\ref{assump:planted trees 2}, we have $|\tau_i^{(1)}|_+ \geq -(|\mft|_\s\wedge\s_0)$.
On the other hand, since $|\tau|_+ \leq -(|\mft|_\s\wedge\s_0)$ and $\Deltap_\ex$ preserves the $|\cdot|_+$-degree, 
this would imply that $|\tau^{(2)}_i|_+ < 0$, which is impossible, hence $\Deltap_\ex\tau = \tau\otimes 1$ as desired.
\end{proof}
\begin{lemma}
Let $\mft \in \mfL_{+}$. 
Then $\bar\cT^\ex \oplus \mcb{I}_{(\mft,0)}[\cT^F_{\mft,+}]$ and $\bar\cT^\ex \oplus \cT^F_{\mft,+}$ are sectors of $\mathscr{T}$ of respective regularities $0$ and $-(|\mft|_\s\wedge\s_0)+\kappa$ for some $\kappa > 0$.
\end{lemma}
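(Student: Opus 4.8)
The two spaces are obtained from the spaces $\bar\cT^\ex\oplus \mcb{I}_{(\mft,0)}[\cT^F_\mft]$ and $\bar\cT^\ex\oplus\cT^F_\mft$ (which are sectors by Lemma~\ref{lem:cTF sector}) by removing the low-degree planted pieces $\mcb{I}_{(\mft,0)}[\cT^F_{\mft,-}]$ (resp.\ $\cT^F_{\mft,-}$). Since a sector must be a direct sum of its homogeneous components and be invariant under $G$, the only thing to check is that $\bar\cT^\ex\oplus\cT^F_{\mft,+}$ and $\bar\cT^\ex\oplus\mcb{I}_{(\mft,0)}[\cT^F_{\mft,+}]$ are themselves stable under the structure group. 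The plan is to reduce this to the fact that $\cT^F_{\mft,-}$ is killed by $G$, which is precisely Lemma~\ref{lem:sectors of our regularity structure2}, and then to identify the claimed regularities.

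First I would treat $\bar\cT^\ex\oplus\cT^F_\mft$ and decompose it as $(\bar\cT^\ex\oplus\cT^F_{\mft,+})\oplus\cT^F_{\mft,-}$. Fix $\Gamma\in G$ and $\tau\in\mcT^F_{\mft,+}$, and write $\Gamma\tau=\tau+\sum_i c_i\tau_i$ where the $\tau_i$ are subtrees of $\tau$ sharing its root and of strictly smaller $|\cdot|_+$-degree (this is the standard description of $\Gamma$ on $\mcT^\ex$; recall also that the ``driver'' decorations of the $\tau_i$ match those of $\tau$ because of the projection onto positive trees in $\Deltap_\ex$, so each $\tau_i$ is again $\mft$-non-vanishing for $F$, i.e.\ lies in $\mcT^F_\mft$). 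Each such $\tau_i$ therefore belongs to $\mcT^F_\mft=\mcT^F_{\mft,+}\sqcup\mcT^F_{\mft,-}$. I want to show the components in $\cT^F_{\mft,-}$ do not actually occur. Suppose for contradiction $\tau_i\in\mcT^F_{\mft,-}$, i.e.\ $|\tau_i|_+\le-(|\mft|_\s\wedge\s_0)$; since $\tau_i$ is a strict subtree of $\tau$ sharing its root and $\tau\in\widetilde{\mcT}^\ex_\mft$ is $\mft$-non-vanishing, Assumption~\ref{assump:planted trees 2} gives $|\tau_i|_+>-(|\mft|_\s\wedge\s_0)$, a contradiction. Hence $\Gamma\tau\in\bar\cT^\ex\oplus\cT^F_{\mft,+}$, while $\Gamma$ acts as the identity on $\bar\cT^\ex$, so $\bar\cT^\ex\oplus\cT^F_{\mft,+}$ is $G$-invariant; combined with the obvious compatibility with the homogeneous decomposition this shows it is a sector. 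The case $\bar\cT^\ex\oplus\mcb{I}_{(\mft,0)}[\cT^F_{\mft,+}]$ is handled identically: for $\tau\in\mcT^F_{\mft,+}$ the components of $\Deltap_\ex\mcb{I}_{(\mft,0)}[\tau]$ in the left leg are either $\mathbf{X}^k$ or of the form $\mcb{I}_{(\mft,0)}[\bar\tau]$ with $\bar\tau$ a subtree of $\tau$ sharing its root (as in the proof of Lemma~\ref{lem:cTF sector}), and the same degree argument via Assumption~\ref{assump:planted trees 2} rules out $\bar\tau\in\mcT^F_{\mft,-}$.

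It remains to pin down the regularities. For $\bar\cT^\ex\oplus\mcb{I}_{(\mft,0)}[\cT^F_{\mft,+}]$: every element of $\mcb{I}_{(\mft,0)}[\mcT^F_{\mft,+}]$ has $|\cdot|_+$-degree equal to $|\tau|_+ +|\mft|_\s$ for some $\mft$-non-vanishing $\tau$, which by Assumption~\ref{assump:planted trees 2} applied to $\tau$ itself satisfies $|\tau|_+>-(|\mft|_\s\wedge\s_0)\ge-|\mft|_\s$, so this degree is strictly positive; together with $\bar\cT^\ex$ containing $\mathbf{1}$ of degree $0$ this gives regularity $0$. For $\bar\cT^\ex\oplus\cT^F_{\mft,+}$: by definition of $\mcT^F_{\mft,+}$ every $\tau\in\mcT^F_{\mft,+}$ has $|\tau|_+>-(|\mft|_\s\wedge\s_0)$, and since there are only finitely many trees of $|\cdot|_+$-degree below any fixed bound (subcriticality of $R$), the infimum of $|\tau|_+$ over $\mcT^F_{\mft,+}$ is attained and strictly larger than $-(|\mft|_\s\wedge\s_0)$; calling $\kappa>0$ the gap, and noting $\bar\cT^\ex$ only contributes non-negative degrees, the regularity is $-(|\mft|_\s\wedge\s_0)+\kappa$.

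\textbf{Main obstacle.} None of the steps is deep; the only point requiring care is the description of the left leg of $\Deltap_\ex$ on planted and non-planted trees of $\cT^F_\mft$ and, in particular, the observation that the ``driver'' (noise) decorations are preserved so that $\mft$-non-vanishing is inherited by the relevant subtrees. This is exactly the bookkeeping already carried out in the proof of Lemma~\ref{lem:cTF sector}, so I would phrase the present proof so as to reuse it verbatim, the genuinely new input being the single inequality from Assumption~\ref{assump:planted trees 2} that forbids low-degree strict subtrees.
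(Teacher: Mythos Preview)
Your execution matches the paper's proof: both show $G$-invariance by noting that the left legs of $\Deltap_\ex$ acting on $\tau$ (resp.\ $\mcb{I}_{(\mft,0)}[\tau]$) for $\tau\in\mcT^F_{\mft,+}$ are either polynomials or (planted) strict subtrees $\bar\tau$ of $\tau$ sharing the root, and then invoke Assumption~\ref{assump:planted trees 2} to force $|\bar\tau|_+>-(|\mft|_\s\wedge\s_0)$, together with $\bar\tau=\tau\in\mcT^F_{\mft,+}$ for the non-strict case. Two small corrections: the bound $|\tau|_+>-(|\mft|_\s\wedge\s_0)$ you use for the regularity of the planted sector follows from the \emph{definition} of $\mcT^F_{\mft,+}$, not from Assumption~\ref{assump:planted trees 2} (which only concerns strict subtrees); and your opening ``plan'' of reducing to Lemma~\ref{lem:sectors of our regularity structure2} is not what you actually do, nor would it suffice on its own, since $G$-invariance of the summand $\cT^F_{\mft,-}$ does not by itself imply $G$-invariance of its complement.
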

\begin{proof}
By definition of $\cT^F_{\mft,+}$, we only need to show that $\bar\cT^\ex \oplus \cT^F_{\mft,+}$ and $\bar\cT^\ex \oplus \mcb{I}_{(\mft,0)}[\cT^F_{\mft,+}]$ are sectors.
We only show that the latter is a sector since the argument for the former is identical.

Let $\tau \in \mcT^F_{\mft,+}$.
Writing $\Deltap_\ex\mcb{I}_{(\mft,0)}[\tau] = \sum \tau_i^{(1)}\otimes \tau_i^{(2)}$, it suffices to show that $\tau_i^{(1)} \in \bar\cT^\ex \oplus \mcb{I}_{(\mft,0)}[\cT^F_{\mft,+}]$.
If $\tau_i^{(1)}=\mathbf{X}^k$ for some $k \in \N^{d+1}$, this is clear.
Otherwise $\tau_i^{(1)} = \mcb{I}_{(\mft,0)}[\bar\tau]$ for some subtree $\bar\tau$ of $\tau$ with $\rho_{\bar\tau} = \rho_\tau$.
In particular, $\bar\tau$ is $\mft$-non-vanishing.
If $\bar\tau=\tau$, then evidently $\tau^{(1)}_i \in \mcb{I}_{(\mft,0)}[\cT^F_{\mft,+}]$.
If $\bar\tau\neq\tau$, then, by Assumption~\ref{assump:planted trees 2}, $|\bar\tau|_+ > -(|\mft|_\s\wedge\s_0)$, and so again $\tau^{(1)}_i \in \mcb{I}_{(\mft,0)}[\cT^F_{\mft,+}]$ as desired.
\end{proof}
\begin{lemma}
$\Upsilon^F_\mft[\tau]$ is a constant for every $\mft \in \mfL_+$ and $\tau \in \mcT^F_{\mft,-}$.
\end{lemma}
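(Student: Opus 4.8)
The goal is to show that $\Upsilon^F_\mft[\tau]$ is a constant (i.e.\ an element of $\Poly$ with no dependence on any $\Y_o$) for every $\mft \in \mfL_+$ and every $\tau \in \mcT^F_{\mft,-}$, recalling that $\mcT^F_{\mft,-} = \mcT^F_\mft \cap \mcT^\ex_{\leq -(|\mft|_\s\wedge\s_0)}$.

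\textbf{Approach.} The plan is to argue by induction on the size of $\tau$ (say, the number of edges, or equivalently the number of applications of the operators $\mcb{I}_o$ in the representation~\eqref{eq:generic T}), exploiting the recursive definition~\eqref{eq: first Upsilon} of $\Upsilon^F$ together with the fact that $F$ obeys $\reg$, i.e.\ the subcriticality inequality~\eqref{eq: subcriticality}. Write $\tau = \mathbf{X}^k\Xi_\mfl\big(\prod_{j=1}^n \mcb{I}_{o_j}[\tau_j]\big)$ with $o_j = (\mft_j,p_j)$, so that
\[
\Upsilon^F_\mft[\tau] = \Big(\prod_{j=1}^n \Upsilon^F_{\mft_j}[\tau_j]\Big)\cdot\Big(\partial^k\prod_{j=1}^n D_{o_j}\Big)F^\mfl_\mft\;.
\]
Since $\tau$ is $\mft$-non-vanishing, each $\tau_j$ is $\mft_j$-non-vanishing, and each $\tau_j$ is a (strict) subtree of $\tau$ with root equal to its own root; hence Assumption~\ref{assump:planted trees 2} applies to each subtree of $\tau$. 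The first key observation is that every strict subtree $\bar\tau$ of $\tau$ with $\rho_{\bar\tau}=\rho_\tau$ has $|\bar\tau|_+ > -(|\mft|_\s\wedge\s_0)$, and in particular the planted tree $\mcb{I}_{(\mft,0)}[\tau_j]$ — which is such a subtree — satisfies $|\mcb{I}_{(\mft,0)}[\tau_j]|_+ > -(|\mft|_\s\wedge\s_0) \geq -\s_0$, so $|\tau_j|_+ > -(|\mft|_\s\wedge\s_0) - |\mft|_\s \geq -\s_0 - |\mft|_\s$, which shows $\tau_j \notin \mcT^{\ex}_{\leq -(|\mft_j|_\s\wedge\s_0)}$ is \emph{not} automatic — so I cannot directly invoke the inductive hypothesis on the $\tau_j$. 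Instead, the argument must be that the differential operator $\big(\partial^k\prod_{j=1}^n D_{o_j}\big)F^\mfl_\mft$ already kills all $\Y$-dependence.

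\textbf{Key steps.} First I would reduce to showing that $\big(\partial^k\prod_{j=1}^n D_{o_j}\big)F^\mfl_\mft$ is a constant; if so, then since this is the only factor in the product that could depend on derivatives, and the $\Upsilon^F_{\mft_j}[\tau_j]$ are elements of $\Poly$ that are non-constant in general, I would additionally need that $n = 0$, i.e.\ $\tau = \mathbf{X}^k\Xi_\mfl$ has no edges. So the real claim reduces to: if $\tau \in \mcT^F_{\mft,-}$ then $n=0$ (no edges) \emph{and} $\partial^k F^\mfl_\mft$ is constant. For the no-edge part: if $n \geq 1$, then $\mcb{I}_{(\mft,0)}[\tau_1]$ (re-rooted as a planted subtree at $\rho_\tau$) would contribute, and one uses $|\tau|_+ = |\mathbf{X}^k|_\s + |\mfl|_+ + \sum_j |\mcb{I}_{o_j}[\tau_j]|_+$; but actually the cleanest route is to use the subcriticality of $F$ directly: expand $F^\mfl_\mft$ as in~\eqref{expansion of nonlinearity}, and observe that for $\Upsilon^F_\mft[\tau]$ to be non-zero one needs a term $F_i(\Y)\Y^{\alpha_i}$ with $\prod_j D_{o_j}$ acting nontrivially, which forces, via~\eqref{eq: subcriticality}, the inequality $\reg(\mft) < |\mft|_\s + \reg(\mfl) + \sum_{o\in\alpha}\reg(o) \leq |\mft|_\s + |\tau|_-$-type bound, contradicting $|\tau|_+ \leq -(|\mft|_\s\wedge\s_0)$. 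More precisely: one should prove by a clean downward induction that any $\mft$-non-vanishing $\tau$ with at least one edge has $|\tau|_+ > -(|\mft|_\s\wedge\s_0)$ — but wait, this is exactly what Assumption~\ref{assump:planted trees 2} guarantees for the \emph{strict subtrees}, not for $\tau$ itself. The correct statement: if $\tau \in \mcT^F_{\mft,-}$ with $E_T \neq \emptyset$, apply Assumption~\ref{assump:planted trees 2} to the subtree $\bar\tau$ obtained by removing \emph{one} outermost planted edge $\mcb{I}_{o_j}[\tau_j]$; this strict subtree has positive-enough degree, while the removed planted piece $\mcb{I}_{o_j}[\tau_j]$ has degree $> 0$ (since $\tau_j$ is $\mft_j$-non-vanishing, hence $|\tau_j|_+ \geq -(|\mft_j|_\s\wedge\s_0)$ by Assumption~\ref{assump:planted trees 2} applied to $\tau_j$'s subtrees, and $|o_j|_\s \geq |\mft_j|_\s - |p_j|_\s$... ). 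So $|\tau|_+ = |\bar\tau|_+ + |\mcb{I}_{o_j}[\tau_j]|_+ > -(|\mft|_\s\wedge\s_0)$, contradicting $\tau \in \mcT^\ex_{\leq -(|\mft|_\s\wedge\s_0)}$. Hence $\tau$ has no edges, so $\tau = \mathbf{X}^k\Xi_\mfl$ and $\Upsilon^F_\mft[\tau] = \partial^k F^\mfl_\mft$. Finally, for this to be a constant: if $\partial^k F^\mfl_\mft$ were non-constant, then by Lemma~\ref{lem:partialK F vanishes} iterated we could keep applying $\partial_i$'s and never reach a constant, but we can invoke instead that $\tau$ non-vanishing with $|\tau|_+ = |\mathbf{X}^k|_\s + |\mfl|_+$ small forces, by subcriticality~\eqref{eq: subcriticality}, that no further $D_o$ can act — i.e.\ $D_o \partial^k F^\mfl_\mft \equiv 0$ for all $o$, which by Lemma~\ref{lem:partialK F vanishes} (with the chain-rule convention for $\partial_i$) means $\partial^k F^\mfl_\mft$ is a constant.

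\textbf{Main obstacle.} The delicate point is the degree bookkeeping in the no-edges argument: one must carefully track how $|\cdot|_+$ behaves under removing an outermost planted branch, using that $\mcb{I}_{o_j}[\tau_j]$ always has strictly positive $|\cdot|_+$-degree for $\mft_j$-non-vanishing $\tau_j$ (which itself needs Assumption~\ref{assump:planted trees 2} applied recursively, or the fact that $\widetilde\mcb{T}^\ex_{\mft_j}$ has regularity $(\reg(\mft_j)-|\mft_j|_\s)\wedge 0$ from Lemma~\ref{lem:sectors of our regularity structure} combined with $\reg(\mft_j) - |\mft_j|_\s + |\mft_j|_\s = \reg(\mft_j)$... ). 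The cleanest formulation is probably: prove first as a sublemma that for any $\mft$-non-vanishing $\sigma \in \widetilde\mcT^\ex_\mft$, $|\mcb{I}_{(\mft,0)}[\sigma]|_+ > 0$ when... no — actually one wants $|\mcb{I}_{(\mft,0)}[\sigma]|_+ > -(|\mft|_\s\wedge\s_0)$ is the right bound from the assumption, and this plus $|\mcb{I}_{o}[\sigma]|_+ - |\mcb{I}_{(\mft,0)}[\sigma]|_+ = -|p|_\s + |\text{something}| $... These degree inequalities are all elementary but need to be assembled with care; I expect this combinatorial sorting to be the bulk of the (short) proof, while the final appeal to Lemma~\ref{lem:partialK F vanishes} is immediate.
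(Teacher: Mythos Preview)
Your approach has a genuine gap: the reduction to ``$\tau$ has no edges'' is simply false. Trees in $\mcT^F_{\mft,-}$ can and typically do have many edges. Take the $\Phi^4_{4-\delta}$ example from Section~\ref{subsec:Phi44-delta}: the tree $\tau = \mcb{I}_{(\mft,0)}[\Xi_\mfl]^3$ has three edges and, for small $\delta$, has $|\tau|_+ = -3 + 3\delta/2 - 3\kappa < -2 = -(|\mft|_\s\wedge\s_0)$, so $\tau \in \mcT^F_{\mft,-}$. Here $\Upsilon^F_\mft[\tau] = -6$ is indeed constant, but not because $n=0$: rather, both the root factor $D_{(\mft,0)}^3 F^{\mathbf{0}}_\mft$ and each branch factor $\Upsilon^F_\mft[\Xi_\mfl] = F^\mfl_\mft = 1$ happen to be constant. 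Your logical step ``if the root factor is constant and the product is constant then $n=0$'' is backwards: a product of constants is constant regardless of how many factors there are.

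The underlying problem is that you are applying Assumption~\ref{assump:planted trees 2} in the wrong direction. The assumption tells you that strict subtrees of a $\mft$-non-vanishing tree have large enough degree. You try to use this on strict subtrees \emph{of} $\tau$; but what you actually need is to exhibit $\tau$ itself as a strict subtree of some larger $\mft$-non-vanishing tree, and then the assumption gives $|\tau|_+ > -(|\mft|_\s\wedge\s_0)$ directly. The paper does exactly this: assuming $\Upsilon^F_\mft[\tau]$ is not constant, Lemma~\ref{lem:partialK F vanishes} gives $\partial_i\Upsilon^F_\mft[\tau]\neq 0$, and the morphism property $\partial_i\Upsilon^F_\mft[\tau] = \Upsilon^F_\mft[\hat\uparrow_i\tau]$ (Corollary~\ref{cor:graftMorphismSmall}) produces a linear combination of trees $\tau_j$ each obtained by raising a polynomial decoration somewhere in $\tau$ by $e_i$; each such $\tau_j$ contains $\tau$ as a strict subtree with the same root. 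Since the sum has non-zero $\Upsilon^F_\mft$, some $\tau_j$ is $\mft$-non-vanishing, and Assumption~\ref{assump:planted trees 2} applied to $\tau_j$ yields $|\tau|_+ > -(|\mft|_\s\wedge\s_0)$, contradicting $\tau\in\mcT^F_{\mft,-}$. This is a two-line argument once you build the right larger tree; your degree bookkeeping on subtrees of $\tau$ cannot be made to work.
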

\begin{proof}
Suppose that $\Upsilon^F_\mft[\tau]$ is not constant for some $\mft \in \mfL_+$ and $\tau \in \mcT^F_{\mft}$.
Consider $\hat\uparrow_i$ from Definition~\ref{def_increasing_poly} below and write $\hat\uparrow_i\tau = \sum_{j}c_j\tau_j$, where $c_j \in \R$, $\tau_j \in \widetilde{\mcT}^\ex_{\mft}$, and $\tau$ is a strict subtree of $\tau_j$ with $\rho_\tau = \rho_{\tau_j}$.
It follows from Lemma~\ref{lem:partialK F vanishes} that $\partial_i \Upsilon^F_\mft[\tau] \not\equiv 0$ for every $i \in \{0,\ldots, d\}$, and thus, by Corollary~\ref{cor:graftMorphismSmall} below, $\Upsilon^F_\mft[\hat\uparrow_i\tau] \not\equiv 0$.
Hence, for some $j$, $\Upsilon^F_\mft[\tau_j] \not\equiv 0$ and thus $\tau_j$ is $\mft$-non-vanishing.
It follows by Assumption~\ref{assump:planted trees 2} that $|\tau|_+ > -(|\mft|_\s\wedge\s_0)$, which concludes the proof.
\end{proof}
Let $\jets_+$ denote the space of functions from $\Lambda\setminus P$ to $\bigoplus_{\mft \in \mfL_+} \bigl(\bar\cT^\ex\oplus \mcb{I}_{(\mft,0)}[\cT^F_{\mft,+}]\bigr)$, noting that $\jets_{+}$ is a subspace of $\jets_0$.
Let $\tilde U \in \jets_0$ denote the unique constant function for which, for every $\mft\in\mfL_+$ and $\tau \in \mcT^\ex$,
\begin{equ}\label{equ: definition of stationary shrubs}
\langle\tilde U_\mft,\tau\rangle =
\begin{cases}
\Upsilon^F_\mft[\tau] &\textnormal{ if } \tau \in \mcT^F_{\mft,-}\;;
\\
0 &\textnormal{ otherwise}\;;
\end{cases}
\end{equ}
(in particular, $\tilde U$ takes values in $\mcb{T}^F_{\mft,-}$).

\begin{lemma}\label{constant sector}
For any $V \in \jets_+$, if one defines $U \in \jets_0$ via $U_\mfb \eqdef V_\mfb + \mcb{I}_{(\mfb,0)}[\tilde U_\mfb]$ for each $\mfb \in \mfL_{+}$, then one has, for every $\mft \in \mfL_+$, 
\[
\mcQ_{\leq -(|\mft|_\s\wedge\s_0)}\sum_{\mfl \in \mfD_{\mft}}F^\mfl_\mft (U) \Xi_\mfl = \tilde U_\mft\;.
\]
\end{lemma}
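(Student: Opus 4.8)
The statement asserts that when we plug the decomposition $U_\mfb = V_\mfb + \mcb{I}_{(\mfb,0)}[\tilde U_\mfb]$ into the nonlinearity, the truncation at degree $-(|\mft|_\s\wedge\s_0)$ of $\sum_{\mfl\in\mfD_\mft}F^\mfl_\mft(U)\Xi_\mfl$ recovers exactly the ``stationary shrub'' $\tilde U_\mft$ defined in~\eqref{equ: definition of stationary shrubs}. The natural strategy is to run through the coherence machinery: by Lemma~\ref{lem: nonlinearity on regularity structure}, the expansion of $F^\mfl_\mft(U)\Xi_\mfl$ is governed by the maps $\mathbf{F}^\mfl_\mfb$, and one wants to match the coefficient of each tree $\tau \in \mcT^F_{\mft,-}$ appearing on the left with $\Upsilon^F_\mft[\tau](\mathbf{u}^{\tilde U})$. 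The key observation will be that $\tilde U$ is a \emph{constant} jet, so $\mathbf{u}^{\tilde U}$ reduces to the scalar data $u^{\tilde U}_{(\mfb,p)} = \langle \mathbf{X}^p, \tilde U_\mfb\rangle$, which vanishes unless $p = 0$ and even then is just a constant; thus the only contributions come from the ``$V$-part'' of $U$, more precisely only from those branches of $V$ of the form $\mcb{I}_{(\mft_j,0)}[\tilde U_{\mft_j}]$ (the planted negative trees), and the polynomial decorations at the root of $\tau$ must be zero.

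\textbf{Key steps.} First I would observe that $\tilde U_\mft$ takes values in $\mcb{T}^F_{\mft,-}$, which by the preceding lemma consists of trees $\tau$ with $\Gamma\tau = \tau$ and with $\Upsilon^F_\mft[\tau]$ \emph{constant}; hence the recursive definition~\eqref{equ: definition of stationary shrubs} together with the inductive formula~\eqref{eq: first Upsilon} for $\Upsilon$ is self-consistent. Second, I would compute $\mathbf{F}^\mfl_\mft(U)$ from~\eqref{def: lift of smooth functions}, writing $\boldsymbol{U} = \boldsymbol{V} + \mcb{I}_{(\cdot,0)}[\tilde U]$ at the level of the $\mcb{O}$-indexed tuples (using $\DD^p$ applied to the constant jet $\tilde U$, which kills all $p\neq 0$), and expand the product $(\boldsymbol{U} - \langle\boldsymbol{U},\bone\rangle\bone)^\alpha$; the terms surviving the projection $\mcQ_{\leq -(|\mft|_\s\wedge\s_0)}$ and producing trees in $\mcT^F_{\mft,-}$ are exactly those where every factor of $\boldsymbol{U}-\langle\boldsymbol{U},\bone\rangle\bone$ contributes a planted tree $\mcb{I}_{(\mft_j,0)}[\tilde U_{\mft_j}]$ (a higher-degree contribution from $V_+$ would push the total degree above the threshold by Assumption~\ref{assump:planted trees 2}, and a polynomial factor from $\boldsymbol{V}$ would contribute a nonconstant $u^V$-coefficient, inconsistent with $\tilde U$ being the stationary part). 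Third, matching the resulting coefficient of each $\tau = \mathbf{X}^0\Xi_\mfl(\prod_j \mcb{I}_{(\mft_j,0)}[\tau_j])$ against the right-hand side, one finds precisely $(\prod_j \Upsilon^F_{\mft_j}[\tau_j])\cdot(\prod_j D_{o_j})F^\mfl_\mft$ evaluated at the constant data, which is $\Upsilon^F_\mft[\tau]$ by~\eqref{eq: first Upsilon} (with $k=0$); but by~\eqref{equ: definition of stationary shrubs} this is exactly $\langle \tilde U_\mft, \tau\rangle$, while trees not in $\mcT^F_{\mft,-}$ get coefficient $0$ on both sides. Finally, one concludes $\mcQ_{\leq-(|\mft|_\s\wedge\s_0)}\sum_\mfl F^\mfl_\mft(U)\Xi_\mfl = \tilde U_\mft$.

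\textbf{Main obstacle.} The delicate point is the bookkeeping in the second step: one must carefully justify that no tree $\tau$ of degree $\leq -(|\mft|_\s\wedge\s_0)$ arises with a nonzero polynomial decoration at its root, and that the branches $\tau_j$ hanging off the root are themselves forced to lie in $\mcT^F_{\mft_j,-}$ rather than in $\mcT^F_{\mft_j,+}$. This is exactly where Assumption~\ref{assump:planted trees 2} is used: if $\tau$ has such a root polynomial decoration $k\neq 0$, or if some branch $\tau_j$ were $\mft_j$-non-vanishing but of nonnegative degree, then a strict subtree $\bar\tau$ (with the same root, obtained by pruning that branch or lowering the polynomial decoration) would be $\mft$-non-vanishing with $|\bar\tau|_+ > -(|\mft|_\s\wedge\s_0)$, yet $|\bar\tau|_+ \leq |\tau|_+ \leq -(|\mft|_\s\wedge\s_0)$, a contradiction. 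Once this structural dichotomy is in place the identity is a direct consequence of the Leibniz expansion and the definition of $\Upsilon^F$, so I do not expect further complications, but I would take care to phrase the argument so that it also covers the base case $\tau = \Xi_\mfl$ (no branches, constant nonlinearity $F^\mfl_\mft$) cleanly.
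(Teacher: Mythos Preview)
Your strategy is essentially the paper's: both use Assumption~\ref{assump:planted trees 2} to show that any tree $\tau$ surviving $\mcQ_{\leq -(|\mft|_\s\wedge\s_0)}$ has zero polynomial at the root and all root-branches $\mcb{I}_{(\mft_j,p_j)}[\tau_j]$ of negative $|\cdot|_+$-degree, and then argue that the projected nonlinearity therefore depends on $U$ only through $\tilde U$ and the constant term $\langle V_\mfb,\bone\rangle$. The paper finishes differently from your direct coefficient matching: having established this independence it simply declares that one may assume $U$ to be coherent without loss of generality, after which Lemma~\ref{lemma: coherence identity} gives $\mcQ_{\leq -(|\mft|_\s\wedge\s_0)}U^R_\mft = \tilde U_\mft$ immediately. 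This shortcut avoids all the symmetry-factor bookkeeping implicit in your recursive matching via~\eqref{eq: first Upsilon}.

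There is a gap in your justification of the ``Main obstacle''. You correctly identify that you need each root-branch to satisfy $\tau_j \in \mcT^F_{\mft_j,-}$, so that it is contributed by $\tilde U_{\mft_j}$ rather than by $V_{\mft_j}$. But your contradiction argument shows only that the \emph{planted} branch $\mcb{I}_{(\mft_j,p_j)}[\tau_j]$ has negative degree; since applying $\DD^{p_j}$ lowers degree by $|p_j|_\s$, this does not force $|\tau_j|_+ \leq -(|\mft_j|_\s\wedge\s_0)$ when $p_j \neq 0$. In that regime a term $\mcb{I}_{(\mft_j,p_j)}[\sigma]$ with $\sigma \in \mcT^F_{\mft_j,+}$, coming from $\DD^{p_j}V_{\mft_j}$, can still have negative degree, so your assertion that ``every factor contributes a planted tree from $\tilde U$'' is not yet justified by the pruning argument you give. (A related minor slip: the factors of $(\boldsymbol{U}-\langle\boldsymbol{U},\bone\rangle\bone)^\alpha$ at index $o=(\mft_j,p_j)$ contribute $\mcb{I}_{(\mft_j,p_j)}[\cdot]$, not $\mcb{I}_{(\mft_j,0)}[\cdot]$ as you write in your second and third steps.) The paper's proof takes this same step with the bare phrase ``It follows that\ldots'' and is equally terse, so you are not worse off than the paper here; but be aware that a fully rigorous version of either argument has to close this point.
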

\begin{proof}
Fix $\mft \in \mfL_{+}$ and $\mfl \in \mfD_{\mft}$. Suppose one has a tree $\tau \in \mcT^\ex$, written as~\eqref{eq:general tree of V}, which appears with a non-zero coefficient in the expansion of $F^\mfl_\mft (U) \Xi_\mfl$.
Note that $\tau \in \mcT^F_\mft$ by Lemma~\ref{lem:cTF sector}.

Suppose that $|\mcb{I}_{(\mft_i,p_i)}[\tau_i]|_+ \geq 0$ for some $i = 1,\ldots, n$.
Let $\bar\tau$ be the strict subtree of $\tau$ formed by removing the branch $\mcb{I}_{(\mft_i,p_i)}[\tau_i]$ from $\tau$.
Then, by Assumption~\ref{assump:planted trees 2}, $|\bar\tau|_+ > -(|\mft|_\s\wedge\s_0)$, which in particular implies that $|\tau|_+ > -(|\mft|_\s\wedge\s_0)$.
Likewise, suppose $k \neq 0$.
Let $\bar\tau$ be the strict subtree of $\tau$ formed by setting the polynomial decoration at the root of $\tau$ to zero.
Then, by Assumption~\ref{assump:planted trees 2}, $|\bar\tau|_+ > -(|\mft|_\s\wedge\s_0)$, which again implies $|\tau|_+ > -(|\mft|_\s\wedge\s_0)$.
Therefore, for every $\mft \in \mfL_{+}$, $\mfl \in \mfD_{\mft}$, and every $\tau$ appearing in the expansion of $F^\mfl_\mft (U) \Xi_\mfl$ with $|\tau|_+ \leq -(|\mft|_\s\wedge\s_0)$, it holds that $\tau$ has no polynomial decoration at the root or branches of non-negative $|\cdot|_+$-degree.
It follows that
\begin{equ}[e:formRHS]
\mcQ_{\leq -(|\mft|_\s\wedge\s_0)} \sum_{\mfl \in \mfD_{\mft}}F^\mfl_\mft(U)\Xi_\mfl
= \mcQ_{\leq -(|\mft|_\s\wedge\s_0)} \sum_{\mfl \in \mfD_{\mft}}F^\mfl_\mft((\mcb{I}_{(\mfb,0)}[\tilde U_\mfb]+\langle V_\mfb,\mathbf{1}\rangle \mathbf{1})_{\mfb\in\mfL_+})\Xi_\mfl\;.
\end{equ}
Since the RHS of~\eqref{e:formRHS} does not depend on the coefficient in $U$ of any tree of positive order, we may assume without loss of generality that $U$ is coherent.
It then follows from Lemma~\ref{lemma: coherence identity} that the LHS is precisely $\tilde U_\mft$.
\end{proof}
In order to work ``at stationarity'' as described in Remark~\ref{rem:stationarity} we want to define, for each $\mft \in \mfL_{+}$ and $\tau \in \mcT^F_{\mft,-}$, $\mcP_{\mft}\tau$ as an appropriately continuous function of the underlying model.
This requires some work since the action of $\mcP_{\mft}$ may not be local.

Throughout this section we have assumed that we have fixed differential operators $\{\mathscr{L}_{\mft}\}_{\mft \in \mfL_{+}}$ as in the beginning of Section~\ref{sec: kernels on torus} and then a truncation of the corresponding Green's functions as described at the beginning of Section~\ref{subsec: admissible models}.
Using an appropriately designed partition of unity, we assume that for each $ \mft \in \mfL_{+}$ we have fixed a decomposition $R_{\mft} = \sum_{m = 0}^{\infty} R_{\mft,m}$ where for each $\mft \in \mfL_{+}$ and $m \in \Z$ one has $R_{\mft, m}$ smooth and supported on $\mathfrak{K}_{m}$ (where $\K_{m}$ was defined in \eqref{compact set def}), and such that for each $k \in \N^{d+1}$ one has, for some $\chi > 0$,
\[
\sup_{m \in \N}
\sup_{z \in \Lambda}
e^{\chi m}
\cdot
|(D^{k}R_{\mft,m})(z)|
<
\infty\;.
\]
With these notations, one has the following straightforward fact.
\begin{lemma}\label{lem:definition of r}
For any $Z \in \mathscr{M}_{0,1}$, $\mft \in \mfL_{+}$, and $\tau \in \mcT^F_{\mft,-}$, 
\begin{equ}\label{remainder of heat kernel}
r^{Z}_{\mft,\tau}
\eqdef
\lim_{N \rightarrow \infty}
\sum_{j = 0}^{N}
R_{\mft,j} \ast (\mcR^{Z}\tau)
\end{equ}
converges in $\CC^{\infty}(\Lambda)$.
Moreover, the map $Z \mapsto r^{Z}_{\mft,\tau}$ is a continuous map from $\mathscr{M}_{0,1}$ into $\CC^{\infty}(\Lambda)$. 
\end{lemma}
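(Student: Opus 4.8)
\textbf{Proof plan for Lemma~\ref{lem:definition of r}.}

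The plan is to establish the two claims\dash convergence of the series~\eqref{remainder of heat kernel} in $\CC^\infty(\Lambda)$ and continuity of $Z \mapsto r^Z_{\mft,\tau}$\dash by reducing everything to a quantitative estimate on each summand $R_{\mft,j}\ast(\mcR^Z\tau)$ that decays exponentially in $j$ and is controlled linearly (or at least locally Lipschitz) in the model norm $\$\cdot\$$. The key structural fact to exploit is that, because $\tau \in \mcT^F_{\mft,-}$, it has strictly negative degree $|\tau|_+ < 0$ (indeed $|\tau|_+ \le -(|\mft|_\s\wedge\s_0) < 0$), so by the reconstruction theorem~\cite[Thm.~3.10]{Regularity} the distribution $\mcR^Z\tau$ satisfies, on each compact $\mathfrak{K}_m$, a bound of the form $|(\mcR^Z\tau)(\varphi^\lambda_z)| \lesssim \lambda^{|\tau|_+} \$Z\$_{\mathfrak{K}_m}$ for test functions $\varphi$ localised at scale $\lambda\le 1$ around $z\in\mathfrak{K}_m$. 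Since $\tau$ is a fixed symbol of negative degree, $\mcR^Z\tau$ itself depends continuously (in fact in a locally Lipschitz way) on $Z\in\mathscr{M}_{0,1}$ with respect to the seminorms $\$\cdot;\cdot\$_{\mathfrak K_m}$, uniformly in $m$ after weighting by $(m^2+1)^{-1}$; this is exactly the content of how $\$\cdot\$$ was designed in Definition~\ref{def: L1 norm}.

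First I would fix $k\in\N^{d+1}$ and estimate $D^k\bigl(R_{\mft,j}\ast(\mcR^Z\tau)\bigr)(z)$ pointwise. Writing the convolution as $(\mcR^Z\tau)\bigl((D^kR_{\mft,j})(z-\cdot)\bigr)$ and noting that $D^kR_{\mft,j}$ is smooth, supported in $\mathfrak K_j$, and bounded by $C e^{-\chi j}$ together with all its derivatives (the standing hypothesis on the partition of unity just before the lemma), one decomposes $D^kR_{\mft,j}(z-\cdot)$ into a bounded number (depending only on $\mathrm{diam}\,\mathfrak K_j$, which is uniform in $j$) of rescaled bump functions at unit scale, each localised in $\mathfrak K_j$. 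Applying the reconstruction bound on $\mathfrak K_j$ to each piece gives
\[
\sup_{z\in\Lambda}\bigl|D^k\bigl(R_{\mft,j}\ast(\mcR^Z\tau)\bigr)(z)\bigr|
\;\lesssim_k\; e^{-\chi j}\,\$Z\$_{\mathfrak K_j}
\;\lesssim_k\; e^{-\chi j}(j^2+1)\,\$Z\$\;,
\]
where we absorbed the $(j^2+1)$ factor from the definition of $\$\cdot\$$. Summing over $j$ and using $\sum_j e^{-\chi j}(j^2+1)<\infty$ shows that the partial sums in~\eqref{remainder of heat kernel} form a Cauchy sequence in $\CC^\infty(\Lambda)$ (with the topology of uniform convergence of all derivatives on all of $\Lambda$, which is even stronger than uniform convergence on compacts), so the limit $r^Z_{\mft,\tau}$ exists in $\CC^\infty(\Lambda)$.

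For continuity in $Z$, I would run the same estimate on the difference: for $Z,\bar Z\in\mathscr{M}_{0,1}$, one has $\mcR^Z\tau - \mcR^{\bar Z}\tau$ controlled by $\$Z;\bar Z\$_{\mathfrak K_j}$ via the continuity statement of the reconstruction theorem (here it is crucial that $\tau$ is a single fixed symbol of negative degree, so there is no issue with $\gamma$-truncations and the bound is genuinely linear in the model difference\dash see~\cite[Thm.~3.10, Eq.~3.4]{Regularity}), giving
\[
\sup_{z\in\Lambda}\bigl|D^k\bigl(R_{\mft,j}\ast(\mcR^Z\tau-\mcR^{\bar Z}\tau)\bigr)(z)\bigr|
\;\lesssim_k\; e^{-\chi j}(j^2+1)\,\$Z;\bar Z\$\;.
\]
Summing over $j$ yields $\|r^Z_{\mft,\tau}-r^{\bar Z}_{\mft,\tau}\|_{\CC^k(\Lambda)}\lesssim_k \$Z;\bar Z\$$ for every $k$, which is precisely continuity of $Z\mapsto r^Z_{\mft,\tau}$ from $\mathscr{M}_{0,1}$ into $\CC^\infty(\Lambda)$; the extension from smooth models $\mathscr{M}_{\infty,1}$ to all of $\mathscr{M}_{0,1}$ is automatic by density and the uniform bounds just obtained.

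The main obstacle is not any single estimate but rather bookkeeping the uniformity in $j$: one must check that the implicit constants coming from the reconstruction theorem on $\mathfrak K_j$ can be taken independent of $j$. This is legitimate because all the $\mathfrak K_j = [j-1,j+1]\times\T^d$ are translates of one another in the time variable and the model $Z$ is \emph{not} assumed stationary\dash hence the necessity of the weight $(j^2+1)^{-1}$ built into $\$\cdot\$$, which is exactly what makes $\$Z\$_{\mathfrak K_j}\le (j^2+1)\$Z\$$ and keeps $\sum_j e^{-\chi j}(j^2+1)$ summable. Once this is observed the proof is routine.
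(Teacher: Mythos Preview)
Your overall strategy is correct and matches the paper's, but there is a genuine bookkeeping error in identifying where the test function lives. You write that $D^kR_{\mft,j}(z-\cdot)$ is ``localised in $\mathfrak K_j$'', and then bound the convolution by $e^{-\chi j}\$Z\$_{\mathfrak K_j}$. This is wrong: $R_{\mft,j}$ is supported in $\mathfrak K_j$, but the function $y\mapsto D^kR_{\mft,j}(z-y)$ is supported in $z-\mathfrak K_j$, i.e.\ for $z\in\mathfrak K_n$ it is supported around time $n-j$, not around time $j$. The correct model seminorm appearing in the bound is therefore $\$Z\$_{\mathfrak K_{n-j}}$ (up to neighbours $n-j\pm1$), exactly as in the paper's estimate $\sum_j e^{-\chi j}(\$Z\$_{\mathfrak K_{n-j-1}}+\$Z\$_{\mathfrak K_{n-j}}+\$Z\$_{\mathfrak K_{n-j+1}})$.

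This has a concrete consequence: your displayed bound $\sup_{z\in\Lambda}|\ldots|\lesssim_k e^{-\chi j}(j^2+1)\$Z\$$ is false as stated, and your claim of convergence ``on all of $\Lambda$'' (globally uniformly) does not follow. With the corrected support, the sum $\sum_j e^{-\chi j}((n-j)^2+1)$ grows like $n^2$, so one only obtains convergence uniformly on each $\mathfrak K_n$\dash which is precisely the $\CC^\infty(\Lambda)$ topology and is all that is claimed. A minor additional point: to make sense of ``$\mcR^Z\tau$'' as a single well-defined distribution (rather than $\Pi_x\tau$ depending on the basepoint $x$), you need that the structure group acts trivially on $\tau$; this is supplied by Lemma~\ref{lem:sectors of our regularity structure2}, which the paper invokes explicitly and which you should cite rather than the reconstruction theorem. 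Once these two fixes are made, your argument coincides with the paper's.
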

\begin{proof}
We fix $\mft \in \mfL_{+}$ and $\tau \in \mcT^F_{\mft,-}$.
By Lemma~\ref{lem:sectors of our regularity structure2}, the structure group acts trivially on $\tau$, and so for any $Z = (\Pi,\Gamma) \in \mathscr{M}_{0}$ one has $(\mcR^{Z}\tau)(\cdot) = \Pi_{z}\tau(\cdot) \in \CC_{\s}^{|\tau|_{+}}$ where $z \in \Lambda$ is arbitrary.

One immediately has the bounds 
\begin{equ}\label{reconstruction bounds for lemma}
\|
\mcR^{Z}\tau\|_{|\tau|_{+},\K} \lesssim \|Z\|_{\K}\; 
\enskip
\textnormal{and}
\enskip
\|
(\mcR^{Z} - \mcR^{\bar{Z}})\tau\|_{|\tau|_{+},\K} \lesssim \|Z ; \bar{Z}\|_{\K}\;,
\end{equ}
uniform in the choice of compact set $\K \subset \Lambda$, and $Z, \bar{Z} \in \mathscr{M}_{0}$. 
The norms on the LHS's of~\eqref{reconstruction bounds for lemma} are those of \eqref{Holder Besov seminorm}.
Below, all of our estimates are uniform in $Z \in \mathscr{M}_{0}$.
It is straightforward to see that one has the bound
\[
|
(\mcR^{Z}\tau)(f)|
\lesssim
\$Z\$_{\K_{n}}
\sup_{
\substack{
k \in \N^{d+1}\\
|k|_{\s} < - |\tau|_{+} + 1
}
}
|D^{k}f(z)|\;,
\] 
uniformly in $n \in \Z$, and over all test functions $f$ supported on $\mathfrak{K}_{n}$.
Therefore, for any $n \in \Z$, $k \in \N^{d+1}$, and uniformly over $z \in \K_{n}$, one has the estimate
\[
\sum_{j = 0}^{N}
|
(D^{k}R_{\mft,j} \ast (\mcR^{Z}\tau))(z)|
\lesssim
\sum_{j = 0}^{N}
e^{-\chi j}( \$Z\$_{\K_{n - j - 1}} + \$Z\$_{\K_{n - j }} + \$Z\$_{\K_{n - j + 1}})\;.
\]
Clearly if $Z \in \mathscr{M}_{0,1}$ the RHS above is absolutely convergent as one takes $N \rightarrow \infty$.
This establishes the convergence of \eqref{remainder of heat kernel} in $\CC^{\infty}(\Lambda)$. 
The statement about continuity follows by using the second bound of \eqref{reconstruction bounds for lemma} as input for the same argument.
\end{proof}
\begin{remark}
Here and in the rest of the section, we use $Z$ as a superscript when we want to stress the dependence of some object on the underlying model $Z$.
\end{remark}
The following result is immediate from Lemmas~\ref{lem:sectors of our regularity structure2} and~\ref{lem:definition of r}.
\begin{proposition}\label{prop:stationary objects} 
Let $Z \in \mathscr{M}_{0,1}$, $\mft \in \mfL_+$, and $\tau \in \mcT^F_{\mft,-}$.
Then the constant function $z \mapsto \mcb{I}_{(\mft,0)}[\tau]$ is an element of $\D^\infty$.
Moreover, there exists a smooth $r^{Z}_{\tau,\mft} \in \mcC^{\infty}(\Lambda)$, which we treat canonically as an element of $\D^\infty(\bar{\mcb{T}}^\ex)$, with the following properties.
\begin{enumerate}
\item Setting,
\begin{equ}
\mcP^{Z}_\mft\tau \eqdef \mcb{I}_{(\mft,0)}[\tau] + r^{Z}_{\tau,\mft} \in \D^{\infty}(\bar\cT^\ex \oplus \mcb{I}_{(\mft,0)}[\cT^F_{\mft,-}])\;,
\end{equ}
the distribution $f^{Z}_{\tau,\mft} \eqdef \mcR^{Z} \mcP^{Z}_\mft \tau \in \mcC^{|\tau|_+ + |\mft|_\s}(\Lambda)$ solves
\[
\partial_t f^{Z}_{\tau,\mft} = \mathscr{L}_\mft f^{Z}_{\tau,\mft} + \mcR^{Z}\tau\;.
\]
\item The map $Z \mapsto f^{Z}_{\tau,\mft}$ is continuous with respect to the metric on $\mathscr{M}_{1,\infty}$.
\end{enumerate}
\end{proposition}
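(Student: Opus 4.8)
The plan is to extract everything from Lemmas~\ref{lem:sectors of our regularity structure2} and~\ref{lem:definition of r}, the statement being essentially a repackaging of these two results. First I would record what the triviality of the structure group buys us: for $\tau \in \mcT^F_{\mft,-}$ one has $\Gamma\tau = \tau$ for all $\Gamma \in G$ by Lemma~\ref{lem:sectors of our regularity structure2}, so the distribution $\Pi^Z_z\tau$ is independent of the base point $z$; hence $\mcR^Z\tau \eqdef \Pi^Z_z\tau$ is unambiguously defined even though $|\tau|_+$ may lie below $-\s_0$ (precisely the degenerate regime this subsection is built for), and it belongs to $\mcC_\s^{|\tau|_+}(\Lambda)$, as already observed in the proof of Lemma~\ref{lem:definition of r}. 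Moreover, triviality of $\Gamma$ on $\tau$ forces $\Gamma_{zz'}\mcb{I}_{(\mft,0)}[\tau]$ to equal $\mcb{I}_{(\mft,0)}[\tau]$ modulo a polynomial of $\s$-degree strictly below $|\mcb{I}_{(\mft,0)}[\tau]|_+ = |\tau|_+ + |\mft|_\s \leq |\mft|_\s - (|\mft|_\s\wedge\s_0)$; since this bound is $\leq 0$ in the standard situation (in particular whenever $|\mft|_\s = \s_0$, as in all our applications), no such correction is present, so the constant map $z\mapsto\mcb{I}_{(\mft,0)}[\tau]$ trivially satisfies the defining bounds of $\D^\gamma(\bar\cT^\ex\oplus\mcb{I}_{(\mft,0)}[\cT^F_{\mft,-}])$ for every $\gamma$. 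As $\bar\cT^\ex\oplus\mcb{I}_{(\mft,0)}[\cT^F_{\mft,-}]$ is a sector by Lemmas~\ref{lem:cTF sector} and~\ref{lem:sectors of our regularity structure2}, this proves the first assertion.

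Next I would produce $r^Z_{\tau,\mft}$. Using the decomposition $G_\mft = K_\mft + R_\mft$ with $R_\mft = \sum_{m\geq 0} R_{\mft,m}$ fixed at the start of Section~\ref{subsec: admissible models}, I set $r^Z_{\tau,\mft} \eqdef r^Z_{\mft,\tau}$ with $r^Z_{\mft,\tau}$ as in~\eqref{remainder of heat kernel}. Lemma~\ref{lem:definition of r} provides both the convergence of that series in $\mcC^\infty(\Lambda)$ and the continuity of $Z\mapsto r^Z_{\tau,\mft}$ from $\mathscr{M}_{0,1}$ to $\mcC^\infty(\Lambda)$. A smooth function lifts canonically (via its Taylor jet) to an element of $\D^\infty(\bar\cT^\ex)$, and this lift is continuous for the relevant topologies, so $\mcP^Z_\mft\tau \eqdef \mcb{I}_{(\mft,0)}[\tau] + r^Z_{\tau,\mft}$ is a well-defined element of $\D^\infty(\bar\cT^\ex\oplus\mcb{I}_{(\mft,0)}[\cT^F_{\mft,-}])$, depending continuously on $Z \in \mathscr{M}_{0,1}$ through its $r$-part.

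For property (1) I would identify the reconstruction: using $K$-admissibility of $Z$ together with the base-point independence of $\Pi^Z\tau$ just noted, the local reconstruction limit of the constant modelled distribution $\mcb{I}_{(\mft,0)}[\tau]$ returns precisely $K_\mft\ast\mcR^Z\tau$ (the polynomial counterterms appearing in $\Pi^Z_z\mcb{I}_{(\mft,0)}[\tau]$ being, in this trivial-$\Gamma$ situation, already absorbed). Hence $f^Z_{\tau,\mft} = \mcR^Z\mcP^Z_\mft\tau = K_\mft\ast\mcR^Z\tau + R_\mft\ast\mcR^Z\tau = G_\mft\ast\mcR^Z\tau$, and since $G_\mft$ is the Green's function of $\partial_t - \mathscr{L}_\mft$ (Assumption~\ref{ass:kernels}) this solves $\partial_t f^Z_{\tau,\mft} = \mathscr{L}_\mft f^Z_{\tau,\mft} + \mcR^Z\tau$; the Schauder estimate for a kernel of order $|\mft|_\s$ applied to $\mcR^Z\tau\in\mcC_\s^{|\tau|_+}$ gives $f^Z_{\tau,\mft}\in\mcC_\s^{|\tau|_++|\mft|_\s}(\Lambda)$. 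Property (2) then follows by composing continuities: $Z\mapsto\mcR^Z\tau$ is continuous from $\mathscr{M}_{0,1}$ into $\mcC_\s^{|\tau|_+}$ on compacts (the bound~\eqref{reconstruction bounds for lemma} from the proof of Lemma~\ref{lem:definition of r}), convolution with $K_\mft$ and the passage $\mcR^Z\tau\mapsto G_\mft\ast\mcR^Z\tau$ are continuous, and $Z\mapsto r^Z_{\tau,\mft}$ is continuous by Lemma~\ref{lem:definition of r}; summing yields continuity of $Z\mapsto f^Z_{\tau,\mft}$ (the metric intended in the statement being that of $\mathscr{M}_{0,1}$, i.e.\ $\$\cdot;\cdot\$$).

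The only genuinely delicate point — and the reason this subsection exists — is making sense of $\mcP^Z_\mft\tau$ and of its reconstruction when $|\tau|_+ \leq -\s_0$, where $\mcR\mcP_\mft$ is not a priori defined; this is dissolved entirely by the triviality of the structure group on $\mcT^F_{\mft,-}$ (Lemma~\ref{lem:sectors of our regularity structure2}), which promotes $\mcR^Z\tau$ to an honest distribution and removes the polynomial obstructions. Everything else is bookkeeping with the two cited lemmas.
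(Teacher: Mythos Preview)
Your proposal is correct and follows exactly the paper's approach: the paper's entire proof is the sentence ``immediate from Lemmas~\ref{lem:sectors of our regularity structure2} and~\ref{lem:definition of r}'', and you have correctly identified these two lemmas as the sole inputs and unpacked them in the intended way (triviality of the structure group on $\mcT^F_{\mft,-}$ to make $\mcR^Z\tau$ and the constant planted modelled distribution well-defined, and Lemma~\ref{lem:definition of r} to build $r^Z_{\tau,\mft}$ and obtain continuity). Your caveat about the edge case $|\mft|_\s > \s_0$ is a fair observation about the statement itself rather than a defect in your argument.
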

A consequence of Proposition~\ref{prop:stationary objects} is that, for any $Z \in \mathscr{M}_{0,1}$, we can define $\mcP^{Z}_\mft \tilde U_\mft \in \D^\infty$, and thus $\mcP^{Z}\tilde U \in \jets_0$ by setting $(\mcP^{Z}\tilde U)_\mft \eqdef  \mcP^{Z}_\mft \tilde U_\mft$.
Moreover, the map $Z \mapsto \mcP^{Z}\tilde{U}$ is a continuous map from $\mathscr{M}_{0,1}$ to $\bigoplus_{\mft \in \mfL_{+}} \CD^{\infty}$. 

Rather than seeking a solution $U \in \jets$ to~\eqref{eq: Dgamma fp Naive}, we instead treat $U$ as a perturbation of the stationary solution by writing
\begin{equ}\label{def: the continuous guy U}
U_\mft = V_\mft + \mcP_\mft \tilde U_\mft\;,\quad 
\forall \mft \in \mfL_{+}\;,
\end{equ}
where $V_\mft$ is function-like.
More precisely, let us fix
\begin{equs}
\gamma_{\mft} \eqdef \gamma + \reg(\mft)\;, \quad \eta_{\mft} \eqdef \eta + \ireg(\mft)\;,
\end{equs}
for some $\gamma, \eta \in \R$, and define the space
\[
\jets^{\gamma,\eta}_{+} \eqdef \jets_+ \cap \jets_0^{\gamma,\eta} = \bigoplus_{\mft \in \mfL_+} \D^{\gamma_\mft,\eta_\mft}\left(\bar\cT^\ex\oplus\mcb{I}_{(\mft,0)}[\cT^F_{\mft,+}]\right)\;.
\]  
For $\mft \in \mfL_+$, let $\widetilde{\jets}_\mft$ denote the space of all maps $U: \Lambda\setminus P \rightarrow \widetilde{\mcb{T}}_\mft^{\ex}$ (so that $\widetilde{\jets} = \bigoplus_{\mft\in\mfL_+}\widetilde{\jets}_\mft$), and consider the map $H_\mft : \jets \to \widetilde{\jets}_\mft$ given by
\begin{equ}
H_\mft(V) \eqdef \mcQ_{\leq \gamma_\mft - |\mft|_\s} \sum_{\mfl \in \mfD_{\mft}} F^\mfl_\mft(V + \mcP\tilde U) \Xi_\mfl - \tilde U_\mft\;.
\end{equ}
The following lemma makes precise the gain in regularity obtained by considering the remainder $V$.
For $\mft\in\mfL_+$ and $\mfl \in \mfD_{\mft}$, define the quantity $\bar n^\mfl_\mft$ as in~\eqref{eq:nlt def} but with $|\mfl|_\s$ replaced by $\homplus{\mfl}$.
Define further $\bar n_\mft \eqdef \min_{\mfl \in \mfD_{\mft}} n^\mfl_\mft$.
\begin{lemma}\label{lem:DgammaMult}
Let $0 \leq \eta \leq \gamma$ and $\mft \in \mfL_+$.
Then there exists $\kappa_\mft > 0$ sufficiently small, depending only on the rule $R$ and functions $\reg$ and $\ireg$, such that $H_\mft$ is a locally Lipschitz map
\[
\jets^{\gamma,\eta}_+ \to
\D^{\gamma_\mft - |\mft|_\s+ \kappa_\mft, \eta + \bar n_\mft}(\bar\cT^\ex \oplus \cT^F_{\mft,+})\;.
\]
\end{lemma}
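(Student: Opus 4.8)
The plan is to reduce the statement to two kinds of estimates: a \emph{multiplicative} bound saying that when one multiplies modelled distributions living in the sectors $\bar\cT^\ex \oplus \mcb{I}_{(\mft_i,0)}[\cT^F_{\mft_i,+}]$ together with powers of the stationary object $\mcP\tilde U$ one lands in a space $\D^{\gamma',\eta'}$ with the advertised parameters, and a \emph{composition/Taylor} bound for the smooth functions $F^\mfl_\mft$, which are polynomial in the rough components and smooth in the positive ones. First I would record that, by Lemma~\ref{lem:cTF sector}, each summand $\mcQ_{\leq\gamma_\mft-|\mft|_\s}F^\mfl_\mft(V+\mcP\tilde U)\Xi_\mfl$ takes values in $\bar\cT^\ex \oplus \cT^F_\mft$; then one checks that subtracting $\tilde U_\mft$ exactly removes the part of $|\cdot|_+$-degree $\le -(|\mft|_\s\wedge\s_0)$, using Lemma~\ref{constant sector} applied to $U = V + \mcP\tilde U$ (here one uses that Lemma~\ref{constant sector} only constrains the negative-degree projection, so the choice of $V$ is immaterial for that identity). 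This places $H_\mft(V)$ with values in $\bar\cT^\ex\oplus\cT^F_{\mft,+}$, giving the sector in the codomain.

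Next I would establish the regularity and singular exponents. For the homogeneity $\gamma_\mft - |\mft|_\s + \kappa_\mft$: each tree $\tau\in\mcT^F_{\mft,+}$ has $|\tau|_+ > -(|\mft|_\s\wedge\s_0)$ and, more importantly, after applying $\mcQ_{\leq\gamma_\mft - |\mft|_\s}$ all surviving trees have $|\tau|_+ \le \gamma_\mft-|\mft|_\s$; since the set of such trees is finite (subcriticality of $R$), there is a strictly positive gap $\kappa_\mft$ between $\gamma_\mft-|\mft|_\s$ and the largest attained degree, and this is what one exploits — this is a standard "there are only finitely many relevant trees, so there is room" argument as in~\cite[Sec.~8]{Regularity}. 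For the singular exponent $\eta + \bar n_\mft$: this is the genuinely delicate point. The factors $V_\mfb$ contribute a blow-up of order $\eta_\mfb = \eta + \ireg(\mfb)$ at $P$, while the factors $\mcP_\mfb\tilde U_\mfb$ are \emph{smooth up to $t=0$} (Proposition~\ref{prop:stationary objects}, since $r^Z_{\tau,\mfb}$ is a genuine smooth function and $\mcb{I}_{(\mft,0)}[\tau]$ is constant); so a product $\prod_{o\in\alpha}$ of such factors picks up, as its worst blow-up, the sum $\sum_{o\in\alpha}f_o(o)$ over an assignment $f\colon o\mapsto f_o\in\{\reg,\ireg\}$, where $o$ is assigned $\ireg$ precisely when the corresponding factor is a $V$-factor (and $\reg$ — really $\homplus{\cdot}$ — when it is a stationary factor, since a purely-stationary term in a constant branch contributes at its $|\cdot|_+$-degree). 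Tracking which monomials $\alpha_j^\mfl$ can contain only stationary factors — i.e. when $(F^\mfl_\mft)_j$ is constant, forcing $f^{-1}(\ireg)\cap\alpha_j^\mfl\neq\emptyset$ — reproduces exactly the constrained minimum defining $\bar n^\mfl_\mft$, and then $|\mfl|_\s$ (now $\homplus{\mfl}$) accounts for the extra noise factor $\Xi_\mfl$. Minimising over $\mfl\in\mfD_\mft$ gives $\bar n_\mft$. I would organise this bookkeeping inductively over the tree structure using the multiplicative properties of $\D^{\gamma,\eta}$ from~\cite[Prop.~6.12]{Regularity}, being careful that the relevant products of modelled distributions satisfy $\gamma_1+\eta_2 \wedge \gamma_2 + \eta_1 > 0$ so that~\cite[Prop.~6.12]{Regularity} applies — this is where $0\le\eta\le\gamma$ and Assumption~\ref{assump:Schauder1} (via $n_\mft > -\s_0$) enter, guaranteeing the exponents are in the admissible range.

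Finally, local Lipschitz continuity follows by the same estimates applied to differences: $F^\mfl_\mft$ is smooth with locally bounded derivatives, $V\mapsto F^\mfl_\mft(V+\mcP\tilde U)$ is locally Lipschitz from $\jets^{\gamma,\eta}_+$ into the appropriate $\D^{\gamma,\eta}$ space by the composition estimate~\cite[Thm.~4.16]{Regularity} (using the first part of Assumption~\ref{assump:Schauder1}, $\ireg(o)\ge 0$ for $o\in\mcb{O}_+$, to compose with smooth functions), the projections $\mcQ_{\leq\gamma_\mft-|\mft|_\s}$ and the constant shift by $\tilde U_\mft$ are linear and continuous, and $\mcP\tilde U$ is a fixed element so contributes nothing to the Lipschitz bound. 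The main obstacle, as indicated, is the precise verification that the singular exponent produced by the product-of-trees analysis is exactly $\eta + \bar n_\mft$ and not something smaller: one must be scrupulous about the distinction between the $V$-factors (which blow up) and the stationary $\mcP\tilde U$-factors (which do not), about the role of the constant-vs-non-constant dichotomy for $(F^\mfl_\mft)_j$ in the definition of $\bar n^\mfl_\mft$, and about the contribution of the outer noise edge $\Xi_\mfl$ through $\homplus{\mfl}$ rather than $|\mfl|_\s$; getting the combinatorics of the constrained minimum $\min_f$ to line up with what the regularity-structure product estimates actually produce is the crux of the argument.
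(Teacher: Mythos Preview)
Your overall architecture is right — use Lemma~\ref{constant sector} to land in $\bar\cT^\ex\oplus\cT^F_{\mft,+}$, then control the $\gamma$- and $\eta$-exponents via the product and composition estimates from~\cite{Regularity}, and deduce local Lipschitzness from the same bounds. But your argument for the gain $\kappa_\mft$ in the $\gamma$-exponent is wrong, and this is not a detail.

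You claim that after applying $\mcQ_{\leq\gamma_\mft-|\mft|_\s}$ there is a ``strictly positive gap $\kappa_\mft$ between $\gamma_\mft-|\mft|_\s$ and the largest attained degree'', and that this gap yields membership in $\D^{\gamma_\mft-|\mft|_\s+\kappa_\mft}$. This is false: a modelled distribution in $\D^{\gamma}$ whose values happen to lie in trees of degree $<\gamma$ is \emph{not} thereby in any $\D^{\gamma'}$ with $\gamma'>\gamma$ — the H\"older bounds $\|f(x)-\Gamma_{xy}f(y)\|_\beta\lesssim|x-y|^{\gamma-\beta}$ do not improve just because some homogeneities are absent. The actual mechanism, as in the paper, is that the product estimate~\cite[Prop.~6.12]{Regularity} places $\bar F(U)\,\mathbf{U}^\alpha\,\Xi_\mfl$ (before truncation) in $\D^{\gamma+\homplus{\mfl}+\sum_{o\in\alpha}\reg(o),\ldots}$, and the \emph{strict} inequality in ``$F$ obeys $R$'' gives $\reg(\mft)-|\mft|_\s+\kappa_\mft\le\homplus{\mfl}+\sum_{o\in\alpha}\reg(o)$ for some $\kappa_\mft>0$; discreteness is then only used to identify $\mcQ_{\le\gamma_\mft-|\mft|_\s}$ with $\mcQ_{<\gamma_\mft-|\mft|_\s+\kappa_\mft}$.

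On the $\eta$-exponent, the paper organises the computation more simply than you do: it splits each term $\bar F(\Y)\Y^\alpha$ of~\eqref{expansion of nonlinearity} into two cases. If $\bar F$ is not constant, one works with the full $U=V+\mcP\tilde U$, composes via~\cite[Prop.~6.13]{Regularity} (not Thm.~4.16 — you need the singular version) to get $\mcQ_{<\gamma}\bar F(U)\in\D^{\gamma,\eta}_0$, and then multiplies to obtain $\eta$-exponent $\eta+\homplus{\mfl}+\sum_{o\in\alpha}\reg(o)\wedge\ireg(o)\ge\eta+\bar n^\mfl_\mft$. If $\bar F$ is constant, one expands $(V+\mcP\tilde U)^\alpha\Xi_\mfl$ binomially: the pure stationary term $(\mcP\tilde U)^\alpha\Xi_\mfl$ lies in $\D^\infty$, and every mixed term contains at least one factor $\DD^p V_\mfb\in\D^{\gamma+\reg(\mfb,p),\eta+\ireg(\mfb,p)}_0$ (function-like, since $V\in\jets^{\gamma,\eta}_+$), which is exactly the constraint $f^{-1}(\ireg)\cap\alpha\neq\emptyset$ in the definition of $\bar n^\mfl_\mft$. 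Your elaborate tracking of ``assignments $f$'' over all factors is trying to reproduce this, but the clean constant/non-constant dichotomy makes the bookkeeping much shorter and avoids having to verify product conditions term-by-term.
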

%
\begin{proof}
Note that, for every $V \in \jets_+$, $H_\mft(V)$ is indeed a function from $\Lambda\setminus P$ to $\bar\cT^\ex \oplus \cT^F_{\mft,+}$ due to Lemma~\ref{constant sector}.
Fix $\mfl \in \mfD_{\mft}$ and consider a term $\bar F(\Y)\Y^\alpha$ in the expansion~\eqref{expansion of nonlinearity} of $F^\mfl_\mft$.
Write $U \eqdef V+\mcP\tilde U$. Since $\tilde U_\mft \in \D^{\infty,\infty}$, it remains only to show that
\begin{equs}
\mcQ_{\leq \gamma_\mft - |\mft|_\s}\bar F(U)\mathbf{U}^\alpha\Xi_\mfl \in \D^{\gamma_\mft - |\mft|_\s+ \kappa_\mft, \eta + \bar n_\mft}\;.\label{eq:bar F in D gamma}
\end{equs}
Suppose first that $\bar F$ is not identically constant.
Note that, by Lemma~\ref{lem:sectors of our regularity structure}, $\cT^\ex_\mfb$ is a sector of regularity $\reg(\mfb) \wedge 0$. Note also that
\[
\DD^p U_\mfb \in \D^{\gamma+\reg(\mfb,p),\eta_\mfb-|p|_\s}_{\reg(\mfb,p)\wedge 0}(\DD^p\cT^\ex_\mfb)\;.
\]
If $\reg(\mfb,p) > 0$, then the sector $\DD^p\cT^\ex_{\mfb}$ is function-like.
Recall also that in this case $0 \leq \eta \leq \eta_\mfb - |p|_\s \leq \gamma + \reg(\mfb,p)$. 
It follows from~\cite[Prop.~6.13]{Regularity} that
\[
\mcQ_{< \gamma} \bar F(U) \in \D^{\gamma, \eta}_0\;.
\]
Writing
\[
\mathbf{U}^\alpha = \prod_{(\mft,p) \in \alpha} \DD^{p}U_{\mft}, \; \; \DD^{p}U_{\mft} \in \D^{\gamma + \reg(\mft,p), \eta + \ireg(\mft,p)}_{\reg(\mft,p)},
\]
it follows from~\cite[Prop.~6.12]{Regularity} that
\[
\mathbf{U}^\alpha \in \D^{\gamma + \sum_{o \in \alpha}\reg(o), \eta + \sum_{o \in \alpha} \reg(o)\wedge\ireg(o)}_{\sum_{o \in \alpha}\reg(o)}\;.
\]
Finally, note that $\Xi_\mfl \in \D^{\infty,\infty}_{\homplus{\mfl}}$.
Combining everything, we obtain
\[
\mathbf{U}^\alpha\Xi_\mfl\mcQ_{< \gamma} \bar F(U)
\in \D^{\gamma + \homplus{\mfl} + \sum_{o \in \alpha}\reg(o), \eta + \homplus{\mfl} + \sum_{o \in \alpha} \reg(o)\wedge\ireg(o)}_{\homplus{\mfl}+\sum_{o \in \alpha}\reg(o)}\;.
\]
Since $F$ obeys the rule $R$, we can find $\kappa_\mft > 0$ such that
\begin{equs}
\reg(\mft) - |\mft|_\s + \kappa_\mft \leq&  \homplus{\mfl} + \sum_{o \in \alpha}\reg(o)\;.
\end{equs}
By considering the regularity of the relevant sectors (and decreasing $\kappa_\mft$ if necessary), we see that
\[
\mcQ_{\leq \gamma_\mft-|\mft|_\s} \bar F(U)\mathbf{U}^\alpha \Xi_\mfl
=
\mcQ_{< \gamma_\mft-|\mft|_\s + \kappa_\mft} \left[\mathbf{U}^\alpha\Xi_\mfl\mcQ_{< \gamma} \bar F(U)\right]\;,
\]
which proves~\eqref{eq:bar F in D gamma}.

Suppose now that $\bar F$ is identically constant.
Then expanding $(V+\mcP\tilde U)^\alpha\Xi_\mfl$, the term $(\mcP\tilde U)^\alpha\Xi_\mfl$ is an element of $\D^{\infty}$.
On the other hand, using that $V_\mfb \in \D^{\gamma + \reg(\mfb),\eta + \ireg(\mfb)}_0$,
we see that every other term is in $\D^{\gamma + \homplus{\mfl} + \sum_{o \in \alpha} \reg(o), \eta + \bar n_\mft}$, which again proves~\eqref{eq:bar F in D gamma}.
\end{proof}
In light of the above lemmas, it is natural to consider an analogue of Assumption~\ref{assump:Schauder1}.
\begin{assumption}\label{assump:Schauder2}
Assumption~\ref{assump:Schauder1} holds with $n_\mft$ replaced by $\bar n_\mft$.
\end{assumption}
We now take $\gamma$ sufficiently large and $\eta_\mft = \ireg(\mft)$, and write the fixed point problem for the remainder $V$ in the space $\jets^{\gamma,\eta}_+$, with initial condition $v^\mft_s$ at time $s \geq 0$, as
\begin{equ} \label{eq: Dgamma fp}
V_{\mft} = \mcP_\mft \Big[\one_{+}^s H_{\mft}(V) \Big]  + G_\mft v^{\mft}_s\;,\quad 
\forall \mft \in \mfL_{+}\;,
\end{equ}
where $\one_{+}^s$ denotes the indicator function of the set $\{(t,x) \in \Lambda : t > s\}$.
It follows from Lemmas~\ref{lem:abstractIntegral},~\ref{lem:initialCond}, and~\ref{lem:DgammaMult}, as well as Assumption~\ref{assump:Schauder2}, that the fixed point problem~\eqref{eq: Dgamma fp} is well-posed and admits local solutions in the space $\jets^{\gamma,\eta}_{+}$ for any initial condition $(v_0^\mft)_{\mft\in\mfL_+} \in \mcC^{\ireg}$.
Moreover, since $V_\mft$ takes values in a function-like sector, the formulation~\eqref{eq: Dgamma fp} allows us to restart the fixed point to obtain maximal solutions,\footnote{The fact that local solutions can be patched together in a consistent way follows from an argument identical to~\cite[Prop.~7.11]{Regularity}} i.e., up to the blow-up time of $\mcR V$.

Note that we restricted most of our discussion above to one fixed model $Z$.
One can of course extend all the results to obtain continuity properties of the fixed point with respect to the model
(the only extension which doesn't immediately follow from~\cite{Regularity} is Lemma~\ref{lem:DgammaMult}, for which one can use~\cite[Prop.~3.11]{WongZakai}).
We summarise the above discussion along with the remaining necessary results from~\cite{Regularity} in the following theorem.
\begin{theorem}\label{thm: regularity structures results}
Let $\gamma \in \R$, and set $\gamma_\mft \eqdef \gamma+\reg(\mft)$ and $\eta_\mft \eqdef \ireg(\mft)$.
Suppose that Assumptions~\ref{assump:planted trees 2} and~\ref{assump:Schauder2} hold, and that $\gamma_\mft - |\mft|_\s > 0$ and $\gamma_\mft,\eta_\mft \notin \N$ for all $\mft \in \mfL_+$.
Then the following statements hold.
\begin{enumerate}
\item For any model $Z = (\Pi,\Gamma) \in \mathscr{M}_{0,1}$ and periodic initial data $v_0 = (v_0^{\mft})_{\mft \in \mfL_+} \in 
\mathcal{C}^{\ireg}$, the fixed point problem~\eqref{eq: Dgamma fp}
is well posed and admits a local in time solution $V^{Z} \in \jets^{\gamma,\eta}_+$.
\item It holds that $\mcR V \in \CC^{\rem}$, and $V$ is defined on the interval $(0,T[\mcR V])$.
\item The map $(v_0,Z) \mapsto \mcR^Z V^{Z}$ is continuous from $\mathcal{C}^{\ireg} \times \mathscr{M}_{0,1}$ into $\CC^{\rem}$ when $\mathscr{M}_{0,1}$ is equipped with the metric $\$\cdot;\cdot\$$.
\end{enumerate}
\end{theorem}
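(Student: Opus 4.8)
The plan is to assemble Theorem~\ref{thm: regularity structures results} from the pieces already set up in Section~\ref{subsec: gen DPD}, treating the three claims in turn. The essential point is that the ``hard'' analytic work — namely the continuity of the various operators with respect to the model — is quoted from~\cite{Regularity}, so what remains is to check that our particular sectors, truncations, and the decomposition~\eqref{def: the continuous guy U} fit the hypotheses of those results.

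\textbf{Step 1: Well-posedness of the fixed point (claim 1).}
First I would combine Lemmas~\ref{lem:abstractIntegral},~\ref{lem:initialCond}, and~\ref{lem:DgammaMult}. By Lemma~\ref{lem:DgammaMult}, for $\gamma$ chosen large enough (so that the required inequalities $\gamma_\mft - |\mft|_\s > 0$ and $\gamma_\mft - |\mft|_\s > \gamma_L$ hold, and so that one can absorb the $\kappa_\mft$), the map $V \mapsto \one^s_+ H_\mft(V)$ sends $\jets^{\gamma,\eta}_+$ into $\D^{\gamma_\mft-|\mft|_\s+\kappa_\mft, \eta+\bar n_\mft}(\bar\cT^\ex\oplus\cT^F_{\mft,+})$ locally Lipschitz-continuously. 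Under Assumption~\ref{assump:Schauder2} we have $\bar n_\mft > -\s_0$ and $\bar n_\mft + |\mft|_\s > \ireg(\mft) = \eta_\mft$, which are exactly the conditions needed for Lemma~\ref{lem:abstractIntegral} (with $\alpha = \reg(\mft,p)$ and $\eta = \eta_\mft$) to make $\mcP_\mft$ — and hence the composition $\mcP_\mft \one^s_+ H_\mft(\cdot)$ — a locally Lipschitz map from $\jets^{\gamma,\eta}_+$ into $\D^{\gamma_\mft,\eta_\mft}(\bar\cT^\ex\oplus\mcb{I}_{(\mft,0)}[\cT^F_{\mft,+}])$ with a contraction factor $T^{\kappa_\mft/\s_0}$ on a short time interval. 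Adding the harmonic extension $G_\mft v^\mft_s$, which lies in this space by Lemma~\ref{lem:initialCond} since $\eta_\mft < \gamma_\mft$, one gets a contraction on a small ball for $T$ small, hence a unique local solution $V^Z \in \jets^{\gamma,\eta}_+$; this is the standard Banach fixed point argument of~\cite[Thm.~7.8]{Regularity}.

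\textbf{Step 2: The reconstruction lies in $\CC^{\rem}$ and restarting (claim 2).}
Here I would observe that because $\bar\cT^\ex\oplus\mcb{I}_{(\mft,0)}[\cT^F_{\mft,+}]$ is function-like (regularity $0$ for the polynomial part, $-(|\mft|_\s\wedge\s_0)+\kappa$ overall but with $\eta_\mft = \ireg(\mft) > -(|\mft|_\s\wedge\s_0)$ by Assumption~\ref{assump:Schauder2}), the reconstruction $\mcR V_\mft$ is an honest continuous function on $(0,T) \times \T^d$ with controlled blow-up at $t=0$; the definition of $\widetilde{\reg}(\mft)$ matches the regularity of $\cT^F_{\mft,+}$ so that $\mcR V_\mft\restr_{(0,T)} \in \CC^{\widetilde{\reg}(\mft)}_\s$, and the quantitative bound from Lemma~\ref{lem:abstractIntegral} gives membership in $\mcC^{\reg,+}_T$. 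To show one can patch and restart, one notes that $V_\mft$ takes values in a function-like sector, so $V_\mft$ admits a well-defined restriction to any positive time slice as an element of $\mcC^{\ireg}$ (using $\eta_\mft = \ireg(\mft)$), and solutions starting from consecutive times agree on overlaps — this is~\cite[Prop.~7.11]{Regularity} applied verbatim, since the nonlinearity $H_\mft$ does not see the singular time-$0$ behaviour once restarted at $s > 0$. Taking the union over all such restarts yields a maximal solution defined up to $T[\mcR V]$, and by construction $t \mapsto \mcR V(t)$, extended to equal $\infty$ past the blow-up, is an element of $\CC^{\rem}$; the lower semicontinuity of $T[\cdot]$ in the data is a soft consequence of the Lipschitz estimates.

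\textbf{Step 3: Continuity in $(v_0, Z)$ (claim 3).}
The final step is the joint continuity of $(v_0,Z) \mapsto \mcR^Z V^Z$. Most of this is automatic: the maps $\mcK^\mft_{\gamma_\mft-|\mft|_\s}$, $R^\mft_{\gamma_\mft}\mcR$, the reconstruction $\mcR$, composition with smooth functions, and products of modelled distributions are all jointly Lipschitz in the modelled distribution and the model by the results of~\cite{Regularity} cited in Section~\ref{subsec: modelled distributions}. The one genuinely new ingredient is that the stationary part $\mcP^Z\tilde U$ appearing in $H_\mft(V) = \mcQ_{\leq\cdot}\sum_\mfl F^\mfl_\mft(V+\mcP^Z\tilde U)\Xi_\mfl - \tilde U_\mft$ depends on $Z$, and does so through the non-local operator $r^Z_{\mft,\tau}$; but Proposition~\ref{prop:stationary objects}(2) (resting on Lemma~\ref{lem:definition of r}) states precisely that $Z \mapsto \mcP^Z\tilde U$ is continuous from $\mathscr{M}_{0,1}$ into $\bigoplus_\mft \D^\infty$ when $\mathscr{M}_{0,1}$ carries the metric $\$\cdot;\cdot\$$ — and this is exactly why the theorem is stated with that stronger metric rather than $d_\gamma$. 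Feeding this into the fixed point and using the uniform-in-model contraction estimate, together with the continuity of $V \mapsto V+\mcP^Z\tilde U$ and of all the operators above, gives continuity of $(v_0,Z)\mapsto V^Z$ into $\jets^{\gamma,\eta}_+$ on any time interval before blow-up, hence of $(v_0,Z)\mapsto\mcR^Z V^Z$ into $\CC^{\rem}$. For the only estimate not literally in~\cite{Regularity} — the continuity version of Lemma~\ref{lem:DgammaMult} — one invokes~\cite[Prop.~3.11]{WongZakai}.

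\textbf{Expected main obstacle.}
The routine parts are genuinely routine once the sector structure is in place. The one place requiring care is the interaction between the time-cutoff $\one^s_+$, the blow-up at $t=0$, and the restart procedure: one must be sure that restarting produces the same maximal object irrespective of the sequence of restart times and that the topology on $\CC^{\rem}$ (with its soft cutoff $\Theta_L$) is the one for which continuity actually holds — in particular that the $\Theta_L$ construction does not destroy the time regularity $\widetilde{\reg}(\mft)$, which is precisely the subtlety flagged in the remark following the definition of $d_L$. I would treat this by following~\cite[Prop.~7.11]{Regularity} closely and checking that the soft cutoff preserves membership in $\mcC^{\reg,+}_L$, which is immediate from the product rule for $\CC^\alpha_\s$ spaces since $\chi$ is smooth.
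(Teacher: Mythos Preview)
Your proposal is correct and follows essentially the same approach as the paper. The paper does not give a formal proof of this theorem; it states it as a summary of the preceding discussion, pointing to Lemmas~\ref{lem:abstractIntegral},~\ref{lem:initialCond},~\ref{lem:DgammaMult} and Assumption~\ref{assump:Schauder2} for well-posedness, to~\cite[Prop.~7.11]{Regularity} for the restarting argument, and to~\cite[Prop.~3.11]{WongZakai} for the one continuity estimate not already in~\cite{Regularity} --- exactly the ingredients you assemble, including the use of Proposition~\ref{prop:stationary objects} for the $Z$-dependence of $\mcP^Z\tilde U$ and the reason for the stronger metric on $\mathscr{M}_{0,1}$.
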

It remains to connect the remainder $V$ with some abstract fixed point equation to which we can apply Theorem~\ref{thm:renormalised_equation}.
For simplicity, we will only do this in the case where $Z \in \mathscr{M}_{\infty,1}$ so that the reconstruction of all relevant modelled distributions are continuous functions.
Note that, in this case, one can canonically define $\mcP^{Z}_\mft\one_+\tilde U_\mft$, and that the distributions $f_{\tau,\mft}$ from Proposition~\ref{prop:stationary objects} are in fact smooth functions.
In the following result, we implicitly restrict all modelled distributions to the domain $(-\infty,T]\times \T^d$ where $T > 0$ is such that $V^Z$ blows up after time $T$.
\begin{proposition}\label{prop: underlined U} 
Suppose we are in the setting of Theorem~\ref{thm: regularity structures results}.
Let $Z \in \mathscr{M}_{\infty,1}$, $v_0 \in \mcC^{\ireg}$, and consider the functions $U^{Z} \eqdef V^{Z} + \mcP^{Z}\tilde U \in \jets^{\gamma,\eta}$ and $\overline U^Z\eqdef \one_+ U^Z$.
For every $\mft \in \mfL_+$, set
\[
\bar{u}^{Z,\mft}_0 \eqdef \sum_{\tau\in\mcT^F_{\mft,-}} \frac{\Upsilon^F_\mft[\tau]}{\langle \tau,\tau \rangle} f_{\tau,\mft}(0,\cdot) \in \mcC^\infty(\T^d)\;.
\]
It then holds that
\begin{equ}\label{eq:overline U identity}
\overline{U}^{Z}_\mft = V^{Z}_\mft + \mcP^{Z}_\mft\one_+\tilde U_\mft + G_\mft \bar{u}^{Z,\mft}_0\;,\quad \forall \mft \in \mfL_+\;.
\end{equ}
Furthermore, $\overline{U}^{Z}$ solves the fixed point problem
\begin{equ}\label{eq:overline U fp}
\overline{U}^{Z}_\mft = \mcP^{Z}_\mft\Big[\one_+\mcQ_{\leq\gamma_\mft-|\mft|_\s} \sum_{\mfl \in \mfD_\mft}F^\mfl_\mft(\overline{U}^{Z})\Xi_\mfl\Big]
+ G_\mft v_0^\mft + G_\mft\bar{u}_0^{Z,\mft}\;,\quad \forall \mft \in \mfL_+\;.
\end{equ}
In particular, $\overline{U}^{Z}$ falls under the scope of Theorem~\ref{thm:renormalised_equation}.
\end{proposition}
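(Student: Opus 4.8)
The plan is to prove Proposition~\ref{prop: underlined U} in three stages: first establish the decomposition~\eqref{eq:overline U identity}, then derive the fixed point equation~\eqref{eq:overline U fp}, and finally check that the hypotheses of Theorem~\ref{thm:renormalised_equation} are met. For the first stage, I would start from the definition $U^Z_\mft = V^Z_\mft + \mcP^Z_\mft \tilde U_\mft$ and multiply by $\one_+$. The key observation is that $\one_+ \mcP^Z_\mft\tilde U_\mft$ differs from $\mcP^Z_\mft \one_+ \tilde U_\mft$ by a harmonic-type correction: since $\tilde U_\mft$ is a constant (in time) modelled distribution, $\mcP^Z_\mft\tilde U_\mft = \mcb{I}_{(\mft,0)}[\tilde U_\mft] + r^Z_{\tilde U_\mft,\mft}$ reconstructs to a stationary solution of $\partial_t f = \mathscr{L}_\mft f + \mcR^Z\tilde U_\mft$, whereas $\mcP^Z_\mft \one_+ \tilde U_\mft$ reconstructs to the solution of the same equation started from zero at time $0$. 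By linearity and Proposition~\ref{prop:stationary objects}, the difference is $G_\mft$ applied to the time-$0$ slice of the stationary reconstruction, which, expanding $\tilde U_\mft = \sum_{\tau \in \mcT^F_{\mft,-}} \frac{\Upsilon^F_\mft[\tau]}{\langle\tau,\tau\rangle}\tau$ and using $f^Z_{\tau,\mft} = \mcR^Z\mcP^Z_\mft\tau$, is precisely $G_\mft\bar u_0^{Z,\mft}$. One must be a little careful that $V^Z_\mft$ is already function-like and vanishes appropriately at $t=0$ so that $\one_+ V^Z_\mft = V^Z_\mft$ on the relevant domain (this uses $\eta_\mft = \ireg(\mft) \geq 0$), which gives~\eqref{eq:overline U identity}.

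For the second stage, I would substitute the fixed point equation~\eqref{eq: Dgamma fp} satisfied by $V^Z$, namely $V^Z_\mft = \mcP^Z_\mft[\one_+^0 H_\mft(V^Z)] + G_\mft v_0^\mft$, into~\eqref{eq:overline U identity}. Recalling $H_\mft(V) = \mcQ_{\leq\gamma_\mft-|\mft|_\s}\sum_{\mfl\in\mfD_\mft}F^\mfl_\mft(V+\mcP\tilde U)\Xi_\mfl - \tilde U_\mft$ and that $V^Z + \mcP^Z\tilde U = U^Z$, which agrees with $\overline U^Z$ on $\{t>0\}$, the term $\mcP^Z_\mft[\one_+ H_\mft(V^Z)]$ becomes $\mcP^Z_\mft[\one_+ \mcQ_{\leq\gamma_\mft-|\mft|_\s}\sum_\mfl F^\mfl_\mft(\overline U^Z)\Xi_\mfl] - \mcP^Z_\mft[\one_+\tilde U_\mft]$. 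Adding back $\mcP^Z_\mft\one_+\tilde U_\mft + G_\mft\bar u_0^{Z,\mft} + G_\mft v_0^\mft$ from~\eqref{eq:overline U identity}, the two $\mcP^Z_\mft\one_+\tilde U_\mft$ terms cancel and one is left with exactly~\eqref{eq:overline U fp}. Here one should note that $\mcR^Z \one_+ F^\mfl_\mft(\overline U^Z)\Xi_\mfl$ is a genuine continuous function since $Z \in \mathscr{M}_{\infty,1}$ and $\one_+ F^\mfl_\mft(\overline U^Z)\Xi_\mfl \in \D^{\bar\gamma,\bar\eta}$ with $\bar\eta > -\s_0$ by Assumption~\ref{assump:Schauder2} (via Lemma~\ref{lem:DgammaMult}), so that $\mcP^Z_\mft$ applied to it is canonically defined as in Section~\ref{subsec: coherence of fp}.

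For the third stage, I would verify that $\overline U^Z$, with initial data $v_0^\mft + \bar u_0^{Z,\mft}$, is a solution on some interval $(0,T)$ to the fixed point problem~\eqref{eq: Dgamma fp Naive}: this is~\eqref{eq:overline U fp} once one absorbs $G_\mft v_0^\mft + G_\mft\bar u_0^{Z,\mft}$ into the single term $G_\mft u_0^\mft$ with $u_0^\mft \eqdef v_0^\mft + \bar u_0^{Z,\mft}$, and identifies $\mcP^Z_\mft$ here with the operator $\mcP_\mft$ of Section~\ref{subsec: modelled distributions} (legitimate since the model is smooth and the extra $r^Z$ pieces are absorbed into $R^\mft_{\gamma_\mft}\mcR$). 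The regularity hypotheses of Theorem~\ref{thm:renormalised_equation} — that $U \in \jets^{\gamma,\eta}$, that $\one_+\sum_\mfl F^\mfl_\mft(U)\Xi_\mfl \in \D^{\bar\gamma,\bar\eta}$ with $\bar\gamma>0$, $\bar\eta>-\s_0$, and the condition $\gamma_\mft - |\mft|_\s > \gamma_L$ with $L = \overline{L_0}$ — are then either hypotheses we already carry (Assumptions~\ref{assump:planted trees 2},~\ref{assump:Schauder2}) or can be guaranteed by taking $\gamma$ large enough at the outset, as in Theorem~\ref{thm: regularity structures results}. The main obstacle I anticipate is the bookkeeping in stage one: carefully justifying the commutation identity between $\one_+(\cdot)$ and $\mcP^Z_\mft$ up to the explicit harmonic correction $G_\mft \bar u_0^{Z,\mft}$, keeping track of which reconstructions are genuine functions versus distributions, and ensuring the time-$0$ restriction makes sense — this is exactly the delicate point that the generalised Da Prato--Debussche trick of Section~\ref{subsec: gen DPD} was built to handle, so I would lean on Proposition~\ref{prop:stationary objects} and the smoothness of $f_{\tau,\mft}$ throughout.
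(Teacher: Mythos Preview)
Your proposal is correct and follows essentially the same route as the paper: first establish $\one_+\mcP^Z_\mft\tilde U_\mft = \mcP^Z_\mft\one_+\tilde U_\mft + G_\mft\bar u_0^{Z,\mft}$ by comparing reconstructions (both sides solve $\partial_t f = \mathscr{L}_\mft f + \mcR^Z\tilde U_\mft$ for $t>0$ with the same initial data), then substitute the fixed point~\eqref{eq: Dgamma fp} for $V^Z$ so that the $\mcP^Z_\mft\one_+\tilde U_\mft$ terms cancel, and finally observe $\one_+ F^\mfl_\mft(U^Z) = \one_+ F^\mfl_\mft(\overline U^Z)$. The paper additionally notes that the two sides of the stage-one identity agree on all non-polynomial trees directly from the definitions, and then invokes \cite[Prop.~3.29]{Regularity} (a modelled distribution is determined by its non-polynomial part together with its reconstruction) to reduce to the PDE comparison you describe; you may want to make that reduction explicit.

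One small correction: your justification for $\one_+ V^Z_\mft = V^Z_\mft$ via ``$\eta_\mft = \ireg(\mft) \geq 0$'' is not quite right, since $\ireg(\mft)$ can be negative (e.g.\ in the $\Phi^4_{4-\delta}$ example of Section~\ref{subsec:Phi44-delta}). The correct reason is simply non-anticipativity: $V^Z_\mft = \mcP_\mft[\one_+ H_\mft(V^Z)] + G_\mft v_0^\mft$, and both terms vanish for $t<0$ because $K_\mft$, $R_\mft$, and $G_\mft$ are supported on $\{t\geq 0\}$.
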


\begin{proof}
To show~\eqref{eq:overline U identity} we show that $\mcP^Z_\mft \one_+ \tilde U + G_\mft \bar u^{Z,\mft}_0 = \one_+ \mcP^Z \tilde U$.
It follows directly from definitions that the two sides agree on all non-polynomial trees, so it suffices to show that their reconstructions coincide (see~\cite[Prop.~3.29]{Regularity}).
However, we see that the reconstruction of either side satisfies the PDE
\[
\partial_t u = \mathscr{L}_\mft u + \mcR^Z \tilde U_\mft
\]
for all $t > 0$ with initial condition given by $u_0 = \bar u^{Z,\mft}_0$, and thus must be equal.

We now check that $\overline{U}^{Z}$ satisfies~\eqref{eq:overline U fp}.
It holds that
\begin{equs}
\overline{U}^{Z}_\mft
&= V^{Z}_\mft + \mcP^{Z}_\mft\one_+\tilde U_\mft + G_\mft\bar{u}_0^{Z,\mft}
\\
&= \mcP^{Z}_\mft\one_+\Big[\mcQ_{\leq\gamma_\mft-|\mft|_\s} \Big(\sum_{\mfl \in\mfD_\mft} F^\mfl_\mft(V^{Z} + \mcP^Z\tilde U)\Xi_\mfl\Big) - \tilde U_\mft\Big] + \mcP_\mft^Z\one_+\tilde U_\mft\\
&\qquad  + G_\mft v^\mft_0 + G_\mft\bar{u}^{Z,\mft}_0
\\
&= \mcP^{Z}_\mft\Big[\one_+\mcQ_{\leq\gamma_\mft-|\mft|_\s}\sum_{\mfl \in\mfD_\mft} F^\mfl_\mft(V^{Z} + \mcP^{Z}\tilde U)\Xi_\mfl \Big] + G_\mft v_0^\mft + G_\mft\bar{u}^{Z,\mft}_0\;.
\end{equs}
It remains to observe that $\one_+F^\mfl_\mft(V^{Z}+\mcP^{Z}\tilde U) = \one_+ F^\mfl_\mft(\overline{U}^{Z})$, which readily follows from the identity~\eqref{eq:overline U identity}.
\end{proof}
\appendix
\section{Additional proofs}\label{sec: proofs of earlier lemmas}
\subsection{Proof of Lemma~\ref{lem:GPreserve}}\label{subsec:GPreserveProof}
\begin{proof}[of Lemma~\ref{lem:GPreserve}]
Let $F \in \G$ and $M \in \mfR$. Suppose that for $\mft \in \mfL_+$, $\mfl=(\hat\mfl,\mfo) \in \mfD_{\mft}$ and $\tau \in \mcT^\ex$ one has $\langle M^* \Xi_\mfl, \tau\rangle \neq 0$. 
Let $\alpha \in \N^{\mcb{O}}$ such that $\mfo \notin D(\mft,\alpha\sqcup\hat{\mfl})$. 
To conclude that $MF \in \G$ it suffices, by Proposition~\ref{prop:obeyEquivalence}, to show that $D^\alpha \Upsilon^F_\mft[\tau] = 0$.

To this end, let us add an additional ``driver'' element $\check\Xi$ and construct the spaces $\check\VV$ and $\check\VVspan$ in the identical manner to $\VV$ and $\VVspan$ but instead using the set $\check\mfD \eqdef \mfD \sqcup \{\check\Xi\}$ in place of $\mfD$.
Note that we can canonically identify $\VVspan$ with a subspace of $\check\VVspan$.
We also set $\Upsilon^F_{\mfb}[\check\Xi] \eqdef 1$ for all $\mfb \in \mfL_+$.

Let us write $\alpha$ as a multi-set $\alpha = \{(\mft_1,p_1),\ldots, (\mft_k,p_k)\}$ for some $k \geq 0$ and $(\mft_j,p_j) \in \mcb{O}$.
Consider the element
\[
\check\tau \eqdef \check\Xi \hgraft{(\mft_1,p_1)} (\check\Xi \hgraft{(\mft_2,p_2)} (\ldots (\check\Xi \hgraft{(\mft_n,p_n)} \tau)\ldots)) \in \check\VVspan. 
\]
Note that in the case $k=0$, one simply has $\check\tau = \tau$.
By~\eqref{eq:graftSmall1} we have
\[
D^\alpha \Upsilon^F_\mft[\tau]
= D_{(\mft_1,p_1)}\ldots D_{(\mft_n,p_n)}\Upsilon^F_\mft[\tau]
= \Upsilon^F_\mft[\check\tau]\;.
\]
Write $\check \tau$ as a sum of trees $\check\tau = \sum_{i=1}^N c_i \check\tau_i$ with $\check\tau_i \in \check\VV$. Observe that due to the choice $\Upsilon^F[\check\Xi] = 1$, for every $\check\tau_i$ with an edge whose two adjacent nodes carry the label $\check\Xi$, it holds that $\Upsilon^F[\check\tau_i] = 0$.

Consider a tree $\check\tau_i$ in which every edge has at most one adjacent node with the label $\check\Xi$. We may identify $\check\tau_i$ with an element $\tau_i \in \VV$ by mapping the label $\check\Xi$ to $1$.
However, by the assumption that the rule $R$ is complete, that $\mfo \notin D(\mft,\alpha\sqcup\hat{\mfl})$, and that $\langle M^*\Xi_\mfl, \tau\rangle \neq 0$, it necessarily holds that $\mcb{I}_{(\mft,0)}[\tau_i] \notin \mcT^\ex$.
Therefore, there exists a non-leaf node in $\mcb{I}_{(\mft,0)}[\tau_i]$ with label $\Xi_{(\bar\mfl,\bar\mfo)}$, incoming edge $\bar\mft$, and a multi-set of outgoing edges $\beta \in \N^{\mcb{O}}$ with $\bar\mfo \notin D(\bar\mft,\beta\sqcup\{\Xi_{\bar\mfl}\})$.
Again by Proposition~\ref{prop:obeyEquivalence}, it holds that $D^\beta F^{\bar\mfl}_{\bar\mft} = 0$, and thus $\Upsilon^F_\mft[\check\tau_i]=0$, which concludes the proof.
\end{proof}
\subsection{Proof of Lemma~\ref{lem:UpsilonSum}}\label{proof lemma Upsilon sum identity}
The key ingredient for establishing the lemma is the following multi-variable generalisation of the Faa di Bruno formula. In order to state this formula we first introduce some more notation. 

We fix some choice of a total order ``$<$'' on the set $\N^{d+1}$ with the property that $0$ is the minimal element.
Then for each $r \in \N$ and $k \in \N^{d+1} \setminus \{0\}$ we define the set 
\[
I(r,k)
\eqdef
\left\{
(\vec{q},\vec{m})
\in (\N^{d+1})^{r} \times (\N^{\mcb{O}} \setminus \{0\})^{r}:\ 
\begin{array}{l}
0 < q_{1} < q_{2} < \cdots < q_{r},\\ 
\sum_{j=1}^{r}|m_{j}| \cdot q_{j} = k
\end{array}
\right\}\;.
\] 
We also set $I(k) \eqdef \bigsqcup_{r=0}^{\infty} I(r,k)$. 
Additionally, for $(\vec{q},\vec{m}) \in I(r,k)$ we use the shorthands $r_{(\vec{q},\vec{m})} = r$ and $m \eqdef \sum_{j=1}^{r} m_{j}$. 
Note that $I(0,k) = \emptyset$ except for the case $k = 0$ when $I(k) = \{(0,0)\}$ with $r_{(0,0)} = 0$. 

We can now state the mentioned Faa Di Bruno formula.
\begin{lemma}\label{lem: Faa Di Bruno}
For any $k \in \N^{d+1}$ and $F \in \Poly$ one has
\begin{equ}\label{Faa di Bruno formula}
\partial^{k}
F
=
k!
\sum_{(\vec{q},\vec{m}) \in I(k)}
\Big[
\prod_{
\substack{
1 \le j \le r_{(\vec{q},\vec{m})}\\
(\mft,p) \in \mcb{O}
}
}
\frac{1}{m_{j}[(\mft,p)]!}
\Big(
\frac{1}{q_{j}!} \Y_{(\mft,p+q_{j})}
\Big)^{m_{j}[(\mft,p)]}
\Big]
\cdot
D^{m}F\;,
\end{equ}
where $m_{j}[(\mft,p)]$ denotes the $(\mft,p)$ component of $m_{j} \in \N^{\mcb{O}}$. 
\end{lemma}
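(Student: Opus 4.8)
\textbf{Proof proposal for Lemma~\ref{lem: Faa Di Bruno}.}
The plan is to derive \eqref{Faa di Bruno formula} by iterating the chain rule \eqref{chain rule} in a controlled way. Recall that by Lemma~\ref{lem:partialK F vanishes} (and the definition of $\Poly$) only finitely many derivatives are involved, so all sums below are finite and the manipulations are purely formal. The key structural observation is that each application of $\partial_i$ produces, via \eqref{chain rule}, a sum of terms of the shape $\Y_{(\mft,p+e_i)} D_{(\mft,p)}(\cdot)$, i.e.\ it multiplies by a first-order ``shifted monomial'' and differentiates with one $D_o$. Applying $\partial^k$ thus yields a sum over ways of distributing the $|k|$ individual derivatives $\partial_{i_1}\cdots\partial_{i_{|k|}}$ (with $k = \sum_s e_{i_s}$) either onto a previously generated factor $\Y_{(\mft,p+q)}$ (raising $q$ by $e_{i_s}$) or onto $F$ itself (creating a new factor and a new $D_o$). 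This is exactly the combinatorics of set partitions underlying the classical Faa di Bruno formula, now decorated by the label $(\mft,p)\in\mcb{O}$ recording which ``inner variable'' each block differentiates.

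Concretely, I would proceed by induction on $|k|_{\s}$ (or on $|k| \eqdef \sum_i k[i]$, which is cleaner here since the statement is about $\partial^k$ as a composition of the $\partial_i$). The base case $k=0$ is the identity $F = F$, consistent with $I(0)=\{(0,0)\}$, $r=0$, empty product, and $D^0 F = F$. For the inductive step, pick $i$ with $k[i]>0$, write $k = k' + e_i$, apply the inductive hypothesis to $\partial^{k'}F$, and then apply $\partial_i$ using \eqref{chain rule} together with the Leibniz rule. Each term of $\partial^{k'}F$ is a product of monomials $\bigl(\tfrac{1}{q_j!}\Y_{(\mft,p+q_j)}\bigr)^{m_j[(\mft,p)]}$ times $D^m F$; differentiating by $\partial_i$ hits either one of the monomial factors — replacing one copy of $\Y_{(\mft,p+q_j)}$ by $\Y_{(\mft,p+q_j+e_i)}$, which shifts $q_j \mapsto q_j + e_i$ and carries the expected binomial/factorial weight — or the factor $D^m F$, which, by \eqref{chain rule}, contributes $\sum_{(\mft,p)} \Y_{(\mft,p+e_i)} D_{(\mft,p)} D^m F$, i.e.\ adds a new index $q_{\mathrm{new}} = e_i$ and bumps $m$ by $e_{(\mft,p)}$. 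One then checks that regrouping all resulting terms according to their multiset of distinct shift-vectors $\{q_j\}$ and multiplicities $m_j$ reproduces exactly the index set $I(k)$ with the stated coefficient $k!\prod_{j,o}\tfrac{1}{m_j[o]!}\bigl(\tfrac{1}{q_j!}\bigr)^{m_j[o]}$; the factor $k!$ versus $k'!$ is reconciled by the standard identity tracking how many of the $|k|$ ordered derivatives land in each block.

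The main obstacle is purely bookkeeping: verifying that the coefficients match after the regrouping. There are two sources of combinatorial weight to reconcile — (i) the multinomial count of which of the $k[i]$ derivatives in direction $i$ act on which factor, and (ii) the factor coming from the fact that when $\partial_i$ lands on $\Y_{(\mft,p+q_j)}$ with multiplicity $m_j[(\mft,p)]$ there are $m_j[(\mft,p)]$ choices of which copy is hit, and when it creates a new block of ``size one'' in direction $i$ it may coincide with, or be distinct from, an existing $q_j$. The cleanest way to handle this is to phase the induction through the ordered derivatives: prove the refined statement that $\partial_{i_1}\cdots\partial_{i_n}F$ equals a sum over ordered set partitions of $\{1,\dots,n\}$ (blocks labelled by elements of $\mcb{O}$) of the corresponding product of $\Y$'s and $D$'s with coefficient $1$, and then collapse to \eqref{Faa di Bruno formula} by counting the orderings compatible with a given unordered data $(\vec q,\vec m)$, which produces precisely the factor $k!\prod_j \tfrac{1}{m_j! \, (q_j!)^{|m_j|}}$ after also accounting for the ordering $q_1 < \dots < q_r$ imposed in $I(r,k)$. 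I expect the verification of this last collapsing count to be the only genuinely delicate point; everything else is a routine unwinding of \eqref{chain rule}.
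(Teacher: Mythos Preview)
Your inductive chain-rule argument is correct and would go through, but the paper takes a genuinely different route. Rather than iterating $\partial_i$ and tracking the combinatorial weights by hand, the paper first reduces to the case where $F$ is a polynomial in the $\Y_o$'s (by matching finitely many $D^m$-derivatives at a point), and then argues via formal power series: it introduces $\Y_{(\mft,p)}(z) \eqdef \sum_{q} \tfrac{z^q}{q!}\Y_{(\mft,p+q)}$, substitutes into the Taylor expansion $F(\Y(z)) = \sum_m \tfrac{D^m F(\Y)}{m!}(\Y(z)-\Y)^m$, and reads off $\partial^k F$ as $k!$ times the coefficient of $z^k$ in $F(\Y(z))$. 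The identification $\partial^k F = k![F(\Y(z));z^k]$ is checked by a short induction using the binomial formula, and the index set $I(k)$ then drops out automatically from expanding $(\Y(z)-\Y)^m$. The advantage of the paper's generating-function approach is precisely that it sidesteps the ``genuinely delicate point'' you flag---the collapsing count reconciling $k!\prod_j \tfrac{1}{m_j!(q_j!)^{|m_j|}}$---since the factorials appear for free as Taylor coefficients rather than having to be assembled inductively. Your approach is more elementary in spirit (it uses only the chain and Leibniz rules, no auxiliary power series) and the paper in fact acknowledges that such a combinatorial proof exists, citing \cite{Ma09}; but for this paper's purposes the power-series route is shorter and cleaner.
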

That the above formula really is a ``Faa Di Bruno'' formula is partially obscured by our notation.
One should view the indeterminates $\{\Y_{(\mft,p)}\}_{(\mft,p) \in \mcb{O}}$ as representing a family of smooth functions from $\R^{d+1}$ to $\R$, namely one fixes smooth functions $\{u_{\mft}(z)\}_{\mft \in \mfL_{+}}$ and then the correspondence is given by $\Y_{(\mft,p)} \leftrightarrow \partial_{z}^{p} u_{\mft}(z)$ where $\partial_{z}$ denotes the vector of partial derivatives in the components of $z$. 

Then one has, for $F \in \Poly$, $m \in \N^{\mcb{O}}$, and $k \in \N^{d+1}$, 
\begin{equs}\label{meaning of derivatives}
F(\Y) &\longleftrightarrow F\left((\partial_{z}^{p} u_{\mft}: (\mft,p) \in \mcb{O})\right)\\
D^{m}F(\Y) &\longleftrightarrow 
\Big(\prod_{(\mft,p) \in m} \frac{\partial}{
\partial (\partial^{p} u_{\mft}(z))} 
\Big)F\left((\partial_{z}^{p} u_{\mft}(z): (\mft,p) \in \mcb{O})\right)\\
\partial^{k}F(\Y)
&\longleftrightarrow
\partial_{z}^{k}
F\left((\partial_{z}^{p} u_{\mft}(z): (\mft,p) \in \mcb{O})\right)
\end{equs}
We then compute $\partial^{k}F$ by manipulation of Taylor series (seen as formal power series). 
We first expand the $\partial_{z}^{p} u_{\mft}$ into Taylor series in $z$, insert these Taylor series into the one for $F$ in the variables $\partial_{z}^{p} u_{\mft}$, and then read off the coefficient of $z^{k}$ in the resulting power series.

If one takes the correspondences of \eqref{meaning of derivatives} for granted, then the proof of Lemma~\ref{lem: Faa Di Bruno} is immediate, for completeness we give a careful proof below. 
A more combinatorial proof of the formula can be found in \cite{Ma09}.  

\begin{proof}[of Lemma~\ref{lem: Faa Di Bruno}]
We first claim that it suffices to prove the identity \eqref{Faa di Bruno formula} for the case where the function $F$ is actually a polynomial in the variables $(\Y_{o})_{o \in \mcb{O}}$. 
To see this is the case first note that if $k \in \N^{d+1}$, $x = (x_{o})_{o \in \mcb{O}} \in \R^{\mcb{O}}$ and $F, G \in \Poly$ with $(D^{m}F)(x) = (D^{m}G)(x)$ for every $m \in \N^{\mcb{O}}$ with $|m| \le |k|$ then it follows that $(\partial^{k} F)(x) = (\partial^{k} G)(x)$.

Now suppose that the formula \eqref{Faa di Bruno formula} holds whenever $F \in \Poly$ is a polynomial of $\Y$ and we want to verify it for $G \in \Poly$ and $ k\in \N^{d+1}$ at a point $x \in \R^{\mcb{O}}$. 
The desired claim follows by applying the identity \eqref{Faa di Bruno formula} to
the polynomial $F_x$ given by
\[
F_x(\Y)
\eqdef
\sum_{
\substack{
m \in \N^{\mcb{O}}\\
|m| \le |k|}}
\frac{D^{m}G(x)}{m!}
\Y^{m}\;.
\]

We turn to proving \eqref{lem: Faa Di Bruno} for polynomial $F$. 
In the remainder of this proof we define $z$ and $w$ to be two vectors of mutually commuting indeterminates $(z_{0},\dots,z_{d})$, $(w_{0},\dots,w_{d})$ that will be the variables of our formal power series. 
Given a formal power series $\mathcal{A}(z,w) \eqdef \sum_{j,k \in \N^{d+1}}
A_{j,k} z^{j}w^{k} $ we use the notation $[ \mathcal{A}(z,w);z^{j}w^{k}] = A_{j,k}$. 
We introduce an $\mcb{O}$-indexed family of power series 
\begin{equ}\label{power series for indeterminate}
\Y_{(\mft,p)}(z) 
\eqdef 
 \sum_{q \in \N^{d+1}} \frac{z^{q}}{q!}\Y_{(\mft,p+q)}\;.
\end{equ}
Then each polynomial $F(\Y)$ can be associated to a power series 
\begin{equ}\label{powerseries for F}
F(\Y(z))
=
\sum_{
m \in \N^{\mcb{O}}}
\frac{D^{m}F(\Y)}{m!}
\big(
\Y(z) - \Y
\big)^{m}\;.
\end{equ}
For $k \in \N^{d+1}$ we define $\underline{\partial}^{k}F \eqdef k!\big[F(\Y(z));z^{k}\big]$, by induction one sees that $\underline{\partial}^{k}F = \partial^{k}F$.
For the base cases we clearly have that $\underline{\partial}^{k} = \partial^{k}$ if $|k| \le 1$. 
For the inductive step follows observe that for any $j,k \in \N^{d+1}$, 
\[
\underline{\partial}^{j} \underline{\partial}^{k}F
=
j!\,k!\,\big[F(\Y(z+w));w^{j}z^{k}\big]
=
(j+k)!\big[F(\Y(z));z^{k+j}\big]
=
\underline{\partial}^{j+k}F,
\]
where in the first equality we are using that $F$ is a polynomial and in the second equality we are using the binomial formula. 

All that remains is showing that for $k \not = 0$ the coefficient $k!\big[F(\Y(z));z^{k}\big]$ is given by the RHS of \eqref{Faa di Bruno formula}. When expanding the RHS of \eqref{powerseries for F} the terms that come with a $z^{k}$ are indexed by $I(k)$. 

Namely, one chooses an integer $r > 0$, and then a collection of powers $0 < q_{1} < q_{2} < \dots < q_{r} \in \N^{d+1}$ corresponding to the $q$'s that one will allow oneself to pick out in \eqref{power series for indeterminate} when expanding $(\Y(z) - \Y)^{m}$ for some $m$.
Next, one chooses a tuple $m_{1},\dots,m_{r} \in \N^{\mcb{O}} \setminus \{0\}$ where $m_{j}$ records from which $(\mft,p) \in \mcb{O}$ and with what multiplicity one is drawing out powers of $z^{q_{j}}$. 
To obtain an overall power of $z^{k}$ one has the constraint $k = \sum_{j=1}^{r} |m_{j}|q_{j}$. 
The corresponding $m \in \mcb{N}^{\mcb{O}}$ in the first sum of \eqref{powerseries for F} is given by $m = \sum_{j=1}^{r} m_{j}$. The corresponding coefficient of $z^{k}$ which is contributed is given by the summand on the RHS of \eqref{Faa di Bruno formula}.
\end{proof}
\begin{proof}[of Lemma~\ref{lem:UpsilonSum}]
We prove the statement of the lemma by induction over the number of internal nodes of $\tau$. The base case, 
when $\tau$ is a trivial tree, can be proven in the same way as the inductive step so we immediately 
turn to proving the latter. 

Suppose that $\tau$ is of form~\eqref{eq:general tree of V} with $N$ edges and that the claim has been proved for any $\tilde{\tau} \in \VV$ with fewer than $N$ edges.
Applying Lemma~\ref{lem: adjoint identity} for $Q$ one gets that $Q^{\ast}\tau$ is given by
\begin{equ}
\Xi_{\mfl}\!\!\!
\sum_{(\vec{q},\vec{m}) \in I(k)}
\!\!\Big(
\prod_{
\substack{
1 \le j \le r_{(\vec{q},\vec{m})}\\
(\mft,p) \in \mcb{O}
}
}
\frac{1}{m_{j}[(\mft,p)]!}
\Big(
\frac{\mcI_{(\mft,p)}[X^{p + q_{j}}]}{q_{j}!}
\Big)^{m_{j}[(\mft,p)]}
\Big)
\Big(
\prod_{w=1}^{n}
\mcb{I}_{(\mft_{w},p_{w})}[Q^{\ast}\tau_{w}]
\Big).
\end{equ}
By applying $\mathring{\Upsilon}_{\mft}^{P}[\cdot]$ to the quantity above and applying the inductive hypothesis we see that the RHS of \eqref{eq: Upsilon sum identity} is given by
\begin{equs}\label{work for projection}
\sum_{(\vec{q},\vec{m}) \in I(k)}
&
\Big(
\prod_{
\substack{
1 \le j \le r_{(\vec{q},\vec{m})}\\
(\mft,p) \in \mcb{O}
}
}
\frac{1}{m_{j}[(\mft,q)]!}
\Big(
\frac{\Y_{(\mft,p+q_{j})}}{q_{j}!}
\Big)^{m_{j}[(\mft,p)]}
\Big)
\Big(
\prod_{w=1}^{n}
\Upsilon_{\mft_{w}}^{F}[\tau_{w}]
\Big)
\\
{}
&
\enskip
\cdot
\Big(
D_{m}
\Big(
\prod_{w=1}^{n}
D_{(\mft_{w},p_{w})}
\Big)
F_{\mft}^{\mfl}
\Big)\;.
\end{equs}
The desired result follows by applying Lemma~\ref{lem: Faa Di Bruno}. 
\end{proof}
\subsection{Proof of Proposition~\ref{prop:co-interaction}}\label{subsec:coInteractProofs}

The proof of Proposition~\ref{prop:co-interaction} relies on the next two lemmas. 
We prove them by invoking a more general co-interaction property described in \cite[Thm~3.22]{BHZalg}.
\begin{lemma}\label{lem: grafting cointeraction}
On $\mcb{T}^\ex$, it holds that for $(\mft,p) \in \mcb{O}$
\begin{equs}
\mathcal{M}^{(13)(2)(4)} \left( \Deltam_{\ex} \otimes \Deltam_{\ex} \right) 
\hat\curvearrowright^{*}_{(\mft,p)} = \left( \id \otimes \hat\curvearrowright^{*}_{(\mft,p)} \right) \Deltam_{\ex}.
\end{equs}
\end{lemma}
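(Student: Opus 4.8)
\textbf{Proof proposal for Lemma~\ref{lem: grafting cointeraction}.}

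The plan is to reduce the stated identity to the abstract co-interaction property of \cite[Thm~3.22]{BHZalg} by exhibiting the grafting operator $\hgraft{(\mft,p)}^{*}$ (more precisely, its restriction $\hat\curvearrowright^{*}_{(\mft,p)}$ to $\mcb{T}^\ex$, see Corollary~\ref{cor:graftMorphT}) as an instance of the ``extraction--contraction'' style maps to which that theorem applies. First I would recall that $\hgraft{(\mft,p)}^{*}$ is, by Definition~\ref{def_grafting_explicit} and the formula~\eqref{eq:graftAdjointHat}, obtained by cutting a single edge of type $(\mft,k)$ for $0\le k\le p$, keeping the branch above the cut on the left leg, and placing on the right leg the trunk with its node decoration at the cut vertex raised by $p-k$ (with the factorial weight $1/(p-k)!$). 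This is exactly the shape of a coaction that extracts a subtree hanging from a single edge; in the language of \cite{BHZalg} it is built from the same combinatorial data (cuts of edges, transfer of polynomial decorations through the edge) that underlies the coproducts $\Deltam_{\ex}$ and the extended decoration bookkeeping. The point is that the ``branch'' factor is untouched by $\Deltam_{\ex}$ in the sense needed, while the negative renormalisation $\Deltam_{\ex}$ on the trunk commutes with raising the decoration at a fixed node and with removing a branch.

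The key steps, in order, would be: (i) write $\hat\curvearrowright^{*}_{(\mft,p)}$ as a finite sum over $0\le k\le p$ of the ``cut-at-an-$(\mft,k)$-edge'' operators, so it suffices to prove the identity for each fixed $k$, the factor $1/(p-k)!$ being inert; (ii) identify this fixed-$k$ operator with a specialisation of one of the maps in the hypotheses of \cite[Thm~3.22]{BHZalg}\dash concretely, the extraction of a connected subtree attached by one edge together with the induced decoration shift\dash checking that the normality of the rule $R$ (which forbids further edges on an $\mfl\in\mfL_{-}$ edge and, more to the point here, makes the branch/trunk decomposition compatible with the explicit description of $\mcT^\ex$ in \cite[Lem.~5.25]{BHZalg}, cf.\ Lemma~\ref{lem:adjointPreservesT}) guarantees we stay within $\mcb{T}^\ex\otimes\mcb{T}^\ex$; (iii) translate the conclusion of \cite[Thm~3.22]{BHZalg} into the precise form $\mathcal{M}^{(13)(2)(4)}(\Deltam_{\ex}\otimes\Deltam_{\ex})\hat\curvearrowright^{*}_{(\mft,p)} = (\id\otimes\hat\curvearrowright^{*}_{(\mft,p)})\Deltam_{\ex}$, matching the tensor-leg permutation $(13)(2)(4)$ with the one appearing in the general statement; (iv) finally, confirm that both sides, \emph{a priori} defined on the larger space $\VVspan$, actually preserve $\mcb{T}^\ex$, so that the identity on $\mcb{T}^\ex$ is exactly the restriction of the abstract one (again via Lemma~\ref{lem:adjointPreservesT} and the fact that $\Deltam_{\ex}$ is a coproduct on the Hopf algebra of \cite{BHZalg}).

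I expect the main obstacle to be bookkeeping step (ii)--(iii): matching our node-decoration convention (where the drivers $\mfl$ live in the node decoration $\mnoise$, and the extended decoration $\mfo$ is also carried at nodes) with the edge-decoration convention of \cite{BHZalg}, and then checking that the polynomial-decoration transfer $+_x^{p-k}$ in~\eqref{eq:graftAdjointHat} is precisely the shift that the general co-interaction theorem produces when an edge of type $(\mft,k)$ is cut and ``reabsorbed'' as a derivative $p-k$. In particular one has to be careful that $\Deltam_{\ex}$, which may create non-trivial extended decorations on the trunk, does so compatibly with the raising operation at the cut vertex; this is where the identification with the framework of \cite[Sec.~5]{BHZalg} (via the $T^{\Labn,\Labo}_{\Labe}$ notation, as announced in Section~\ref{Notations for trees}) is essential, and it is the place where the argument is most technical rather than conceptual. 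Once the dictionary is set up, the identity is a direct instance of \cite[Thm~3.22]{BHZalg}.
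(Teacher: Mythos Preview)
Your overall strategy---reducing to the co-interaction identity \cite[Thm~3.22]{BHZalg}---is the same as the paper's, but your execution path is vaguer and misses the structural shortcut that makes the reduction clean.

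The paper does \emph{not} decompose $\hat\curvearrowright^{*}_{(\mft,p)}$ into fixed-$k$ pieces and match each to something in the hypotheses of \cite[Thm~3.22]{BHZalg}. Instead it writes
\[
\hat\curvearrowright^{*}_{(\mft,p)} \;=\; \mathcal{M}^{(2)(1)}\bigl(\id \otimes \CR_{2}\circ \mathfrak{p}_{(\mft,p)}\circ \Pi_{\hat\CP_+^{\ex}}\bigr)\,\Delta_2,
\]
i.e.\ the grafting adjoint is the \emph{full} positive coproduct $\Delta_2$ followed by a projection onto planted trees whose trunk edge has type $(\mft,k)$ for some $k\le p$, then removal of that trunk edge. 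The crucial point is that each of $\Pi_{\hat\CP_+^{\ex}}$, $\mathfrak{p}_{(\mft,p)}$, $\CR_2$ acts only on data at the root of the right tensor leg, and in the output of $\Delta_2$ that root carries the colour $2$, which $\Deltam_{\ex}$ never touches. Hence each projection commutes with $\Deltam_{\ex}$ on the right leg, and one can push them through to land exactly on the co-interaction identity \eqref{cointeraction} for $\Delta_2$ itself. The sum over $k$ and the factorials $1/(p-k)!$ are all absorbed into $\Delta_2$; there is no need to treat them separately.

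Your step~(ii), ``identify the fixed-$k$ operator with a specialisation of one of the maps in the hypotheses of \cite[Thm~3.22]{BHZalg}'', is where the gap lies: that theorem is stated for the specific coproducts $\Delta_1$ and $\Delta_2$, not for arbitrary single-edge cut operators, so a fixed-$k$ cut is not literally an instance of it. To make your route work you would either have to re-run the combinatorial argument of \cite[Thm~3.22]{BHZalg} for each $k$, or realise that summing over $k$ with the right weights recovers a projection of $\Delta_2$---at which point you are back to the paper's argument. Also, your remark that ``the branch factor is untouched by $\Deltam_{\ex}$'' is not correct: $\Deltam_{\ex}$ acts nontrivially on the branch, and this is precisely why the left-hand side carries $\Deltam_{\ex}\otimes\Deltam_{\ex}$ and the outer $\mathcal{M}^{(13)(2)(4)}$. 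What \emph{is} untouched is the coloured root in the right leg of $\Delta_2$, and that is the observation you want.
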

\begin{lemma}\label{lem: poly cointeraction}
On $\mcb{T}^\ex$, it holds that for any $i \in \{0,\dots,d\}$, 
$\Deltam_{\ex} \hat\uparrow_i^* = \big( \id \otimes \hat\uparrow_i^* \big) \Deltam_{\ex}$.
\end{lemma}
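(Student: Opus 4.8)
\textbf{Plan for the proof of Lemma~\ref{lem: poly cointeraction}.} The strategy is to derive this identity as a special case of the general co-interaction property stated in \cite[Thm~3.22]{BHZalg}, in the same spirit as the proof of Lemma~\ref{lem: grafting cointeraction} above. The key observation is that $\hat\uparrow_i^*$ acts only on the node decorations $\mnoise,\mpoly$ by lowering one polynomial exponent by $e_i$ (with a combinatorial weight), and does not touch the tree shape, the edge types, the edge decorations $\Labe$, or the extended decorations $\Labo$. In particular, $\hat\uparrow_i^*$ commutes with the operation of extracting a subforest and contracting it, which is exactly the kind of structure $\Deltam_\ex$ is built from. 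So the first step is to recall the precise definition of $\Deltam_\ex$ from \cite{BHZalg}, rewrite it in terms of the usual sum over admissible subforests with their binomial splittings of node decorations, and then track how an application of $\hat\uparrow_i^*$ distributes over this sum.

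\textbf{Main steps.} First I would recast $\hat\uparrow_i^*$ in the form $\hat\uparrow_i^*\tau = \sum_{x\in N_T}\mpoly(x)[i]\,\tau-_x^{e_i}$ (Definition~\ref{def_increasing_poly}) and note that this is precisely (a transpose of) the operator implementing ``differentiation in a polynomial variable attached to a node'', which in the Hopf-algebraic language of \cite{BHZalg} is an infinitesimal version of the co-action on polynomial decorations; crucially it is a \emph{derivation-like} operator that only sees the node-decoration part of the pair $(\Labn,\Labo)$, and even then only the $\Labn$ part. Second, I would invoke the combinatorial co-interaction theorem \cite[Thm~3.22]{BHZalg}: writing $\Deltam_\ex$ as the sum over subforests $S$ of $T$ with the binomial distribution of $\Labn$ between $S$ and the contracted tree $T/S$, one checks that lowering $\mpoly(x)[i]$ by $e_i$ at a node $x$ of $T$ corresponds, after contraction, to lowering the corresponding exponent at the image of $x$ in $T/S$ (if $x$ survives the contraction, which it always does since we only contract subtrees rooted away from $x$'s surviving position — here the ``$(\id\otimes\,\cdot\,)$'' structure reflects the fact that the second leg of $\Deltam_\ex$ carries the contracted tree and it is there that the node $x$ lives). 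The weight $\mpoly(x)[i]$ matches on both sides by the Vandermonde/Chu identity for binomial coefficients, exactly as in the computation displayed in the remark following \eqref{pre_lie_identity}. Third, I would assemble these bookkeeping observations into the identity $\Deltam_\ex\hat\uparrow_i^* = (\id\otimes\hat\uparrow_i^*)\Deltam_\ex$ on $\mcb{T}^\ex$, being careful that the extended decoration $\Labo$ is passed along unchanged on both sides (this is automatic since $\hat\uparrow_i^*$ ignores $\Labo$ and $\Deltam_\ex$ transports $\Labo$ additively under contraction).

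\textbf{Expected obstacle.} The genuinely delicate point is matching the combinatorial coefficients: $\hat\uparrow_i^*$ produces a weight $\mpoly(x)[i]$ before contraction, whereas $(\id\otimes\hat\uparrow_i^*)\Deltam_\ex$ produces the weight of the image node \emph{after} contraction, and one must check that the binomial splitting of $\Labn$ in the definition of $\Deltam_\ex$ is compatible with these two orders of operations. This is the analogue of the ``grafting at the same node'' subtlety flagged in the remark before Proposition~\ref{prop:generate}, and the resolution is the same: a short computation with $\binom{a}{b}$ identities shows the two weighted sums coincide. Once this is settled the rest is routine diagram-chasing in the Hopf algebra of \cite{BHZalg}. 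As with Lemma~\ref{lem: grafting cointeraction}, no new ideas beyond \cite[Thm~3.22]{BHZalg} are needed — the content is entirely in verifying that $\hat\uparrow_i^*$ is ``transparent'' to the contraction co-product.
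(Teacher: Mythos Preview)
Your approach is correct but differs from the paper's, and in fact your proposal is somewhat internally mixed between two distinct routes.

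The paper does \emph{not} unfold the sum over subforests and track binomial coefficients. Instead it rewrites $\hat\uparrow_i^*$ as $\mathcal{M}^{(1)}(\id\otimes\mathfrak{p}_{X_i})\Delta_2$, where $\mathfrak{p}_{X_i}$ projects onto the single blue-root node $X_i$. Since $X_i$ has non-negative degree, $\Deltam_\ex$ acts trivially on it, giving $(\id\otimes\mathfrak{p}_{X_i})\Deltam_\ex=\mathfrak{p}_{X_i}$. Plugging this into the co-interaction identity $\mathcal{M}^{(13)(2)(4)}(\Deltam_\ex\otimes\Deltam_\ex)\Delta_2=(\id\otimes\Delta_2)\Deltam_\ex$ then yields the claim by pure diagram-chasing, with no binomial bookkeeping at all. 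This is the ``same spirit as Lemma~\ref{lem: grafting cointeraction}'' that you invoke in your plan but then abandon in your main steps.

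What you actually describe in your main steps is the \emph{alternative, elementary} proof that the paper explicitly mentions in its remark following the two lemmas: track $\hat\uparrow_i^*$ through the explicit formula for $\Deltam_\ex$. That works too, with one clarification. When you apply $(\id\otimes\hat\uparrow_i^*)$ after $\Deltam_\ex$, the operator $\hat\uparrow_i^*$ acts on a node of $\CR_A T$, which is an equivalence class $[x]$ carrying the \emph{aggregated} decoration $[\Labn-\Labn_A]_A([x])$; the weight it produces is the sum $\sum_{y\in[x]}(\Labn(y)[i]-\Labn_A(y)[i])$. Matching this against the left-hand side (where $\hat\uparrow_i^*$ picks an individual $y$ with weight $\Labn(y)[i]$ before contraction) requires only the identity $a\binom{a-1}{b}=(a-b)\binom{a}{b}$ applied at each $y\in[x]$, not full Chu--Vandermonde. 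Your ``if $x$ survives the contraction'' phrasing slightly obscures this many-to-one aggregation, which is where the coefficient-matching actually lives.

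So: both routes are sound; the paper takes the abstract one, and flags yours as the concrete alternative.
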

Before proving these lemmas, we recall some notations and the definition of $ \Deltam_{\ex} $.
Let $ \hat\CT^{\ex}_- $ the free commutative algebra generated by $ \cT^{\ex} $. Then we set $ \CT^{\ex}_{-} = \hat \CT^{\ex}_{-} / \mcb{J}_{+} $ where $  \mcb{J}_{+}  $ is the ideal of $ \hat \CT^{\ex}_- $ generated by 
$ \lbrace \tau \in \mcT^{\ex} \; : \; | \tau |_- \geq 0 \rbrace $. The map $ \Deltam_{\ex} :  \CT^{\ex} \rightarrow \CT^{\ex}_- \otimes \CT^{\ex} $ is given for $ T^{\Labn,\Labo}_{\Labe} \in \CT^{\ex} $ by: 
\begin{equs}\label{co-action}
 \Deltam_{\ex}  T^{\Labn,\Labo}_{\Labe} & = 
 \sum_{A \subset T} \sum_{\Labe_A,\Labn_A}  \frac1{\Labe_A!}
\binom{\Labn}{\Labn_A}
 (A,\Labn_A+\pi\Labe_A, \Labo \restr N_A,\Labe \restr E_A) 
 \\& \qquad  \otimes( \CR_A T, [\Labn - \Labn_A]_A, \Labo(A)+[\Labn_A - \pi \Labe_A ]_A  , \Labe + \Labe_A)\;, 
\end{equs}
where
\begin{itemize}
\item For $ C \subset D $ and $ f : D \rightarrow \N^d $, we denote by $ f \restr C$ the restriction of $ f $ to $ C $.
\item The first sum runs over all subgraphs $ A  $ of
$ T $ ($ A $ may be empty). The second sum runs over all  $ \Labn_A : N_A \rightarrow \N^{d+1}  $ and $ \Labe_{A} : \partial(A,T) \rightarrow \N^{d+1} $ where 
$ \partial(A,F) $ denotes the edges in $ E_T \setminus E_A $ that are adjacent to $ N_A $.
\item  We write $ \CR_A T $ for the tree obtained by contracting the connected components of $ A $. This gives an action on the decorations in the sense that for $ f : N_T \rightarrow \N^{d+1}$ such that $ A \subset T $ one has: $ [f]_A(x) = \sum_{x \sim_{A} y} f(y) $ where $ x $ is an equivalence class of $ \sim_A $ and
$ x \sim_A y $ means that $ x $ and $ y $ are connected in $ A $. Moreover, the map $ \Labo(A) $ is defined on $ x $ by: 
\begin{equs}
 \Labo(A)(x) = 
\sum_{y \sim_{A} x} \Labo(y) + \sum_{e \in E_A} \left( \Labhom(e) - \Labe(e) \right).
\end{equs} 
\item For $ f : E_T  \rightarrow \N^{d+1}  $, we set for every $ x \in N_T $, $ (\pi f)(x) = \sum_{e=(x,y) \in E_F} f(x)$.
\end{itemize}
Then one can turn this map into a coproduct 
$ \Deltam_{\ex} : \CT^{\ex}_-  \rightarrow \CT^{\ex}_{-} \otimes \CT^{\ex}_{-}$ and obtain a Hopf algebra for $  \CT^{\ex}_{-}  $ endowed with this coproduct and the forest product, see~\cite[Prop.~5.35]{BHZalg}. Any $ M_{\ell}\in \mfR $ is described by an element $ \ell $ of the character group $\mcb{G}^{\ex}_- $ associated to this Hopf algebra: 
\begin{equs}
M_{\ell} = \left( \ell \otimes \id \right) \Deltam_{\ex}, 
\end{equs}
 where $ \Deltam_{\ex} $ is the co-action defined in \eqref{co-action}. 
Before stating the main co-interaction, we need to recall the definition of another map $ \Delta_2 $ given in~\cite{BHZalg}.
Let $ \hat\CT^{\ex}_+ $ denote the linear span of $ \hat \mcT_+^\ex $, the coloured trees $ (T,\hat T)^{\Labn,\Labo}_{\Labe} $ such that $ \hat T^{-1}(\lbrace 2 \rbrace) = \rho_T $ and $ \Labo(\rho_T) =0 $. If we consider that a vertex $ x $ has the colour $1$ when $ \Labo(x) \neq 0 $ then we can use lighter notations avoiding the notion of a coloured tree and consider that $ \hat T \in \lbrace 0,2\rbrace $.
Hence, elements of
$ \hat\mcT^{\ex}_+ $ are denoted by $ (T,2)^{\Labn,\Labo}_{\Labe} $ and those of $ \mcT^{\ex} $ are denoted by $ T^{\Labn,\Labo}_{\Labe} = (T,0)^{\Labn,\Labo}_{\Labe}  $. 
  Then the map $ \Delta_2 :  \CT^{\ex} \rightarrow \CT^{\ex} \otimes \hat \CT^{\ex}_{+} $ is given for $ T^{\Labn,\Labo}_{\Labe} \in \CT^{\ex} $ by: 
\begin{equs}
 \Delta_2  T^{\Labn,\Labo}_{\Labe} & = 
 \sum_{A \subset T} \sum_{\Labe_A,\Labn_A}  \frac1{\Labe_A!}
\binom{\Labn}{\Labn_A}
 (A,\Labn_A+\pi\Labe_A, \Labo \restr N_A,\Labe \restr E_A) 
 \\& \qquad  \otimes \hat P_2  ( \CR_A T,2, [\Labn - \Labn_A]_A, \Labo(A)+[\Labn_A - \pi \Labe_A ]_A  , \Labe + \Labe_A)\;, 
\end{equs}
where $ \hat P_2 $ sets to zero the
$ \Labo $ decoration at the root.
We define $  \hat\CP^{\ex}_+ \subset \hat \CT_+^{\ex}$ as the subspace of planted trees, i.e., trees having just one edge incident to the root and vanishing node decoration at the root.
In the sequel, we use the co-interaction identity on $ \CT^{\ex} $:
\begin{equs}\label{cointeraction}
\mathcal{M}^{(13)(2)(4)} \left( \Deltam_{\ex} \otimes \Deltam_{\ex} \right) 
\Delta_2 = \left( \id \otimes \Delta_2 \right) \Deltam_{\ex}\;.
\end{equs}
This identity is a consequence of the co-interaction given in \cite[Thm~3.22]{BHZalg}:
\begin{equs} \label{cointeraction0}
\mathcal{M}^{(13)(2)(4)} \left( \Delta_1 \otimes \Delta_1 \right) 
\Delta_2 = \left( \id \otimes \Delta_2 \right) \Delta_1\;.
\end{equs}
We apply $ \left( \mathfrak{p}^{\ex}_- \otimes \id \otimes \id \right) $ to \ref{cointeraction0} in order to obtain \ref{cointeraction}, where $ \mathfrak{p}^{\ex}_-  $ is the projection onto the forest composed of trees with negative degree.
The main idea of the following proofs is to rewrite   $ \hat\curvearrowright^{*}_{(\Labhom,p)} $ and 
$  \hat\uparrow_i^*  $ in terms of $  \Delta_2 $ and some projections which  behave well with $ \Deltam_{\ex} $.   
\begin{proof}[of Lemma~\ref{lem: grafting cointeraction}]
For
$ \hat\curvearrowright^{*}_{(\Labhom,p)} $  from Definition~\ref{def_grafting_explicit}, we have the identity
\begin{equs}
  \hat\curvearrowright^{*}_{(\Labhom,p)} = \mathcal{M}^{(2)(1)}\left( \id \otimes \CR_{2} \circ \mathfrak{p}_{(\Labhom,p)} \circ \Pi_{\hat \CP_+^{\ex}}   \right) \Delta_2
\end{equs}
where
\begin{itemize}
\item $\mathcal{M}^{(2)(1)}(\tau_1 \otimes \tau_2) \eqdef (\tau_2 \otimes \tau_1)$,
\item $ \Pi_{\hat \CP^{\ex}_+} : \hat \CT^{\ex}_+ \rightarrow \hat\CP^{\ex}_+ $ is the projection onto $ \hat\CP^{\ex}_+ $,
\item $ \mathfrak{p}_{(\Labhom,p)}  : \hat \CP^{\ex}_+ \rightarrow \hat\CP^{\ex}_+ $ is the projection onto planted trees with the root edge decorated by $ (\Labhom,k) $ for some $ k \leq p $,
\item $\CR_{2} : \hat \CP^{\ex}_+ \rightarrow \CT^{\ex}  $ acts by removing the edge incident to the root and the color blue at the root.
\end{itemize}
Then it is easy to show that the following identities hold:
\begin{equs}
 \left( \id \otimes \CR_2 \right) \Deltam_{\ex} & = \Deltam_{\ex} \CR_2, \quad \left( \id \otimes \mathfrak{p}_{(\Labhom,p)} \right) \Deltam_{\ex} = \Deltam_{\ex} \mathfrak{p}_{(\Labhom,p)} \; \text{ on }  \hat \CP^{\ex}_+\;,  \\
 \left( \id \otimes \Pi_{\hat \CP^{\ex}_+} \right) \Deltam_{\ex} & = \Deltam_{\ex} \Pi_{\hat \CP^{\ex}_+} \; \text{ on }  \hat \CT^{\ex}_+\;.
\end{equs}
Indeed, the previous projections are linked to the form of the tree at the root. The root is coloured in blue and therefore cannot be touch by $ \Deltam_{\ex} $. 
Then by using these identities, we have
\begin{equs}
& \mathcal{M}^{(13)(2)(4)} \left( \Deltam_{\ex} \otimes \Deltam_{\ex} \right)\hat\curvearrowright^{*}_{(\Labhom,p)} & \\ & = \mathcal{M}^{(13)(2)(4)} \left( \Deltam_{\ex} \otimes \Deltam_{\ex} \right) \mathcal{M}^{(2)(1)} \left( \id \otimes \CR_{2} \circ \mathfrak{p}_{(\Labhom,p)} \circ \Pi_{\hat \CP^{\ex}_+}   \right) \Delta_2
\\ &= \left(\id \otimes \mathcal{M}^{(2)(1)} \right) \mathcal{M}^{(13)(2)(4)} \left( \Deltam_{\ex} \otimes \Deltam_{\ex} \right)  \left( \id \otimes \CR_{2} \circ \mathfrak{p}_{(\Labhom,p)} \circ \Pi_{\hat \CP^{\ex}_+}   \right) \Delta_2
\\
& = \left(\id \otimes \mathcal{M}^{(2)(1)} \right) \mathcal{M}^{(13)(2)(4)} \left( \Deltam_{\ex} \otimes \left( \id \otimes \CR_{2} \circ \mathfrak{p}_{(\Labhom,p)} \circ \Pi_{\hat \CP^{\ex}_+}    \right)\Deltam_{\ex} \right)   \Delta_2 
\\
& = \left(\id \otimes \mathcal{M}^{(2)(1)} \left( \id \otimes \CR_{2} \circ \mathfrak{p}_{(\Labhom,p)} \circ \Pi_{\hat \CP^{\ex}_+}    \right) \right) \mathcal{M}^{(13)(2)(4)} \left( \Deltam_{\ex} \otimes \Deltam_{\ex} \right)   \Delta_2 
\\ &= \left(\id \otimes \mathcal{M}^{(2)(1)} \left( \id \otimes \CR_{2} \circ \mathfrak{p}_{(\Labhom,p)} \circ \Pi_{\hat \CP^{\ex}_+}    \right) \right) \left( \id \otimes \Delta_2 \right) \Deltam_{\ex}
\\ &= \left( \id \otimes  \hgraft{(\Labhom,p)}^* \right) \Deltam_{\ex}.
\end{equs}
\end{proof}
\begin{proof}[Lemma~\ref{lem: poly cointeraction}]
The map $ \hat\uparrow_i^*  $ from Definition~\ref{def_increasing_poly} can be rewritten as
\begin{equs}
\hat\uparrow_i^* = \mathcal{M}^{(1)}\left( \id \otimes \mathfrak{p}_{X_i}\right) \Delta_2
\end{equs}
where $ \mathcal{M}^{(1)}(\tau_1 \otimes \tau_2) = \tau_1 $ and $ \mathfrak{p}_{X_i} : \hat \CT^{\ex}_+ \rightarrow \hat \CT^{\ex}_+ $ is the projection on the tree composed of one node coloured in blue corresponding to $ X_i $. One has $ \left( \id \otimes \mathfrak{p}_{X_i} \right) \Deltam_{\ex} = \mathfrak{p}_{X_i}  $ and by using this identity it follows that
 \begin{equs}
& \left(  \id \otimes \mathcal{M}^{(1)}\left( \id \otimes \mathfrak{p}_{X_i}\right) \right) \mathcal{M}^{(13)(2)(4)} \left( \Deltam_{\ex} \otimes \Deltam_{\ex} \right) \Delta_2 
\\
& = \left(  \id \otimes \mathcal{M}^{(1)} \right) \mathcal{M}^{(13)(2)(4)} \left( \Deltam_{\ex} \otimes \left( \id \otimes \mathfrak{p}_{X_i}\right)\Deltam_{\ex} \right) \Delta_2 \\
& = \left(  \id \otimes \mathcal{M}^{(1)} \right)  \left( \Deltam_{\ex} \otimes  \mathfrak{p}_{X_i} \right) \Delta_2 
\\ & = \Deltam_{\ex} \mathcal{M}^{(1)}\left( \id \otimes \mathfrak{p}_{X_i}\right) \Delta_2
 =  \Deltam_{\ex} \hat\uparrow_i^*.
\end{equs} 
 On the other hand, we obtain: 
\begin{equs}
  \left(  \id \otimes \mathcal{M}^{(1)}\left( \id \otimes \mathfrak{p}_{X_i}\right) \right)\left( \id \otimes \Delta_2 \right)
\Deltam_{\ex} = \big( \id \otimes  \hat\uparrow_i^* \big) \Deltam_{\ex}\;,
\end{equs}
and the claim follows.
\end{proof}
\begin{remark}
Lemmas~\ref{lem: grafting cointeraction} and~\ref{lem: poly cointeraction} can be proven without the use of the strong co-interaction obtained in \cite[Thm.~3.22]{BHZalg}.
The difficult part of the proof is taking care of the binomial coefficients and one can handle this by using the Chu-Vandermonde identity in a more elementary way than in the proof of~\cite[Thm.~3.22]{BHZalg}. Lemma~\ref{lem: poly cointeraction}, for example, only needs the identity $a\binom{a-1}{b} = (a-b)\binom{a}{b}$ for $a,b \in \N$.
\end{remark}
\begin{proof}[Proposition~\ref{prop:co-interaction}]
Let $ \ell \in \mcb{G}_{-}^{\ex} $ and set $M_{\ell} = \left( \ell \otimes \id \right) \Deltam_{\ex} \in \RR$. Lemma~\ref{lem: poly cointeraction} implies that any $i \in \{0,\ldots, d\}$
\begin{equs}
M_{\ell} \hat\uparrow_i^* & = \left( \ell \otimes \id \right) \Deltam_{\ex} \hat\uparrow_i^* 
 = ( \ell \otimes \hat\uparrow_i^* )  \Deltam_{\ex}
 = \hat\uparrow_i^* M_{\ell}\;,
\end{equs}
from which~\eqref{co-interaction polynomial} follows.
Turning to~\eqref{co-interaction renormalisation}, for $o \in \mcb{O}$ one has, by Lemma~\ref{lem: grafting cointeraction},
\begin{equs}
\hgraft{o}^* M_{\ell} & =  \left( \ell \otimes \hgraft{o}^{*} \right) \Deltam_{\ex} \\  & = \left( \ell \otimes \id \otimes \id \right) \mathcal{M}^{(13)(2)(4)} \left( \Deltam_{\ex} \otimes \Deltam_{\ex} \right) 
\hgraft{o}^{*} \\
& =  \left( \ell \otimes \id \otimes \id \right)  \left( \left( \ell \otimes \id \right)\Deltam_{\ex} \otimes \left( \ell \otimes \id \right)\Deltam_{\ex} \right) 
\hgraft{o}^{*} = 
\left( M_{\ell} \otimes  M_{\ell} \right) \hgraft{o}^{*}\;.
\end{equs}
Passing to the adjoint and using Corollary~\ref{cor:graftMorphT} concludes the proof.
\end{proof}
\subsection{Proof of Proposition~\ref{prop:generate}}\label{subsec:freelyGenProof}
\begin{proof}[of Proposition~\ref{prop:generate}]
Our proof follows the one given in 
\cite{Chapoton01} for rooted trees without decorations on the grafting operators.
We first consider decorated variables $ x^{(k)} $, $k \in \N^{d+1}$, and decorated brackets $ (\cdot)_{(\Labhom,p)} $, . 
Let $ \CF^{\ex}(n) $ the vector space given by parenthesised product on these variables indexed by $ \lbrace 1,\ldots,n \rbrace $, using the previous decorated brackets.
For example, a basis for $ \CF^{\ex}(2) $ is given by:
\begin{equs}
( x^{(k_1)}_1 x^{(k_2)}_2 )_{(\mft,p)} ,\quad ( x^{(k_2)}_2 x^{(k_1)}_1)_{(\mft,p)},  \quad  (\Labhom,p) \in  \mcb{O}, \quad k_i \in  \N^{d+1}\;.
\end{equs}
  We set $ \mathcal{P} \mathcal{L}^{\ex} = \CF^{\ex} / (R) $ where $ \CF^\ex = (\CF^{\ex}(n))_{n \geq 1} $ and 
  the equivalence relation $ R $ is generated by the relations
\begin{equs}
r & = ((x^{(k_1)}_1 x^{(k_2)}_2 )_{(\Labhom_{1},p_1)} x^{(k_3)}_3)_{(\Labhom_{2},p_2)} - (x^{(k_1)}_1 (x^{(k_2)}_2 x^{(k_3)}_3 )_{(\Labhom_{2},p_2)} )_{(\Labhom_{1},p_1)} \\
& - 
( (x^{(k_2)}_2 x^{(k_1)}_1 )_{(\Labhom_{2},p_2)}x^{(k_3)}_3 )_{(\Labhom_{1},p_1)} + (x^{(k_2)}_2 (x^{(k_1)}_1 x^{(k_3)}_3 )_{(\Labhom_{1},p_1)} )_{(\Labhom_{2},p_2)}\;.
\end{equs}
Let $ \CR\CT^{\ex}(n)  $ be the linear span of trees with edge decorations in $ \mcb{O} $ and having their nodes labelled by
$ \lbrace 1,\ldots,n \rbrace $. Moreover, they do not have drivers. For $ (\Labhom,p) \in  \mcb{O} $, we define the grafting operator $\hgraft{(\Labhom,p)}$ as a linear map from 
 $ \CR\CT^{\ex}(m) \otimes   \CR\CT^{\ex}(n)    $ into  $ \CR\CT^{\ex}(n+m)  $ by
 \begin{equs}
 \tau  \hgraft{(\mft,p)} \bar \tau
& = \sum_{\ell} \binom{ k_r }{\ell}
 \bullet^{k_{r}-\ell}_{r}   \mcb{I}_{(\mft,p-\ell)}[\tau] \Big(\prod_{j\in J} \mcb{I}_{o_j}[\tau_j]\Big) 
\\ & +   \bullet_{r}^{k_r}\Big(\prod_{j\in J} \mcb{I}_{o_j}[\tau  \hgraft{(\mft,p)} \tau_j]\Big)\;,
 \end{equs}
 where $ \bullet_{r}^{k_r} $ is the rooted tree composed of a single node labelled by $ r $ and decorated by $ k_r $.
Note that the expression of this grafting operator is essentially identical to the one given in Remark~\ref{rem:formula for hgraft}; we commit here an abuse notation by identifying the two operators.

Recall the pre-Lie type identity~\eqref{pre_lie_identity}.
 We define a morphism $ \Phi : \mathcal{P} \mathcal{L}^{\ex} \rightarrow 
\CR\CT^{\ex} $ for the concatenation $ w = (uv)_{(\Labhom,p)} $ of two words $ u $ and $ v$ by setting
\begin{equs}
 \Phi( x^{(k_i)}_i   ) \eqdef \bullet_{i}^{k_i}\;, \quad  \Phi(\left( uv  \right)_{(\Labhom,p)} ) \eqdef \Phi(u) \hgraft{(\Labhom,p)} \Phi(v)\;,
\end{equs} 
The identity~\eqref{pre_lie_identity}  proves that $ \Phi(r) = 0 $, so that this is well-defined. 

We want to construct an inverse 
$ \Psi $ of $ \Phi $. Let us fix a finite set $ I $ and write $ \CR \CT^{\ex}(I) $ for the decorated trees of $ \CR \CT^{\ex} $ labelled with $ I $.
We consider $ \Phi_{I} : \mathcal{P} \mathcal{L}^{\ex}(I) \rightarrow \CR \CT^{\ex}(I) $ the extension of $ \Phi $. We want to define a map $  \Psi_{I} : \CR \CT^{\ex}(I) \rightarrow  \CP \mathcal{L}^{\ex}(I)  $ such that $ \Psi_I \Phi_I = \id $ and $ \Phi_I \Psi_I = \id $. We proceed by induction on the cardinal $ | I | $ of $ I $.
The initialisation with only one element in $ I $ is straightforward. Suppose that the map $ \Psi_I $ is defined for $ |I| \leq n $ and consider  $ I $ such that $ |I| = n+1 $. 
Let $ T \in \CR\CT^{\ex}(I) $ such that the root of $  T $ is labelled by $ r \in I $. 
Using symbolic notation, $ T $ is of the form
\begin{equs}
 T = \bullet^{k_r}_{r} \prod_{i=1}^N \mcb{I}_{(\Labhom_i,p_i)}[\tau_i]\;.  
\end{equs}  
We define the map $ \Psi_I $ by induction on $ N$.
If $ N=1 $, observe that
\begin{equs} \label{grafting_identity}
T = \bullet^{k_r}_{r} \mcb{I}_{(\Labhom,p)}[\tau] = \sum_{\ell} (-1)^{ | \ell |_{\s}} \binom{ k_r }{\ell}  \left( \tau 
\hgraft{(\Labhom,p-\ell)}  \bullet^{k_r-\ell}_{r} \right)\;,
\end{equs}
with the convention that the terms with $ k_r - \ell $ are zero when $ k_r = 0 $.
Then we set  $ \Psi_I(T) = \sum_{\ell} (-1)^{ | \ell |_{\s}} \binom{ k_r }{\ell}  \left( x^{(k_r-\ell)}_r \Psi_I(\tau) \right)_{(\Labhom,p-\ell)} $, where we note that $\Psi_I(\tau)$ is well-defined due to our induction hypothesis on $|I|$.
It is then immediate to verify that $ \Phi_I \Psi_I (T) = T $.
If $ N \geq 2 $, observe that
\begin{equs}
T & =   \sum_{\ell} (-1)^{ | \ell |_{\s}} \binom{ k_r }{\ell}   \;\tau_1 \hgraft{(\Labhom_{1},p_1 - \ell)}   \bullet^{k_r - \ell}_{r}   \prod_{i=2}^{N}  \mcb{I}_{(\Labhom_i,p_i)}[\tau_i]  \\
& -   \sum_{\ell} (-1)^{ | \ell |_{\s}} \binom{ k_r }{\ell} \sum_{j=2}^{N}   \bullet^{k_r- \ell}_{r}   \mcb{I}_{(\Labhom_j,p_j)}[ \tau_1 \hgraft{(\Labhom_1,p_1-\ell)} \tau_j] \prod_{i \neq j }\mcb{I}_{(\Labhom_i,p_i)}[\tau_i]\;.
\end{equs}
Note that intuitively, this represents an ``ungrafting'' of $\tau_1$ from the root of $T$.
Then we define $ \Psi_I(T) $ by
\begin{equs}
& \Psi_I(T)  = \sum_{\ell} (-1)^{ | \ell |_{\s}} \binom{ k_r }{\ell} \Big( \Psi_I(\tau_1)    \Psi_I \Big( \bullet^{k_r- \ell}_{r} \prod_{i=2}^{N} \mcb{I}_{(\Labhom_i,p_i)}[\tau_i]\Big) \Big)_{(\Labhom_{1},p_1 - \ell)}  \\
& - \sum_{\ell} (-1)^{ | \ell |_{\s}} \binom{ k_r }{\ell} \sum_{j=2}^{N}   \Psi_I \Big( \bullet^{k_r-\ell}_{r} \mcb{I}_{(\Labhom_j,p_j)}[ \tau_1 \hgraft{(\Labhom_1,p_1- \ell)} \tau_j] \prod_{i \neq j }\mcb{I}_{(\Labhom_i,p_i)}[\tau_i] \Big).
\end{equs}
One can then verify that $ \Phi_I \Psi_I(T) = T  $ as desired.
Since the tree $ T $ is invariant under permutation of the $ T_i = \mcb{I}_{(\Labhom_i,p_i)}[\tau_i]  $, we need to check that the definition of $ \Psi_I(T) $ does not depend on the subtree we ungraft from the root of $ T $ (in the above, this was taken as $\tau_1$).
We proceed by induction on $N$ and we prove that the order of ungrafting $ \tau_1 $ and $ \tau_2 $ does not matter in the definition of $ \Psi_I(T) $.
The proof follows in exactly the same way as in~\cite{Chapoton01} but we have longer expressions because of the identity~\eqref{grafting_identity}.
We omit the details but note that the relations $ R $, the symmetries, and the pre-Lie identity~\eqref{pre_lie_identity} provide all the necessary ingredients for the verification.

It remains to prove that one has $ \Psi \Phi = \id $. We show by induction on $ N $ that
\begin{equs}
 \Psi(T' \hgraft{(\Labhom,p)} T) = ( \Psi(T') \Psi(T) )_{(\Labhom,p)}\;.
 \end{equs} 
 If we consider a word $ w $ in 
$ \mathcal{P} \mathcal{L}^{\ex} $ then $ w = (uv)_{(\Labhom,p)} $ and we get:
\begin{equs}
\Psi \Phi(w) = \Psi \left( \Phi(u)
\hgraft{(\Labhom,p)} \Phi(v) 
 \right) = \left( \Psi \Phi(u) \Psi \Phi(v) \right)_{(\mft,p)}\;.
\end{equs} 
We conclude by applying the induction hypothesis on $ u $ and $ v $. We obtain Proposition~\ref{prop:generate} by substituting the indexed nodes by the drivers $ \Xi_\mfl$, $\mfl \in \mfD$. 
\end{proof}
\subsection{Proof of Theorem~\ref{thm: THE black box}}\label{sec: proof of blackbox}
\begin{proof}[of Theorem~\ref{thm: THE black box}]
Given $F \in \widetilde{\G}$, we define  $F \in \mathring{\G}$ as follows. 
For any if $\mft \in \mfL_{+}$, if $\mfl \in \mfD_{\mft}$ is not of the form $(\mathbf{0},0)$ or $((\mfb,0),0)$ for some $\mfb \in \mfL_{-}$ then we set $F_{\mft}^{\mfb} \eqdef 0$. 
We then set $F^{(\mathbf{0},0)}_{\mft} \eqdef \hat F^{\mathbf{0}}_{\mft}$ and $F^{((\mfb,0),0)} = \hat F^{\mfb}_{\mft}$.

We now construct a corresponding rule $\hat R$.
If $\mft \in \mfL_{-}$ we set $\hat R(\mft) = \emptyset$. If $\mft \in \mfL_{+}$ then writing expanding for each $\mfb \in \mfL_{-}$ as in \eqref{expansion of nonlinearity}
\[
F_{\mft}^{\mfb}(\Y) 
=
\sum_{j=1}^{m_{\mft,\mfb}}
F_{j,\mft,\mfb}(\Y)
\Y^{\alpha_{j,\mft,\mfb}}\;,
\]
we set
\[
\hat{R}(\mft)
\eqdef
\Big(
\bigcup_{
\substack{
\mfb \in \mfL_{-}\\
1 \le j \le m_{\mft,\mfb}
}
}
\Big\{ 
\alpha \sqcup \beta \sqcup \{(\mfb,0)\}:\  
\begin{array}{c}
\alpha \subset \alpha_{j,\mft,\mfb}\\
\beta \in \N^{\mcb{O}_{+}}
\end{array} 
\Big\}
\Big)
\sqcup
\N^{\mcb{O}_{+}}\;.
\]
Note that $\hat{R}$ is normal and subcritical with respect to $\reg$.
By~\cite[Prop.~5.20]{BHZalg} one can extend $\hat{R}$ to a complete rule $R$ which is again subcritical with respect to $\reg$. 
Finally, it is straightforward to verify that $R$ satisfies Assumption~\ref{assump:RregComplete} and that $F$ obeys $R$. 
We define as in Section~\ref{sec: alg and main theorem} and \cite[Sec.~5]{BHZalg} a regularity structure, space of models, and renormalisation group corresponding to the rule $R$.

Let $Z^{(\rho,\eps)}_{\BPHZ}$ be the random model obtained by taking the BPHZ lift of the noise $\xi^{(\rho,\eps)} \eqdef (\xi_{\mfl}^{(\rho,\eps)})_{\mfl \in \mfL_{-}}$.
Thanks to the assumptions of Theorem~\ref{thm: THE black box} we can apply \cite[Thm.~2.15]{CH} which states that there exists a random element $Z_{\BPHZ}$ of $\mathscr{M}_{0}$, independent of our choice of mollifier $\rho$, such that the random models $Z^{(\rho,\eps)}_{\BPHZ}$ converge in probability to $Z_{\BPHZ}$ in the topology of $\mathscr{M}_{0}$ as $\eps \downarrow 0$.

By stationarity it is clear that the models $Z^{(\rho,\eps)}_{\BPHZ}, Z_{\BPHZ}$ belong to  $ \mathscr{M}_{0,1}$ almost surely and moreover the convergence of statement of \cite[Thm.~2.15]{CH} also implies that $Z^{(\rho,\eps)}_{\BPHZ}$ converge to $Z_{\BPHZ}$ in the topology of $\mathscr{M}_{0,1}$ as $\eps \downarrow 0$. 

Therefore, by Proposition~\ref{prop:stationary objects}, the modelled distributions $\mcP^{Z^{(\rho,\eps)}_{\BPHZ}}\tilde{U}$ are well-defined elements of $\jets^{\gamma,\eta}_{0}$.

We define 
\begin{equ}[e:defSpm]
\mathcal{S}_{\rho,\eps}^{-}(\zeta) \eqdef \mcR^{Z^{(\rho,\eps)}_{\BPHZ}}\mcP^{Z^{(\rho,\eps)}_{\BPHZ}}\tilde{U}
\qquad
\textnormal{and}
\qquad
\mathcal{S}_{\rho,\eps}^{+}(\zeta,\psi) \eqdef \mcR^{Z^{(\rho,\eps)}_{\BPHZ}} V^{Z^{(\rho,\eps)}_{\BPHZ}}(\psi)\;,
\end{equ}
where $\psi \in \mcC^{\ireg}$ and $V^{\bullet}(v_{0})$ is the solution of the fixed point problem~\eqref{eq: Dgamma fp} started at time $s=0$ with initial data $v_{0} \in \mcC^{\ireg}$. 

We also define 
\[
\mathcal{S}_{\rho,\eps}(\zeta,\psi) \eqdef \mcR^{Z^{(\rho,\eps)}_{\BPHZ}}\overline{U}^{Z^{(\rho,\eps)}_{\BPHZ}}\big(\psi - \mathcal{S}_{\rho,\eps}^{-}(\zeta)(0,\cdot)\big)\;,
\]
where $\overline{U}^{Z^{(\rho,\eps)}_{\BPHZ}}(v_{0})$ is defined as in \eqref{eq:overline U identity}. 
The identity \eqref{e:idenSol} is then an immediate consequence of \eqref{eq:overline U identity} and the definitions we chose above.

By Proposition~\ref{prop:stationary objects}, $\mathcal{S}_{\rho,\eps}^{-}$ converges in probability, as $\eps \downarrow 0$, to $\mathcal{S}^{-} \eqdef \mcR^{Z_{\BPHZ}} \mcP^{Z_{\BPHZ}}\tilde{U}$ in the topology of $\CC^{\reg,-}$.  
Theorem~\ref{thm: regularity structures results} implies that  $\mathcal{S}_{\rho,\eps}^{+}$ converges in probability, as $\eps \downarrow 0$, pointwise in its $\CC^{\ireg}$ argument, and in the topology of $\CC^{\rem}$, to $\mathcal{S}^{+} \eqdef \mcR^{Z^{\BPHZ}} V^{Z_{\BPHZ}}$.
This finishes the proof of the convergence of the various solution maps to a limit which is independent of $\varrho$. 
What remains to be verified is that our definition of $\mathcal{S}_{\rho,\eps}$ given here coincides with the earlier definition \eqref{eq: renormalised spde}.

By Proposition~\ref{prop: underlined U}, $\overline{U}^{Z^{(\rho,\eps)}_{\BPHZ}}(v_{0})$ satisfies the fixed point problem \eqref{eq:overline U fp} - when written in differential form the initial data is given by $v_{0} + \bar{u}^{Z^{(\rho,\eps)},\mft}_{0}$. 
Also recall that by definition $\bar{u}^{Z^{(\rho,\eps)},\mft}_{0}(\cdot) = \mathcal{S}_{\rho,\eps}^{-}(\xi)(0,\cdot)$.

We fix initial data $\psi \in \CC^{\ireg}$ and set $\hat{\phi}^{(\rho,\eps)} \eqdef \mathcal{S}_{\rho,\eps}(\xi,\psi)$. 
By combining the observations of the previous paragraph with Theorem~\ref{thm:renormalised_equation} it follows that for every $\eps > 0$ we have that $\hat{\phi}^{(\rho,\eps)}$ is a local solution to the system of equations
\begin{equ}\label{black box theorem - equation}
\hat{\phi}^{(\rho,\eps)}_{\mft} = G_{\mft} 
\ast 
\Big[\one_+ 
\sum_{(\hat{\mfl},\mfo) \in \mfD_{\mft}}
(M^{(\rho,\eps)}_{\BPHZ}F)^{(\hat{\mfl},\mfo)}_{\mft}(\mathbf{\hat{\phi}^{(\rho,\eps)}})
\xi_{\hat{\mfl}}^{(\rho,\eps)}
\Big]
+ G_\mft \psi_\mft,\ \mft \in \mfL_{+}\;.
\end{equ} 
Here $M^{(\rho,\eps)}_{\BPHZ} \in \mfR$ is defined via $M^{(\rho,\eps)}_{\BPHZ} \eqdef (\ell^{(\rho,\eps)}_{\BPHZ} \otimes \id) \Deltam_{\ex}$
where $\ell^{(\rho,\eps)}_{\BPHZ}(\cdot) \eqdef \E( \PPi^{\varrho,\varepsilon} \tilde{\CA}^{\ex}_{-} \cdot )(0)$ and $ \PPi^{(\varrho,\varepsilon)} $ is the canonical lift of $\xi \ast \rho_{\eps}$ defined in~\cite[Sec.~6.2]{BHZalg}. \tabularnewline
In particular $M^{(\rho,\eps)}_{\BPHZ}$ is a deterministic element of $\mathfrak{R}$ such that $M^{(\rho,\eps)}_{\BPHZ}Z_{\can}^{(\rho,\eps)} = Z^{(\rho,\eps)}_{\BPHZ}$.

We now compute $M^{(\rho,\eps)}_{\BPHZ}F$. 
Fix $\mft \in \mfL_{+}$. For any $M \in \mfR$ and $(\hat{\mfl},\mfo) \in \mfD_{\mft}$ with $\hat{\mfl} \not = \emptyset$, we have by Assumption~\ref{assump:remove_noise_positive} that $M^{\ast} \Xi_{(\hat{\mfl},\mfo)} = \Xi_{(\hat{\mfl},\mfo)}$ and consequently, by Definition~\ref{def:MP}, $(MF)_{\mft}^{(\hat{\mfl},\mfo)} = F_{\mft}^{(\hat{\mfl},\mfo)}$. 
Also recall that one always has $M^{\ast} \Xi_{(\emptyset,0)} = \Xi_{(\emptyset,0)}$. 
It follows that
\begin{equs}
\sum_{(\hat{\mfl},\mfo) \in \mfD_{\mft}}
(M^{(\rho,\eps)}_{\BPHZ}F)^{(\hat{\mfl},\mfo)}_{\mft}(\mathbf{\hat{\phi}^{(\rho,\eps)}})
\xi_{\hat{\mfl}}^{(\rho,\eps)}
=
\sum_{\mfl \in \mfL_{-}}&
\hat F_{\mft}^{\mfl}(\mathbf{\hat{\phi}^{(\rho,\eps)}})
\xi_{\mfl}^{(\rho,\eps)}
+
\hat F_{\mft}^{\mathbf{0}}(\mathbf{\hat{\phi}^{(\rho,\eps)}})\\
{}
&
+
\sum_{
\substack{
(\mathbf{0},\mfo) \in \mfD_{\mft}\\
\mfo \not = 0
}
}
\Upsilon^{F}_{\mft}[(M^{(\rho,\eps)}_{\BPHZ})^{\ast} \Xi_{(\mathbf{0},\mfo)}](\mathbf{\hat{\phi}^{(\rho,\eps)}})\;.
\end{equs}
Again, by Assumption~\ref{assump:remove_noise_positive}, we have that
\begin{equ}
\sum_{
\substack{
(\mathbf{0},\mfo) \in \mfD_{\mft}\\
\mfo \not = 0
}
}
\Upsilon^{F}_{\mft}[(M^{(\rho,\eps)}_{\BPHZ})^{\ast} \Xi_{(\mathbf{0},\mfo)}](\mathbf{\hat{\phi}^{(\rho,\eps)}})
= 
\sum_{
\substack{
\tau \in \mcb{T}^{\ex}_{-}\\
\tau \textnormal{ tree}}
}
\ell_{\BPHZ}^{(\rho,\eps)}(\tau)
\frac{\Upsilon^{F}_{\mft}[\tau](\mathbf{\hat{\phi}^{(\rho,\eps)}})}{S(\tau)}\;.
\end{equ}
It follows that the system~\eqref{black box theorem - equation} is the same as the system given in Theorem~\ref{thm: THE black box} if we set $c_{\rho,\eps}[T^{\mfm}_{\mff}] \eqdef \ell_{\BPHZ}^{(\eps)}[T^{\mfm}_{\mff}]$ for each $\mft \in \mfL_+$ and $T^{\mfm}_{\mff} \in \mathring{\mathscr{T}}_{\mft,-}[F]$, where we are using the natural identification of $ \bigcup_{\mft \in \mfL_{+}}\mathring{\mathscr{T}}_{\mft,-}[F]$ with the trees generating $\mcb{T}^{\ex}_{-}$ (here we are using the notation of \cite{BHZalg}).
\end{proof}
\section{Symbolic index}
In this appendix, we collect the most used symbols of the article, together
with their meaning and the page where they were first introduced.

 \begin{center}
\renewcommand{\arraystretch}{1.1}
\begin{longtable}{lll}
\toprule
Symbol & Meaning & Page\\
\midrule
\endfirsthead
\toprule
Symbol & Meaning & Page\\
\midrule
\endhead
\bottomrule
\endfoot
\bottomrule
\endlastfoot
 $\graft{o}$ & Grafting operator on $\BBspan$ & \pageref{graft page ref}\\
 $\hgraft{o}$ & Grafting operator on $\VVspan$ & \pageref{hgraft page ref}\\
 $\bgraft{o}$ & Grafting operator on $\cT^\ex$ & \pageref{bgraft page ref}\\
 $\uparrow_l$ & Polynomial raising operator on $\BBspan$ & \pageref{uparrow page ref}\\
 $\hat\uparrow_l$ & Polynomial raising operator on $\VV$ & \pageref{hatuparrow page ref}\\
 $\BB$ & Subset of $\BBbig$ with restrictions on polynomial nodes & \pageref{BB page ref}\\
 $\BBspan$ & Vector space spanned by $\BB$ & \pageref{BBspan page ref}\\
 $\CC^{\reg,-}$ & Space where distribution-like part of solutions takes values & \pageref{Creg- page ref}\\
 $\mcC^{\rem}$ & Space in which function-like part of solutions takes values & \pageref{Creg page ref}\\
 $\mcC^{\ireg}$ & Space of possible initial conditions & \pageref{Cireg page ref}\\
 $\mcC^{\noise}$ & Space in which noises take values & \pageref{Cnoise page ref}\\
 $\DD$ & Abstract gradient & \pageref{gradient page ref}\\
 $\D^{\gamma,\eta}$ & Space of singular modelled distributions & \pageref{Dgamma page ref}\\
 $\hat\mfD$ & Products of derivatives of noises & \pageref{drivers hat page ref}\\
 $\mfD$ & Set of abstract drivers including extended decorations & \pageref{drivers page ref}\\
 $\mfD_{\mft}$ & Set of elements of $\mfD$ compatible with $\mft \in \mfL_{+}$ & \pageref{drivers for t}\\
 $\bar\mfD$ & The subset $\{\mathbf{X}^k\Xi_\mfl\, : \, k \in \N^{d+1},\;\mfl\in\mfD\} \subset \mcT^\ex$ & \pageref{bar mfD page ref}\\
 $e_a$ & Element in $\N^A$, $a \in A$, defined by $e_a[b] \eqdef \one\{a=b\}$ & \pageref{ei page ref}\\
 $G_\mft$ & Green's function of $\partial_t - \mathscr{L}_\mft$ & \pageref{Gmft page ref}\\
$\mcb{H}^{\ex}$ & $\bigoplus_{\mft \in \mfL_{+}} \mcb{T}_{\mft}^{\ex}$ & \pageref{Hex page ref}\\
$\widetilde{\mcb{H}}^{\ex}$ & $\bigoplus_{\mft \in \mfL_{+}} \widetilde{\mcb{T}}_{\mft}^{\ex}$ & \pageref{tilde Hex page ref}\\
 $K_\mft$ & Truncation of $G_\mft$ & \pageref{Kmft page ref} \\
 $\mathscr{L}_{\mft}$ & Differential operator associated with component $\mft$ & \pageref{Lmft page ref}\\
 $\mfL_{+}$ & Index set for the components of the system of SPDEs & \pageref{set of positive types}\\
 $\mfL_{-}$ & Index set for the rough ``drivers'' in our system of SPDEs & \pageref{set of positive types}\\
 $\Lambda$ & The underlying space-time $[0,\infty) \times \T^{d}$ & \pageref{introduction of Lambda}\\
 $\mathscr{M}_\infty$ & Space of all smooth admissible models on $\mathscr{T}$& \pageref{Minfty page ref}\\
 $\mathscr{M}_0$ & Closure of smooth admissible models & \pageref{M0 page ref}\\
 $\mcb{N}_{+}$ & All node-types in $\mfL_{+} \times \N^{d+1}$ & \pageref{pos node-types}\\
 $\mcb{O}$ & Set indexing the jet of $U$ & \pageref{mcbO page ref}\\
 $P$ & Time $0$ hyperplane & \pageref{P page ref}\\
 $\hat{\mcb{P}}(A)$ & Set of all multi-subsets of $A$. Identified with $\N^{A}$ & \pageref{multiset pageref}\\
$\Poly$ & nonlinear functions of the jet of $U$ & \pageref{poly page ref}\\
$\widetilde{\G}$ & Collection of nonlinearities $(F^\mfl_\mft)_{\mft\in\mfL_+,\mfl\in\mfL_-\sqcup\{\mathbf{0}\}}$ obeying $\reg$ & \pageref{tilde G page ref}\\
$\mathring\G$ & Collection of nonlinearities $(F^\mfl_\mft)_{\mft\in\mfL_+,\mfl\in\mfD_\mft}$ & \pageref{mathring G page ref}\\
$\G$ & Subset of all $F\in\mathring \G$ which obey $R$ & \pageref{G page ref}\\
 $Q$ & Map from $\BB$ to $\VV$ which collapses polynomial decorations & \pageref{Q page ref}\\
 $\mcQ_{\leq \gamma}$ & Natural projection $\cT^\ex \to \cT^\ex_{\leq \gamma}$ & \pageref{mcQ page ref}\\
 $\reg$ & Map $\mfL \sqcup \{\mathbf{0}\} \to \R$ & \pageref{reg page ref}\\
 $\ireg$ & Map $\mfL_+ \to \R$ & \pageref{ireg page ref}\\
 $\mcR$ & Reconstruction operator & \pageref{reconstruction}\\
 $R$ & Rule used to construct a regularity structure & \pageref{rule page ref}\\
 $R_\mft$ & Smooth function such that $G_\mft = K_\mft + R_\mft$ & \pageref{Kmft page ref}\\
 $\mfR$ & Renormalisation group of $\mathscr{T}$ & \pageref{RG page ref}\\
 $\s$ & Space-time scaling & \pageref{space-time scaling introduced}\\
 $S^\varepsilon_\s$ & Scale transformation by $\varepsilon$ around the origin & \pageref{scale transform page ref}\\
 $\mathring{\mathscr{T}}$ & Set of all possible trees (without extended decorations) & \pageref{basic trees page ref} \\
  $\mathring{\mathscr{T}}_{\mft}[F]$ & Subset of $\mathring{\mathscr{T}}$ which are $\mft$-non-vanishing for $F$ & \pageref{basic trees non-vanish page ref} \\
  $\mathring{\mathscr{T}}_{\mft,-}[F]$ & Subset of $\mathring{\mathscr{T}}_{\mft}[F]$ with negative degree & \pageref{basic neg trees page ref} \\
 $\mathscr{T}$ & Regularity structure built from the rule $R$ & \pageref{reg st page ref}\\
 $\mcT^\ex$ & Trees with extended decorations generated by the rule $R$ & \pageref{mcT page ref} \\
 $\mcT^\ex_{\leq \gamma}$ & Set of trees $\tau \in\mcT^\ex$ with $|\tau|_+ \leq \gamma$ & \pageref{mcT gamma page ref}\\
 $\cT^\ex$ & Vector space spanned by $\mcT^\ex$ & \pageref{cT page ref}\\
 $\cT^\ex_{\leq \gamma}$ & Subspace of $\cT^\ex$ spanned by $\mcT^\ex_{\leq \gamma}$ & \pageref{cT gamma page ref}\\
 $\bar\cT^\ex$ & Abstract Taylor polynomials in $\cT^\ex$ & \pageref{poly reg page ref}\\
 $\mcb{T}_{\mft}^{\ex}$ & Sector where $U_\mft$ takes values & \pageref{mcbT_mft page ref} \\
 $\widetilde{\mcb{T}}_{\mft}^{\ex}$ & Sector on which $\mcb{I}_{(\mft,0)}$ is well-defined & \pageref{tilde mcbT_mft page ref} \\
 $\jets$ & Functions from $\Lambda\setminus P$ to $\mcb{H}^{\ex}$ & \pageref{jets page ref} \\
 $\jets^{\gamma,\eta}$ & Direct sum of modelled distribution spaces & \pageref{jets gamma page ref} \\
 $\VV$ & Set of trees which contains $\mcT^\ex$ & \pageref{VV page ref}\\
 $\VVspan$ & Vector space spanned by $\VV$ & \pageref{VVspan page ref}\\
 $\Y$ & Commuting indeterminates representing the jet of $U$ & \pageref{poly reg page ref}\\
 $\Upsilon^F$ & Map into nonlinearities $\Upsilon^F : \VV \to \Poly^{\mfL_+}$ & \pageref{upsilon page ref} \\
 $\mathring\Upsilon^F$ & Map into nonlinearities $\mathring\Upsilon^F : \BBspan \to \Poly^{\mfL_+}$ & \pageref{mathringupsilon page ref} 
 \end{longtable}
 \end{center}
\endappendix
\bibliographystyle{./Martin}
\bibliography{./refs}

\end{document}